\numberwithin{equation}{section}
\newcommand{\R}{\mathbb R}
\newcommand{\C}{\mathbb C}
\newcommand{\N}{\mathbb N}
\newcommand{\be}{\begin{equation}}
\newcommand{\ee}{\end{equation}}
\newcommand{\ba}{\begin{eqnarray}}
\newcommand{\ea}{\end{eqnarray}}
\newcommand{\p}{\partial}
\newcommand{\Comp}{\mathcal{C}}
\newcommand{\yx}{Y^{x}}
\newcommand{\yt}{Y^{t}}
\newcommand{\yxt}[2]{Y^{x,t}_{#1,#2}}
\newcommand\E[2]{E_{#1,#2}}
\newcommand\Y[2]{Y_{#1,#2}}
\newcommand{\dxf}{\widetilde{D}_{x}}
\newcommand{\dtf}{\widetilde{D}_{t}}
\def\e{{\varepsilon}}
\newcommand\mub{\widetilde{\mu}}
\newcommand\bna{\begin{eqnarray*}}
\newcommand\ena{\end{eqnarray*}}
\newcommand\bnan{\begin{eqnarray}}
\newcommand\enan{\end{eqnarray}}
\newcommand\bnp{\begin{proof}}
\newcommand\enp{\end{proof}}
\newcommand\bneq{\begin{eqnarray*}\left\lbrace \begin{array}{rcl}}
\newcommand\eneq{\end{array} \right.\end{eqnarray*}}
\newcommand\bneqn{\begin{eqnarray}\left\lbrace \begin{array}{rcl}}
\newcommand\eneqn{\end{array} \right.\end{eqnarray}}
\newcommand\bni{\begin{itemize}}
\newcommand\eni{\end{itemize}}
\newcommand{\yot}{(y_0,y_1,...,y_{N-1})}
\newcommand\nor[2]{\left\|#1\right\|_{#2}}
\newtheorem{nota}{Notation}[section]
\newtheorem{theorem}{Theorem}[section]
\newtheorem{proposition}[theorem]{Proposition}
\newtheorem{lemma}[theorem]{Lemma}
\newtheorem{corollary}[theorem]{Corollary}
\newtheorem{remark}{Remark}
\theoremstyle{remark}
\newtheorem{definition}{Definition}
\numberwithin{equation}{section}
\begin{document}
\title[Exact controllability of anisotropic 1D PDE]
{Exact controllability of anisotropic 1D partial differential equations  in spaces of analytic functions}

\author[C. Laurent]{Camille Laurent}
\address{CNRS UMR 9008, Universit\'e Reims-Champagne-Ardennes, Laboratoire de Math\'ematiques de Reims (LMR), Moulin de la Housse-BP 1039, 51687 REIMS cedex 2, France}
\email{camille.laurent@univ-reims.fr}

\author[I. Rivas]{Ivonne Rivas}
\address{Universidad del Valle, Departamento de Matem\'aticas, Cali, Colombia}
\email{ivonne.rivas@correounivalle.edu.co}

\author[L. Rosier]{Lionel Rosier}
\address{Universit\'e du Littoral C\^ote d'Opale, Laboratoire de Math\'ematiques Pures et Appliqu\'ees J. Liouville,
 B.P. 699, F-62228 Calais, France}
\email{Lionel.Rosier@univ-littoral.fr}

\date{}

\begin{abstract}
     
In this article, we prove a local controllability result for a general class of 1D partial differential equations on the interval $(0,1)$. The PDEs we consider take the form $\partial_t^N y=\zeta_M \partial_{x}^{M}y+f(x , y , \partial_{x} y,...,  \partial _x^{M-1} y)$ where $1\le N < M$, $\zeta_M\in \C ^*$, and $f$ is some linear or nonlinear term of lower order. In this context, we prove a local controllability result between states that are analytic functions. If some boundary conditions are prescribed, a similar local controllability result holds between analytic functions satisfying some compatibility conditions that are natural for the existence of smooth solutions of the considered PDE. 
The proof is performed by studying a nonlinear Cauchy problem in the spatial variable with data in some spaces of Gevrey functions and by investigating the relationship between the jet of space derivatives and the jet of time derivatives. We give various examples of applications, including the (good and bad) Boussinesq equation, the Ginzburg-Landau equation, the Kuramoto-Sivashinsky equation and the Korteweg-de Vries equation.

\end{abstract}
\maketitle

\vspace{0.3cm}

\textbf{2010 Mathematics Subject Classification: 35K40, 93B05}  

\vspace{0.5cm}

\textbf{Keywords:}  anisotropic PDEs; exact controllability; ill-posed problems; Gevrey functions; 
reachable states. 

\tableofcontents
\section{Introduction}

For $M$, $N\in \N^*:=\N\setminus \{0\}$ fixed with $M>N$ and $y$  a function defined on $[0,1]\times [0,T]$, with value in $\R$, we consider the  abstract dynamical system 
\begin{eqnarray}
\partial _t^N y = P\, y + f(x , y , \partial_{x} y,...,  \partial _x^{M-1} y),&& x\in [0,1],\ t\in [0,T],  \label{W1a}\\ 
B\yx(0,t)=0,&& t\in [0,T],  \label{W1b}\\
\yt(x,0) = Y_0(x),&&  x\in [0,1] \label{W1c},
\end{eqnarray}

with
\bnan
Y^x(x,t) :=(y(x,t),\partial_x y(x,t),...,\partial_x^{M-1}y(x,t)),  \label{Yx}\\
Y^{t}(x,t) :=(y(x,t),\partial_ty(x,t),...,\partial_t^{N-1}y(x,t)),\label{Yt}
\enan
\bnan
\label{formP}
P:=\sum_{j=0}^{M}\zeta _{j}\partial_{x}^{j},
\enan
where  $\zeta _{j}\in\R$ for $0\le  j \le M$ and $ \zeta _{M}\neq 0$,  $Y_0\in C^\infty ([0,1])^N$, $B\in \R ^{v\times M}$ is a fixed real matrix of size $ v\times M$, and
 $v\in \N$ is the number of boundary conditions that we require to be equal to zero. (If $v=0$,  it indicates that there is no boundary condition at $x=0$.) 
 Finally, we assume $f\in C^{\infty}(\R ^{M+1}; \R)$ and $f$ is analytic with respect to all its arguments in a \emph{neighborhood} of $\vec{0}_{\R^{M+1}}$. More precisely, we assume that 
\be
f(x,0,\ldots,0)=0, \quad \forall x\in (-4,4), \label{AB1}
\ee 
and 
\be
f(x,\vec{y}) :=  \sum_{ (\vec{p},r)\in \N^{M+1} }a_{\vec{p},r}\vec{y}^{\vec{p}} x^r=\sum_{ (\vec{p},r)\in \N^{M+1} }a_{\vec{p},r} y_0^{p_0} y_1^{p_1}\dots y_{M-1}^{p_{M-1}} x^r   \label{AB2}
\ee
 with  $\vec{y}=(y_0,y_1,...,y_{M-1})$, $(x,\vec{y})\in (-4,4)^{M+1}$, and $\vec{p}=(p_0, \dots,p_{M-1})\in \N^M$ where the coefficients 
 $a_{\vec{p},r} $ are  such that 
\be
|a_{\vec{p},r} | \le \frac{C_a}{b^{|\vec{p}|} b_2^r},\qquad  \forall r\in \N, \ \forall \vec{p}\in \N^M\label{AB3},
\ee
for some constants
\be
C_a>0 , \quad b >4,\  \textrm{ and }  b_2>4. \label{AB4}
\ee
Note that $a_{\vec{0},r}=0$ for all $r \in \N$ by \eqref{AB1}. For $\vec{p}\in \N^M$, we define 
\[
A_{\vec{p}}(x):=\sum_{r\in \N} a_{\vec{p},r} x^r, \quad |x|<b_2. 
\]
We infer from \eqref{AB2} and \eqref{AB3} that
\begin{eqnarray}
\label{formfbis}
f(x,\vec{y}) &=& \sum_{\tiny\begin{array}{c}\vec{p}\in \N^M \\ |\vec{p}|>0\end{array}} A_{\vec{p}}(x)\vec{y}^{\vec{p}}= \sum_{\tiny\begin{array}{c}\vec{p}\in \N^M \\ |\vec{p}|>0\end{array}} A_{\vec{p}}(x)y_0^{p_0} y_1^{p_1}\dots y_{M-1}^{p_{M-1}},  
\end{eqnarray}  

\begin{gather}\label{estimAp}
|A_{\vec{p}}(x)| \le \frac{C_a}{b^{|\vec{p}|}} \, \frac{1}{1-\frac{|x|}{b_2}}, \quad |x|<b_2.
\end{gather}
Among the many physically relevant instances of \eqref{W1a} satisfying \eqref{AB1}-\eqref{AB4}, we can mention

\begin{enumerate}
\item the {\em Korteweg-de Vries (KdV)  equation}
$$\partial _t y=\partial _x^3 \, y +\partial_xy+ y\partial_xy ;$$ 
\item the  {\em ``good'' $(-)$ or ``bad''  $(+)$ Boussinesq equation}
$$\partial _t^2 y =\pm \partial _x ^4 y +\partial _x ^2 y- \partial_x^2(y^2) ;$$
\item the {\em Kuramoto-Sivashinsky (KS) equation}
$$\partial _t y + \partial _x ^4 y +\partial _x ^2 y+y\partial_xy=0.$$
\end{enumerate}
With a few modifications in the framework, we can also treat
\begin{enumerate}[resume]
\item the {\em complex Ginzburg-Landau (GL) equation}
$$ \partial _t y=e^{i\theta}\partial _x^2 \, y +e^{i\varphi} |y|^2y \quad \text{where} \quad \theta, \varphi\in \R. $$
\end{enumerate}
The exact controllability result has to be stated in a space of analytic functions (see \cite{MRRreachable} for the linear heat equation). For given $R>1$ and $C>0$, we denote by ${\mathcal N} _{R,C}$ and ${\mathcal R} _{R,C}$ the sets 
\begin{equation}
{\mathcal N}_{R,C} := \left\{ (\alpha _n)_{n\ge 0} \in \C^\N ; |\alpha _n| \le C \frac{n!}{R^n} , \, \ \forall n\ge 0\right\}\subset  \C^{\N}, 
\label{WW}
\end{equation}

\begin{equation}
{\mathcal R}_{R,C} := \left\{  z:[-1,1]\to \C: \ \exists (\alpha _n)_{n\ge 0} \in {\mathcal N}_{R,C} \textnormal{ with }  
 z(x) =\sum_{n=0}^\infty \alpha _n \frac{x^n}{n!}, \, \  \forall x\in [-1,1]\right\}.   
\label{WWW}
\end{equation}
 Let  us denote by $H(\Omega)$ the space of holomorphic functions in $\Omega$,  and let us introduce the Hardy space $H^\infty_R:=H (B(0,R))\cap L^\infty (B(0,R))$,  which is a Banach space for the 
norm $\Vert \cdot\Vert _{L^\infty ( B(0,R )) }$ (see \cite{rudin2}). Let 
\[
{\mathcal B}_{R,C} :=\{  z:[-1,1]\to \C ;  \ \exists f\in H^\infty_R,\ \Vert f\Vert_{L^\infty(B(0,R))} \le C, \ f_{\vert\,  [-1,1]}=z\}.
\]
Observe that $${\mathcal{B}}_{R,C} \subset {\mathcal R}_{R,C}\subset {\mathcal B}_{r,C  ( 1-\frac{r}{R} )^{-1} } \textrm{  for }  1<r<R \textrm{ and }  C>0.$$
For the proof, see below Lemma \ref{lemmasets}.\\


We say that a function $h\in C^\infty ([t_1,t_2])$ is {\em Gevrey of order $s\ge 0$ on $[t_1,t_2]$}, and we write  
$h\in G^s([t_1,t_2])$, if there exist
some positive constants $C,R$ such that 
\[
|\partial _t ^p h(t) | \le C \frac{(p!)^s}{R^p},\quad \forall t\in [t_1,t_2], \ \forall p\in \N. 
\]
Similarly, we say that a function $y\in C^\infty ([x_1,x_2]\times [t_1,t_2])$ is 
{\em Gevrey of order $s_1$ in $x$ and $s_2$ in $t$}, with $s_1,s_2\ge 0$, and we write $y\in G^{s_1,s_2}([x_1,x_2]\times [t_1,t_2])$, if there exist some positive constants $C,R_1,R_2$ such that 
\begin{align}
\label{defGev2D}
\vert \partial _x ^{p_1}\partial _t ^{p_2} y(x,t) \vert  \le C \frac{ (p_1!)^{s_1} (p_2!)^{s_2}}{R_1^{p_1} R_2^{p_2}},\quad 
\forall (x,t)\in [x_1,x_2]\times [t_1,t_2], \ \forall (p_1,p_2)\in \N ^2.
\end{align}

The suitable time Gevrey regularity in our situation is
\bna
\lambda:=\frac{M}{N}>1.
\ena
Before giving our results, we need to define a set of compatibility conditions. The initial data need to belong to a specific set to ensure the existence of smooth solutions issuing from these initial data. Indeed, the equation imposes some relations between the time derivatives of the solutions and the space derivatives of the initial data. Namely, we have the following property whose proof is constructive and mainly consists in taking derivatives in the PDE.
\begin{lemma}
\label{lmdefJlintro}
For any $l\in \N$, there exist a number $m=m(l)\in \N$ and a smooth application $J_{l}: [-1,1]\times (\R^{N})^{m(l)+1}\to \R^{M}$ such that for any 
solution $y\in C^\infty ([-1,1]\times [t_1,t_2])$ of
$\partial _t^N y = P\, y + f(x , y , \partial_{x} y,...,  \partial _x^{M-1} y)$, 
we have
\bnan
\label{propJlintro}
\partial_{t}^{l}\yx=J_{l}(x,\yt,\partial_{x}\yt,...,\partial_{x}^{m}\yt))\quad \textnormal{ on }[-1,1]\times [t_1,t_2].
\enan
\end{lemma}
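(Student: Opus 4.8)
The plan is to prove the identity \eqref{propJlintro} by induction on $l$, the key point being to express every time derivative $\partial_t^l \yx$ as a function of the jet of $\yt$ in the space variable. Since $\yx=(y,\partial_x y,\ldots,\partial_x^{M-1}y)$ and $\yt=(y,\partial_t y,\ldots,\partial_t^{N-1}y)$, the case $l=0$ amounts to writing $\partial_x^j y$ for $0\le j\le M-1$ in terms of $(\yt,\partial_x\yt,\ldots,\partial_x^m\yt)$. For $j\le M-1 <M$ this is automatic when $j$ is controlled by the components of $\yt$ (namely $\partial_x^j y$ appears once we take enough $x$-derivatives of the first component $y$ of $\yt$), so $J_0$ is essentially a projection, with $m(0)=M-1$ and $J_0(x,\zeta_0,\ldots,\zeta_{M-1})$ reading off the appropriate scalar entries; the smoothness (indeed linearity) of $J_0$ is clear.

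For the inductive step, suppose $\partial_t^l\yx = J_l(x,\yt,\partial_x\yt,\ldots,\partial_x^{m(l)}\yt)$ holds for all smooth solutions. Differentiating in $t$ and using the chain rule gives $\partial_t^{l+1}\yx$ as a smooth function of $x$, of the $x$-derivatives $\partial_x^k\yt$ for $k\le m(l)$, and of their time derivatives $\partial_x^k\partial_t\yt$ for $k\le m(l)$. Now $\partial_t\yt=(\partial_t y,\ldots,\partial_t^{N}y)$: all components but the last already appear in $\yt$, and the last one, $\partial_t^N y$, is given by the PDE \eqref{W1a} as $P\,y+f(x,y,\partial_x y,\ldots,\partial_x^{M-1}y)$, which is a smooth (indeed, on the relevant domain, analytic) function of $x$ and of $\partial_x^j y$ for $0\le j\le M$. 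Each such $\partial_x^j y$ is in turn $\partial_x^j$ applied to the first component of $\yt$, hence is among the entries of $\partial_x^k\yt$ for $k\le M$. Therefore $\partial_t\yt$, and after commuting $\partial_x^k$ with $\partial_t$ also each $\partial_x^k\partial_t\yt$, is a smooth function of $x$ and of $\partial_x^{k'}\yt$ for $k'\le k+M$. Substituting back, $\partial_t^{l+1}\yx$ becomes a smooth function of $x$ and of $\partial_x^{k}\yt$ for $k\le m(l)+M$, so we may take $m(l+1)=m(l)+M$ and define $J_{l+1}$ by this composition; smoothness of $J_{l+1}$ follows from smoothness of $J_l$, of $P$ (linear), and of $f$ (which is $C^\infty$ on $\R^{M+1}$ by hypothesis).

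The only subtlety, and the point deserving care rather than a genuine obstacle, is bookkeeping: one must track how many $x$-derivatives of $\yt$ are consumed at each step (the recursion $m(l+1)=m(l)+M$ with $m(0)=M-1$ gives $m(l)=(l+1)M-1$, which is the number $m=m(l)$ in the statement) and must verify that the substitution is purely algebraic/differential so that no regularity is lost and $J_l$ depends only on $x$ and the finite jet, not on the particular solution $y$. One should also note that $\partial_x$ and $\partial_t$ commute on $C^\infty$ functions, which legitimizes writing $\partial_x^k\partial_t^N y$ by differentiating the PDE $k$ times in $x$; since $f$ is a fixed smooth function, $\partial_x^k[f(x,\yx)]$ is a smooth function of $x$ and of the $x$-derivatives of $y$ up to order $M-1+k$ via the Faà di Bruno formula, which keeps everything within the claimed jet order. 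This makes the construction explicit and constructive, as announced.
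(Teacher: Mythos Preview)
Your proof is correct and follows essentially the same idea as the paper: take time derivatives in the PDE and bookkeep how many space derivatives of $\yt$ are needed. The main difference is the induction variable. You induct on $l$ directly, applying $\partial_t$ once at each step and using the PDE to eliminate $\partial_t^N y$; this yields $m(l)=(l+1)M-1$. The paper instead writes $l=Nn+j$ with $0\le j<N$ and inducts on $n$, applying $\partial_t^N$ at each step (see Lemma~\ref{lem1} and Notation~\ref{defGl}); this produces the sharper value $m(l)=Mn+M-1$ together with the explicit decomposition $\partial_t^l\partial_x^k y = P^n\partial_t^j\partial_x^k y + H_l^k(x,\ldots)$. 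Your approach is shorter and perfectly sufficient for the lemma as stated. The paper's more structured version pays off later: the explicit $P^n$ term is what makes the linear case (Proposition~\ref{propcompatlin}) transparent, and the refined dependence of $H_l^k$ is exploited in the parity analysis of Lemma~\ref{lmparHn} underlying Proposition~\ref{propCompsym}.
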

\begin{definition}
Let $J_{l}$, $l\in \N$,  be the vector  functions defined in Lemma \ref{lmdefJlintro}. We define the following \textbf{compatibility set}
\begin{equation}\label{comp}
\Comp :=\left\{Y_{0}\in C^{\infty}([0,1])^{N}; \quad BJ_{l}( x,Y_{0},\partial_{x}Y_{0},...,\partial_{x}^{m(l)}Y_{0})\Big|_{ x=0}=0,\quad \forall l\in \N\right\}.
\end{equation}
\end{definition}
The compatibility set $\Comp$  plays an important role in the exact controllability of system \eqref{W1a}-\eqref{W1c}.
Since the PDE \eqref{W1a}  is time-invariant, we can check that the condition \eqref{comp} is the same at any time. In particular
\begin{itemize}
\item for any smooth solution $y$ of \eqref{W1a}-\eqref{W1b}, we have that $\yx(t)\in \Comp$ for any $t\in [0,T]$. (See below Lemma \ref{lmcompatibility}.)
\item if $y$ is a smooth solution to \eqref{W1a} such that $\yx(t)\in \Comp$ for any $t\in [0,T]$, then $y$ satisfies the boundary condition \eqref{W1b}.
(See below Lemma \ref{lmcompatibilityrecip}.)
\end{itemize}
If we want to consider the boundary controllability of the PDE \eqref{W1a}  subject to the boundary conditions \eqref{W1b}, it is therefore very natural to consider initial and final data in the space $\Comp$. We will derive a controllability result by considering small amplitude analytic functions in 
$\Comp$.

The main result in this paper is the following {\em local exact controllability result}. 
\begin{theorem}
\label{thm1}
Let $f=f(x,\vec{y})$ be as in \eqref{AB1}-\eqref{AB4} with $b, b_2>\hat R:=4M e^{(\lambda e)^{-1}}$ 
Let $R>\hat R$ and $T>0$. 
Then there exists some number $\hat C >0$ such that for all
 $Y_0,Y_1\in ({\mathcal R}_{R,\hat C})^{N}\cap \Comp$, there exists a smooth solution $y$ of 
\eqref{W1a}-\eqref{W1c} defined for all $(x,t)\in [0,1]\times [0,T]$ and satisfying $\yt(x,T)=Y_1(x)$ for all  $x\in [0,1]$. Furthermore, we have $y\in G^{1,\lambda}([0,1]\times [0,T])$.      
\end{theorem}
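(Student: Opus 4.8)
The plan is to use the flatness method: one prescribes the trace $\yx(0,\cdot)=(g_0,\dots,g_{M-1})$ of the solution at $x=0$ as a vector of Gevrey functions of $t$, and then recovers $y$ on all of $[0,1]\times[0,T]$ by solving the Cauchy problem associated with \eqref{W1a} in the \emph{space} variable $x$ with these data. First I would reduce the statement: split $[0,T]$ into $[0,T/3]$, $[T/3,2T/3]$ and $[2T/3,T]$, take $y\equiv 0$ on the middle slab, and reduce to (a) building on $[0,1]\times[0,T/3]$ a solution with $\yt(\cdot,0)=Y_0$, with $B\yx(0,\cdot)=0$, and with $\yx(0,\cdot)$ (hence $y$) vanishing identically near $t=T/3$; and (b) the mirror-image construction on $[0,1]\times[2T/3,T]$ reaching $Y_1$ at $t=T$. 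Gluing is harmless since on the overlaps the three pieces all vanish, and the global $G^{1,\lambda}$ bound is inherited by taking the largest constants.

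For step (a), set $g_i(t):=\partial_x^i y(0,t)$ for $0\le i\le M-1$ and $g=(g_0,\dots,g_{M-1})$. Evaluating Lemma~\ref{lmdefJlintro} at $(x,t)=(0,0)$ forces the $t$-jet of $g$ at $0$ to be $\partial_t^l g(0)=J_l\big(0,Y_0(0),\partial_xY_0(0),\dots,\partial_x^{m(l)}Y_0(0)\big)$ for $l\in\N$; since $Y_0\in\Comp$, this jet is valued in $\ker B$ to infinite order. The crucial, and I expect hardest, point is the quantitative claim that this jet is \emph{Gevrey of order $\lambda=M/N$} with usable constants, i.e. $|\partial_t^l g(0)|\le C'\,(l!)^\lambda/R_2^l$ with a radius $R_2$ large enough to run the $x$-Cauchy solver below and an amplitude $C'$ as small as wished once $\hat C$ is small. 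This is exactly what an analysis of the maps $J_l$ is for: $J_l$ is obtained by differentiating \eqref{W1a} roughly $l/N$ times and applying the multivariate chain rule to the analytic nonlinearity $f$ (using the majorant bounds \eqref{AB3}--\eqref{estimAp}), so that each block of $N$ time derivatives is paid for by $M$ space derivatives of $Y_0$; starting from the analyticity estimate $|\partial_x^jY_0(0)|\le\hat C\,j!/R^j$ valid because $Y_0\in(\mathcal R_{R,\hat C})^N$, this trade-off produces precisely the factor $(l!)^{M/N}$, and the threshold $\hat R=4Me^{(\lambda e)^{-1}}$ is what makes the resulting majorant series converge with room to spare.

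With this jet in hand, I would invoke a Gevrey-$\lambda$ version of the Borel--Ritt theorem together with a $G^\lambda$-cutoff to construct $g\in\big(G^\lambda([0,T/3])\big)^M$, valued in $\ker B$, realizing the prescribed jet at $t=0$, vanishing identically near $t=T/3$, with Gevrey radius $\ge R_2$ and arbitrarily small amplitude (this fixes $\hat C$). Feeding $g$ into the nonlinear Cauchy problem in $x$ --- the Cauchy--Kovalevskaya/Ovsyannikov-type result to be established below, which under the standing hypotheses $b,b_2>\hat R$ and $R>\hat R$ and for small data produces a solution analytic in $x$ up to (and past) $x=1$ and Gevrey-$\lambda$ in $t$ --- yields $y\in G^{1,\lambda}([0,1]\times[0,T/3])$ solving \eqref{W1a} with $\partial_x^iy(0,\cdot)=g_i$. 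Then $B\yx(0,\cdot)=Bg=0$; moreover $y\equiv 0$ near $t=T/3$ because, by the recursion defining the $x$-Cauchy solution from its data, $y$ vanishes on any $t$-interval on which $g$ does; and finally each $\partial_t^ky(\cdot,0)$, $0\le k\le N-1$, is analytic in $x$ on $[0,1]$ and has the same $x$-jet at $x=0$ as the $k$-th component of $Y_0$ (since $\partial_t^l\partial_x^iy(0,0)$ equals the value dictated by $J_l$, which is computed from $Y_0$), so they agree on $[0,1]$ by analytic continuation; that is, $\yt(\cdot,0)=Y_0$. The mirror construction on $[2T/3,T]$ gives $\yt(\cdot,T)=Y_1$, and pasting the three slabs finishes the proof. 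The two genuinely substantial ingredients are thus the Gevrey estimates on the maps $J_l$ (the main obstacle) and the solvability of the nonlinear $x$-Cauchy problem up to $x=1$; everything else is the standard flatness bookkeeping.
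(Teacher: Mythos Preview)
Your proposal is correct and follows the same architecture as the paper: compute the $t$-jet $\partial_t^l Y^x(0,0)$ from the $x$-jet of $Y_0$ via the maps $J_l$ (this is the content of Proposition~\ref{prop10}, where the needed Gevrey-$\lambda$ estimates \eqref{Gd} are derived exactly by the ``each block of $N$ time derivatives costs $M$ space derivatives'' accounting you sketch), realise this jet by a Gevrey function in $\ker B$ via a quantitative Borel--Ritt lemma (Lemma~\ref{lem4bis}, which is also where the factor $e^{(\lambda e)^{-1}}$ in $\hat R$ enters), feed it into the $x$-Cauchy solver (Theorem~\ref{thm2}), and recover $Y^t(\cdot,0)=Y_0$ by the bijectivity of the jet correspondence (Proposition~\ref{prop1} and Proposition~\ref{prop10}(3)) plus analyticity in $x$.

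The only notable organizational difference is the gluing: you pass through the zero state on a middle slab $[T/3,2T/3]$, whereas the paper first builds two \emph{global} solutions $\hat y,\tilde y$ on $[0,T]$ (via Proposition~\ref{propexistcompat}, which packages exactly your steps) with the correct data at $t=0$ and $t=T$ respectively, and then solves the $x$-Cauchy problem once more with the interpolated trace $K_0(t)=\rho(t)\hat Y^x(0,t)+(1-\rho(t))\tilde Y^x(0,t)$ for a cutoff $\rho\in G^{(\lambda+1)/2}$. Your route yields null controllability as a byproduct and avoids building $\hat y,\tilde y$ on the full interval; the paper's route avoids gluing three pieces and the (harmless but bookkeeping-heavy) verification that the pieces are compatible. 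One caveat on your last paragraph: the step ``$\partial_t^k y(\cdot,0)$ has the same $x$-jet at $0$ as $(Y_0)_k$'' is not a direct consequence of the definition of $J_l$ but requires inverting the jet map; this is exactly what Proposition~\ref{prop10}(3) provides.
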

We stress that Theorem \ref{thm1} can be applied to {\bf any} PDE with less derivatives in time than in space, even if the 
corresponding initial boundary value problem is {\bf not well-posed}. For instance, the backward heat equation $\partial _t y=-\partial _x^2 y$ and the bad Boussinesq 
equation  $\partial _t^2 y =\partial _x ^4 y +\partial _x ^2 y- \partial_x^2(y^2)$ are concerned. 

It is difficult in general to describe explicitly $\Comp$  (see Section \ref{sectKdV} for the KdV equation). However, the set $\Comp$ can be precisely described in the following cases:
\begin{itemize}
\item If $B=0$ (i.e. no boundary conditions at $x=0$), then $\Comp=C^{\infty}([0,1])^{N}$ (i.e. all smooth initial data are allowed)
\item If $f=0$ (linear PDE with constant coefficients), then the compatibility set reads
\begin{multline*}
    \Comp=\Big\{Y_{0}=(y_0,y_1, \ldots,y_l,\ldots,y_{N-1}) \in C^{\infty}([0,1])^{N} \, \text{such that} \, BP^jY^{x,l}_0(0)=0, \\ \, \forall j\in \N, \forall l=0,\dots, N-1\Big\}
\end{multline*}
when  we denoted $Y^{x,l}_0(x):=(y_l(x), \dots, \partial_x^{M-1}y_l(x))$ as in \eqref{Yx}. We refer to Proposition \ref{propcompatlin} for a precise statement and for the proof.
\item if $M\in 2\N$ and $P=\sum_{j=0}^{M/2} \zeta_{2j}\partial_{x}^{2j}$ (i.e. $P$ contains only even derivatives), some parity arguments can be used under some symmetry assumptions about the non-linearity, as it is shown in the following proposition.  
\end{itemize} 

\begin{proposition}\label{propCompsym}
Assume that $M \in 2\N$ and $P=\sum_{j=0}^{M/2}\zeta _{2j}\partial_{x}^{2j}$.
 \begin{enumerate}
\item If the  boundary conditions 
$B\yx(0,t)=0$ reduce to $\partial^{2j}_{x}y(0,t)=0$ for $2j\leq  M-1$, and if for all $x\in [-1,1]$ and all $(y_0 , ... , y_M)\in (-4,4)^{M+1}$ 
we have 
\be
\label{WWimpair}
f(-x,-y_{0}, \dots,(-1)^{i+1} y_{i},\dots,y_{M-1})= - f(x , y_{0} ,\dots,  y_{M-1})
\ee
then 
\bna
\Comp=\left\{Y_{0}=\yot\in C^{\infty}([0,1])^{N};\quad \partial^{2j}_{x}y_{l}(0)=0\quad \forall j\in \N,\  \forall l=0, .... N-1\right\}
\ena
\item If the  boundary conditions 
$B\yx(0,t)=0$ reduce to $\partial^{2j+1}_{x}y(0,t)=0$ for $2j+1\leq  M-1$, and if for all $x\in [-1,1]$ and all  $(y_0, ... , y_M)\in (-4,4)^{M+1}$ we have 
\be
\label{WWpair}
f(-x,y_{0},\dots,(-1)^{i} y_{i},\dots, - y_{M-1})= f(x,y_{0},\dots,y_{M-1})
\ee
then 
\bna
\Comp=\left\{Y_{0}=\yot\in C^{\infty}([0,1])^{N};\quad \partial^{2j+1}_{x} y_{l}(0)=0,\quad \forall j\in \N, \forall l=0, ... , N-1\right\}.
\ena
\end{enumerate}
\end{proposition}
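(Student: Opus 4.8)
The plan is to show that the symmetry hypothesis on $f$ propagates through the maps $J_l$ from Lemma \ref{lmdefJlintro}, so that the abstract compatibility conditions $BJ_l(x,Y_0,\dots,\partial_x^{m(l)}Y_0)|_{x=0}=0$ collapse to the stated vanishing conditions on the even (resp.\ odd) $x$-derivatives of the components $y_l$ at $x=0$. First I would set up the relevant involution. In case (1), consider the substitution $x\mapsto -x$ acting on a solution $y(x,t)$ of $\partial_t^N y = Py + f$; define $\tilde y(x,t):=-y(-x,t)$. Since $P=\sum_{j=0}^{M/2}\zeta_{2j}\partial_x^{2j}$ contains only even powers of $\partial_x$, we have $P\tilde y(x,t) = -(Py)(-x,t)$, and the parity relation \eqref{WWimpair} is exactly what is needed so that $f(x,\tilde y,\partial_x\tilde y,\dots,\partial_x^{M-1}\tilde y)(x,t) = -f(-x,y,\dots,\partial_x^{M-1}y)(-x,t)$; hence $\tilde y$ is again a solution. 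The key structural consequence, obtained by differentiating in $x$, is that the jet maps $J_l$ inherit a parity: for $\yt=(y,\partial_t y,\dots,\partial_t^{N-1}y)$ with reversed sign and argument, the components of $J_l$ transform with alternating signs dictated by the number of $x$-derivatives. Concretely, writing $J_l = ((J_l)_0,\dots,(J_l)_{M-1})$ where $(J_l)_i$ produces $\partial_x^i\partial_t^l y$, one gets $(J_l)_i(-x,\sigma(\mathbf z))=(-1)^{i+1}(J_l)_i(x,\mathbf z)$ for an appropriate sign change $\sigma$ on the arguments. The symmetric case (2) is identical with $\tilde y(x,t):=y(-x,t)$ and the sign pattern shifted by one, using \eqref{WWpair}.

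Next I would read off what this parity says at $x=0$. In case (1) the boundary operator $B$ selects precisely the components $\partial_x^{2j}y(0,t)$ with $2j\le M-1$, i.e.\ the \emph{even}-index components of $Y^x$. For those, $(J_l)_{2j}$ satisfies $(J_l)_{2j}(-x,\sigma(\mathbf z)) = -(J_l)_{2j}(x,\mathbf z)$; evaluated at a point where the argument $\mathbf z$ is built from $\partial_x^k Y_0(0)$, the reversal $\sigma$ flips the sign of $\partial_x^k y_l(0)$ whenever $k$ is odd. Therefore the condition $B J_l(0,Y_0,\dots)=0$ is automatic as soon as all the odd-order derivatives $\partial_x^{2j+1}y_l(0)$ entering $\mathbf z$ vanish, which is exactly the proposed description of $\Comp$. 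Conversely, taking $l=0$ one has $J_0(x,\yt)=\yx$, so $(J_0)_{2j}$ is just $\partial_x^{2j}y$ and the condition for $l=0$ already forces $\partial_x^{2j}y_0(0)=0$ for $2j\le M-1$; I would then argue by induction on $l$ and on the order of derivative, using the explicit structure of $J_l$ (built by iterating the PDE, which only adds \emph{even}-order $x$-derivatives coming from $P$ and $x$-derivatives of $f$ whose parity is controlled by \eqref{WWimpair}), that \emph{every} even-order $x$-derivative of every $y_l$ at $0$ must vanish, not only those with $2j\le M-1$. This two-way argument gives the claimed equality of sets.

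The main obstacle I anticipate is the bookkeeping in this induction: Lemma \ref{lmdefJlintro} only asserts existence of $J_l$ and does not give a closed form, so I must track the parity of each term produced when the PDE is differentiated $l$ times in $t$ and the result re-expressed via $\partial_t^N y = Py+f$. I would handle this by isolating a clean invariance statement — "if $y$ solves the PDE then so does $\tilde y$" — and deducing the parity of $J_l$ purely from uniqueness of the jet expression \eqref{propJlintro}, rather than from any formula: applying \eqref{propJlintro} to $\tilde y$ and comparing with the transformed version of \eqref{propJlintro} for $y$ forces $(J_l)_i(-x,\sigma(\cdot))=(-1)^{i+1}(J_l)_i(x,\cdot)$ identically, since both sides are the $i$-th component of $\partial_t^l Y^x$ for the solution $\tilde y$. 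That reduces the whole proposition to (a) verifying the invariance of the PDE under $y\mapsto\tilde y$, which is where \eqref{WWimpair}–\eqref{WWpair} and the even-order form of $P$ are used, and (b) the elementary observation that an odd (resp.\ even) function and all the relevant derivatives vanish at $0$ iff its even-order (resp.\ odd-order) Taylor coefficients vanish. The parity case (2) then follows verbatim with the roles of even and odd exchanged.
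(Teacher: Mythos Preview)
Your overall strategy---reduce the proposition to a parity statement for the maps $J_l^k$ and then read off the vanishing at $x=0$---is exactly the route the paper takes. The paper records the parity as a separate lemma (Lemma~\ref{lmparHn}: $J_l^k(-x,-I(Y))=(-1)^{k+1}J_l^k(x,Y)$ in case~(1)) and proves it by induction through the explicit recursive construction of the $H_l^k$ and $J_l^k$. Your alternative---observe that $\tilde y(x,t)=-y(-x,t)$ is again a solution and deduce the parity of $J_l$ from the uniqueness part of Lemma~\ref{lmuniqJl}---is legitimate and more conceptual, but note that Lemma~\ref{lmuniqJl} only gives equality on $[-1,1]\times B(0,\varepsilon)$, so you must invoke analyticity of the $J_l^k$ (they are, being built from $f$ and $P$) to extend it; also Lemma~\ref{lmuniqJl} itself rests on the existence result Proposition~\ref{prop100}, so be aware of the logical order.

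There is, however, a concrete sign error in your middle paragraph. With $\tilde y(x,t)=-y(-x,t)$ one has $\partial_x^k\tilde Y^t(0,t)=(-1)^{k+1}\partial_x^kY^t(0,t)$, so the involution $\sigma$ multiplies the $k$-th block of the argument by $(-1)^{k+1}$: it flips the \emph{even}-index blocks and fixes the odd ones (in the paper's notation $\sigma=-I$). Hence $\sigma(\mathbf z)=\mathbf z$ at $x=0$ precisely when all \emph{even}-order derivatives $\partial_x^{2j}y_l(0)$ vanish, not the odd ones. With your sentence as written you would be proving that even functions lie in $\mathcal C$, which is case~(2), not case~(1). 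Once this sign is corrected the inclusion $\widetilde{\mathcal C}\subset\mathcal C$ goes through.

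For the converse $\mathcal C\subset\widetilde{\mathcal C}$ your sketch is too vague: parity of $J_l^k$ alone is not enough, because you need to \emph{extract} the highest-order derivative from the condition $J_l^i(0,\dots)=0$. The paper uses the decomposition $J_l^i=P^n\partial_t^j\partial_x^i y+H_l^i$ with $H_l^i$ depending only on lower $x$-derivatives (Notation~\ref{defGl}). By induction on the derivative order, the parity of $H_l^i$ forces $H_l^i(0,(Y_0)^x_{Mn+i-1}(0))=0$, whence $P^n\partial_x^i y_j(0)=0$; since $P$ has only even-order terms with top coefficient $\zeta_M\neq 0$, this isolates $\partial_x^{Mn+i}y_j(0)=0$. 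You should make this leading-term step explicit.
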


Note that in the last two cases, the intersection of $\Comp$ with the set of analytic functions is a set of functions that admit odd (respectively even) extensions. Note also that the ``good''  and ``bad'' Boussinesq equations satisfy only \eqref{WWpair}, while the Ginzburg-Landau equation satisfies both \eqref{WWimpair} and \eqref{WWpair}.\\

\begin{remark}
\begin{enumerate}
\item 
The constant $\hat R:= 4M e^{(\lambda e)^{-1}}$ is probably not optimal, but we aimed to provide an explicit (reasonable) constant. For the linear heat equation, it is known that the optimal constant is $\hat R:= 1$ with a diamond-shaped domain of analyticity (see \cite{DE,HKT,HO}).
\item
If $f$ is linear in the variables $\vec{y}=(y_0, \dots, y_{M-1})$, then the PDE \eqref{W1a}  is linear and the smallness assumption on the amplitude of the initial and final data can be removed, as long as  $Y^0,Y^1\in ({\mathcal R}_{R,\hat C})^{N}\cap \Comp$ for some 
$\hat C \in (0,+\infty )$. In particular, for $f(x,\vec{y})=V(x)y_0$, Theorem \ref{thm1} applies for any equation of the form $\partial _t^N y = P\, y + V(x) y$ where $V$ is analytic in a sufficiently large ball. The compatibility set $\Comp$ may depend on $V$. Note however that both conditions \eqref{WWimpair} and \eqref{WWpair} are satisfied if $V$ is even (i.e. $V(x)=V(-x)$ for all $x\in [-1,1]$). Theorem \ref{thm1} applies for instance for the linear heat equation $\partial _t y=\partial _x^2y + V(x)y$ without any smallness assumption about the potential $V(x)$, giving that the reachable space from zero contains functions that are analytic in some sufficiently large ball. See \cite{ELT22} for a more precise result about the reachable space, but under a {\em smallness assumption about the potential}.

Note also that the most relevant term of $P$ is actually the higher order term $\zeta _{M}\partial_{x}^{M}$, since linear lower order terms can be put either in $P$ or in $f(x , y , \partial_{x} y,...,  \partial _x^{M-1} y)$. Yet, we have chosen to keep this form because  $\Comp$ is easily determined for a linear PDE with constant coefficients. 
\item
The definition of $\Comp$ seems to depend on some choice of the functions  $J_{l}^{k}$.  However,  the proof of Lemma \ref{lmdefJlintro} is constructive and therefore it provides an algorithm to define these functions. 
Moreover, it is possible (see Lemma \ref{lmuniqJl} below) to prove that if two functions $J_{l}^{k}$ satisfy the property \eqref{propJlintro} for all solution $y$ of \eqref{W1a}, then they coincide in the product of $[-1,1]$ and some small ball $B(0,\e)$ of $(\R^{N})^{m(l)+1}$ which is the domain where we are going to use it. In any case, the previous property implies that the functions $J_{l}^{k}$ are unique in the class of analytic functions.
\item
The paper has been written for a quite general PDE.  However, it might certainly be possible to consider more general PDEs, containing for instance time derivatives in the lower order terms, 
or in the nonlinearity,  or some time-dependent coefficients. We did not consider these cases because 
it would render the proof more technical and more difficult to read.  
 \end{enumerate}
\end{remark}
The paper is organized as follows. In Section \ref{section1}, we apply our main results to the Korteweg de-Vries equation, the Boussinesq equation, the Ginzbourg-Landau equation, and the Kuramoto-Sivashinsky equation.  Section \ref{section2} is concerned with the existence and uniqueness results for the Cauchy problem in the $x$-variable  (Theorem \ref{thm2}). The relationship between the jet of space derivatives and the jet of time derivatives at some point  (jet analysis) for a solution of \eqref{W1a} is studied in Section \ref{section3}. In particular, 
we show that the nonlinear equation \eqref{W1a} can be (locally) solved forward and backward if the initial data $Y_0$ can be extended as an analytic function in some ball of $\C$ (Proposition  \ref{prop100}). Finally, the proofs of Theorem \ref{thm1} and Proposition \ref{propCompsym} are displayed in Section \ref{section4}.  

\section{Examples}\label{section1}
In this section, we list a few examples of equations coming from physical models for which our general result applies. The list is of course not exhaustive. Also, we limited ourselves to some models that contain a regularizing effect coming from a parabolic behavior or from smoothing boundary conditions. It is not that Theorem \ref{thm1} is limited to this kind of problems, but for conservative equations (like nonlinear Schr\"odinger equations, KdV with some specific boundary conditions as in \cite{rosier}, \cite{ERZ} among other works), it is quite likely (and very often it has already been proved) that the controllability can be obtained in much lower regularity. Notice that even in this context, our result can be interesting if we are looking for a very regular control since the control we build is in some Gevrey class.
\subsection{The Korteweg-de Vries equation}\label{sectKdV}

In this section, we are concerned with the controllability of the  Korteweg-de Vries (KdV) equation: 
\begin{eqnarray}
\partial_{t} y = \partial_{x}^{3}y+\partial_{x}y + y \partial_{x}y, && x \in [0,1], \quad t \in [0,T],\label{KdV1}\\
y(1,t)=h(t),&&  t \in [0,T], \label{KdV2}\\
y(0,t)=0,&&  t \in [0,T], \label{KdV3}\\
\partial_{x}y(0,t)=0,&&  t \in [0,T], \label{KdV4}\\
y(x,0)=y^{0}(x),&& x \in [0,1], \label{KdV5}
\end{eqnarray}
which adapts to our abstract setting \eqref{W1a}-\eqref{W1c}  with $N=1$, $M=3$ (hence $\lambda =3$), $P=\partial_{x}^{3}+\partial_{x}$ and 
$f(x,y,\partial_{x}y, \partial_x^2 y )= y \partial_{x}y$. Thus $Y^t=y$ and $Y^x=(y, \partial _x y, \partial _x^2y)$. 
Note that the change of variables $x\to 1-x$ transforms 
\eqref{KdV1} into the classical form of the KdV equation $\partial _t y + \partial _x^3 y + \partial _x y + y\partial _x y=0$, and \eqref{KdV2}-\eqref{KdV4} into the boundary conditions
$y(0,t)=h(t)$ and $y(1,t)=\partial _x y (1,t)=0$. 
 
It is well-known \cite{GG,rosier} that system \eqref{KdV1}-\eqref{KdV5} is null controllable, and also controllable to the trajectories. Due to the smoothing effect, an exact controllability cannot hold in $L^2(0,1)$. The reachable space for the linearized KdV  equation $\partial _t y=\partial _x^3 y +\partial _x y$ supplemented with the boundary conditions \eqref{KdV2}-\eqref{KdV4} was described in \cite{MRRRkdv}.\\     
By Theorem \ref{thm1}, for any $T>0$ and any $R>\hat R:= 12 e^{(3e)^{-1}}$, there is some number $\hat C>0$ such that for any $ y^0,
\widetilde{y}^0 \in {\mathcal R}_{R, \hat C} \cap \Comp$, 
there exists a solution $y\in G^{1,3}([0,1]\times [0,T])$ of \eqref{KdV1}-\eqref{KdV5} satisfying $y(x,T)=\widetilde{y}^0 (x)$ for all $x\in [0,1]$. Let us now describe more precisely the set $\Comp$
defined in \eqref{comp}. Denote $J_l=(J_{l,1}, J_{l,2}, J_{l,3})$. Recall that $\Comp $ is given by the conditions $B{J_l}(x,y_0,\partial _x y_0,  ...., \partial _x^{m(l)}y_0)\Big|_{x=0}=0$ for all $l\ge 0$, where $B=\left( \begin{array}{ccc} 1&0&0 \\ 0&1&0 \end{array} \right)$. The following Lemma provides a more precise version of Lemma \ref{lmdefJlintro} adapted to KdV.
\begin{lemma}
\label{lem100}
For any $l\in \N $, $m(l)=3l+2$ and there exists a smooth map $H_l:\R ^{3l-1}\to \R$ such that 
\begin{eqnarray}
J_{l,1}&=&y_{3l} + H_l(y_0,y_1,..., y_{3l-2}), \label{LL1} \\
J_{l,2}&=& y_{3l+1} + \sum_{i=0}^{3l-2} \frac{\partial H_l}{\partial y_i} (y_0,y_1, ... , y_{3l-2})y_{i+1},  \label{LL2}\\
J_{l,3}&=& y_{3l+2} + \sum_{i=0}^{3l-2} \frac{\partial H_l}{\partial y_i} (y_0,y_1, ... , y_{3l-2})y_{i+2}, 
+\sum_{i,j=0}^{3l-2}  \frac{\partial ^2 H_l}{\partial y_j \partial y_i} (y_0,y_1, ... , y_{3l-2})y_{j+1} y_{i+1}.\quad \label{LL3}
\end{eqnarray}
\end{lemma}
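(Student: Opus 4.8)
The plan is to prove Lemma \ref{lem100} by induction on $l$, using the KdV equation $\partial_t y = \partial_x^3 y + \partial_x y + y\partial_x y$ to trade one time derivative for (at most) three space derivatives, and keeping track of the precise algebraic structure of the resulting expressions. First I would set up notation: write $y_k := \partial_x^k y$ so that $Y^x = (y_0, y_1, y_2)$, and observe that since $N=1$ the time jet is just $Y^t = y = y_0$, while $\partial_x^j Y^t = y_j$. Thus $J_l$ must express $\partial_t^l Y^x = (\partial_t^l y_0, \partial_t^l y_1, \partial_t^l y_2)$ as functions of $y_0, y_1, \dots, y_{m(l)}$.

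The base case $l=0$ is immediate: $J_{0,1} = y_0$, $J_{0,2} = y_1$, $J_{0,3} = y_2$, so $m(0) = 2 = 3\cdot 0 + 2$ and $H_0 \equiv 0$ (interpreting $\R^{3\cdot 0 - 1} = \R^{-1}$ as trivial, or simply checking $l=1$ as the true base case). For the inductive step, I would differentiate \eqref{LL1} in $t$ once and substitute $\partial_t y_k = \partial_x^k \partial_t y = \partial_x^k(y_{k+3} + y_{k+1} + y_0 y_1) = y_{k+3} + y_{k+1} + \partial_x^k(y_0 y_1)$. Applying this to $\partial_t J_{l,1} = \partial_t y_{3l} + \sum_{i=0}^{3l-2} \frac{\partial H_l}{\partial y_i}\partial_t y_i$: the term $\partial_t y_{3l}$ produces the new top-order term $y_{3l+3}$ plus lower-order terms $y_{3l+1} + \partial_x^{3l}(y_0 y_1)$, and each $\partial_t y_i$ with $i \le 3l-2$ contributes only terms of order $\le 3l+1$. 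Collecting, $J_{l+1,1} = y_{3(l+1)} + H_{l+1}(y_0, \dots, y_{3(l+1)-2})$ where $H_{l+1}$ absorbs everything of order $\le 3l+1 = 3(l+1)-2$; this also confirms $m(l+1) = 3(l+1)+2$. The formulas \eqref{LL2}–\eqref{LL3} for $J_{l+1,2}, J_{l+1,3}$ then follow from $J_{l+1,2} = \partial_x J_{l+1,1}$ and $J_{l+1,3} = \partial_x^2 J_{l+1,1}$ by the chain rule — here one simply differentiates $y_{3(l+1)} + H_{l+1}(y_0, \dots, y_{3(l+1)-2})$ in $x$, noting $\partial_x y_j = y_{j+1}$ and using the multivariate chain rule, which reproduces exactly the single-sum and double-sum structure displayed. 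One must also justify $J_{l+1,2} = \partial_x J_{l+1,1}$: this holds because $\partial_t^{l+1} y_1 = \partial_t^{l+1}\partial_x y_0 = \partial_x \partial_t^{l+1} y_0 = \partial_x J_{l+1,1}(\dots)$ using smoothness and commuting the derivatives, and similarly for the third component.

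The main obstacle is bookkeeping the order of derivatives carefully enough to guarantee that $H_{l+1}$ genuinely depends only on $y_0, \dots, y_{3(l+1)-2}$ and not on higher-order terms — in particular one must check that the nonlinear contributions $\partial_x^{3l}(y_0 y_1)$ and $\frac{\partial H_l}{\partial y_i}(y_{i+3} + y_{i+1} + \partial_x^i(y_0y_1))$ for $i \le 3l-2$ never reach order $3l+2$ or higher. Since $\partial_x^{3l}(y_0 y_1) = \sum_{k=0}^{3l}\binom{3l}{k} y_k y_{3l+1-k}$ has top order $3l+1$, and for $i \le 3l-2$ we have $i + 3 \le 3l+1$ and $\partial_x^i(y_0 y_1)$ has top order $i+1 \le 3l-1$, the bound holds — but this requires writing it out. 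A secondary point is making the structural claim precise at $l=1$ as an explicit base case (computing $\partial_t y = y_3 + y_1 + y_0 y_1$, etc., so $H_1(y_0) = y_1 + \tfrac{1}{2}\partial_x(y_0^2)$... actually $H_1$ should be a function only of $y_0, \dots, y_{3\cdot 1 - 2} = y_1$, namely $H_1(y_0, y_1) = y_1 + y_0 y_1$), since the general inductive step needs a correctly-formatted starting point. I would present the induction cleanly with the order-counting lemma stated explicitly, then let the chain-rule computations for the second and third components be routine.
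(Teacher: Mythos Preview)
Your proposal is correct and follows essentially the same approach as the paper: induction on $l$, computing $\partial_t^{l+1}y$ by applying $\partial_t$ to $J_{l,1}$ via the chain rule, substituting $\partial_t y_i$ from the KdV equation, checking the order of the resulting terms to isolate $y_{3(l+1)}$ and define $H_{l+1}$, and then obtaining \eqref{LL2}--\eqref{LL3} by differentiating in $x$. The paper also writes out the explicit formula for $H_{l+1}$ and the $l=1$ case, but the logical structure is identical to yours (note the minor typo in your intermediate expression $\partial_x^k(y_{k+3}+y_{k+1}+y_0y_1)$, which should read $\partial_x^k(y_3+y_1+y_0y_1)$; your final expression $y_{k+3}+y_{k+1}+\partial_x^k(y_0y_1)$ is correct).
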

\begin{proof}
Clearly $J_{0,1}=y_0,\ J_{0,2}=y_1,\ J_{0,3}=y_2$, so that $m(0)=2$ and $H_0=0$. From \eqref{KdV1}, we infer that 
\begin{eqnarray*}
\partial_t\partial_x y         &=& \partial _x^4 y + \partial _x^2 y + y\partial _x^2 y +(\partial _x y)^2,\\
\partial _t \partial _x^2 y  &=& \partial _x^5 y +  \partial _x^3  y + y\partial _x^3 y +3\partial _x y \partial _x ^2 y. 
\end{eqnarray*}
Therefore $m(1)=5$ with 
\[
J_{1,1}=y_3+y_1+y_0y_1,\ \ J_{1,2}=y_4+y_2+y_0 y_2+y_1^2, \ \ J_{1,3}=y_5+y_3+y_0y_3+3y_1y_2. 
\]
Thus $H_1(y_0,y_1)=y_1+y_0y_1$. Assume now that $m(l) = 3l+2$ and that \eqref{LL1}-\eqref{LL3} hold. Then 
\begin{eqnarray*}
\partial _t^{l+1}y&=& \partial _t J_{l,1} (x,y, \partial _x y , ... , \partial _x ^{3l+2} y) \\
&=& \partial _t\big(  \partial _x^{3l} y  + H_l (y, \partial  _xy , ... , \partial _x ^{3l-2} y ) \big)\\
&=& \partial _t \partial _x^{3l} y +\sum_{i=0}^{3l-2} \frac{\partial H_l}{\partial y_i} (y, \partial  _xy , ... , \partial _x ^{3l-2} y )   \partial _t \partial _x^i y. 
\end{eqnarray*}
Since 
\[
\partial _x^i (y\partial _xy) = \partial _x^{i+1} (\frac{y^2}{2}) = \frac{1}{2} \sum_{k=0}^{i+1} \left(\begin{array}{c} i+1\\k\end{array}  \right) \partial _x^k y \partial _x^{i+1-k} y,
\]
we obtain 
\begin{eqnarray}
\partial_t^{l+1} y 
&=& \partial _x^{3l + 3} y + \left( \partial _x ^{3l+1} y + \frac{1}{2} \sum_{k=0}^{3l+1}  \left(\begin{array}{c} 3l+1\\k\end{array}  \right) \partial _x^k y \partial _x^{3l+1-k} y \right. \nonumber\\
&&\left.  + \sum_{i=0}^{3l-2} \frac{\partial H_l}{\partial y_i} (y, \partial _x y, ... , \partial _x ^{3l-2} y)  
\big( \partial _x ^{i+3} y + \partial _x ^{i+1} y + \frac{1}{2} \sum_{k=0}^{i+1} \left(  \begin{array}{c} i+1\\k\end{array}  \right) \partial _x^k y \partial _x^{i+1-k} y\big) 
\right) .    \qquad \label{UU1}
\end{eqnarray}
It follows that $J_{l+1,1}=y_{3l+3} + H_{l+1}(y_0, y_1, ... , y_{3l+1})$ with 
\begin{eqnarray*}
H_{l+1} 
&:=&  y_{3l+1} + \frac{1}{2} \sum_{k=0}^{3l+1}  \left(\begin{array}{c} 3l+1\\k\end{array}  \right) y_k y_{3l+1-k} \\
&&\qquad + \sum_{i=0}^{3l-2} \frac{\partial H_l}{\partial y_i} (y_0, y_1, ..., y_{3l-2})
\big( y_{i+3} + y_{i+1} + \frac{1}{2} \sum_{k=0}^{i+1} \left(  \begin{array}{c} i+1\\k\end{array}  \right) y_k y_{i+1-k} \big) .
\end{eqnarray*}
Thus \eqref{LL1} holds at the step $l+1$. Taking the derivative in $x$ in \eqref{UU1}  gives \eqref{LL2} and \eqref{LL3} at the rank  $l+1$. Finally $m(l+1)=3l+5$. 
\end{proof}
Thus $\Comp$ is the set of the functions $y_0\in C^\infty ([0,1])$ such that $J_{l,1}=J_{l,2}=0$ for all $l\ge 0$, i.e.
\begin{eqnarray*}
&&y(0)=\partial _x y(0)=0, \\
&& \partial _x^{3l} y(0) =- H_l (y,\partial _x y,..., \partial _x ^{3l-2} y)\Big|_{x=0}, \quad \forall l\in \N^*, \\ 
&&\partial_x^ {3l+1} y(0)= - \left(  \sum_{i=0}^{3l-2} \frac{\partial H_l}{\partial y_i} (y,\partial _x y, ... , \partial _x^{3l-2} y)\partial _x ^{i+1} y\right)\Big|_{x=0} \quad \forall l\in \N ^* .
\end{eqnarray*}
Writing $y_0(x)=\sum_{n=0}^\infty \alpha _n\frac{x^n}{n!}$, we obtain the following conditions for the coefficients $\alpha_n$: 
\begin{eqnarray}
&&\alpha_0 = \alpha _1=0,\label{BB1}\\
&&\alpha_{3l} = - H_l (\alpha_0,\alpha_1,..., \alpha_{3l-2}), \quad \forall l\in \N^*, \label{BB2} \\ 
&&\alpha_{3l+1}= -   \sum_{i=0}^{3l-2} \frac{\partial H_l}{\partial y_i} (\alpha_0,\alpha _1, ... , \alpha _{3l-2} )\alpha _{i+1}, \quad \forall l\in \N ^*.\label{BB3}
\end{eqnarray}
We conclude that 
\begin{eqnarray*}
{\mathcal R}_{R, \hat C} \cap \Comp &=& 
\left\{  z :[-1,1]\to \C: \ \exists (\alpha _n)_{n\ge 0} \in {\mathcal N}_{R,\hat C} \textnormal{ such that \eqref{BB1}-\eqref{BB3} hold and }\right. \\
&& \left. z(x) =\sum_{n=0}^\infty \alpha _n \frac{x^n}{n!}, \, \  \forall x\in [-1,1]\right\}. 
\end{eqnarray*}
\begin{remark} 
The condition 
\be
\label{BB10}
|\alpha _n|\le \hat C \frac{n!}{R^n}
\ee
has to be satisfied for all $n\in \N$. It is likely (but still to be proved) that if \eqref{BB10} is satisfied for the subsequence $(\alpha _{3l+2})_{l\ge 0}$, eventually for a small constant $\hat C$, it is also satisfied for the whole sequence 
$(\alpha _n)_{n\ge 0}$ (the two other subsequences $(\alpha _{3l})_{l\ge 0}$ and $(\alpha _{3l+1})_{l\ge 0}$ being defined due to \eqref{BB1}-\eqref{BB3}). 
If it  is indeed the case, then the coefficients $\alpha_{3l+2}$ ($l\in \N$)  can be chosen ``freely'' provided that they satisfy \eqref{BB10}, and hence
the set $ {\mathcal R}_{R, \hat C} \cap \Comp $ looks like a nonlinear submanifold.
  
\end{remark}
\begin{theorem}
\label{thmKdVN}
Let $R>\hat R  := 12 e^{(3e)^{-1}}$ and $T>0$. 
Then there exists some number $\hat C >0$ such that for all functions 
 $y^0$, $ \widetilde{y}^0 \in {\mathcal R}_{R, \hat C} \cap \Comp$, there exist functions 
 $y\in G^{1,3}([0,1]\times [0,T])$ and $h \in G^{3}([0,T])$ satisfying
\eqref{KdV1}-\eqref{KdV5}  together with  $y(x,T)= \widetilde{y}^0 (x)$  for all  $x\in [0,1]$.    
\end{theorem}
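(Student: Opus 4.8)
The plan is to deduce Theorem \ref{thmKdVN} directly from the general result, Theorem \ref{thm1}, since the KdV system \eqref{KdV1}-\eqref{KdV5} has already been identified with the abstract framework \eqref{W1a}-\eqref{W1c} with $N=1$, $M=3$ (so $\lambda=3$), $P=\partial_x^3+\partial_x$, $f(x,\vec y)=y_0y_1$, and $B=\left(\begin{array}{ccc}1&0&0\\0&1&0\end{array}\right)$. First I would check that this $f$ meets the assumptions \eqref{AB1}-\eqref{AB4}: it is a polynomial (hence entire), $f(x,0,0,0)=0$, and in the expansion \eqref{AB2} the only nonzero coefficient is $a_{(1,1,0),0}=1$; so \eqref{AB3} holds with, for instance, $b=b_2:=\hat R+1=12e^{(3e)^{-1}}+1>4$ and $C_a:=b^2$. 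In particular $b,b_2>\hat R=4Me^{(\lambda e)^{-1}}=12e^{(3e)^{-1}}$, which is exactly the lower bound required in the hypotheses of Theorem \ref{thm1}.

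Next, given $R>\hat R$ and $T>0$, I would apply Theorem \ref{thm1} with initial and final data $Y_0:=y^0$ and $Y_1:=\widetilde{y}^0$, which by assumption lie in $({\mathcal R}_{R,\hat C})^{N}\cap\Comp={\mathcal R}_{R,\hat C}\cap\Comp$ (recall $N=1$, so $Y^t=y$ and the data are scalar functions). This produces a number $\hat C>0$ and a smooth function $y$ on $[0,1]\times[0,T]$ that solves \eqref{KdV1}, satisfies $BY^x(0,t)=0$ — which for this particular $B$ is precisely the pair of boundary conditions \eqref{KdV3}-\eqref{KdV4} — together with $y(x,0)=y^0(x)$ (this is \eqref{W1c}, hence \eqref{KdV5}) and $y(x,T)=\widetilde{y}^0(x)$ for all $x\in[0,1]$, and moreover $y\in G^{1,\lambda}([0,1]\times[0,T])=G^{1,3}([0,1]\times[0,T])$.

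It then remains only to exhibit the boundary control. I would simply set $h(t):=y(1,t)$ for $t\in[0,T]$, so that \eqref{KdV2} holds by definition and all of \eqref{KdV1}-\eqref{KdV5} are satisfied, together with the target condition $y(x,T)=\widetilde{y}^0(x)$. The Gevrey regularity of $h$ is inherited from that of $y$: specializing the estimate \eqref{defGev2D} with $p_1=0$ and $x=1$ gives $|\partial_t^p h(t)|=|\partial_t^p y(1,t)|\le C(p!)^3/R_2^{p}$ for all $p\in\N$ and all $t\in[0,T]$, i.e.\ $h\in G^3([0,T])$. This finishes the argument.

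There is essentially no genuine obstacle here beyond the routine verification that KdV fits the abstract hypotheses with the stated constant $\hat R=12e^{(3e)^{-1}}$; the only conceptual point is that the control is the trace of $y$ at the endpoint $x=1$, where no boundary condition is prescribed, and that the two-variable Gevrey bound for $y$ automatically restricts to a one-variable Gevrey bound of order $\lambda=3$ for that trace. The substantive work — constructing $y$ by solving a Cauchy problem in the $x$-variable with Gevrey data and controlling the relationship between the jet of space derivatives and the jet of time derivatives — is carried out once and for all in the proof of Theorem \ref{thm1} in Section \ref{section4}.
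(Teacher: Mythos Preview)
Your proposal is correct and follows essentially the same approach as the paper: the paper derives Theorem \ref{thmKdVN} directly from Theorem \ref{thm1} (as stated in the paragraph preceding Lemma \ref{lem100}), and the control is obtained as the trace of the constructed Gevrey solution at $x=1$, exactly as you do and as is done explicitly in the proofs of the Boussinesq and Kuramoto--Sivashinsky theorems. Your verification of \eqref{AB1}--\eqref{AB4} for $f(x,\vec y)=y_0y_1$ and of the regularity $h\in G^3([0,T])$ via \eqref{defGev2D} is precisely the argument the paper uses (cf.\ the proof of Theorem \ref{thmBoussiN}).
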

\subsection{Boussinesq equation}\label{sectBeq}
We consider the issue of the exact controllability of two systems involving the (good or bad) Boussinesq equation.  
 \subsubsection{Neumann boundary conditions} We first consider the system 
\begin{eqnarray}
\label{BoussiN}
\left\lbrace 
\begin{array}{rll}
\partial_{t}^{2} y &= \pm \partial_{x}^{4}y+\partial_{x}^{2}y -\partial_{x}^2(y^{2}), \quad & x \in [0,1], \quad t \in [0,T],\\
\partial_{x}y(0,t)&=0,   \quad &  t \in [0,T], \\
 \partial_{x}y(1,t)&=v(t), \quad &  t \in [0,T], \\
\partial_{x}^3y(0,t)&=0,  \quad &  t \in [0,T],\\
\quad \partial_{x}^3y(1,t)&=w(t), \quad &  t \in [0,T],\\
y(x,0)&=y^{0}(x), \quad & x \in [0,1], \\
y_{t}(x,0)&=y^{1}(x),  \quad & x \in [0,1]. 
\end{array}
\right.
\end{eqnarray}

If the sign in $\pm$ is $+$, the first equation in \eqref{BoussiN} is called the {\em bad Boussinesq equation} which is known to be severely ill-posed, even for the linear part. It would therefore be difficult to obtain any controllability result with the standard methods. We shall obtain the following exact controllability result. 
\begin{theorem}
\label{thmBoussiN}
Let $R>\hat R  := 16 e^{(2e)^{-1}}$ and $T>0$. 
Then there exists some number $\hat C >0$ such that for all pairs of functions 
 $(y^0,y^1)$, $ (\widetilde{y}^0,\widetilde{y}^1) \in ({\mathcal R}_{R,\hat C})^{2}$ which are even with respect to $0$, there exist functions 
 $y\in G^{1,2}([0,1]\times [0,T])$ and $v, w\in G^{2}([0,T])$ satisfying
\eqref{BoussiN} together with  $y(x,T)=\widetilde{y}^{0}(x)$ and  $y_{t}(x,T)=\widetilde{y}^{1}(x)$ for all  $x\in [0,1]$.      
\end{theorem}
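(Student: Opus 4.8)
The plan is to obtain Theorem \ref{thmBoussiN} as a direct consequence of Theorem \ref{thm1} and Proposition \ref{propCompsym}(2), after casting \eqref{BoussiN} in the abstract form \eqref{W1a}--\eqref{W1c}. Here $N=2$ and $M=4$, so $\lambda=M/N=2$ and $\hat R=4Me^{(\lambda e)^{-1}}=16e^{(2e)^{-1}}$, matching the threshold in the statement; the operator is $P=\pm\partial_x^4+\partial_x^2$ (with nonzero real leading coefficient $\zeta_4=\pm1$, so both the good and the bad equations are covered), and since $\partial_x^2(y^2)=2y\partial_x^2y+2(\partial_xy)^2$ the lower-order term is $f(x,\vec y)=-2y_1^2-2y_0y_2$ with $\vec y=(y_0,y_1,y_2,y_3)$. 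Because $f$ is a polynomial, it satisfies \eqref{AB1}--\eqref{AB2} and the decay \eqref{AB3} for \emph{every} choice of constants $b,b_2$; in particular one may take $b,b_2>\hat R$, so the hypothesis of Theorem \ref{thm1} on $b,b_2$ is costless. The prescribed conditions $B\yx(0,t)=0$ are $\partial_xy(0,t)=\partial_x^3y(0,t)=0$, i.e.\ $\partial_x^{2j+1}y(0,t)=0$ for $2j+1\le M-1=3$, while the controls act at the free endpoint $x=1$ and are invisible to \eqref{W1a}--\eqref{W1c}.

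Next I would verify the symmetry hypothesis \eqref{WWpair} of Proposition \ref{propCompsym}(2): $P$ contains only even derivatives, and $f(-x,y_0,-y_1,y_2,-y_3)=-2y_1^2-2y_0y_2=f(x,y_0,y_1,y_2,y_3)$. Proposition \ref{propCompsym}(2) then gives $\Comp=\{Y_0=(y_0,y_1)\in C^{\infty}([0,1])^2;\ \partial_x^{2j+1}y_l(0)=0,\ \forall j\in\N,\ \forall l=0,1\}$, that is, the pairs of smooth functions admitting a smooth even extension to $[-1,1]$. Hence, identifying an element of $\mathcal R_{R,\hat C}$ with its restriction to $[0,1]$, membership in $\Comp$ amounts exactly to being even on $[-1,1]$ (equivalently, to having vanishing Taylor coefficients of odd order), so $(\mathcal R_{R,\hat C})^2\cap\Comp$ is precisely the set of pairs in $(\mathcal R_{R,\hat C})^2$ that are even with respect to $0$.

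With these checks in hand, Theorem \ref{thm1} furnishes, for the given $R>\hat R$ and $T>0$, a constant $\hat C>0$ such that for any even data $(y^0,y^1),(\tilde y^0,\tilde y^1)\in(\mathcal R_{R,\hat C})^2$ there is a smooth solution $y\in G^{1,2}([0,1]\times[0,T])$ of \eqref{W1a}--\eqref{W1c} with $\yt(\cdot,0)=(y^0,y^1)$ and $\yt(\cdot,T)=(\tilde y^0,\tilde y^1)$. This $y$ solves the Boussinesq PDE, satisfies $\partial_xy(0,t)=\partial_x^3y(0,t)=0$, and attains the prescribed initial and terminal states; it then remains to set $v(t):=\partial_xy(1,t)$ and $w(t):=\partial_x^3y(1,t)$. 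Restricting the bound \eqref{defGev2D} (with $s_1=1$, $s_2=2$) to $x=1$ yields $|\partial_t^pv(t)|\le C'(p!)^2/R_2^p$ and the analogous estimate for $w$, whence $v,w\in G^2([0,T])$, which finishes the proof.

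I do not expect a serious obstacle: the argument is essentially a translation between \eqref{BoussiN} and the abstract framework together with bookkeeping of constants. The one point that needs care is getting the parity right, namely that the boundary data involve the \emph{odd}-order derivatives $\partial_xy,\partial_x^3y$, which is what pairs with even extensions and with condition \eqref{WWpair} of Proposition \ref{propCompsym}(2) rather than \eqref{WWimpair}; and one should keep in mind that the polynomial character of the Boussinesq nonlinearity is precisely what makes the requirement $b,b_2>\hat R$ automatically satisfied, so no smallness on the data is needed beyond the one already built into Theorem \ref{thm1}.
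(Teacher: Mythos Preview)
Your proposal is correct and follows essentially the same approach as the paper's own proof: cast the Boussinesq system in the abstract form with $N=2$, $M=4$, $\lambda=2$, verify that the polynomial nonlinearity $f(x,y_0,y_1,y_2,y_3)=-2(y_0y_2+y_1^2)$ satisfies the parity condition \eqref{WWpair}, apply Proposition \ref{propCompsym}(2) to identify $\Comp$ with even functions, invoke Theorem \ref{thm1}, and read off the controls $v,w$ as traces at $x=1$ with Gevrey regularity following from \eqref{defGev2D}. Your write-up is in fact more detailed than the paper's (e.g.\ you make explicit why the polynomial nonlinearity trivially satisfies $b,b_2>\hat R$), but the logical skeleton is identical.
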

\begin{proof}[Proof of Theorem \ref{thmBoussiN}]
We apply Theorem \ref{thm1} together with Proposition \ref{propCompsym}  with $\lambda=4/2=2$. Note that the control inputs  $v, w$ are just taken as traces of the constructed solution $y\in G^{1,2}([0,1]\times [0,T])$. The regularity of $v, w$ then follows from \eqref{defGev2D}.  

We need to check that the non-linearity satisfies the right assumption. Since 
$\partial_{x}^2(y^{2})=2\left(y\partial_{x}^2y+(\partial_x y)^2\right) $, the non-linearity reads $f(x,y_0,y_1,y_2,y_3)=-2(y_0 y_2+y_1^2)$.
As 
$$f(-x,y_{0}, -y_{1},y_{2},-y_3)=-2(y_0 y_2+(-y_1)^2)= f(x , y_{0} , y_{1},y_2,y_3),$$ 
we see that condition \eqref{WWpair} in Proposition \ref{propCompsym} is fulfilled. 
Finally, we notice that for any function $h\in {\mathcal R}_{R,\hat C}$, $h$ is even if and only if $\partial^{2j+1}_{x}h(0)=0$ for any  $j\in \N .$
\end{proof}

\subsubsection{Dirichlet boundary conditions} If we keep the non-linearity $f(x,y,\partial _x y, \partial _x^2 y, \partial _x ^3 y)=-\partial_{x}^2(y^2)$, then
 $$f(-x,-y_{0}, y_{1},-y_{2},y_3)=-2(y_0 y_2+y_1^2)= f(x , y_{0} , y_{1},y_2,y_3),$$ 
 so that condition \eqref{WWimpair} in Proposition \ref{propCompsym} is not fulfilled. Theorem \ref{thm1} may be applied, but the determination of the compatibility set $\Comp$ is not obvious.

We consider instead a different non-linearity, namely  $f(x,y,\partial _x y, \partial _x^2y, \partial _x ^3 y)=-\partial_x(y^2)$. More precisely, we consider
the system
\begin{eqnarray}
\label{BoussiD}
\left\lbrace 
\begin{array}{rll}
\partial_{t}^{2} y &= \pm \partial_{x}^{4}y + \partial_{x}^{2}y -\partial_{x}(y^{2}), \quad & x \in [0,1], \quad t \in [0,T],\\
y(0,t)&=0,   \quad &  t \in [0,T], \\
y(1,t)&=v(t), \quad &  t \in [0,T], \\
\partial_{x}^2y(0,t)&=0,  \quad &  t \in [0,T],\\
\quad \partial_{x}^2y(1,t)&=w(t), \quad &  t \in [0,T],\\
y(x,0)&=y^{0}(x), \quad & x \in [0,1], \\
y_{t}(x,0)&=y^{1}(x),  \quad & x \in [0,1]. 
\end{array}
\right.
\end{eqnarray}

\begin{theorem}
\label{thmBoussiDir}
Let $R>\hat R  := 16 e^{(2e)^{-1}}$ and $T>0$. 
Then there exists some number $\hat C >0$ such that for all pairs of functions 
 $(y^0,y^1)$, $(\widetilde{y}^0,\widetilde{y}^1)\in ({\mathcal R}_{R,\hat C})^{2}$ which are odd with respect to $0$, there exist functions $y\in G^{1,2}([0,1]\times [0,T])$ and $v, w\in G^{2}([0,T])$
 satisfying \eqref{BoussiD} together with  $y(x,T)=\widetilde{y}^{0}(x)$ and $y_{t}(x,T)=\widetilde{y}^{1}(x)$ for all  $x\in [0,1]$.    
\end{theorem}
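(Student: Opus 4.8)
The plan is to mimic the proof of Theorem \ref{thmBoussiN}, applying Theorem \ref{thm1} together with part~(1) of Proposition \ref{propCompsym}. Here $N=2$, $M=4$, so $\lambda=4/2=2$ and $\hat R=4Me^{(\lambda e)^{-1}}=16e^{(2e)^{-1}}$, which matches the statement. The nonlinearity is now $f(x,\vec y)=-\partial_x(y^2)=-2y\partial_x y$, i.e. $f(x,y_0,y_1,y_2,y_3)=-2y_0y_1$; being a polynomial it is entire, so \eqref{AB1}--\eqref{AB4} hold with $b,b_2$ as large as we wish, and in particular the requirement $b,b_2>\hat R$ of Theorem \ref{thm1} is automatically met.

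First I would check the parity hypothesis of Proposition \ref{propCompsym}(1). Since $f$ depends only on $y_0$ and $y_1$, the substitution $(x,y_0,y_1,y_2,y_3)\mapsto(-x,-y_0,y_1,-y_2,y_3)$ appearing in \eqref{WWimpair} gives
\[
f(-x,-y_0,y_1,-y_2,y_3)=-2(-y_0)y_1=2y_0y_1=-f(x,y_0,y_1,y_2,y_3),
\]
so \eqref{WWimpair} is satisfied (unlike the choice $-\partial_x^2(y^2)$, which only satisfies \eqref{WWpair}). Moreover the boundary conditions at $x=0$ in \eqref{BoussiD}, namely $y(0,t)=0$ and $\partial_x^2y(0,t)=0$, are precisely of the form $\partial_x^{2j}y(0,t)=0$ for $2j\le M-1$. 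Hence Proposition \ref{propCompsym}(1) applies and
\[
\Comp=\bigl\{Y_0=(y_0,y_1)\in C^\infty([0,1])^2;\ \partial_x^{2j}y_l(0)=0\ \ \forall j\in\N,\ \forall l\in\{0,1\}\bigr\}.
\]

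Then I would observe that a function $h\in{\mathcal R}_{R,\hat C}$ is odd with respect to $0$ if and only if $\partial_x^{2j}h(0)=0$ for every $j\in\N$; therefore $({\mathcal R}_{R,\hat C})^2\cap\Comp$ is exactly the set of pairs in $({\mathcal R}_{R,\hat C})^2$ that are odd with respect to $0$. Applying Theorem \ref{thm1} with the constant $\hat C$ it furnishes, for any such data $(y^0,y^1)$, $(\widetilde y^0,\widetilde y^1)$ we obtain a solution $y\in G^{1,2}([0,1]\times[0,T])$ of the first equation in \eqref{BoussiD} that meets the homogeneous boundary conditions at $x=0$, the prescribed initial data, and the terminal data $y(\cdot,T)=\widetilde y^0$, $y_t(\cdot,T)=\widetilde y^1$. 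The control inputs are then read off as the traces $v(t):=y(1,t)$ and $w(t):=\partial_x^2y(1,t)$, whose membership in $G^2([0,T])$ is immediate from \eqref{defGev2D}. The only subtlety — and the nearest thing to an obstacle — is getting the bookkeeping right: pairing the parity condition \eqref{WWimpair} with the \emph{Dirichlet-type} data $\partial_x^{2j}y(0,t)=0$, and checking that it is $-\partial_x(y^2)$, rather than the physically more natural $-\partial_x^2(y^2)$, for which this pairing works; everything else is a direct citation of Theorem \ref{thm1} and Proposition \ref{propCompsym}.
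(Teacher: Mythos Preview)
Your proof is correct and follows essentially the same approach as the paper's: verify \eqref{WWimpair} for $f(x,y_0,y_1,y_2,y_3)=-2y_0y_1$, invoke Proposition~\ref{propCompsym}(1) to identify $\Comp$ with odd functions, and then apply Theorem~\ref{thm1}, reading off the controls as traces. If anything, your write-up is more explicit than the paper's (which simply says ``the proof is the same as for Theorem~\ref{thmBoussiN}'' and checks the parity identity); the only detail you leave implicit is that $P=\pm\partial_x^4+\partial_x^2$ contains only even derivatives, which is the standing hypothesis of Proposition~\ref{propCompsym}.
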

\begin{proof}[Proof of Theorem \ref{thmBoussiDir}]
The proof is the same as for Theorem \ref{thmBoussiN}. Since $\partial_{x}(y^{2})=2 y\partial_{x}y$, the non-linearity reads $f(x,y_0,y_1,y_2,y_3)=-2 y_0 y_1$. 
From  $$f(-x,-y_{0}, y_{1},-y_{2},y_3)=2y_0 y_1= - f(x , y_{0} , y_{1},y_2,y_3),$$
we infer that condition \eqref{WWimpair} in Proposition \ref{propCompsym} is fulfilled. As 
a function $h\in {\mathcal R}_{R,\hat C}$ is odd if and only if $\partial^{2j}_{x}h(0)=0$ for any  $j\in \N$, the conclusion follows at once. 
\end{proof}

\subsection{The complex Ginzburg-Landau equation}
We are concerned with the controllability of the complex Ginzburg-Landau equation with parameters $\theta, \varphi\in \R$. 
We begin with the control problem with Dirichlet boundary conditions:
\begin{eqnarray}
\label{GLDir}
\left\lbrace 
\begin{array}{rll}
\partial _t y&=e^{i\theta}\partial _x^2 \, y +e^{i\varphi} |y|^2y, \quad & x \in [0,1], \quad t \in [0,T],\\
y(0,t)&=0, \quad &  t \in [0,T], \\
y(1,t)&=v(t), \quad &  t \in [0,T], \\
y(x,0)&=y^{0}(x)  \quad & x \in [0,1]. 
\end{array}
\right.
\end{eqnarray}
\begin{theorem}
\label{thmGLDir}
Let $R>\hat R := 8 e^{(2e)^{-1}}$ and $T>0$. 
Then there exists some number $\hat C >0$ such that for all functions 
 $y^0$, $\widetilde{y}^0 \in {\mathcal R}_{R,\hat C}$ which are odd with respect to $0$, there exist  $y\in G^{1,2}([0,1]\times [0,T])$ 
 and  $v\in G^{2}([0,T])$ satisfying  
\eqref{GLDir} together with $y(x,T)=\widetilde{y}^{0}(x)$ for all  $x\in [0,1]$.      
\end{theorem}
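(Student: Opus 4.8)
The plan is to obtain Theorem~\ref{thmGLDir} as an application of Theorem~\ref{thm1} together with Proposition~\ref{propCompsym}, after one preliminary reduction. Since the nonlinearity $e^{i\varphi}|y|^2y$ is \emph{not} holomorphic in $y$, the complex scalar equation does not directly fit the hypothesis \eqref{AB2}--\eqref{AB4}; so I would first write $y=u+iv$ with $u,v$ real and recast \eqref{GLDir} as the real $2\times 2$ system $\partial_t(u,v)=A\,\partial_x^2(u,v)+f(u,v)$, where $A$ is the rotation matrix of angle $\theta$ (hence $\det A=1\neq0$, so the top-order coefficient matrix is invertible — this is the mild ``modification of the framework'' needed to run the proof of Theorem~\ref{thm1} for a system rather than for a scalar equation; alternatively one may keep the unknowns $y$ and $\bar y$, for which the principal part stays diagonal). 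Here $N=1$, $M=2$, hence $\lambda=M/N=2$. The term $f$, being the homogeneous cubic $e^{i\varphi}(u^2+v^2)(u+iv)$ (split into real and imaginary parts), is a polynomial with constant coefficients depending neither on $x$ nor on $\partial_x u,\partial_x v$; it therefore satisfies \eqref{AB1}--\eqref{AB4} with $b$ and $b_2$ as large as we wish, in particular with $b,b_2>\hat R=4Me^{(\lambda e)^{-1}}=8e^{(2e)^{-1}}$.

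Next I would pin down the compatibility set $\Comp$. The boundary condition in \eqref{GLDir} is $y(0,t)=0$, that is, componentwise, $\partial_x^{2j}(\cdot)(0,t)=0$ for the unique admissible index $2j\le M-1=1$, namely $j=0$; we are thus in the situation of part~(1) of Proposition~\ref{propCompsym}. To apply it I must verify the parity condition \eqref{WWimpair}: for $M=2$ the transformation in \eqref{WWimpair} sends $(x,y_0,y_1)$ to $(-x,-y_0,y_1)$, and under $y\mapsto -y$ one has $|y|^2y\mapsto|-y|^2(-y)=-|y|^2y$, while $f$ has no explicit $x$-dependence and no $\partial_x y$-dependence; hence each real component of $f$ obeys \eqref{WWimpair}. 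Proposition~\ref{propCompsym}(1) (applied componentwise) then identifies $\Comp$ with the set of data all of whose even-order $x$-derivatives vanish at $0$; intersecting with the analytic class, this is precisely the set of functions admitting an odd extension across $0$, i.e. $y^0$ odd with respect to $0$ — exactly the hypothesis of Theorem~\ref{thmGLDir}.

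With these checks done, Theorem~\ref{thm1} in its systems form applies: given $R>\hat R$ and $T>0$, it produces a constant $\hat C>0$ such that any two odd data $y^0,\widetilde y^0\in\mathcal R_{R,\hat C}$ are connected, over the time interval $[0,T]$, by a Gevrey solution $y\in G^{1,2}([0,1]\times[0,T])$ of the PDE with $\yt(\cdot,0)=y^0$ and $\yt(\cdot,T)=\widetilde y^0$. Since the prescribed data lie in $\Comp$, Lemma~\ref{lmcompatibilityrecip} ensures that this solution automatically satisfies the homogeneous boundary condition $y(0,t)=0$. The control input is then taken to be the trace $v(t):=y(1,t)$, and the Gevrey bound \eqref{defGev2D} for $y$ immediately gives $v\in G^{2}([0,T])$, which completes the argument.

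Essentially everything here is bookkeeping; there is no hard analytic estimate to prove, as all the work is contained in Theorem~\ref{thm1}. The two points deserving care are: (i) justifying that the proof of Theorem~\ref{thm1} (the Cauchy problem in $x$ of Section~\ref{section2} and the jet analysis of Section~\ref{section3}) goes through for the $2\times 2$ system whose principal symbol is the invertible matrix $A\xi^2$, the invertibility of $A$ being all that is used; and (ii) remembering that, because $|y|^2y$ is not holomorphic in the single variable $y$, the reduction to real and imaginary parts (or to $(y,\bar y)$) is mandatory before the analyticity hypothesis \eqref{AB2}--\eqref{AB4} even makes sense. I expect (i) to be the only step requiring more than a line of explanation.
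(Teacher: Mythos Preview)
Your proposal is correct and the parity check matches the paper's verbatim, but the route you take to handle the non-holomorphic nonlinearity is different from the paper's. You reduce to a real $2\times 2$ system (or equivalently to the pair $(y,\bar y)$), which forces you to extend Theorem~\ref{thm1}, Proposition~\ref{propCompsym}, and the whole machinery of Sections~\ref{section2}--\ref{section3} to systems with an invertible matrix principal part; this is doable but, as you note under point~(i), is the one place where real checking is needed. The paper instead keeps the equation complex scalar and simply observes that the Gevrey norms $\|\cdot\|_L$ of Definition~\ref{defGevrey} are invariant under complex conjugation; hence the algebra property of Lemma~\ref{algebre} gives $\|u_0^2\overline{u_0}\|_L\le\|u_0\|_L^3$ directly, and Lemma~\ref{lemmeF}, Proposition~\ref{prop10}, and the Gevrey-regularity appendix go through with this single remark (the sums over $\vec p$ are even finite here). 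What your approach buys is that it stays literally inside hypotheses \eqref{AB2}--\eqref{AB4}, at the cost of a systems extension; what the paper's approach buys is economy --- no new framework, just the one-line observation that $|\partial^\alpha\bar y|=|\partial^\alpha y|$.
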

The control problem with Neumann boundary conditions reads
\begin{eqnarray}
\label{GLN}
\left\lbrace 
\begin{array}{rll}
\partial _t y&=e^{i\theta}\partial _x^2 \, y +e^{i\varphi} |y|^2y, \quad & x \in [0,1], \quad t \in [0,T],\\
\partial_x y(0,t)&=0, \quad &  t \in [0,T], \\
\partial_x y(1,t)&=v(t), \quad &  t \in [0,T], \\
y(x,0)&=y^{0}(x)  \quad & x \in [0,1]. 
\end{array}
\right.
\end{eqnarray}
\begin{theorem}
\label{thmGLN}
Let $R>\hat R := 8 e^{(2e)^{-1}}$ and $T>0$. 
Then there exists some number $\hat C >0$ such that for all functions
 $y^0$, $\widetilde{y}^0\in {\mathcal R}_{R,\hat C}$ which are even with respect to $0$, there exist $y\in G^{1,2}([0,1]\times [0,T])$ and $v\in G^{2}([0,T])$ satisfying
\eqref{GLN} together with $y(x,T)=\widetilde{y}^{0}(x)$ for all  $x\in [0,1]$.      
\end{theorem}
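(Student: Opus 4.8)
The plan is to derive Theorem~\ref{thmGLN} from Theorem~\ref{thm1} and Proposition~\ref{propCompsym}, once the framework has been adapted to allow a complex leading coefficient and a nonlinearity depending also on $\bar y$. Here $N=1$ and $M=2$, so $\lambda=M/N=2$ and $\hat R=4Me^{(\lambda e)^{-1}}=8e^{(2e)^{-1}}$, in agreement with the statement. The Ginzburg--Landau equation in \eqref{GLN} has the form $\partial_t y=\zeta_M\partial_x^2 y+f$ with $\zeta_M=e^{i\theta}\in\C^*$ and $f=f(x,\vec y,\overline{\vec y})=e^{i\varphi}y_0^2\,\overline{y_0}$, a single monomial. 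Equivalently, one may split $y=u+iv$ and work with the real $2\times 2$ system
\begin{align*}
\partial_t u&=\cos\theta\,\partial_x^2 u-\sin\theta\,\partial_x^2 v+(u^2+v^2)(u\cos\varphi-v\sin\varphi),\\
\partial_t v&=\sin\theta\,\partial_x^2 u+\cos\theta\,\partial_x^2 v+(u^2+v^2)(u\sin\varphi+v\cos\varphi),
\end{align*}
whose top-order coefficient is the (invertible) rotation matrix of angle $\theta$. In either formulation the arguments of Sections~\ref{section2}--\ref{section4} go through unchanged, since they only use the invertibility of the coefficient of $\partial_x^M$ and the analyticity of the lower-order term; this is the ``few modifications in the framework'' announced in the introduction.

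Granting this, the hypotheses of Theorem~\ref{thm1} are readily checked. Since $f$ is polynomial in $(\vec y,\overline{\vec y})$, only finitely many of the coefficients in \eqref{AB2} do not vanish, so \eqref{AB1}--\eqref{AB4} hold for arbitrary $b,b_2>4$; in particular one may take $b,b_2>\hat R$. The boundary operator $B$ imposes exactly $\partial_x y(0,t)=0$, i.e. $\partial_x^{2j+1}y(0,t)=0$ for the only admissible index $2j+1=1\le M-1$, so we are in the situation of Proposition~\ref{propCompsym}(2). Its hypothesis \eqref{WWpair} reads, for $M=2$, $f(-x,y_0,-y_1)=f(x,y_0,y_1)$ (and likewise with the conjugate arguments), which holds because $f$ depends neither on $x$ nor on $y_1$. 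Hence Proposition~\ref{propCompsym}(2) yields
\[
\Comp=\bigl\{\,Y_0\in C^\infty([0,1])\ ;\ \partial_x^{2j+1}Y_0(0)=0\ \ \forall j\in\N\,\bigr\},
\]
and, as observed just after Proposition~\ref{propCompsym}, a function $h\in{\mathcal R}_{R,\hat C}$ belongs to $\Comp$ if and only if it is even with respect to $0$.

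It remains to assemble the pieces. Fix $R>\hat R$ and $T>0$, and let $\hat C>0$ be the constant furnished by Theorem~\ref{thm1}. For $y^0,\widetilde y^0\in{\mathcal R}_{R,\hat C}$ even with respect to $0$ we have $y^0,\widetilde y^0\in{\mathcal R}_{R,\hat C}\cap\Comp$, so Theorem~\ref{thm1} provides a solution $y\in G^{1,2}([0,1]\times[0,T])$ of $\partial_t y=e^{i\theta}\partial_x^2 y+e^{i\varphi}|y|^2y$ with $\partial_x y(0,t)=0$, $y(\cdot,0)=y^0$ and $y(\cdot,T)=\widetilde y^0$. Setting $v(t):=\partial_x y(1,t)$, which lies in $G^2([0,T])$ by the Gevrey bound \eqref{defGev2D}, the pair $(y,v)$ solves \eqref{GLN} together with the prescribed terminal condition, which proves the theorem.

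The only genuine difficulty is the first step: the nonlinearity $|y|^2y$ is not holomorphic in $y$, so Theorem~\ref{thm1} cannot be quoted verbatim and one must either adjoin $\bar y,\partial_x\bar y,\dots$ to the holomorphic arguments of $f$ or pass to the real $2\times 2$ system, and then check that every statement invoked — the Cauchy problem in $x$ of Section~\ref{section2}, the jet analysis of Section~\ref{section3}, and Proposition~\ref{propCompsym} — survives this enlargement. This is routine but should be made explicit; everything else is a direct specialization of Theorem~\ref{thm1} and Proposition~\ref{propCompsym}.
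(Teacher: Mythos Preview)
Your proposal is correct and follows essentially the same approach as the paper: check that $N=1$, $M=2$, $\lambda=2$, verify condition \eqref{WWpair} for the nonlinearity $f(x,y_0,y_1)=e^{i\varphi}|y_0|^2y_0$ (which holds since $f$ does not depend on $y_1$ or $x$), identify $\Comp$ with the even functions via Proposition~\ref{propCompsym}(2), and then invoke Theorem~\ref{thm1} with the control taken as the trace $v(t)=\partial_x y(1,t)$. The paper does exactly this, and supplies in a separate appendix section the explicit verification that Theorem~\ref{thm2} and Proposition~\ref{prop10} extend to the complex-valued setting with the non-holomorphic nonlinearity $|y|^2y$---precisely the ``routine but should be made explicit'' step you flag at the end---by working directly with complex $y$ and noting that $\overline{y}$ satisfies the same Gevrey bounds as $y$, rather than passing to the real $2\times 2$ system you also mention.
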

The proof follows the previous cases closely, except that they are complex-valued functions and the nonlinearity $|y|^2y=y^2\overline{y}$ cannot be written as a sum (finite or infinite) of powers of the variable $y$. We describe in Section \ref{s:GLproof} the modifications that must be performed to get the expected result.
\begin{remark}It might seem problematic to use the nonlinearity $f(z)=|z|^2z$ which is not holomorphic. The solution we construct satisfies $y(\cdot,t)\in {\mathcal R}_{R,\hat C}$, which means that it is real analytic on $[-1,1]$ for any $t\in [0,T]$, in the sense that it agrees with its Taylor expansion at $0$, which is enough for the proof we are doing. Indeed, as noticed in Lemma \ref{lemmasets}, it implies that it has a holomorphic extension $z\mapsto y(z,t)$ for $z\in B_{\C}(0,\widetilde{R})$ for some $\widetilde{R}>0$. The application $x\in [-1,1]\mapsto |y(x,t)|^2y(x,t)$ is also real analytic and also has a holomorphic extension. Yet, this extension does not coincide with $|y(z,t)|^2y(z,t)$. In particular, the solution can be extended to $B_{\C}(0,\widetilde{R})\times [0,T]$ but it is not clear what equation it satisfies on this set. We only know that the solution satisfies the Ginzburg-Landau equation on $[-1,1]\times [0,T]$.\end{remark}
\subsection{The Kuramoto-Sivashinsky equation}
We investigate the controllability of the Kuramoto-Sivashinsky (KS) equation with boundary conditions of Dirichlet type:
\begin{eqnarray}
\label{KSDir}
\left\lbrace 
\begin{array}{rll}
\partial _t y &= -\partial _x ^4 y - \partial _x ^2 y  -y\partial_xy, \quad & x \in [0,1], \quad t \in [0,T],\\
y(0,t)&=0, \quad &  t \in [0,T], \\
y(1,t)&=v(t), \quad &  t \in [0,T], \\
\partial_{x}^2y(0,t)&=0, \quad &  t \in [0,T],\\
\partial_{x}^2y(1,t)&=w(t), \quad &  t \in [0,T],\\
y(x,0)&=y^{0}(x),   \quad & x \in [0,1]. 
\end{array}
\right.
\end{eqnarray}

\begin{theorem}
\label{thmKSDir}
Let $R>\hat R=16 e^{(4e)^{-1}}$ and $T>0$. 
Then there exists some number $\hat C >0$ such that for all functions 
 $y^0$, $\widetilde{y}^0\in {\mathcal R}_{R,\hat C}$ which are odd with respect to $0$, there exist functions $y\in G^{1,4}([0,1]\times [0,T])$ and $v, w\in G^{4}([0,T])$ satisfying 
\eqref{KSDir} together with  $y(x,T)=\widetilde{y}^{0}(x)$ for all  $x\in [0,1]$.      
\end{theorem}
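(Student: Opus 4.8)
\textbf{Proof proposal for Theorem \ref{thmKSDir}.}
The plan is to recognize \eqref{KSDir} as an instance of the abstract system \eqref{W1a}--\eqref{W1c} and then to invoke Theorem \ref{thm1} together with part (1) of Proposition \ref{propCompsym}. First I would set $N=1$ and $M=4$, so that $\lambda=M/N=4$ and $\hat R=4Me^{(\lambda e)^{-1}}=16\,e^{(4e)^{-1}}$, consistent with the statement. The operator is $P=-\partial_x^4-\partial_x^2$ (so $\zeta_4=\zeta_2=-1$, $\zeta_0=0$, and $\zeta_M\neq 0$), $Y^t=y$, $Y^x=(y,\partial_xy,\partial_x^2y,\partial_x^3y)$, and, since $-y\partial_xy=-y_0y_1$, the nonlinearity is $f(x,y_0,y_1,y_2,y_3)=-y_0y_1$. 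As $f$ is a polynomial with a single nonzero coefficient $a_{(1,1,0,0),0}=-1$ and $f(x,0,0,0,0)=0$, the hypotheses \eqref{AB1}--\eqref{AB4} are satisfied with any prescribed constants; in particular one may take $b,b_2>\hat R$. Hence Theorem \ref{thm1} applies for every $R>\hat R$ and every $T>0$, and furnishes a constant $\hat C>0$.

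Next I would identify the compatibility set $\Comp$. Since $M=4\in 2\N$ and $P=\sum_{j=0}^{2}\zeta_{2j}\partial_x^{2j}$ contains only even-order derivatives, and since the homogeneous boundary conditions $y(0,t)=0$, $\partial_x^2y(0,t)=0$ are precisely the conditions $\partial_x^{2j}y(0,t)=0$ for $2j\leq M-1=3$ (that is, $j=0,1$), we are in the setting of Proposition \ref{propCompsym}(1). It remains to verify the parity condition \eqref{WWimpair}: the substitution $x\mapsto -x$, $(y_0,y_1,y_2,y_3)\mapsto(-y_0,y_1,-y_2,y_3)$ sends $f=-y_0y_1$ to $-(-y_0)y_1=y_0y_1=-f$, so \eqref{WWimpair} holds. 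Proposition \ref{propCompsym}(1) then gives
\[
\Comp=\bigl\{\,y_0\in C^\infty([0,1]);\ \partial_x^{2j}y_0(0)=0\ \ \forall j\in\N\,\bigr\}.
\]
Because a function $h\in{\mathcal R}_{R,\hat C}$ is odd on $[-1,1]$ if and only if all its even-order derivatives vanish at $0$, we conclude that ${\mathcal R}_{R,\hat C}\cap\Comp$ is exactly the set of odd functions in ${\mathcal R}_{R,\hat C}$.

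Finally I would conclude by applying Theorem \ref{thm1} with $Y_0=y^0$ and $Y_1=\widetilde y^0$, which both belong to ${\mathcal R}_{R,\hat C}\cap\Comp$ by the previous step. This produces a solution $y\in G^{1,\lambda}([0,1]\times[0,T])$ with $\lambda=4$ of \eqref{W1a}--\eqref{W1c} satisfying $y(x,T)=\widetilde y^0(x)$ for all $x\in[0,1]$ and obeying the boundary relations $BY^x(0,t)=0$, i.e. $y(0,t)=\partial_x^2y(0,t)=0$ on $[0,T]$. Setting $v(t):=y(1,t)$ and $w(t):=\partial_x^2y(1,t)$, the remaining boundary conditions of \eqref{KSDir} hold by construction, and the Gevrey estimate \eqref{defGev2D} for $y$ immediately gives $v,w\in G^4([0,T])$.

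I do not expect a genuine obstacle here, the result being a direct specialization of the general theory; the only points that require care are checking the sign pattern $(-1)^{i+1}$ in \eqref{WWimpair} against the concrete nonlinearity $-y\partial_xy$, and matching the two homogeneous boundary conditions at $x=0$ with the even-derivative pattern required by Proposition \ref{propCompsym}(1). The smallness of $\hat C$ is precisely the one provided by Theorem \ref{thm1} to accommodate the quadratic term.
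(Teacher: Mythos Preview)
Your argument is correct and follows exactly the route of the paper: identify the equation as an instance of \eqref{W1a}--\eqref{W1c} with $N=1$, $M=4$, check the parity hypothesis, apply Proposition \ref{propCompsym} to describe $\Comp$ as the odd analytic functions, and then invoke Theorem \ref{thm1}, reading off $v,w$ as traces. Your citation of condition \eqref{WWimpair} and of part (1) of Proposition \ref{propCompsym} is the right one for these Dirichlet-type conditions (the paper's own short proof writes ``\eqref{WWpair}'' at this spot, which is a slip; the substitution it displays is the \eqref{WWimpair} one).
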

\begin{proof}[Proof of Theorem \ref{thmKSDir}] 
For $\lambda=4/1=4$, and the non-linearity reads as $f(x,y_0,y_1, y_2,y_3)= - y_0 y_1$. It satisfies 
$$f(-x,-y_{0}, y_{1} ,-y_2,y_3)=y_0 y_1= -f(x,y_0,y_1, y_2,y_3),$$
 which is  condition \eqref{WWpair} in  Proposition \ref{propCompsym}.
\end{proof}


The null controllability for the Kuramoto-Sivashinsky equation has been already studied in \cite{C,CMP,GMC,KM}, for different combinations of boundary data, and in the cases where boundary setting agrees with the setting of \eqref{W1b}, our results are consistent with the known results.  However, the critical set of parameters of diffusion appears only in cases when only one control is considered, which is not the case in this paper.

\subsection{The case of a linear PDE with constant coefficients}
\begin{proposition} \label{propcompatlin}
Assume $f=0$ (linear PDE with constant coefficients). Then 
\bna
\Comp=\left\{Y_{0}=\yot\in C^{\infty}([0,1])^{N};\quad (BP^kY^{x,l})(0)=0,\  \forall k\in \N, \ \forall l=0,\dots, N-1\right\}
\ena 
where we have denoted $Y^{x,l}:=(y_l, \dots, \partial_x^{M-1} y_l)$ as in \eqref{Yx} .
\end{proposition}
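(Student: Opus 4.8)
The plan is to compute the maps $J_l$ of Lemma~\ref{lmdefJlintro} explicitly when $f=0$, exploiting that $P$ has constant coefficients and hence commutes with both $\partial_x$ and $\partial_t$. Write the Euclidean division $l=qN+s$ with $q\in\N$ and $0\le s\le N-1$. Differentiating the identity $\partial_t^N y=Py$ first $s$ times and then $q$ times in $t$ (each step being licit because $P$ commutes with $\partial_t$, so that $\partial_t^s y$ is again a solution and $\partial_t^{qN}(\partial_t^s y)=P^q\partial_t^s y$) gives $\partial_t^l y=P^q\partial_t^s y$ for every $y\in C^\infty([-1,1]\times[t_1,t_2])$ solving the equation. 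Applying $\partial_x^i$ for $0\le i\le M-1$ and using that $P$ commutes with $\partial_x$ yields
\[
\partial_t^l Y^x=P^q\,Y^{x,s}\qquad\text{on }[-1,1]\times[t_1,t_2],
\]
where $Y^{x,s}:=(\partial_t^s y,\partial_x\partial_t^s y,\dots,\partial_x^{M-1}\partial_t^s y)$. Since $\partial_t^s y$ is the $(s{+}1)$-st component of $Y^t$ and $P^q$ is an $x$-differential operator of order $qM$, the right-hand side is a linear, $x$-independent function of $Y^t,\partial_x Y^t,\dots,\partial_x^{(q+1)M-1}Y^t$; thus, with $m(l)=(q+1)M-1$, this is a valid choice of the map $J_l$. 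Inspecting the construction in the proof of Lemma~\ref{lmdefJlintro}, this is precisely what it produces when $f=0$, and in any case it agrees with the $J_l$ fixed there by the uniqueness of Lemma~\ref{lmuniqJl} (for $f=0$ both maps are linear in the jet, so agreement near the origin forces agreement everywhere). As a cross-check, for KdV ($N=1$, $M=3$) this gives $m(l)=3l+2$, in accordance with Lemma~\ref{lem100}.

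It then suffices to evaluate \eqref{propJlintro} along $Y^t(\cdot,0)=Y_0=(y_0,\dots,y_{N-1})$, so that $\partial_x^j Y^t(\cdot,0)=\partial_x^j Y_0$ and $\partial_t^s y(\cdot,0)=y_s$, whence $Y^{x,s}\big|_{t=0}=(y_s,\partial_x y_s,\dots,\partial_x^{M-1}y_s)=Y^{x,s}$ in the notation of the statement, and
\[
J_l\big(0,Y_0,\partial_x Y_0,\dots,\partial_x^{m(l)}Y_0\big)=\big(P^q Y^{x,s}\big)(0).
\]
Since $l\mapsto(q,s)$ is a bijection from $\N$ onto $\{(k,l'):k\in\N,\ 0\le l'\le N-1\}$, the family of conditions defining $\Comp$ in \eqref{comp}, namely $BJ_l(0,Y_0,\dots,\partial_x^{m(l)}Y_0)=0$ for all $l\in\N$, is exactly the family $BP^kY^{x,l'}(0)=0$ for all $k\in\N$ and $0\le l'\le N-1$, which is the announced description.

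There is essentially no analytic difficulty here: the argument is just Euclidean division plus the fact that $\partial_t^N=P$ can be iterated. The only point requiring a little care is the identification of the explicit $J_l$ above with the one implicitly underlying the definition of $\Comp$ (evaluated, moreover, at arbitrary — not necessarily small — smooth $Y_0$); this is handled by the linearity of the construction when $f=0$ together with the uniqueness statement of Lemma~\ref{lmuniqJl}, which together upgrade local agreement of the two candidate maps to agreement on all jets.
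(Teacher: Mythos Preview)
Your proof is correct and follows essentially the same route as the paper: Euclidean division $l=qN+s$, iteration of $\partial_t^N=P$ to get $\partial_t^l Y^x=P^q Y^{x,s}$, identification of this linear map as a valid $J_l$, and invocation of Lemma~\ref{lmuniqJl} for uniqueness. You are slightly more careful than the paper in spelling out why the local uniqueness of Lemma~\ref{lmuniqJl} suffices here (linearity of both candidates upgrades agreement near the origin to agreement everywhere), which is a point the paper leaves implicit.
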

\begin{proof}
Using Euclidian division, we are led to compute the application $J_{Nk+l}$ defined in Lemma \eqref{lmdefJlintro} for any $k\in \N$ and $l=0,\dots, N-1$.

We infer from \eqref{W1a} that  $\partial_t^{Nk+l}\partial_x^i y=P^k \partial_{t}^l \partial_x^i y$ for any $k\in \N$, $l=0,...,N-1$ and $i\in \N$. In particular, $\partial_t^{Nk+l}Y^x =P^k \partial_{t}^l Y^x$ for any $k\in \N$, $l=0,...,N-1$,  
we can define a linear map $J_{Nk+l}: (\R^{N})^{(k+1)M}\to \R^{M}$ such that 
$$J_{Nk+l}(Y_0(0),\cdots, \partial_x^{(k+1)M-1} Y_0(0))=(P^k y_l, P^k\partial_x y_l, \dots,P^k\partial_x^{M-1} y_l  )(0),$$ 
for any $Y_{0}=(y_0,y_1,...,y_{N-1})\in C^{\infty}([0,1])^{N}$ (denoting 
$y_0=y$, $y_l =\partial _t ^l y_0$ for $1\le l\le N-1$). Moreover, for a solution of the equation with initial datum $Y_0$, we have 
\begin{equation}\label{property}
(P^k y_l, P^k\partial_x y_l, \dots,P^k\partial_x^{M-1} y_l  )(0)=P^k Y^{x,l}(0).\end{equation}
The previous computation gives  $\partial_t^{Nk+l}Y^x(0)=J_{Nk+l}(Y_0(0),\cdots, \partial_x^{(k+1)M-1} Y_0(0))$. Therefore, the application $J_{Nk+l}$   by \eqref{property} 
satisfies the property  \eqref{lmdefJlintro} for all solutions. Then, using the uniqueness of the operators $J_l$ (up to adding unnecessary variables) proved in Lemma \ref{lmuniqJl}, we conclude that it is the expected application. \\

In particular, $BJ_{Nk+l}(Y_0(0),\cdots, \partial_x^{(k+1)M-1} Y_0(0))=0$ is equivalent to $BP^k Y^{x,l}(0)=0$
 for any $k\in \N$ and $l=0,\dots, N-1$.
\end{proof}

\section{Cauchy problem in the space variable}
\label{section2}
\subsection{Statement of the global wellposedness result} 

Let $f=f(x,y_0,y_1,\cdots,y_{M-1})$ be as in \eqref{AB1}-\eqref{AB4}. 
We are concerned with the wellposedness of the Cauchy problem: 
\bneqn
&&\partial _t^N y = P  y + f(x , y ,...,  \partial _x^{M-1} y),\quad  x\in [-1,1],\,  t\in [t_1,t_2], \label{C1} \\
&&\partial_x^{i}y(0,t)= k_{i}(t), \hspace{2,9cm} 0\le i\leq  M-1, \quad t\in [t_1,t_2] 
\eneqn 
for some given functions $k_0,\dots, k_{M-1}\in G^{\lambda}([t_1,t_2])$. We denote $K_0=(k_0 , ... , k_{M-1})$.
 Note that the initial conditions of \eqref{C1} can be written as $\yx(0,t)=K_0(t)$.\\

The goal of this section is to prove the following result.
\begin{theorem}
\label{thm2}
Let $P$ be as in \eqref{formP} and $f=f(x,\vec{y})$ be as in \eqref{AB1}-\eqref{AB4}. 
Let $-\infty <t_1<t_2<+\infty$ and $R>(4N)^{\lambda}$, where $\lambda =M/N$. Then there exists some numbers $C>0$, $Q>0$, $R_1,R_2$ with $4Me^{-1/M}<R_1<R_2$ satisfying that 
for all 
$K_0=(k_0,\dots, k_{M-1})\in G^{\lambda}([t_1,t_2])^{M}$ with 
 \be
 \label{AA}
 \vert k^{ (n) } _i(t)\vert \le C  \left(\frac{|\zeta_M|^{1/N}}{R}\right)^{n}(n!)^{\lambda}, \quad i=0,1,\dots, M-1,\ n\ge 0, \ t\in[t_1,t_2],  
 \ee
there exists a solution $y\in G^{1,\lambda} ([-1,1]\times [t_1,t_2])$ of \eqref{C1} satisfying,
\be
\label{AAAAA}
\vert \partial _x ^{p_1}\partial _t ^{p_2} y(x,t) \vert  \le  Q \frac{ ( p_1 + \lambda p_2) !  }{R_1^{p_1} R_2^{\lambda p_2}}|\zeta_M|^{p_2/N},\quad 
 (x,t)\in [-1,1]\times [t_1,t_2], \  (p_1,p_2)\in \N ^2.
\ee
\end{theorem}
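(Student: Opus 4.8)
The plan is to treat \eqref{C1} as a noncharacteristic Cauchy problem in the space variable $x$ and to construct its solution as a power series $y(x,t)=\sum_{n\ge 0}y_n(t)\,x^n/n!$, the coefficients $y_n(t):=\partial_x^n y(0,t)$ being functions of $t\in[t_1,t_2]$, with the whole jet controlled by the two-parameter weight
\[
w(n,p):=(n+\lambda p)!\;R_1^{-n}\,R_2^{-\lambda p}\,|\zeta_M|^{p/N}.
\]
Since $\zeta_M\ne 0$, and since the lower-order linear part $\sum_{j<M}\zeta_j\partial_x^j y$ of $P$ has the form \eqref{AB2} with $|\vec p|=1$, we may absorb it into the nonlinearity (this only enlarges the constant $C_a$ in \eqref{AB3} while keeping \eqref{AB1} and \eqref{AB4}), so that we may assume $P=\zeta_M\partial_x^M$ and put the equation in Kovalevskaya form
\[
\partial_x^M y=\zeta_M^{-1}\big(\partial_t^N y-f(x,y,\partial_x y,\dots,\partial_x^{M-1}y)\big).
\]
The data $\partial_x^i y(0,t)=k_i(t)$ fix $y_0=k_0,\dots,y_{M-1}=k_{M-1}$, and differentiating the equation $n$ times in $x$ and evaluating at $x=0$ gives, for $n\ge 0$,
\[
y_{n+M}(t)=\zeta_M^{-1}\Big(y_n^{(N)}(t)-\big(\partial_x^n[f(x,Y^x)]\big)\big|_{x=0}\Big),
\]
where, by \eqref{formfbis}, $\big(\partial_x^n[f(x,Y^x)]\big)\big|_{x=0}$ is a convergent series over $\vec p\in\N^M$, $|\vec p|>0$, of terms $\binom{n}{r,t_1,\dots,t_{|\vec p|}}A_{\vec p}^{(r)}(0)\prod_{\nu}\partial_x^{t_\nu+i_\nu}y(0,t)$ with $r+\sum_\nu t_\nu=n$ and $0\le i_\nu\le M-1$ read off from $\vec p$.

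The core of the proof is to choose $Q>0$ and $4Me^{-1/M}<R_1<R_2$ so that $\sup_{t\in[t_1,t_2]}|y_n^{(p)}(t)|\le Q\,w(n,p)$ for all $n,p\in\N$, which I would establish by strong induction on the index $n$, all time-orders $p$ treated at once. For $0\le n\le M-1$ this follows from \eqref{AA}: a Stirling-type comparison $(p!)^\lambda\lesssim(\lambda p)!\,\lambda^{-\lambda p}\le(n+\lambda p)!\,\lambda^{-\lambda p}$ shows that, because $R>(4N)^\lambda$ forces $\lambda R^{N/M}>4M$, one may take $R_2\in(4M,\lambda R^{N/M})$ and then $R_1\in(4Me^{-1/M},R_2)$ and absorb $C$ into $Q$. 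For the induction step, apply $\partial_t^p$ to the recursion. The linear term is immediate: by the induction hypothesis and because $\lambda N=M$,
\[
|\zeta_M|^{-1}\,|y_n^{(N+p)}(t)|\ \le\ Q\,|\zeta_M|^{-1}\,w(n,N+p)\ =\ Q\,(R_1/R_2)^M\,w(n+M,p),
\]
which uses only the fraction $(R_1/R_2)^M<1$ of the allowance and leaves a margin $\big(1-(R_1/R_2)^M\big)Q\,w(n+M,p)$ for the nonlinearity.

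\textbf{The main obstacle is the nonlinear term} $\big(\partial_x^n\partial_t^p[f(x,Y^x)]\big)\big|_{x=0}$, whose control amounts to a nonlinear (Gevrey-)Cauchy estimate for the composition $f(x,Y^x)$, and is where the threshold $4Me^{-1/M}$ originates. I would bound $|A_{\vec p}^{(r)}(0)|\le C_a\,r!\,b^{-|\vec p|}b_2^{-r}$ from \eqref{AB3}, then apply the induction hypothesis to each inner factor $\partial_t^{q_\nu}\partial_x^{t_\nu+i_\nu}y(0,t)$ — the $n$ spatial and $p$ temporal derivatives being split by the Leibniz rule, $\sum_\nu t_\nu\le n$, $\sum_\nu q_\nu=p$, each inner index $t_\nu+i_\nu\le n+M-1<n+M$ already covered by the induction and contributing a factor $Q$ — and then invoke the sub-multiplicativity of the weight,
\[
w(a_1,b_1)\,w(a_2,b_2)\ \le\ w(a_1+a_2,\,b_1+b_2),
\]
which lets the product of the $|\vec p|$ inner weights be replaced by a single weight. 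What remains is a purely combinatorial estimate bounding, for each fixed $(i_1,\dots,i_{|\vec p|})$, the sum over $r,(t_\nu),(q_\nu)$ of $\binom{n}{r,t_1,\dots}\binom{p}{q_1,\dots}r!\,b_2^{-r}$ times that weight by $C_\star^{\,|\vec p|}\,w(n+M,p)$ with a constant $C_\star$ depending only on $R_1,R_2,b_2$; this constant is finite precisely when $R_1>4Me^{-1/M}$, which — via an optimization of $\sum_r\binom nr r!\rho^r$ against $(n+M+\lambda p)!$ (a Stirling computation) — pins down that threshold. Finally one sums over $\vec p$: there are only $\mathcal O(\ell^{M-1})$ vectors with $|\vec p|=\ell$, each weighted by $C_ab^{-\ell}$ with $b>4$ and by $Q^\ell$, so for $Q$ small the whole nonlinear contribution is $\le\big(1-(R_1/R_2)^M\big)Q\,w(n+M,p)$, which closes the induction.

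Once the estimate holds, $y(x,t):=\sum_{n\ge 0}y_n(t)\,x^n/n!$ converges on $[-1,1]\times[t_1,t_2]$ (as $R_1>1$) and may be differentiated term by term; from $\partial_x^{p_1}\partial_t^{p_2}y(x,t)=\sum_{n\ge p_1}y_n^{(p_2)}(t)\,x^{n-p_1}/(n-p_1)!$ and $\sum_{m\ge 0}\binom{m+a}{m}z^m=(1-z)^{-(a+1)}$ with $z=1/R_1$, $a=p_1+\lambda p_2$, one gets $|\partial_x^{p_1}\partial_t^{p_2}y(x,t)|\le Q'(p_1+\lambda p_2)!\,(R_1')^{-p_1}(R_2')^{-\lambda p_2}|\zeta_M|^{p_2/N}$ with $R_1'=R_1-1$, $R_2'=R_2(1-1/R_1)$, $Q'=Q(1-1/R_1)^{-1}$; running the induction with $R_1$ chosen slightly larger from the outset so that $R_1'$ and $R_2'$ still exceed $4Me^{-1/M}$, this is exactly \eqref{AAAAA}, and in particular $y\in G^{1,\lambda}([-1,1]\times[t_1,t_2])$. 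By construction the coefficients satisfy the recursion derived from the PDE, hence $y$ solves \eqref{C1}, and $\partial_x^i y(0,t)=y_i(t)=k_i(t)$ for $0\le i\le M-1$. The genuinely delicate point is the combinatorial estimate in the third paragraph: carrying the two-parameter weight through the nonlinearity while keeping the $\vec p$-sum convergent with a constant independent of $n$ and $p$, and extracting the exact power of $R_1$ that produces the threshold.
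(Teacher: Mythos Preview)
Your strategy—direct majorant construction of the Taylor coefficients $y_n(t)=\partial_x^ny(0,t)$ with a two–parameter weight, proving the full $G^{1,\lambda}$ estimate by induction on the space index—is genuinely different from the paper's. The paper first obtains existence by casting \eqref{DDD10} as a first-order system $\partial_xU=AU+F(x,U)$ and applying an abstract Nirenberg--Nishida theorem (Theorem~\ref{thmexistenceloc}) in a scale of Yamanaka-type Gevrey spaces $X_s$ (Lemmas~\ref{lmAder}--\ref{lemmeF}); the estimate \eqref{AAAAA} is then proved \emph{a posteriori} by the same kind of induction you propose (Appendix, Section~\ref{s:A1}). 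So your route is more elementary, fusing existence and regularity into one argument.

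There is, however, a real gap at the point you yourself flag. Your weight $w(n,p)=(n+\lambda p)!\,R_1^{-n}R_2^{-\lambda p}|\zeta_M|^{p/N}$ is only \emph{barely} sub-multiplicative: $w(a_1,b_1)w(a_2,b_2)=w(a_1{+}a_2,b_1{+}b_2)\big/\binom{a_1+a_2+\lambda(b_1+b_2)}{a_1+\lambda b_1}$. Once you feed this through the Leibniz expansion of a product of $|\vec p|$ factors, you must sum over all multisplits $(t_\nu),(q_\nu)$, and the number of terms grows polynomially in $n,p$ with degree depending on $|\vec p|$. Moreover, since each inner factor carries an extra $i_\nu\le M-1$ space derivatives, the sub-multiplicative bound already lands at $w(n+I,p)$ with $I=\sum_\nu i_\nu$, which exceeds $w(n+M,p)$ by a factor $\sim(n+\lambda p)^{I-M}R_1^{M-I}$ as soon as $I>M$ (e.g.\ $|\vec p|\ge 2$ with large $i_\nu$). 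Neither the fixed spatial gain of $M$ nor the smallness of $Q$ can absorb a polynomial in $n,p$ whose degree grows with $|\vec p|$, so the sum you call ``purely combinatorial'' does \emph{not} admit a bound $C_\star^{|\vec p|}w(n+M,p)$ with $C_\star$ independent of $n,p$.

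The paper's fix is to work with the sharper weight
\[
\widetilde w(n,p)=\frac{(n+\lambda p+q)!}{R_1^{n}R_2^{\lambda p}\,(n+\lambda p+1)^{\mu}},\qquad q=M-1,\ \mu>M+1,
\]
for which a genuine algebra estimate holds (Lemma~\ref{lem3}): the product of two functions obeying the $\widetilde w$-bound with constants $C_1,C_2$ obeys it with constant $K_{q,\mu}C_1C_2$, $K_{q,\mu}<\infty$. With this weight your induction closes exactly as in the paper's Appendix (terms $I_1,I_2,I_3$), the constant remaining bounded thanks to $(R_1/R_2)^M<1$ and the $1/(\lambda n+l+M)$ decay in the nonlinear piece. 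If you graft this modification onto your scheme, your argument becomes correct and essentially reproduces the paper's a posteriori estimate while bypassing the abstract existence machinery.
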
 
The proof of Theorem \ref{thm2} will be given after that some preliminary results are established.
We use the notation $x!=\Gamma(x+1)$ even if $x$ is not an integer. 
\begin{remark}
\label{rkscaling}

 It is sufficient to prove Theorem \ref{thm2} for the unidimensional system \eqref{C1}, i.e  Considering  $ |\zeta _M|=1$ and $[t_1,t_2]=[0,t_2]$. Indeed, the equation 
$\partial _t^N y=Py+f(x,y, \ldots , \partial _x ^{M-1}y)$ is invariant by translation in time, so that we can assume that 
 $[t_1,t_2]=[0,t_2]$.  On the other hand, if  $| \zeta_ M |\in (0,+\infty )\setminus \{ 1\}$, we can use the following scaling argument. 
Set  $\widetilde \zeta_M :=\zeta _M / | \zeta _M|$, 
$\widetilde{P} := |\zeta_{M}|^{-1} P$ and $\widetilde{f} :=|\zeta_{M}|^{-1}f$. Note that $\widetilde{P}$ and $\widetilde{f}$ satisfy the expected assumptions with $|\widetilde{\zeta}_M|=1$. 
For $K_0$ satisfying \eqref{AA} on $[0,t_2]$, define $\widetilde{K}_0(t) :=K_0(|\zeta_M|^{-1/N}t)$. Then  $\widetilde{K}_0$ 
satisfies \eqref{AA} with $|\widetilde \zeta_M|=1$, that is $\vert \widetilde{k}^{ (n) } _i(t)\vert \le C\frac{ (n!)^{\lambda} }{R^{n}}$ on $ [0,|\zeta_M|^{1/N}t_2] $. This allows to define a solution $\widetilde{y}(x,t)$ of \eqref{C1} for $x\in [-1,1]$ and $t\in [0,|\zeta_M|^{-1/N}t_2] $   associated with $\widetilde P$, $\widetilde f$ and $\widetilde K_0$. Then the function
\[ y(x,t):=\widetilde y ( x,  |\zeta _M |^{\frac{1}{N}} t )  , \qquad x\in [-1,1], \ t\in [0,t_2] \]
is a solution of \eqref{C1} associated with $P$, $f$ and $K_0$. 
\end{remark}
\subsection{Abstract existence theorem}
We consider a family of Banach spaces $(X_s)_{s\in [0,1]}$ satisfying for $0\leq s'\leq s\leq 1,$
\bnan
\label{embed}
X_s\subset X_{s'},\\
\label{CC1}
\nor{f}{X_{s'}}\leq \nor{f}{X_s};
\enan
that is, the embedding $ X_s\subset X_{s'}$ for $s'\le s.$


We are concerned with an abstract Cauchy problem:  
\begin{gather*}
\begin{cases}
\partial_x U(x)=\mathrm{T} (x)U(x), &  -1\le x\le 1, \\
U(0)=U^0
\end{cases}
\end{gather*}
where $U^0\in X_1$ and $\big( \mathrm{T}(x)\big)_{x\in [-1,1]}$ is a family  of nonlinear operators
with possible {\em loss of derivatives}. 

The following result, taken from \cite[Theorem 2.2]{LR},  is a  
global wellposedness result. It extends the abstract result  in \cite{nirenberg, nishida} which gives only {\em local } solutions.
\begin{theorem}
\label{thmexistenceloc}
Let $\e \in (0,1/4)$, $D>0$ and a family $(\mathrm{T}(x))_{x\in [-1,1]} $ of nonlinear maps from $X_s$ to $X_{s'}$ for $0\le s'<s\le 1$ satisfying 
\bnan
\nor{\mathrm{T}(x)U}{X_{s'}}& \le &\frac{\e }{s-s'}\nor{U}{X_{s}},
\label{hypoGloc1}
\\
\label{hypoGloc2}
\nor{\mathrm{T}(x)U- \mathrm{T}(x)V}{X_{s'}}& \le &\frac{\e }{s-s'}\nor{U-V}{X_{s}}
\enan
for $0\le s'<s\le 1$, $x\in [-1,1]$ and $U$,$V \in X_s$ 
with $\nor{U}{X_{s}}\le D$, $\nor{V}{X_{s}}\leq D$. Then there exists a number $0<\eta\leq D$ such that for any $U^0\in X_1$ 
with $\nor{U^0}{X_1}\leq \eta$, there exists a 
solution $U\in C([-1,1],X_{s_0})$ for some $s_0\in (0,1)$ of the integral equation
\begin{equation}
\label{equationintegrale}
U(x)=U^0+\int_0^x \mathrm{T}(\tau)U(\tau)d\tau .
\end{equation}
Moreover, we have the estimate 
\[
\nor{U( x )}{X_s}\leq C_1 \left(1 -\frac{ \alpha |x|}{a_\infty (1-s)} \right) ^{-1} \nor{U^0}{X_1}, \quad 
\textrm{\rm for }   \ 0\le s<1, \ |x|<\frac{a_\infty}{\alpha} (1-s),
\]
where $\alpha \in (0,1)$, $a_\infty \in (\alpha ,1)$ and $C_1>0$ are some constants. In particular, we have 
\[
\nor{U( x )}{X_s}\leq C_1 \left(1 -\frac{ 2}{\frac{a_\infty}{\alpha} +1} \right) ^{-1} \nor{U^0}{X_1}, \quad 
\textrm{\rm for }   \ 0\le s\le s_0=\frac{1}{2}(1-\frac{\alpha}{a_\infty}), \ |x| \le 1.
\]
If, in addition,  we assume that 
\be
\label{hypoGloc0}
\textrm{ for all }  U_0\in X_s \textrm{ with } \Vert U_0\Vert _{X_s} \le D,   \textrm{ the map } \tau \in [-1,1]\to \mathrm{T}(\tau ) U_0\in X_{s'} \  \textrm{ is continuous}, 
\ee
then $U$ is the  classical solution  of  
\bneqn
\label{eqnabstraite}
\partial_x U(x)&=&\mathrm{T}(x)U(x), \quad -1\le x\le 1, \\
U(0)&=&U^0.
\eneqn
\end{theorem}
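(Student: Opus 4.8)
The plan is to prove Theorem~\ref{thmexistenceloc} by the Nirenberg--Nishida method of successive approximations in the analytic scale $(X_s)$, in the ``global in $x$'' form: set $U_0(x)\equiv U^0$ and
\[
U_{n+1}(x):=U^0+\int_0^x \mathrm{T}(\tau)U_n(\tau)\,d\tau .
\]
First I would fix once and for all a parameter $\alpha\in(0,1)$, a limiting radius $a_\infty\in(\alpha,1)$, and a strictly decreasing sequence $1=a_0>a_1>a_2>\cdots$ with $a_n\downarrow a_\infty$, for instance $a_n:=a_\infty+(1-a_\infty)2^{-n}$. The whole argument takes place in the regions $\mathcal D_s:=\{x\in[-1,1]:\alpha|x|<a_\infty(1-s)\}$, $0\le s<1$. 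This is where the smallness $\e<1/4$ enters: the intrinsic existence radius of the scheme is of order $\e^{-1}$, hence larger than $1$, so one can indeed pick $\alpha,a_\infty$ with $a_\infty/\alpha>1$; and then the special statements are mere specializations, since $\tfrac{a_\infty}{\alpha}(1-s_0)=\tfrac12\big(\tfrac{a_\infty}{\alpha}+1\big)>1$ for $s_0=\tfrac12(1-\alpha/a_\infty)$, so $[-1,1]\subset\mathcal D_{s_0}$.

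The core is the inductive estimate: there are constants $A>0$, proportional to $\nor{U^0}{X_1}$, and $\beta\in(0,1)$, both independent of $n,s,x$, such that
\[
\nor{U_{n+1}(x)-U_n(x)}{X_s}\ \le\ A\,\beta^{\,n}\Big(1-\tfrac{\alpha|x|}{a_n(1-s)}\Big)^{-1},\qquad 0\le s<1,\ \alpha|x|<a_n(1-s).
\]
For $n=0$ one applies \eqref{hypoGloc1} with the two levels $1>s$ to get $\nor{\mathrm{T}(\tau)U^0}{X_s}\le\tfrac{\e}{1-s}\nor{U^0}{X_1}$ and integrates. For the induction step one writes $U_{n+1}-U_n=\int_0^x[\mathrm{T}(\tau)U_n-\mathrm{T}(\tau)U_{n-1}]\,d\tau$, inserts an intermediate level $s<s'<1$ (the usual midpoint between $s$ and the critical level $\sigma(\tau)$ defined by $\alpha|\tau|=a_{n-1}(1-\sigma(\tau))$), applies the Lipschitz bound \eqref{hypoGloc2}, which loses the factor $\tfrac{\e}{s'-s}$, plugs in the induction hypothesis at level $s'$ on $\{\alpha|\tau|<a_{n-1}(1-s')\}$, and then \textbf{optimizes in $s'$ and integrates in $\tau$}. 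The point of letting $a_n$ \emph{decrease} is that $a_n<a_{n-1}$ creates exactly the room needed so that, after the optimal choice of $s'$ and the $\tau$-integration, the product of the loss factor $\tfrac1{s'-s}$ with the singular factor $(1-\alpha|\tau|/(a_{n-1}(1-s')))^{-1}$ is dominated by $\beta\,(1-\alpha|x|/(a_n(1-s)))^{-1}$ with a \emph{fixed} $\beta<1$ (made small by taking $\e$ small and $a_\infty$ bounded away from $1$). Along the way, choosing $\eta\le D$ small enough in the statement forces $\sup_n\nor{U_n(x)}{X_s}\le D$ on $\mathcal D_s$, so that \eqref{hypoGloc1}--\eqref{hypoGloc2} are legitimately applicable at each step. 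I expect this step --- the precise bookkeeping that converts ``loss of one derivative'' into a convergent geometric series --- to be the main obstacle; the rest is soft.

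Granting the inductive estimate, the telescoping series $\sum_n(U_{n+1}-U_n)$ converges in $X_s$, uniformly on compact subsets of $\mathcal D_s$, to a limit $U$; since $a_n\ge a_\infty$, every denominator is $\ge 1-\alpha|x|/(a_\infty(1-s))$, so summing the geometric series yields $U\in C([-1,1],X_{s_0})$ (indeed $U(\cdot)\in X_s$ continuously for each admissible $s$) together with the quantitative bound $\nor{U(x)}{X_s}\le C_1\big(1-\alpha|x|/(a_\infty(1-s))\big)^{-1}\nor{U^0}{X_1}$ for a suitable $C_1>0$ (using also $\nor{U^0}{X_s}\le\nor{U^0}{X_1}$ from \eqref{CC1}); the ``in particular'' estimates are the case $s=s_0$, $|x|\le1$. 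Passing to the limit in $U_{n+1}(x)=U^0+\int_0^x\mathrm{T}(\tau)U_n(\tau)\,d\tau$ is justified by \eqref{hypoGloc2} and the uniform convergence, giving \eqref{equationintegrale}; applying the same Lipschitz-with-loss estimate to the difference of two solutions and iterating yields uniqueness. Finally, under \eqref{hypoGloc0} the curve $\tau\mapsto\mathrm{T}(\tau)U(\tau)$ is continuous into $X_{s'}$ --- combine \eqref{hypoGloc0} (continuity in $\tau$ with the argument frozen) with \eqref{hypoGloc2} (continuity in the argument, uniform in $\tau$, since $\tau\mapsto U(\tau)\in X_s$ is continuous) --- so $x\mapsto\int_0^x\mathrm{T}(\tau)U(\tau)\,d\tau$ is $C^1$ and $U$ solves \eqref{eqnabstraite} classically.
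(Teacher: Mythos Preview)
The paper does not actually prove this theorem: it is quoted from \cite[Theorem~2.2]{LR} (Laurent--Rosier), and the only remark is that the quantifiers on $D$ have been slightly reordered, with the result being ``a direct consequence of \cite[Proposition~2.3]{LR}''. So there is nothing to compare on the paper's side beyond a citation.

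Your sketch follows the standard Nirenberg--Nishida route, which is exactly the method underlying the cited reference, so in spirit you are reproducing the intended proof. A couple of remarks. First, the particular device of a strictly decreasing sequence $a_n\downarrow a_\infty$ is one way to organize the bookkeeping, but the more common (and cleaner) Nishida formulation works directly with the \emph{fixed} weight $(1-\alpha|x|/(a_\infty(1-s)))^{-1}$: one shows that the map $U\mapsto U^0+\int_0^\cdot \mathrm{T}(\tau)U(\tau)\,d\tau$ is a contraction on a ball of the Banach space of continuous functions equipped with the weighted sup-norm $\sup_{s,x}\big(1-\tfrac{\alpha|x|}{a_\infty(1-s)}\big)\nor{U(x)}{X_s}$, the contraction constant being a universal multiple of $\e/\alpha$, hence $<1$ once $\e<1/4$ and $\alpha$ is chosen suitably. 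This avoids the inductive passage from $a_{n-1}$ to $a_n$ that you flag as ``the main obstacle'', and gives the stated estimate directly. Second, your treatment of the smallness of $\eta$ (to keep all iterates in the ball of radius $D$), of the passage to the limit in the integral equation, and of the upgrade to a classical solution under \eqref{hypoGloc0} is correct and complete.
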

Note that we slightly changed the order of the quantifiers for $D$ to the original statement in \cite[Theorem 2.2]{LR}. The result is a direct consequence of \cite[Proposition 2.3.]{LR} where the quantifiers are written this way.

\subsection{Gevrey type functional spaces}

We define several  $\lambda$  Gevrey spaces for $\lambda>1$ (see \cite{KY,yamanaka}) and we follow closely the ideas developed in \cite{LR} for the heat equation. We shall take $\lambda=M/N$, but for the moment we stay in the generality.\\ 

We  introduce a variant of the Gamma function of Euler  with a parameter $a\in\R$ given by 
\begin{gather}\label{defgammaa}
\Gamma_{\lambda,a}(k)=\begin{cases}
2^{-5}(\Gamma(k+1 -a))^{\lambda}(1+k)^{-2}, &   k\in \N, \  k> |a|+1,  \\
\Gamma_{\lambda}(k),&  k\in \N, \  0\leq k\leq  |a|+1
\end{cases}
\end{gather}
with 
\begin{gather}
 \Gamma_{\lambda}(k)=2^{-5}(k!)^{\lambda}(1+k)^{-2},
 \end{gather}
and $\Gamma$ being the usual Gamma function of Euler which is increasing on $[2,+\infty )$. \\

Clearly, $\Gamma _{\lambda ,0}=\Gamma _\lambda$. 
Note that for $k> |a|+1 $, we have $k +1 -a\geq 2$ and $k+1\geq 2$, so we are in an interval where $\Gamma$ is increasing. Thus  we have for all $k\in \N$
\bnan
\label{Gammacroissance}
&&\Gamma_{\lambda,a}(k)\leq \Gamma_{\lambda}(k),  \quad\textnormal{ if } a\geq 0,\\
\label{Gammacroissance2}
&&\Gamma_{\lambda}(k)\leq \Gamma_{\lambda,a}(k),  \quad  \textnormal{ if } a\leq 0.
\enan
For any $L>0$, we consider the intermediate space of functions in $C^{\infty}(K)$ (where $K=[t_1,t_2]$ with $-\infty <t_1<t_2<\infty$) such that
\bna
\left|u\right|_{L,a}:=\sup_{t\in K, k\in\N} \frac{\left|u^{(k)}(t)\right|}{L^{|k-a|}\Gamma_{\lambda,a}(k)} <\infty .
\ena
Note that for $a=0$, we recover the spaces defined earlier in \cite{yamanaka}, and $\left|u\right|_{L,0}=\left|u\right|_{L}$.

\begin{definition}\label{defGevrey}
We consider the norm defined in  \cite{yamanaka} by Yamanaka 
\bna 
\nor{u}{L}:=\max \left\{2^6\nor{u}{L^{\infty}(K)},2^3L^{-1}\left|u'\right|_L\right\} ,
\ena
and similarly, we define for $a\in \R$
\bna
\nor{u}{L,a}:=\max \left\{2^6\nor{u}{L^{\infty}(K)},2^3L^{-1}\left|u'\right|_{L,a}\right\} .
\ena

For $L>0$ and $0\le a_1<a_2$, we have 
\begin{equation}
\Vert u\Vert _{L,a_1} \le C(L,a_1,a_2, \lambda ) \Vert u\Vert _{L,a_2} \quad \forall u\in G_{L,a_2}^\lambda . 
\label{QQ1}
\end{equation}
Indeed, for $k> a_2+1$, we have $k+1-a_1\geq k+1-a_2\geq 2$ where $\Gamma$ is increasing so that $\Gamma(k+1-a_2)\leq \Gamma(k+1-a_1)$, and therefore $L^{|k-a_2|}\Gamma_{\lambda,a_2}(k)\leq L^{a_1-a_2}L^{|k-a_1|}\Gamma_{\lambda,a_1}(k)$. We can obtain a similar inequality for $k\leq a_2+1$ with different constant which gives then \eqref{QQ1}.\\

We define the Banach spaces  $G_{L,a}^{\lambda}$ and $G_L^\lambda$ as
\begin{gather}
G_{L,a}^{\lambda} :=\{u\in C^{\infty}(K) \quad \text{such 
that} \quad \nor{u}{L,a} <\infty \}
\end{gather} 

and 
\begin{gather}
G_{L}^{\lambda} :=\{u\in C^{\infty}(K) \quad \text{such 
that} \quad \nor{u}{L} <\infty \}.
\end{gather} 

\end{definition}
The space  $G_{L,a}^{\lambda}$ can be seen as 
the space of functions Gevrey $\lambda$ with radius $L^{-1}$ with $a$ derivatives. Roughly, we could think that $u\in G_{L,a}^{\lambda}$ if $D^a u\in G^{\lambda}_L$, even if it is not completely true if $a\notin \N$.

Note that, as a direct consequence of 
\eqref{Gammacroissance}-\eqref{Gammacroissance2}, we have the embeddings $G_{L,a}^{\lambda}\subset G_L^{\lambda}$ if $a\geq 0$ and $G_L^{\lambda} \subset G_{L,a}^{\lambda}$ if $a\leq 0$, together with the inequalities
\bnan
\label{Gacroissance+}\nor{u}{L}\leq \max(L^{a},L^{-a})\nor{u}{L,a}, \textnormal{ if } a\geq 0,\\
\label{Gacroissance-}\nor{u}{L,a}\leq \max(L^{a},L^{-a}) \nor{u}{L}, \textnormal{ if } a\leq 0.
\enan

Furthermore, for any $a\in \R$ and $0<L<L'$, we have the embedding $G_{L,a}^{\lambda}\subset G_{L',a}^{\lambda}$ with
\bnan
\label{GacroissanceL}\nor{u}{L',a}\leq \nor{u}{L,a}.
\enan


The following result  \cite[Theorem 5.4]{yamanaka:CPDE} will be used several times in the sequel.
\begin{lemma} \label{algebre}
\textrm{(Algebra property)} For $L>0$
\be
\nor{uv}{L}\leq \nor{u}{L}\nor{v}{L}\quad \forall u,v\in G_L^\lambda. 
\ee
\end{lemma}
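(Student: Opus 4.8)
The plan is to reduce the statement to the Leibniz formula together with one combinatorial estimate on the weights $\Gamma_\lambda$, following Yamanaka. The crucial ingredient is the convolution inequality
\be
\sum_{j=0}^{k}\binom{k}{j}\Gamma_{\lambda}(j)\Gamma_{\lambda}(k-j)\ \le\ \tfrac12\,\Gamma_{\lambda}(k),\qquad \forall k\in\N. \label{eq:convGamma}
\ee
To establish \eqref{eq:convGamma}, I would expand $\Gamma_\lambda(j)=2^{-5}(j!)^{\lambda}(1+j)^{-2}$, use $j!(k-j)!\le k!$ together with $\lambda-1\ge 0$ to get $\binom{k}{j}(j!)^{\lambda}((k-j)!)^{\lambda}\le (k!)^{\lambda}$, and be left with bounding $\sum_{j=0}^{k}(1+j)^{-2}(1+k-j)^{-2}$. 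Splitting this sum at $j=\lfloor k/2\rfloor$, one of the two factors is $\le 4(1+k)^{-2}$ on each half while the other sums to at most $\pi^2/6$, so the sum is $\le \tfrac{4\pi^2}{3}(1+k)^{-2}$; keeping track of the $2^{-5}$ normalisation turns this into $\tfrac{\pi^2}{24}\Gamma_\lambda(k)\le\tfrac12\Gamma_\lambda(k)$. Inserting the pointwise bounds $|f^{(j)}(t)|\le |f|_L L^{j}\Gamma_\lambda(j)$ and $|g^{(k-j)}(t)|\le |g|_L L^{k-j}\Gamma_\lambda(k-j)$ into Leibniz' rule and summing with \eqref{eq:convGamma} gives the seminorm submultiplicativity
\be
|fg|_L\ \le\ \tfrac12\,|f|_L\,|g|_L,\qquad\forall f,g\in C^\infty(K). \label{eq:subm}
\ee

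Next I would relate $|\cdot|_L$, applied both to a function and to its derivative, to the Yamanaka norm $\nor{\cdot}{L}$. Directly from the definitions, $\nor{f}{L^\infty(K)}\le 2^{-6}\nor{f}{L}$ and $|f'|_L\le \tfrac{L}{8}\nor{f}{L}$. Moreover $|f|_L\le\tfrac12\nor{f}{L}$: the $k=0$ term of $|f|_L$ equals $2^{5}|f(t)|\le 2^{5}\nor{f}{L^\infty(K)}\le \tfrac12\nor{f}{L}$, and for $k\ge 1$ the elementary bound $\Gamma_\lambda(k-1)\le 4\Gamma_\lambda(k)$ (immediate from the explicit formula, since $k^{\lambda}\big(k/(k+1)\big)^2\ge \tfrac14$) yields $|f^{(k)}(t)|/\big(L^{k}\Gamma_\lambda(k)\big)=|(f')^{(k-1)}(t)|/\big(L\cdot L^{k-1}\Gamma_\lambda(k)\big)\le \tfrac{4}{L}|f'|_L\le\tfrac12\nor{f}{L}$.

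Finally I would bound the two entries in $\nor{uv}{L}=\max\{2^6\nor{uv}{L^\infty(K)},\,2^3L^{-1}|(uv)'|_L\}$ separately. For the first, $\nor{uv}{L^\infty(K)}\le\nor{u}{L^\infty(K)}\nor{v}{L^\infty(K)}\le 2^{-12}\nor{u}{L}\nor{v}{L}$, hence $2^6\nor{uv}{L^\infty(K)}\le \nor{u}{L}\nor{v}{L}$. For the second, from $(uv)'=u'v+uv'$, \eqref{eq:subm} and the bounds of the previous paragraph,
\be
|(uv)'|_L\ \le\ \tfrac12\big(|u'|_L|v|_L+|u|_L|v'|_L\big)\ \le\ \tfrac12\Big(\tfrac{L}{8}\cdot\tfrac12+\tfrac12\cdot\tfrac{L}{8}\Big)\nor{u}{L}\nor{v}{L}\ =\ \tfrac{L}{16}\nor{u}{L}\nor{v}{L},
\ee
so $2^3L^{-1}|(uv)'|_L\le\tfrac12\nor{u}{L}\nor{v}{L}$. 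Taking the maximum of the two entries gives $\nor{uv}{L}\le\nor{u}{L}\nor{v}{L}$, which is the claim. The only genuinely delicate point is \eqref{eq:convGamma}: everything else is bookkeeping with the constants deliberately built into the definitions of $\Gamma_\lambda$ and $\nor{\cdot}{L}$ (the powers of $2$ and the polynomial factor $(1+k)^{-2}$), which are tuned exactly so that the universal constants coming from $j!(k-j)!\le k!$ and $\sum_{n\ge 1}n^{-2}=\pi^2/6$ get absorbed.
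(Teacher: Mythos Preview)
Your proof is correct and complete. The paper does not give its own argument for this lemma but simply invokes \cite[Theorem 5.4]{yamanaka:CPDE}; your write-up reproduces precisely the Yamanaka approach that reference contains, namely the convolution estimate \eqref{eq:convGamma} on the weights $\Gamma_\lambda$ (which is the reason for the normalising factor $2^{-5}$ and the polynomial damping $(1+k)^{-2}$ in their definition) combined with Leibniz' rule and the bookkeeping identities between $|\cdot|_L$, $|\cdot'|_L$ and $\nor{\cdot}{L}$.
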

The following result \cite[Lemma 2.6]{LR}  is a variant of \cite[Proposition 2.3]{KY} with spaces containing non-integer ``derivatives". 
\begin{lemma}[Cost of derivatives for Gevrey spaces containing derivatives]\label{coutderivee}

Let $\lambda>0$ and $\delta >0$. Let $q\in \N$ and $a,b\in \R$ with $d=q-a+b>0$. Then there exists some number $C=C(\lambda,\delta,a,b,q)>0$  such that for all $L>0$,  $\alpha>1$ and  $u\in G_{L,a} ^\lambda$, we have 
\begin{equation}
\label{UUU1}
\left|u^{(q)}\right|_{\alpha L,b}\leq \left(C (L^{-d} + L^d) + (1+\delta)\alpha^b L^d \left(\frac{\lambda d}{e\ln \alpha}\right)^{\lambda d}\right) \left|u\right|_{L,a}
\end{equation}
and hence
\begin{equation}
\label{UUU2}
\nor{u^{(q)}}{\alpha L,b}\leq \left(C(L^{-d} +\left\langle L\right\rangle^C)+ (1+\delta)\alpha^b L^d \left(\frac{\lambda d}{e\ln \alpha}\right)^{\lambda d}\right)\nor{u}{L,a}.
\end{equation}
  where we denote $\langle x\rangle := \sqrt{1+x^2}$ for $x\in \R$.
\end{lemma}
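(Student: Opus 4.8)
The plan is to reduce the seminorm estimate \eqref{UUU1} to a pointwise bound, in the index $k$, on a ratio of powers of $L$ and of values of the modified Gamma functions $\Gamma_{\lambda,a},\Gamma_{\lambda,b}$, and then to extract the constant $\bigl(\lambda d/(e\ln\alpha)\bigr)^{\lambda d}$ from a one-variable extremization. Unravelling the definitions,
\[
|u^{(q)}|_{\alpha L,b}=\sup_{t\in K,\,k\in\N}\frac{|u^{(q+k)}(t)|}{(\alpha L)^{|k-b|}\,\Gamma_{\lambda,b}(k)}\le |u|_{L,a}\,\sup_{k\in\N}\rho_k,\qquad
\rho_k:=\frac{L^{|q+k-a|}\,\Gamma_{\lambda,a}(q+k)}{(\alpha L)^{|k-b|}\,\Gamma_{\lambda,b}(k)},
\]
since $|u^{(q+k)}(t)|\le |u|_{L,a}\,L^{|q+k-a|}\,\Gamma_{\lambda,a}(q+k)$. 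So the whole matter reduces to estimating $\rho_k$ uniformly in $k\in\N$, $L>0$ and $\alpha>1$. Two elementary remarks are used repeatedly: by the reverse triangle inequality $\bigl|\,|q+k-a|-|k-b|\,\bigr|\le|q-a+b|=d$, so that $L^{|q+k-a|-|k-b|}\le L^{-d}+L^{d}$ for every $k$; and $\alpha^{-|k-b|}\le1$ since $\alpha>1$.

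Next I would split the supremum at a threshold $k_0=k_0(\lambda,\delta,a,b,q)\in\N$, chosen large as dictated below. For the finitely many indices $k<k_0$, the factor $\Gamma_{\lambda,a}(q+k)/\Gamma_{\lambda,b}(k)$ is bounded by a constant depending only on $\lambda,\delta,a,b,q$, and hence $\rho_k\le C\,(L^{-d}+L^d)$; this accounts for the first term on the right-hand side of \eqref{UUU1}. For $k\ge k_0$ large enough that the upper branch of \eqref{defgammaa} applies to both $\Gamma_{\lambda,a}(q+k)$ and $\Gamma_{\lambda,b}(k)$, the exponent of $L$ in $\rho_k$ equals exactly $d=q-a+b$, the polynomial prefactors satisfy $(1+k)^2/(1+q+k)^2\le1$ because $q\ge0$, and $q+k+1-a=(k+1-b)+d$; consequently
\[
\rho_k\le L^{d}\,\alpha^{b-k}\left(\frac{\Gamma(k+1-b+d)}{\Gamma(k+1-b)}\right)^{\!\lambda},\qquad k\ge k_0.
\]

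The sharp part now rests on two classical facts. First, the Stirling asymptotics $\Gamma(x+d)/\Gamma(x)=x^{d}(1+O(1/x))$ as $x\to\infty$ (used with $x=k+1-b$, together with $(k+1-b)^{d}=k^{d}(1+O(1/k))$) allow one to fix $k_0$ so large that $\bigl(\Gamma(k+1-b+d)/\Gamma(k+1-b)\bigr)^{\lambda}\le(1+\delta)\,k^{\lambda d}$ for all $k\ge k_0$. Second, a one-variable maximization gives $\sup_{s>0}\alpha^{-s}s^{\lambda d}=\bigl(\lambda d/(e\ln\alpha)\bigr)^{\lambda d}$, attained at $s=\lambda d/\ln\alpha$. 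Combining these, for $k\ge k_0$ one gets $\rho_k\le(1+\delta)\,\alpha^{b}L^{d}\,\alpha^{-k}k^{\lambda d}\le(1+\delta)\,\alpha^{b}L^{d}\bigl(\lambda d/(e\ln\alpha)\bigr)^{\lambda d}$. Since $\sup_{k\in\N}\rho_k$ is bounded by the sum of the two regime bounds, \eqref{UUU1} follows.

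Finally, \eqref{UUU2} would follow by unwinding $\nor{v}{\alpha L,b}=\max\{2^{6}\nor{v}{L^{\infty}(K)},\,2^{3}(\alpha L)^{-1}|v'|_{\alpha L,b}\}$ with $v=u^{(q)}$: the first slot is controlled by $\nor{u^{(q)}}{L^{\infty}(K)}\le L^{|q-a|}\Gamma_{\lambda,a}(q)\,|u|_{L,a}$ together with the comparison between $|u|_{L,a}$ and $\nor{u}{L,a}$ coming from the definition of the norm, which only costs powers of $L$ of the type permitted in \eqref{UUU2}; for the second slot one writes $u^{(q+1)}=(u')^{(q)}$, applies \eqref{UUU1} to $u'$ with the \emph{same} $q,a,b$ (hence the same $d$), and uses $|u'|_{L,a}\le2^{-3}L\,\nor{u}{L,a}$ and $\alpha^{-1}\le1$. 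The genuine obstacle is purely quantitative: obtaining the \emph{sharp} leading constant $(1+\delta)$ rather than a harmless $O(1)$ forces one to carry the precise Stirling correction in $\Gamma(x+d)/\Gamma(x)$ and the exact extremiser of $\alpha^{-s}s^{\lambda d}$, and to verify that every lower-order error — the polynomial prefactors in $\Gamma_{\lambda,a},\Gamma_{\lambda,b}$, the passage from $(k+1-b)^{d}$ to $k^{d}$, and the finitely many small-$k$ terms — is absorbed either into the factor $(1+\delta)$ or into the $L$-dependent constant $C(L^{-d}+L^d)$.
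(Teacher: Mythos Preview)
The paper does not give its own proof of this lemma: it is quoted from \cite[Lemma 2.6]{LR} (itself a variant of \cite[Proposition 2.3]{KY}), so there is nothing in the paper to compare against beyond the citation. Your argument is correct and is exactly the natural one: reduce $|u^{(q)}|_{\alpha L,b}\le|u|_{L,a}\sup_k\rho_k$, absorb finitely many small $k$ into $C(L^{-d}+L^d)$ via the reverse triangle inequality on the $L$-exponent, and for large $k$ use $\Gamma(x+d)/\Gamma(x)\sim x^{d}$ together with $\sup_{s>0}\alpha^{-s}s^{\lambda d}=(\lambda d/(e\ln\alpha))^{\lambda d}$ to produce the sharp main term; the passage from \eqref{UUU1} to \eqref{UUU2} by splitting the norm and feeding $(u')^{(q)}$ back into \eqref{UUU1} is also the standard step, and the looser $\langle L\rangle^{C}$ in \eqref{UUU2} is there precisely to swallow the stray $L$-powers coming from the $L^\infty$ slot, as you note.
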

\subsection{Application to the semi-linear PDE}
We write our system in the equivalent form
\begin{gather}
\begin{cases}
\partial _x ^Mu =\frac{1}{\zeta _M}\left( \partial _t^N u -\sum_{j=0}^{M-1}\zeta _j\partial_{x}^j u - f(x,u, \partial _x u,...,\partial_x^{M-1}u)\right) ,&  x\in [-1,1],\  t\in [t_1,t_2], \label{DDD10} \\
U^x(0,t)=K_0(t),& t\in [t_1,t_2], 
\end{cases}
\end{gather}

recalling $U^x(x,t)=\left(u(x,t),\partial_x  u(x,t),...,\partial_x^{M-1}u(x,t)\right)$ and $K_0:=\left(k_0(t),\dots, k_{M-1}(t)\right)$. $|\zeta _M|=1$ will be considered in this section, 
 for more detailed  see  Remark \ref{rkscaling}.\\        

We write \eqref{DDD10} as a first-order system
\bnan\label{eqnU}
\partial_x U &=& A U +F(x,U),\\
\label{cdini}U(0)&=&K_0
\enan
with $U=U^x=(u,\partial_x u,\cdots,\partial_x^{M-1}u)$,
\ba
A  =\left(\begin{array}{ccccc}0 &1&0&\dots&0\\ 
0 &0&1&\dots&0\\ 
\vdots &\vdots&\vdots&\dots&\vdots\\ 
0 &0&0&\dots&1\\ 
\zeta_M^{-1}(\partial_t^N-\zeta_{0} )&-\zeta_M^{-1}\zeta_{1}&\cdots&-\zeta_M^{-1}\zeta_{M-2}&-\zeta_M^{-1}\zeta_{M-1}\end{array}\right), 
\ea
and 
\[
F(x,\vec{u})=\left(\begin{array}{c}0\\ \vdots\\ -\zeta_M^{-1}f(x,\vec{u})\end{array}\right),
\]
where the current vector $\vec{u}:=(u_0,u_1,\ldots,u_{M-1})$ will contain the derivatives.
We decompose $A$ as
\begin{eqnarray*}
A  &=&A_0+A_R\\
&=&
\left(\begin{array}{ccccc}0 &1&0&\dots&0\\ 
0 &0&1&\dots&0\\ 
\vdots &\vdots&\vdots&\dots&\vdots\\ 
0 &0&0&\dots&1\\ 
\zeta_M^{-1}\partial_t^N&0&\cdots&0&0\end{array}\right)+\left(\begin{array}{ccccc}0 &0&0&\dots&0\\ 
0 &0&0&\dots&0\\ 
\vdots &\vdots&\vdots&\dots&\vdots\\ 
0 &0&0&\dots&0\\ 
-\zeta_M^{-1}\zeta_{0} &-\zeta_M^{-1}\zeta_{1}&\cdots&-\zeta_M^{-1}\zeta_{M-2}&-\zeta_M^{-1}\zeta_{M-1}\end{array}\right) .
\end{eqnarray*}

Let $L>0$, we define the space 
\begin{gather}\label{Xl}
{\mathcal X}_L:=\{ U=(u_0,u_1,\ldots,u_{M-1})
\in G_{L, \frac{M-1}{\lambda}}^\lambda\times \ldots \times G_{L, \frac{1}{\lambda}}^\lambda \times G_L^\lambda \}
\end{gather}
with the norm 
\bna
\nor{U}{{\mathcal X}_L}= \nor{ (u_0,u_1,\ldots,u_{M-1}) }{{\mathcal X}_L}=
 \nor{u_0}{L, \frac{M-1}{\lambda}}+ \ldots+\nor{u_{M-1}}{L}=\sum_{j=0}^{M-1}\nor{u_j}{L,\frac{M-j-1}{\lambda}},
\ena
where the norms are those defined in Definition {\ref{defGevrey} with $\lambda=M/N$. 
Note that $u_0$ is more regular than $u_1$ of "$1/\lambda $ derivative". In particular, using that $|\zeta _M|=1$, we have that
\bna
\nor{A_0U}{{\mathcal X}_{L}}=\sum_{j=1}^{M-1}\nor{u_j}{L,\frac{M-j}{\lambda}}+\nor{ \partial_t^N u_0}{L} .
\ena
In the following result,  $L_1$ stands for the inverse of the radius $R$ of the initial datum.
\begin{theorem}
\label{thm4}
Pick any $L_1$ with $0<L_1<\frac{1}{(4N)^\lambda}$. Then there exists a number $\eta>0$ such that for any $K_0\in {\mathcal X}_{L_1}$ 
with $\nor{K_0}{{\mathcal X}_{L_1}}\leq \eta$, there exists a 
solution to \eqref{DDD10}
 in 
$C([-1,1], {\mathcal X}_{L_0})$ for some $L_0>0$. 
\end{theorem}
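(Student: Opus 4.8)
The plan is to deduce Theorem \ref{thm4} from the abstract global well-posedness result Theorem \ref{thmexistenceloc}, applied to the first-order reformulation \eqref{eqnU}--\eqref{cdini}, i.e. to $\partial_x U=\mathrm{T}(x)U$ with $\mathrm{T}(x)U:=AU+F(x,U)=A_0U+A_RU+F(x,U)$ and $U(0)=K_0$ (recall $|\zeta_M|=1$ is the standing assumption of this section, cf. Remark \ref{rkscaling}). For the scale of Banach spaces I would take $X_s:=\mathcal{X}_{L(s)}$, $s\in[0,1]$, where $L$ is the affine function $L(s):=L_1+(L_0-L_1)(1-s)$ for a slope $L_0-L_1>0$ to be fixed along the way. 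Then $L$ is non-increasing, so by the embedding \eqref{GacroissanceL} one has $\mathcal{X}_{L(s)}\subset\mathcal{X}_{L(s')}$ with norm-non-increasing inclusion whenever $s'\le s$; this is exactly \eqref{embed}--\eqref{CC1}. Note that $X_1=\mathcal{X}_{L_1}$ is precisely the space of the data, and that a solution produced by Theorem \ref{thmexistenceloc} lives in $X_{s_0}$ for the explicit $s_0\in(0,1)$ appearing there, so one will set $L_0:=L(s_0)$ at the end.

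The heart of the matter is checking the tame estimates \eqref{hypoGloc1}--\eqref{hypoGloc2} for some $\e\in(0,1/4)$ and some $D>0$. For the top-order part $A_0$: it shifts the component $u_j$ into the $(j-1)$-th slot of $\mathcal{X}_{L}$, which by the very choice of the indices $\tfrac{M-1-j}{\lambda}$ in \eqref{Xl} is a loss of exactly $d:=\tfrac{1}{\lambda}$ "derivative"; and it sends $u_0$ to $\zeta_M^{-1}\partial_t^N u_0$, measured in $G_L^\lambda$, which is again a loss of exactly $d=\tfrac1\lambda$ (with $q=N$) since $\tfrac{M-1}{\lambda}=N-\tfrac1\lambda$. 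Applying Lemma \ref{coutderivee} with this $d$ (so that $\lambda d=1$ and the cost factor is $\bigl(\tfrac{\lambda d}{e\ln\alpha}\bigr)^{\lambda d}=\tfrac{1}{e\ln\alpha}$), and using that for the affine choice $\alpha=L(s')/L(s)=1+\tfrac{(L_0-L_1)(s-s')}{L(s)}$, whence $\tfrac{1}{\ln\alpha}\lesssim\tfrac{L(s)}{(L_0-L_1)(s-s')}$, one obtains a bound of the shape $\|A_0U\|_{X_{s'}}\le\bigl(C_0+\tfrac{C_1\,L(s)^{1+1/\lambda}}{(L_0-L_1)(s-s')}\bigr)\|U\|_{X_s}$. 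The assumption $L_1<(4N)^{-\lambda}$ is what leaves room, after optimizing the slope $L_0-L_1$ (the optimal value being of order $\lambda L_1$), to make the constant strictly below $\tfrac14$; this is where the numerology of the threshold $(4N)^{-\lambda}$ enters.

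The lower-order linear part $A_RU$ loses no derivative (it only moves everything into the last slot), so it maps $\mathcal{X}_{L}$ boundedly into itself via \eqref{Gacroissance+}; and $F(x,U)$ is treated by expanding $f(x,\vec u)=\sum_{|\vec p|>0}A_{\vec p}(x)\vec u^{\vec p}$, estimating each monomial with the algebra property Lemma \ref{algebre}, bounding $\|A_{\vec p}(x)\|_{L}$ for $|x|\le 1<b_2$ through \eqref{estimAp}, and summing the resulting geometric series, which converges once $D$ is small in terms of $b$ and $L_1$. These produce contributions $C_2\|U\|_{X_s}$ and $C_3(D)\|U-V\|_{X_s}$ with $C_3(D)\to0$ as $D\to0$; fitting $C_2\|U\|_{X_s}\le\tfrac{C_2(s-s')}{s-s'}\|U\|_{X_s}$ under the same weight $\tfrac{\e}{s-s'}$ again uses that $L_1$ is small, together with a choice of $\e$, $\delta$, then $D$ made in this order. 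Finally, the continuity hypothesis \eqref{hypoGloc0} for $x\mapsto\mathrm{T}(x)U$ is immediate, since $A$ is $x$-independent and each $A_{\vec p}(x)=\sum_r a_{\vec p,r}x^r$ is continuous on $[-1,1]\subset(-b_2,b_2)$.

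With all hypotheses verified, Theorem \ref{thmexistenceloc} yields a number $\eta>0$ such that, for $\|K_0\|_{\mathcal{X}_{L_1}}\le\eta$, there is a (classical) solution $U\in C([-1,1],X_{s_0})$ of $\partial_x U=\mathrm{T}(x)U$, $U(0)=K_0$. Reading off the first $M-1$ scalar equations shows $U=(u,\partial_x u,\dots,\partial_x^{M-1}u)$ with $u:=u_0$, and the last equation is exactly \eqref{DDD10}; together with $U^x(0,\cdot)=K_0$ this gives a solution of \eqref{DDD10} in $C([-1,1],\mathcal{X}_{L_0})$ with $L_0:=L(s_0)$. The step I expect to be the main obstacle is exactly the bookkeeping of the previous two paragraphs: arranging that the combined derivative loss coming from $A_0$, from $A_R$, and from the linear part of $f$ all fits under a single weight $\tfrac{\e}{s-s'}$ with $\e<\tfrac14$, i.e. that the margin in $L_1<(4N)^{-\lambda}$ survives after optimizing over the slope of $L(\cdot)$ and over $\e$, $\delta$, $D$.
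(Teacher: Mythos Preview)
Your overall strategy is the right one and matches the paper's, but the choice of scale $X_s=\mathcal X_{L(s)}$ with an \emph{affine} $L(s)$ and no extra weight in the norm leaves a genuine gap. The trouble is with the ``harmless'' pieces: the constant $C_0$ in your $A_0$-estimate (coming from the term $C(L^{-1/\lambda}+\langle L\rangle^C)$ in Lemma~\ref{coutderivee}), and the constants produced by $A_R$ and by the linear part of $F$. You try to absorb them via $C_2\|U\|_{X_s}\le \frac{C_2(s-s')}{s-s'}\|U\|_{X_s}\le \frac{C_2}{s-s'}\|U\|_{X_s}$, but then you must have $C_0+C_2+\cdots<\tfrac14$. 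None of these constants is controlled by $L_1$: $C_0$ contains $L^{-1/\lambda}$, which \emph{blows up} as $L_1\to 0$, and $C_2$ carries the fixed coefficients $|\zeta_0|,\dots,|\zeta_{M-1}|$. So ``$L_1$ small'' does not rescue you here, and with your scale there is no free parameter left to make these bounded contributions small.

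The paper fixes this by building an extra damping parameter into the scale: it takes $L(s)=e^{r(1-s)}L_1$ with $r=1/N$ and, crucially, defines $\|U\|_{X_s}:=e^{-\tau(1-s)}\|U\|_{\mathcal X_{L(s)}}$. This yields $\|U\|_{X_{s'}}\le e^{-\tau(s-s')}\|U\|_{X_s}$ and, via $te^{-t}\le e^{-1}$, the key conversion $e^{-\tau(s-s')}\le \frac{e^{-1}}{\tau(s-s')}$. Any operator that is merely bounded on $\mathcal X_{L}$ (so $A_R$, the constant term from Lemma~\ref{coutderivee}, and the Lipschitz part of $F$) then satisfies an estimate $\frac{C}{\tau(s-s')}\|U\|_{X_s}$, which is made $<\frac{\e}{s-s'}$ simply by taking $\tau$ large (Lemmas~\ref{lmAder}, \ref{lmArest}, \ref{lemmeF}). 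The condition $L_1<(4N)^{-\lambda}$ is used only for the genuinely top-order piece of $A_0$, exactly as you anticipated. If you insert this exponential weight (and the exponential $L(s)$), your sketch goes through essentially verbatim; with the affine scale and no weight, it does not.
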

\bnp
In order to apply Theorem \ref{thmexistenceloc}, we introduce a scale of Banach spaces $(X_s)_{s\in [0,1]}$ as follows, 
for $s\in [0,1]$, we set
\ba
\nor{U}{X_s}&=&e^{-\tau (1-s)}\nor{U}{{\mathcal X}_{L(s)}} \quad \textrm{ for } U\in X_s := {\mathcal X}_{L(s)}  \label{PP0}\\
L(s)&=&e^{r(1-s)}L_1,  \label{PP1}
\ea
where $$r=1/N$$ and $\tau>0$ will be chosen thereafter. Note that \eqref{CC1} is satisfied from \eqref{GacroissanceL} 
and the fact that $L(s')>L(s)$ for $s'<s$. Additionally, we have that
\bnan
\label{gainXs}
\nor{U}{X_{s'}}\leq e^{-\tau (s-s')}\nor{U}{X_{s}}.
\enan
The use of Lemmas \ref{lmAder}, \ref{lmArest} and  \ref{lemmeF}  will allow us to select the parameters such that  $\mathrm{T}=A+F$ satisfies the assumptions of Theorem \ref{thmexistenceloc}. Then, we only need to notice that $\nor{K_0}{X_1}= \nor{K_0}{{\mathcal X}_{L_1}}\leq D$ for $\eta=D$ small. 
\enp
\begin{remark}
It is interesting to notice that for Theorem \ref{thm4}, we use the analytic regularity of $f$ in the variables $u_0, \ldots, u_{M-1}$, but only the continuity of $f$ in $x$. The analyticity of $f$ in $x,u_0, \ldots, u_{M-1}$ will be used to prove the additional regularity of the solution in the variable $x$. Also, as noticed in Section \ref{s:GLproof} concerning the Ginzburg-Landau equation, the same result holds for a polynomial function of $u_0,\overline{u_0} \ldots, u_{M-1},\overline{u_{M-1}}$. The crucial part for the existence is the composition of Gevrey functions. 
\end{remark}
\begin{lemma}
\label{lmAder}
Let $L_1<\frac{1}{(4N)^\lambda}$. There exist $\tau_0>0$ (large enough) and $\e_0<1/4$ such that we have the estimates
 \bna
\nor{A_0U}{X_{s'}}&\leq &\frac{\e_0}{s-s'}\nor{U}{X_{s}}, \quad \forall U\in X_s,
\ena
for all $\tau \ge \tau _0$  (as in \eqref{PP0}) and all $s,s'$ with  $0\leq s'<s\leq 1$.
\end{lemma}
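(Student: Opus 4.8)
The plan is to reduce the estimate to the cost-of-derivatives inequality \eqref{UUU2}, applied component by component. First I would unwind the definitions. Writing $U=(u_0,\ldots,u_{M-1})$ and using $|\zeta_M|=1$, we have $A_0U=(u_1,\ldots,u_{M-1},\zeta_M^{-1}\partial_t^Nu_0)$, so that
\[
\|A_0U\|_{\mathcal X_{L(s')}}=\sum_{k=1}^{M-1}\|u_k\|_{L(s'),\frac{M-k}{\lambda}}+\|\partial_t^Nu_0\|_{L(s')},
\qquad
\|U\|_{\mathcal X_{L(s)}}=\sum_{j=0}^{M-1}\|u_j\|_{L(s),\frac{M-j-1}{\lambda}}.
\]
The decisive structural point is that each component of $A_0U$ depends on a \emph{single} component of $U$, so it suffices to bound each summand on the left by the corresponding summand on the right and no factor $M$ is lost. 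For $k\ge1$ I apply Lemma \ref{coutderivee} to $u_k\in G^\lambda_{L(s),\frac{M-k-1}{\lambda}}$ with $q=0$, $a=\frac{M-k-1}{\lambda}$, $b=\frac{M-k}{\lambda}$; for $u_0\in G^\lambda_{L(s),\frac{M-1}{\lambda}}$ I apply it with $q=N$, $a=\frac{M-1}{\lambda}$, $b=0$. Using $\lambda=M/N$ one checks that in \emph{both} cases $d=q-a+b=\frac1\lambda>0$, hence $\lambda d=1$, and the ``loss'' factor in \eqref{UUU2} is simply $\tfrac{1}{e\ln\alpha}$.

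Here, by \eqref{PP1}, $\alpha:=L(s')/L(s)=e^{r(s-s')}$ with $r=1/N$, so $\ln\alpha=\tfrac{s-s'}{N}$ and $\tfrac{1}{e\ln\alpha}=\tfrac{N}{e(s-s')}$. The core of the argument is the elementary bound $\alpha^{b}L(s)^{1/\lambda}\le e$: since $0\le b\le\frac{M-1}{\lambda}$, $0\le s-s'\le1$, $0\le1-s\le1$ and $\lambda N=M$, we get $\alpha^{b}\le e^{(M-1)/M}$ and $L(s)^{1/\lambda}=e^{(1-s)/M}L_1^{1/\lambda}\le e^{1/M}L_1^{1/\lambda}$, whose product is $\le e\,L_1^{1/\lambda}$. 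Therefore the dominant term produced by each application of Lemma \ref{coutderivee} is at most $(1+\delta)\,e\,L_1^{1/\lambda}\cdot\tfrac{N}{e(s-s')}=(1+\delta)N L_1^{1/\lambda}\,\tfrac{1}{s-s'}$, while the residual term $C\big(L(s)^{-1/\lambda}+\langle L(s)\rangle^{C}\big)$ is bounded, \emph{uniformly in $s,s'$ and in $\tau$}, by a constant $\widetilde C_1=\widetilde C_1(L_1,N,\lambda,\delta)$, because $L(s)\in[L_1,e^{1/N}L_1]$ and the exponents $(a,b,q)$ range over a finite set. Summing the $M$ component estimates and bounding each coefficient by the maximum, one obtains
\[
\|A_0U\|_{\mathcal X_{L(s')}}\le\Big(\widetilde C_1+(1+\delta)N L_1^{1/\lambda}\,\tfrac{1}{s-s'}\Big)\|U\|_{\mathcal X_{L(s)}}.
\]

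To finish I reinsert the exponential weights \eqref{PP0}: since $e^{-\tau(1-s')}/e^{-\tau(1-s)}=e^{-\tau(s-s')}$, the last display yields
\[
\|A_0U\|_{X_{s'}}\le e^{-\tau(s-s')}\Big(\widetilde C_1(s-s')+(1+\delta)N L_1^{1/\lambda}\Big)\tfrac{1}{s-s'}\,\|U\|_{X_s},
\]
so it is enough to arrange $e^{-\tau(s-s')}\big(\widetilde C_1(s-s')+(1+\delta)N L_1^{1/\lambda}\big)\le\varepsilon_0$ for some $\varepsilon_0<1/4$. Using $te^{-\tau t}\le\tfrac1{e\tau}$ for $t>0$ and $e^{-\tau t}\le1$, the left-hand side is $\le\tfrac{\widetilde C_1}{e\tau}+(1+\delta)N L_1^{1/\lambda}$. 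Now $L_1<(4N)^{-\lambda}$ gives $N L_1^{1/\lambda}<\tfrac14$; I choose $\delta>0$ small so that still $(1+\delta)N L_1^{1/\lambda}<\tfrac14$, fix any $\varepsilon_0\in\big((1+\delta)N L_1^{1/\lambda},\tfrac14\big)$, and then pick $\tau_0$ so large that $\tfrac{\widetilde C_1}{e\tau_0}\le\varepsilon_0-(1+\delta)N L_1^{1/\lambda}$; this yields the claim for all $\tau\ge\tau_0$.

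The one genuinely delicate point is the constant bookkeeping in the two middle steps: one must verify that the ``loss-free'' part of \eqref{UUU2} is really independent of $\tau$ and of $s,s'$ (so $\tau$ can absorb it), and one must track the powers of $e$ precisely enough to see that $\alpha^{b}L(s)^{1/\lambda}\le e$. This is exactly where the normalization $r=1/N$ and the threshold $(4N)^{\lambda}$ for $1/L_1$ come in, together with the first-order form of $A_0$ — each output component depending on a single input component — which prevents a spurious factor $M$.
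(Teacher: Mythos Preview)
Your proof is correct and follows essentially the same approach as the paper: component-wise application of Lemma~\ref{coutderivee} with $d=1/\lambda$ in every case, the bound $\alpha^{b}L(s)^{1/\lambda}\le e\,L_1^{1/\lambda}$ (which is what you actually use, despite first announcing the weaker $\le e$), and absorption of the residual constant via $te^{-\tau t}\le (e\tau)^{-1}$. The only cosmetic difference is that the paper first derives the ${\mathcal X}_{\alpha L}$ estimate for general $r$ and then observes that $r=1/N$ minimizes the leading constant, whereas you take $r=1/N$ as given from the start.
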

\bnp
By assumption,  $N L_1^{1/\lambda}<1/4$. Pick $\delta>0$ small enough such that 
\bnan
\label{choix14}
(1+\delta)N L_1^{1/\lambda}<1/4,
\enan

applying Lemma \ref{coutderivee} to the $M-1$-first terms of $A_0U$ (namely $u_1,...,u_{M-1}$)   for $\lambda=M/N$ and taking  
$q=0$, $b=\frac{M-j}{\lambda}$ and  $a=\frac{M-j-1}{\lambda}$, so   that $d=\frac{1}{\lambda}>0,$ 
we obtain the existence of some number $C=C_{\delta} >0$ such that for $j=1,...,M-1$
\bna
\nor{u_j}{\alpha L,\frac{M-j}{\lambda}}\leq \left(C(L^{-\frac{1}{\lambda}}+\left\langle L\right\rangle^C)+\frac{1+\delta}{e\ln \alpha}\alpha^{\frac{M-1}{\lambda}} L^{1/\lambda} \right)\nor{u_j}{L,\frac{M-j-1}{\lambda}}, \quad \text{ for } \alpha>1 \quad \text{and} \quad L>0.
\ena
 For the last term of $A_0U$ (namely $\zeta _M^{-1}\partial _t ^N u_0$)  with $\lambda=M/N$ and $\delta >0$, \eqref{choix14} is satisfied, and considering now 
$q=N$, $b=0$, $a=\frac{M-1}{\lambda}$, so  $d=\frac{1}{\lambda}>0 $, 
we obtain the existence of some number $C=C_{\delta} >0$ such that
\bna
\nor{ \partial_t^N u_0}{\alpha L}\leq  \left(C(L^{-\frac{1}{\lambda}}+\left\langle L\right\rangle^C)+\frac{1+\delta}{e\ln \alpha} L^{1/\lambda} \right)\nor{u_0}{L, \frac{M-1}{\lambda}}.
\ena
It gives after summation
\bnan
\label{estimderivA}
\nor{A_0U}{{\mathcal X}_{\alpha L}}&\leq&\left(C(L^{-\frac{1}{\lambda}}+\left\langle L\right\rangle^C)+\frac{1+\delta}{e\ln \alpha}\alpha^{\frac{M-1}{\lambda}} L^{1/\lambda} \right)\nor{U}{{\mathcal X}_{L}},
\enan
uniformly for $\alpha>1$ and $L>0$.\\

Therefore,  from equation \eqref{PP0}, \eqref{PP1}, \eqref{gainXs} and considering the estimate \eqref{estimderivA} with  $L=L(s)$, $\alpha=\frac{L(s')}{L(s)}=e^{r(s-s')} >1$  and $s'<s$.   Hence, 
for $0\le s'<s\le 1$, 
\begin{eqnarray}
\nor{A_0U}{X_{s'}}&\leq & e^{-\tau (s-s')} \left(C(L_1^{-\frac{1}{\lambda}}+\left\langle e^{r}L_1\right\rangle^C)+(1+\delta)\frac{e^{\frac{M-1}{\lambda}r(s-s')} e^{r\frac{1-s}{\lambda}}L_1^{1/\lambda}}{e r(s-s')}\right)\nor{U}{X_{s}} \nonumber\\
&\leq & \left(Ce^{-\tau (s-s')}(L_1^{-\frac{1}{\lambda}}+e^{rC})+(1+\delta)e^{rN}\frac{L_1^{1/\lambda}}{er(s-s')}\right)\nor{U}{X_{s}} \nonumber\\
&\leq &\left(\frac{e^{-1}}{\tau (s-s')}C(L_1^{-\frac{1}{\lambda}}+e^{rC})+(1+\delta)\frac{e^{rN}L_1^{1/\lambda}}{er(s-s')}\right)\nor{U}{X_{s}}
\label{SSS1}
\end{eqnarray}
where we have used $0<s-s'\leq 1$, $0<L_1<1/4$ and
\bnan
\label{estimexp}
e^{-\tau (s-s')} =\frac{\tau (s-s')e^{-\tau (s-s')}}{\tau (s-s')}\leq \frac{e^{-1}}{\tau (s-s')},
\enan
since $te^{-t}\le e^{-1}$ for $t\ge 0$. 
 Minimizing the constant in the second term of the right hand side of \eqref{SSS1} 
  leads to the choice $r=1/N$. 
(Note that the initial space $X_{1}={\mathcal X}_{L_1}$ is independent of the choice of $r$.)
We arrive at the estimate
\bna
\nor{A_0U}{X_{s'}}&\leq &\left(\frac{Ce^{-1}(L_1^{-\frac{1}{\lambda}}+e^{ C/N})}{\tau}  +(1+\delta)N L_1^{1/\lambda}\right)\frac{1}{s-s'}\nor{U}{X_{s}} \cdot
\ena
By \eqref{choix14}, selecting $\tau_0$ large enough so that $\e _0:=\frac{Ce^{-1}(L_1^{-\frac{1}{\lambda}}+e^{ C/N})}{\tau_0}  +(1+\delta)N L_1^{1/\lambda}<1/4$. 
 This  completes the proof of  Lemma \ref{lmAder}. 
\enp 

\begin{lemma}
\label{lmArest}
Let $\e>0$, $r=\frac{1}{N}$ and $L_1>0$. There exists $\tau_0>0$ such that we have the estimates
 \bna
\nor{A_RU}{X_{s'}}&\leq &\frac{\e}{s-s'}\nor{U}{X_{s}} \quad \forall U\in X_s,
\ena
for all $\tau \ge \tau _0$ and all $s,s'$ with  $0\leq s'<s\leq 1$.
\end{lemma}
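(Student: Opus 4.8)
The plan is to exploit that $A_R$ is a \emph{bounded} operator on each space $\mathcal X_L$: it involves no derivatives, only multiplication of the components by the constants $-\zeta_M^{-1}\zeta_j$, so, in contrast with $A_0$, there is no loss of regularity to compensate and Lemma \ref{coutderivee} plays no role. Both the factor $\tfrac1{s-s'}$ and the smallness $\e$ will be produced solely by the exponential weight $e^{-\tau(1-s)}$ built into the norm \eqref{PP0}, through the elementary inequality already used in \eqref{estimexp}.

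First I would record the scale-by-scale boundedness of $A_R$ on $\mathcal X_L$. Since only the last component of $A_RU$ is nonzero, equal to $-\zeta_M^{-1}\sum_{j=0}^{M-1}\zeta_j u_j$, using $|\zeta_M|=1$, the embeddings \eqref{Gacroissance+} with $a=\frac{M-j-1}{\lambda}\ge 0$, and the triangle inequality, I get
\[
\nor{A_RU}{\mathcal X_L}=\nor{\zeta_M^{-1}\sum_{j=0}^{M-1}\zeta_j u_j}{L}\le \Big(\sum_{j=0}^{M-1}|\zeta_j|\max\big(L^{\frac{M-j-1}{\lambda}},L^{-\frac{M-j-1}{\lambda}}\big)\Big)\nor{U}{\mathcal X_L}=:C_R(L)\,\nor{U}{\mathcal X_L}.
\]
Because $L(s)=e^{(1-s)/N}L_1$ ranges over the compact interval $[L_1,e^{1/N}L_1]$ as $s\in[0,1]$ (recall $r=1/N$), the quantity $C_R:=\sup_{s\in[0,1]}C_R(L(s))$ is finite and depends only on $M$, $N$, $L_1$ and the coefficients $\zeta_j$.

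Next I would chain this bound with the monotonicity in $L$. For $0\le s'<s\le 1$ we have $L(s)<L(s')$, so \eqref{GacroissanceL} yields $\nor{U}{\mathcal X_{L(s')}}\le\nor{U}{\mathcal X_{L(s)}}$; combining with the previous display at scale $L(s')$, inserting the weights $e^{-\tau(1-\cdot)}$, and using $1-s'=(1-s)+(s-s')$, I obtain
\[
\nor{A_RU}{X_{s'}}=e^{-\tau(1-s')}\nor{A_RU}{\mathcal X_{L(s')}}\le C_R\,e^{-\tau(s-s')}\,e^{-\tau(1-s)}\nor{U}{\mathcal X_{L(s)}}=C_R\,e^{-\tau(s-s')}\nor{U}{X_s}.
\]
Since $te^{-t}\le e^{-1}$ for $t\ge0$, one has $e^{-\tau(s-s')}\le \dfrac{e^{-1}}{\tau(s-s')}$, hence $\nor{A_RU}{X_{s'}}\le \dfrac{C_Re^{-1}}{\tau}\cdot\dfrac1{s-s'}\nor{U}{X_s}$, and it suffices to take $\tau_0:=C_Re^{-1}/\e$, so that the prefactor is at most $\e$ for every $\tau\ge\tau_0$.

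There is no genuine obstacle here: the estimate is ``soft'' and, unlike in Lemma \ref{lmAder}, requires no optimization over $r$ or $\alpha$ — which is exactly why $r$ has already been frozen to $1/N$ at this stage. The only points that deserve a line of care are the uniform-in-$s$ bound on $C_R(L(s))$ (using that $L(s)$ stays in a compact subinterval of $(0,\infty)$) and the correct bookkeeping of the exponential weights, so that the gain $e^{-\tau(s-s')}$ is cleanly isolated before applying \eqref{estimexp}.
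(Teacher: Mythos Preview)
Your proof is correct and follows essentially the same approach as the paper: bound $A_R$ on $\mathcal X_{L}$ uniformly for $L\in[L_1,e^{1/N}L_1]$ via \eqref{Gacroissance+}, then isolate the factor $e^{-\tau(s-s')}$ from the weights and apply \eqref{estimexp}. The only cosmetic difference is that the paper first writes $\nor{U}{X_{s'}}$ and then invokes \eqref{gainXs}, whereas you pass directly from $\mathcal X_{L(s')}$ to $\mathcal X_{L(s)}$ via \eqref{GacroissanceL}; the two routes are equivalent.
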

\bnp
Using \eqref{Gacroissance+}, we first get that there exists $C>0$ (depending on all the previous constants $L_1$, $M$,...) 
such that for $L\in [L_1,e^rL_1]$, ($|\zeta_M|=1$),
\bnan
\label{estimderivAr}
\nor{A_RU}{{\mathcal X}_{L}}= |\zeta_M|^{-1}\nor{\sum_{j=0}^{M-1} \zeta_ju_j}{L}\leq  \sum_{j=0}^{M-1} \nor{ \zeta_j u_j}{L} \leq C \sum_{j=0}^{M-1} \nor{ u_j}{L,\frac{M-j-1}{\lambda}}=  C\nor{U}{{\mathcal X}_{L}}.
\enan
Applying the previous estimate to $L=L(s')$ and using \eqref{PP0} and \eqref{estimexp}, we obtain
\begin{multline*}
\nor{A_RU}{X_{s'}}
= e^{-\tau (1-s')}\nor{A_RU}{\mathcal{X}_{L(s')}}
\leq \\
C e^{-\tau (1-s')}\nor{U}{{\mathcal X}_{L(s')}}= 
C e^{-\tau (s-s')}\nor{U}{X_{s'}}\leq  C\frac{e^{-1}}{\tau (s-s')} \nor{U}{X_{s}}.
\end{multline*}
It gives the result for $\tau_0$ large enough.
\enp
\begin{lemma}
\label{lemmeF}
Let $f$ be as in \eqref{AB1}-\eqref{AB4}, and let 
$F(x,U)=\left(\begin{array}{c}0\\ -f(x,u_0,u_1,\dots, u_{M-1})\end{array}\right)$ for $x\in [-1,1]$ and $U=(u_0,u_1,\dots, u_{M-1})\in L^\infty (K)^M$ with 
$\sup_{i=0,\dots,M-1} (\Vert u_i\Vert _{L^\infty (K) } )<4$.
Let $r=1/N$, $L_1>0$, and $\e>0$.
Then there exists $\tau _0>0$ (large enough) such that for any $\tau \ge \tau _0$, 
there exists $D>0$ (small enough) such that we have the estimates
\ba
\nor{F(x,U)}{X_{s'}}&\leq &\frac{\e}{s-s'}\nor{U}{X_s},  \label{R1}\\
\nor{F(x,U)-F(x,V)}{X_{s'}}&\leq& \frac{\e}{s-s'} \nor{U-V}{X_s} \label{R2}
\ea
for $0\leq s'<s\leq 1$, and $U=(u_0,u_1,\dots, u_{M-1})\in X_s,V=(v_0,v_1,\dots, v_{M-1})\in X_s$ with
\be
\nor{U}{X_{s}}\le D, \ \nor{V}{X_{s}}\leq D. \label{R3}
\ee
Furthermore, for $0\le s\le 1$ and $U\in X_s$ with 
$\Vert U\Vert _{X_s} \le D$, the map $x\in [-1,1]\to F(x,U)\in X_s$ is continuous. 
\end{lemma}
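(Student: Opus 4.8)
The heart of the matter is the nonlinearity estimate \eqref{R1} and its Lipschitz counterpart \eqref{R2}; the continuity statement will then come almost for free. Since $F(x,U)$ has only its last coordinate nonzero and equals $-\zeta_M^{-1}f(x,u_0,\dots,u_{M-1})$ there (with $|\zeta_M|=1$), by the definition of the norm on $\mathcal X_{L}$ we have $\nor{F(x,U)}{\mathcal X_L}=\nor{f(x,u_0,\dots,u_{M-1})}{L}$, so the whole problem reduces to estimating $\nor{f(x,U)}{L}$ in the single Yamanaka norm $\nor{\cdot}{L}$. The plan is to expand $f$ using the series representation \eqref{formfbis}, namely $f(x,\vec u)=\sum_{|\vec p|>0}A_{\vec p}(x)u_0^{p_0}\cdots u_{M-1}^{p_{M-1}}$, and to estimate the $\nor{\cdot}{L}$-norm of each monomial using the algebra property of Lemma \ref{algebre}. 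Here one must be slightly careful: the $u_j$ live in $G^\lambda_{L,(M-j-1)/\lambda}$, not in $G^\lambda_L$; but since the shift exponents are all $\ge 0$, the embeddings $G^\lambda_{L,a}\subset G^\lambda_L$ for $a\ge 0$ together with \eqref{Gacroissance+} give $\nor{u_j}{L}\le \max(L^{(M-j-1)/\lambda},L^{-(M-j-1)/\lambda})\nor{u_j}{L,(M-j-1)/\lambda}\le C(L)\nor{U}{\mathcal X_L}$ for $L$ ranging in the compact interval $[L_1,e^rL_1]$.

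\textbf{Key steps.} First I would fix $L=L(s')\in[L_1,e^rL_1]$ and write, using Lemma \ref{algebre} repeatedly,
\[
\nor{f(x,U)}{L}\le \sum_{|\vec p|>0}\nor{A_{\vec p}(x)}{L}\prod_{j=0}^{M-1}\nor{u_j}{L}^{p_j}.
\]
Since $A_{\vec p}$ is a (real-analytic, indeed entire-on-a-disc) function of $x$ alone, independent of $t$, its $\nor{\cdot}{L}$-norm over $K$ is just a multiple of $\sup_{[-1,1]}|A_{\vec p}|$, which by \eqref{estimAp} is bounded by $C_a b^{-|\vec p|}(1-1/b_2)^{-1}$; crucially $b>4$. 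Next, using the embedding estimate above, $\prod_j\nor{u_j}{L}^{p_j}\le (C(L_1))^{|\vec p|}\nor{U}{\mathcal X_L}^{|\vec p|}$. Combining, and using that $\nor{U}{X_{s'}}=e^{-\tau(1-s')}\nor{U}{\mathcal X_{L(s')}}$, i.e.\ $\nor{U}{\mathcal X_{L(s')}}=e^{\tau(1-s')}\nor{U}{X_{s'}}$, we get a geometric series in $|\vec p|$ that converges as soon as $C(L_1)\,\nor{U}{\mathcal X_L}<b$, which holds once $\nor{U}{\mathcal X_{L(s')}}=e^{\tau(1-s')}\nor{U}{X_{s'}}\le e^\tau D$ is small — but here is the delicacy: $D$ must be chosen \emph{after} $\tau$, precisely so that $e^\tau D$ is small, which is exactly the order of quantifiers announced in the statement (``there exists $\tau_0$; for $\tau\ge\tau_0$ there exists $D$''). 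Having summed the series, one obtains $\nor{F(x,U)}{X_{s'}}\le C\,\nor{U}{X_{s'}}\le C\,e^{-\tau(s-s')}\nor{U}{X_s}\le C\,\frac{e^{-1}}{\tau(s-s')}\nor{U}{X_s}$ using \eqref{gainXs} and \eqref{estimexp}, whence \eqref{R1} for $\tau_0$ large enough. For \eqref{R2}, the monomial identity $a^p-b^p=(a-b)\sum_{k=0}^{p-1}a^kb^{p-1-k}$ extended to several variables, together with the same geometric-series bookkeeping, yields the Lipschitz estimate under the smallness assumption \eqref{R3} on both $U$ and $V$.

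\textbf{Continuity.} For the last assertion, fix $U\in X_s$ with $\nor{U}{X_s}\le D$. Since $F(x,U)-F(y,U)$ has only its last coordinate nonzero, equal to $-\zeta_M^{-1}(f(x,U)-f(y,U))=-\zeta_M^{-1}\sum_{|\vec p|>0}(A_{\vec p}(x)-A_{\vec p}(y))u_0^{p_0}\cdots u_{M-1}^{p_{M-1}}$, the same estimates give $\nor{F(x,U)-F(y,U)}{\mathcal X_L}\le\sum_{|\vec p|>0}\nor{A_{\vec p}(x)-A_{\vec p}(y)}{L}(C(L_1)\nor{U}{\mathcal X_L})^{|\vec p|}$. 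Each $A_{\vec p}$ is continuous on $[-1,1]$ (being a power series with radius of convergence $b_2>4$), and $\nor{A_{\vec p}(x)-A_{\vec p}(y)}{L}=2^6\sup_K|A_{\vec p}(x)-A_{\vec p}(y)|$ since these are constants in $t$; one checks easily that $x\mapsto A_{\vec p}(x)$ is Lipschitz on $[-1,1]$ with constant $\le C_a' b^{-|\vec p|}$ (differentiate the series termwise), so the sum over $\vec p$ converges uniformly in $x,y$ and goes to $0$ as $y\to x$ by dominated convergence. Hence $x\mapsto F(x,U)\in\mathcal X_L$ is continuous, and since the $X_s$-norm differs from the $\mathcal X_{L(s)}$-norm only by the fixed factor $e^{-\tau(1-s)}$, the map $x\mapsto F(x,U)\in X_s$ is continuous as well.

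\textbf{Main obstacle.} The only genuinely delicate point is the order of quantifiers and the role of $\tau$: the gain factor $e^{-\tau(s-s')}$ (or $e^{-1}/(\tau(s-s'))$) from \eqref{gainXs}--\eqref{estimexp} must beat the loss $e^{\tau(1-s')}$ that appears when passing from the $X_{s'}$-norm back to the $\mathcal X_{L(s')}$-norm inside the convergence condition for the geometric series; this forces $D$ to depend on $\tau$ and to be taken small \emph{after} $\tau_0$ is fixed. Everything else — the algebra property, the embeddings between shifted Gevrey spaces, the analyticity bounds \eqref{estimAp} on the coefficients $A_{\vec p}$ — is routine bookkeeping with the constants $b,b_2>4$ doing exactly the work of making the series over $\vec p$ summable.
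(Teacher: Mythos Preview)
Your proposal is correct and follows essentially the same route as the paper: expand $f$ via \eqref{formfbis}, control each monomial with the algebra property (Lemma~\ref{algebre}) after passing from $G^\lambda_{L,(M-j-1)/\lambda}$ to $G^\lambda_L$ via \eqref{Gacroissance+}, sum the resulting geometric series under the smallness condition $\widehat C D e^\tau < b$ (which is exactly the paper's condition \eqref{AB20}), and then trade the factor $e^{-\tau(s-s')}$ for $\frac{e^{-1}}{\tau(s-s')}$ via \eqref{estimexp}. The only cosmetic difference is that the paper first proves the Lipschitz bound \eqref{R2} and deduces \eqref{R1} from $F(x,0)=0$, whereas you sketch \eqref{R1} first and then \eqref{R2} via the telescoping identity; the continuity argument (Lipschitz of $A_{\vec p}$ on $[-1,1]$ plus dominated convergence of the $\vec p$-sum) is identical.
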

\bnp
The assumption \eqref{AB1} gives $F(x,0)=0$ and therefore \eqref{R1} follows from \eqref{R2}. Thus it is sufficient to prove \eqref{R2}. Pick 
$0\le s'<s\le 1$, $D>0$ and $U,V\in X_s$ satisfying \eqref{R3}. Then, the definition \eqref{formfbis} of $f$ gives 
\begin{eqnarray*}
\Vert F(x,U)-F(x,V) \Vert _{X_{s'}}
&=& 
\left\| 
-\left( 
\begin{array}{c}
0\\
f(x,U)-f(x,V)
\end{array}
\right) 
\right\| _{X_{s'}}\\
&= &  e^{-\tau (1-s') } \Vert f(x,U)-f(x,V)\Vert _{L(s')} \\
&\le& e^{-\tau (1-s')} \sum_{|\vec{p}|>0} \Vert  A_{\vec{p}}(x)  [\prod_{j=0}^{M-1}u_j^{p_j}  -\prod_{j=0}^{M-1}v_j^{p_j}] \Vert  _{L(s')}\\
&\le& e^{-\tau (1-s')}\sum_{|\vec{p}|>0} \vert A_{\vec{p}}(x)\vert \sum_{j=0}^{M-1}  \Vert u_j^{p_j}-v_j^{p_j}\Vert _{L(s')} \prod_{i\neq j}\left(\Vert u_i \Vert _{L(s')}^{p_i}+\Vert v_i \Vert _{L(s')}^{p_i}\right) \end{eqnarray*} 
where we used the triangle inequality, Lemma \ref{algebre} and an iteration argument. 
Note that, by \eqref{Gacroissance+}, we have for a constant ${\widehat C}={\widehat C}(L_1, M)\ge 1$ and any $0\le s'<1$
\be
\label{TTT1}
\sum_{i=0}^{M-1}\nor{u_i}{L(s')}\leq {\widehat C }\sum_{i=0}^{M-1} \nor{u_i}{L(s'),\frac{M-i-1}{\lambda}}
\leq {\widehat C} e^{  \tau(1-s')}\nor{U}{X_{s'}}\leq  {\widehat C}D e^{\tau}, 
\ee
and similarly $\sum_{i=0}^{M-1}\nor{v_i}{L(s')}\leq {\widehat C}D e^{\tau}$. Using again Lemma \ref{algebre}, for $j= 0,\dots,M-1$,
we obtain
\begin{eqnarray*}
\Vert u_j^{p_j} - v_j^{p_j} \Vert _{L(s')} 
&=& \Vert (u_j -v_j )(u_j^{p_j-1}  +u_j^{p_j-2}v_j +\cdots + v_j^{p_j-1} ) \Vert _{L(s')}  \\
&\le& \Vert u_j -v_j \Vert _{L(s')} 
\left( 
\Vert u_j\Vert _{L(s')}^{p_j-1} + \Vert u_j\Vert _{L(s')} ^{p_j-2} \Vert v_j \Vert _{L(s')}  + \cdots + 
\Vert v_j \Vert _{L(s')} ^{p_j-1} 
\right) \\
&\le& p_j( {\widehat C} De^{\tau})^{p_j-1} \Vert u_j-v_j\Vert _{L(s')}.
\end{eqnarray*}
It follows that 
\begin{eqnarray}
\Vert F(x,U) - F(x,V) \Vert _{X_{s'}} &\le&2^{M-1}e^{-\tau (1-s')}\sum_{|\vec{p}|>0} \vert A_{\vec{p}}(x)\vert \sum_{j=0}^{M-1} p_j \Vert u_j-v_j\Vert _{L(s')}( {\widehat C}De^{\tau})^{|\vec{p}|-1}\nonumber\\
&\le& C(L_1,N,M) \Vert U-V\Vert _{X_{s'}}\sum_{|\vec{p}|>0} \vert A_{\vec{p}}(x)\vert \sum_{j=0}^{M-1} p_j({\widehat C}De^{\tau})^{|\vec{p}|-1} \nonumber  \\
\label{ABC4}&=:&  C(L_1,N,M)  \Vert U-V\Vert _{X_{s'}} S.
\end{eqnarray}
where we have used \eqref{Gacroissance+}. Let us estimate the term
$$S:=\sum_{|\vec{p}|>0} \vert A_{\vec{p}}(x)\vert \sum_{j=0}^{M-1} p_j({\widehat C}De^{\tau})^{|\vec{p}|-1},$$  set $C_a':=C_a/(1-b_2^{-1})$. Estimate \eqref{estimAp} becomes
\[
|A_{\vec{p}}(x)| \le \frac{C_a}{b^{|\vec{p}|}} \, \frac{1}{1-\frac{|x|}{b_2}} \le \frac{C_a'}{b^{|\vec{p}|}}, 
\quad \textrm{ for } |x |\le 1, 
\]
so that we have 
\begin{equation*}
S\le \sum_{|\vec{p}|>0}\frac{C_a'}{b^{|\vec{p}|}}|\vec{p}| (CDe^{\tau})^{|\vec{p}|-1}\le \sum_{R=1}^{+\infty}\sum_{|\vec{p}|=R}\frac{C_a'}{b^{R}}R (CDe^{\tau})^{R-1}.
\end{equation*}
Using the fact that $\sum_{|\vec{p}|=R} 1 \le C(R+1)^{M-1}$ and that for $0<\rho<1$,
\[
\sum_{R=0}^\infty (R+1)\cdots (R+M) \rho ^R =\frac{d^M}{d\rho ^M }\sum_{R=0}^\infty \rho ^{R+M} =\frac{d^M}{d\rho ^M}
\frac{\rho ^M}{1-\rho } =P(\rho ) (1-\rho )^{-M-1},\]   for some $ P\in \R [X]$. We obtain 
\begin{eqnarray*}
S&\le& C(C_a',b,M)\sum_{R=1}^{+\infty}(R+1)^{M-1} R \left(\frac{\widehat{C}De^{\tau}}{b}\right)^{R-1} \\
&\le& C(C_a',b,M)\sum_{R=0}^{+\infty} (R+1)\cdots (R+M) \left(\frac{\widehat{C}De^{\tau}}{b}\right)^{R}\\
&\le& C(C_a',b,M) \left(1-\frac{\widehat{C}De^\tau}{b}\right)^{-M-1} \le C(C_a',b,M),
\end{eqnarray*} 
provided that 
\be
\label{AB20}
D\le \frac{ b e^{-\tau} }{2\widehat{C}(L_1,M)}\cdot
\ee
Therefore, using \eqref{gainXs}, \eqref{estimexp} and \eqref{ABC4}, we infer that \eqref{AB20} implies
\begin{eqnarray*}
\Vert F(x,U)-F(x,V)\Vert _{X_{s'}} 
&\le&  C(C_a',b,M,N,L_1) \Vert U-V\Vert _{X_{s'}}  \\
 &\le& C(C_a',b,M,N,L_1) e^{-\tau (s-s')} \Vert U-V\Vert _{X_s} \\
&\le& \frac{C(C_a',b,M,N,L_1)}{e} \, \frac{1}{\tau (s-s')} \Vert U-V\Vert _{X_s}   \cdot
\end{eqnarray*}  
To complete the proof of \eqref{R2}, it is sufficient to pick $\tau \ge \tau _0$ with $\tau _0$ such that  
$ \frac{C(C_a',b,M,N,L_1)}{e \tau_0}  \le \epsilon $,
 and $D$ as in \eqref{AB20}.\\
  
For given $0\le s\le 1$ and $U=(u_0,u_1,..., u_{M-1})\in X_s$ with 
$\Vert U\Vert _{X_s} \le D$, let us prove that the map 
$x\in [-1,1]\to F(x,U)\in X_s$ is continuous.   Pick any $x,x'\in [-1,1]$. 
From the mean value theorem, we have for $r\in \N$ such that 
$|x^R-x'^R|\le R|x-x'|$ with $R\in \N$,
\[
| A_{\vec{p}}(x)-A_{\vec{p}}(x') |\le |x-x'|\sum_{R\in \N} \frac{RC_a}{b^{|\vec{p}|}b_2^R}
=\frac{C_a}{b^{|\vec{p}|} b_2 }\left(1-\frac{1}{b_2}\right)^{-2} |x-x'|.
\]
We infer that 
\begin{eqnarray*}
\Vert F(x,U)-F(x',U)\Vert _{X_{s}} 
&=& e^{-\tau (1-s)} \Vert f(x,u_0,\cdots,u_{M-1})-f(x',u_0,\cdots,u_{M-1})\Vert _{L(s)} \\
&\le& \sum_{|\vec{p}|>0} |A_{\vec{p}}(x)-A_{\vec{p}}(x')| 
\Vert u_0^{p_0}u_1^{p_1},\cdots u_{M-1}^{p_{M-1}}\Vert _{L(s)} \\
&\le& 
\frac{C_a}{b_2}   \left(1-\frac{1}{b_2}\right)^{-2} |x-x'|
\sum_{|\vec{p}|>0} \frac{\left(\widehat{C}(L_1,M)De^{\tau}\right)^{|\vec{p}|}}{b^{|\vec{p}|}},
\end{eqnarray*}
due to Lemma \ref{algebre} and \eqref{TTT1}, the last series being convergent when \eqref{AB20} is fulfilled. This proves the continuity of the map $x\in [-1,1]\to F(x,U)\in X_s$.
\enp

We are in a position to prove Theorem \ref{thm2}.
\bnp[Proof of Theorem \ref{thm2}]
By Remark \ref{rkscaling}, we can assume $|\zeta_M|=1$. Let $f=f(x,\vec{y})$ be as in \eqref{AB1}-\eqref{AB4}, 
$ - \infty < t_1 < t_2< + \infty$ and $R>(4N)^{\lambda}$. Pick $k_0,k_1,\cdots,k_{M-1}\in G^{\lambda}( [t_1,t_2] )$ such that 
\eqref{AA} holds. We will show that Theorem \ref{thm4} can be applied provided that $C$ is small enough.   
Pick $L_1 \in (1/R, 1/(4N)^\lambda)$. Let $\eta =\eta (L_1)>0$ be as in Theorem \ref{thm4}. Let
$K_0=(k_0,k_1,\cdots,k_{M-1})$. We have to show that 
\[
\Vert K_0\Vert _{{\mathcal X}_{L_1}}=\sum_{j=0}^{M-1}\nor{k_i}{L_1,\frac{M-j-1}{\lambda}} \le \eta,
\]
for $C$ small enough. Thanks to \eqref{QQ1} and up to a change of $\eta(L_1)$ by a smaller constant, it is sufficient to have for any $i=0,\cdots,M-1$,
\ba
\nor{k_i}{L_1,\frac{M-1}{\lambda}}        &\le& \frac{\eta}{2}. \label{YY1}
\ea
Recall that 
\ba
\Vert f\Vert _{L_1,\frac{M-1}{\lambda}} &=& \max \left( 2^6 \Vert f\Vert _{L^\infty ( [t_1,t_2] )}, 2^3L_1^{-1} 
\sup_{t\in [t_1,t_2],n\in \N} \frac{\vert f^{(n+1)} (t)\vert }{ L_1^{ |n-\frac{M-1}{\lambda}| } \Gamma _{\lambda,\frac{M-1}{\lambda}} (n)} \right) , \label{YY3}
\ea
where 
\[
\Gamma _{\lambda, \frac{M-1}{\lambda}} (n) = 
\left\{ 
\begin{array}{ll}
2^{-5} \big( \Gamma (n+1-\frac{M-1}{\lambda}) \big) ^\lambda (1+n)^{-2}, &\textrm{ if } n> \frac{M-1}{\lambda}+1, \\
2^{-5} (n!)^{\lambda} (1+n)^{-2}, &\textrm{ if } 0\le n\le \frac{M-1}{\lambda}+1.
\end{array}
\right. 
\]


Then, if follows that \eqref{YY1} is satisfied provided that 
\begin{eqnarray}
 \Vert k_i\Vert _{L^\infty ( [t_1,t_2] )} &\le& 2^{-7}\eta, \label{YY5}\\
 \Vert k_i ^{(n+1)} \Vert _{L^\infty ( [t_1,t_2] )} &\le& 2^{-4}\eta   L_1^{1+|n-\frac{M-1}{\lambda} |} 
\Gamma _{\lambda, \frac{M-1}{\lambda}} (n), 
  \quad \forall n\in \N.  \label{YY6}
\end{eqnarray}
Since $\Gamma (n+1-\frac{M-1}{\lambda})\sim \Gamma (n+1)/ n^\frac{M-1}{\lambda} \sim n! /n^\frac{M-1}{\lambda}$ as $n\to +\infty$, we have that  $\big( \Gamma (n+1-\frac{M-1}{\lambda}) \big) ^{\lambda}\sim (n!)^\lambda/n^{M-1}$. Thus, the r.h.s. of \eqref{YY6}
is equivalent to $2^{-9} \eta L_1 ^{n+1-\frac{M-1}{\lambda}}  (n!)^\lambda n^{-(M+1)}$  as $n\to +\infty$. Using 
\eqref{AA} and   $L_1>1/R$, we have that \eqref{YY6} holds if $C$ is small enough. 
The same is true for \eqref{YY5}.\\ 

We infer from Theorem \ref{thm4} the existence of a solution $U=(y,\partial _x y,\cdots,\partial _x^{M-1} y) \in C([-1,1], X_{s_0})$ (for some 
$s_0\in (0,1)$) of \eqref{DDD10}. Let us check that $y\in C^\infty ([-1,1]\times [t_1,t_2])$.
To this end, we prove by induction on $n\in \N$ the following statement 
 \be
 \label{PPP1}
 U\in C^n([-1,1], C^k([t_1,t_2])^M),  \quad \forall k\in \N . 
 \ee
The assertion \eqref{PPP1} is true for $n=0$,  since $X_{s_0}\subset C^k([t_1,t_2])^M$ for all $k\in \N$. 
 Assume that \eqref{PPP1} is true for some $n\in \N$.
Since $A$ is a continuous linear map 
from $C^{k+N} ([t_1,t_2])^M$ into $C^k ([t_1,t_2])^M$ for all $k\in \N$, we have that 
\[
AU\in C^n([-1,1],C^k([t_1,t_2])^M), \quad   \ \forall k\in \N .
\]
On the other hand, as $f$ is analytic and hence of class $C^\infty$, we infer from \eqref{PPP1} that 
$F(x,U)\in C^n([-1,1],$ $C^k([t_1,t_2])^M )$ for all $k\in \N$.   Since $\partial _x U=AU+F(x,U)$, we obtain that 
\eqref{PPP1} is true with $n$ replaced by $n+1$. Therefore, 
$y\in C^\infty ([-1,1]\times [t_1,t_2])$.
Finally, the proof that   $y\in  G^{1,\lambda}([-1,1]\times [t_1,t_2])$,    
is given in Appendix \ref{s:A1}, which uses some estimates of the next section, with eventually a stronger smallness assumption on the initial data.\enp
\section{Correspondence between the space derivatives and the time derivatives}
\label{section3}

We would like to know the relationship between the time derivatives and the space derivatives of any solution  of a general nonlinear equation given by  
\be
\label{A1}
\partial ^{N}_t y = P y + f(x,\yx)
\ee
where $f=f(x,\yx)$ is of class $C^\infty$ on $\R ^{M+1}$.\\ 

When $f=0$ and $Py=\partial _x^My$, then it is easy to see that
\begin{equation}
\label{J1}
\partial _t ^{nN+j}Y^x=\partial _x^{nM} \partial _t ^jY^x, \quad \forall  j\in \{0, ..., N-1\}, \ \forall n\in \N.  
\end{equation}

It follows that for any $(x_0,t_0)$  the determination of the jet $(\partial _t ^n Y^x (x_0,t_0) )_{n\ge 0} $ is equivalent to the determination of the jet 
$(\partial _x ^nY^t (x_0,t_0) )_{n\ge 0}$. 
In the general case ($f=f(x,Y^x)$ and $Py=\sum_{j=0}^M\zeta _j \partial _x ^j y$), the relation \eqref{J1} may not be true. Nevertheless, there is still a 
one-to-one correspondence between 
the jet $(\partial _t ^n Y^x (x_0,t_0) )_{n\ge 0} $ and the jet $(\partial _x ^nY^t (x_0,t_0) )_{n\ge 0}$.

Introduce some notations. For given $-\infty <  t_1 \le \tau \le t_2<+\infty$, we set 
\begin{eqnarray}
{\mathcal S}&:=& \{ y\in C^\infty ([-1,1]\times [t_1,t_2]): \ 
y \textrm{ satisfies }  \eqref{A1} \textrm{ on } [-1,1]\times [t_1,t_2] \} , \\
{\mathcal J}^t &:=& \{ (\partial _t ^n \yx(0,\tau ))_{n\ge 0}: \
\yx =(y,\partial_xy, ..., \partial ^{M-1}_x y), \ 
\ y\in {\mathcal S} \} \subset (\R^{M})^{\N}, \\
{\mathcal J}^x &:=& \{ (\partial _x ^n \yt(0,\tau ))_{n\ge 0}: \
\yt =(y,\partial_t y, ..., \partial ^{N-1}_t y), \ 
\ y\in {\mathcal S} \} \subset (\R^{N})^{\N}. 
\end{eqnarray}
The set ${\mathcal J}^t$ (resp. ${\mathcal J}^x$), which stands  for the set of sequences of
{\em time derivatives} (resp. {\em space derivatives}) at $(0,\tau)$ of $Y^x$ (resp. $Y^t$) for smooth solutions $y$  of \eqref{A1}, is a subset
of $(\R ^M) ^\N$  (resp. $(\R ^N) ^\N$) that we will not determine explicitly. 


\begin{proposition}
\label{prop1}
Let $-\infty < t_1 \le \tau \le t_2 < +\infty$ and 
assume that $f\in C^\infty (\R ^{M+1})$. Then there exists a map
$\Lambda :  (\R^{N})^{\N}\to  (\R^{M})^{\N}$ whose restriction (still denoted by $\Lambda$) $\Lambda: \ {\mathcal J}^x\to {\mathcal J}^t$ is a bijection such that for 
any $y\in C^\infty ([-1,1]\times [t_1,t_2])$ satisfying \eqref{A1} on $[-1,1]\times [t_1,t_2]$, we have $(\partial _t ^n \yx (0,\tau ))_{n\ge 0} = 
\Lambda \left(  (\partial _x ^n \yt(0,\tau )) _{n\ge 0} \right)$, where 
$\yx =(y,\partial_xy, ..., \partial ^{M-1}_x y)$ and 
$\yt =(y,\partial_t y, ..., \partial ^{N-1}_t y)$. 
\end{proposition}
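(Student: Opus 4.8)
The plan is to build $\Lambda$ directly from the functions $J_l$ of Lemma \ref{lmdefJlintro}, and then deduce bijectivity of its restriction from a bootstrap argument on mixed partial derivatives at $(0,\tau)$.

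\textbf{Construction of $\Lambda$ and surjectivity.} Given the maps $J_l\colon[-1,1]\times(\R^N)^{m(l)+1}\to\R^M$ provided by Lemma \ref{lmdefJlintro}, I would define, for an arbitrary sequence $(z_n)_{n\ge0}\in(\R^N)^\N$,
\[
\Lambda\big((z_n)_{n\ge0}\big):=\Big(J_l\big(0,z_0,z_1,\dots,z_{m(l)}\big)\Big)_{l\ge0}\in(\R^M)^\N .
\]
This is well defined on all of $(\R^N)^\N$. If $y\in\mathcal S$, evaluating \eqref{propJlintro} at $x=0$ and $t=\tau$ gives $\partial_t^l\yx(0,\tau)=J_l\big(0,\yt(0,\tau),\partial_x\yt(0,\tau),\dots,\partial_x^{m(l)}\yt(0,\tau)\big)$ for every $l\in\N$, i.e. the asserted identity $(\partial_t^n\yx(0,\tau))_{n\ge0}=\Lambda\big((\partial_x^n\yt(0,\tau))_{n\ge0}\big)$; in particular $\Lambda(\mathcal J^x)\subseteq\mathcal J^t$. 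Surjectivity of $\Lambda\colon\mathcal J^x\to\mathcal J^t$ is then immediate, since any element of $\mathcal J^t$ is of the form $(\partial_t^n\yx(0,\tau))_{n\ge0}$ for some $y\in\mathcal S$, hence equals $\Lambda$ of $(\partial_x^n\yt(0,\tau))_{n\ge0}\in\mathcal J^x$.

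\textbf{The bootstrap lemma.} The core step, from which injectivity will follow, is that for every $y\in\mathcal S$ the whole family of numbers $\partial_t^a\partial_x^b y(0,\tau)$, $(a,b)\in\N^2$, is determined by the time-jet $(\partial_t^n\yx(0,\tau))_{n\ge0}$ through universal rules. I would prove by strong induction on $b$ that there exist smooth maps $\Phi_{a,b}$, not depending on $y$, such that $\partial_t^a\partial_x^b y(0,\tau)$ equals $\Phi_{a,b}$ evaluated at finitely many entries of the time-jet. For $0\le b\le M-1$ this is trivial: $\partial_t^a\partial_x^b y(0,\tau)$ is the $a$-th time derivative at $(0,\tau)$ of the $(b+1)$-th component of $\yx$, hence an entry of the time-jet. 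For $b\ge M$, solve \eqref{A1} for the top spatial derivative,
\[
\partial_x^M y=\zeta_M^{-1}\Big(\partial_t^N y-\sum_{j=0}^{M-1}\zeta_j\partial_x^j y-f(x,\yx)\Big),
\]
apply $\partial_t^a\partial_x^{b-M}$, and evaluate at $(0,\tau)$. The resulting terms $\partial_t^{a+N}\partial_x^{b-M}y(0,\tau)$ and $\partial_t^a\partial_x^{b-M+j}y(0,\tau)$ (for $0\le j\le M-1$) all have spatial order $\le b-1$, so they are covered by the induction hypothesis; the rise in the time order is harmless, since the hypothesis at spatial order $b-M<b$ is available for all time orders.

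\textbf{Main obstacle and conclusion.} The delicate term is $\partial_t^a\partial_x^{b-M}\big(f(x,\yx)\big)\big|_{(0,\tau)}$, and controlling it is where the work sits. Using the multivariate Faà di Bruno formula, I would write it as a finite sum of terms of the form $\big(\text{a partial derivative of }f\text{ at }(0,\yx(0,\tau))\big)\cdot\prod_\ell\partial_t^{a_\ell}\partial_x^{d_\ell}y(0,\tau)$, with $\sum_\ell a_\ell\le a$ and each $d_\ell\le(b-M)+(M-1)=b-1$, because differentiating $f(x,\yx)=f(x,y,\partial_xy,\dots,\partial_x^{M-1}y)$ a total of $b-M$ times in $x$ raises the spatial order of each $\partial_x^j y$ by at most $b-M$ while $j\le M-1$. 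Hence every factor is covered by the induction hypothesis, and the coefficient $\big(\text{partial derivative of }f\big)(0,\yx(0,\tau))$ is a fixed smooth function of $\yx(0,\tau)$ (part of the time-jet) since $f\in C^\infty(\R^{M+1})$. This closes the induction. Consequently, if $y_1,y_2\in\mathcal S$ have the same time-jet at $(0,\tau)$, then $\partial_t^a\partial_x^b y_1(0,\tau)=\partial_t^a\partial_x^b y_2(0,\tau)$ for all $(a,b)$, so in particular the two sequences $(\partial_x^n\yt(0,\tau))_{n\ge0}$ attached to $y_1$ and $y_2$ coincide. Applied to two elements of $\mathcal J^x$ having the same image under $\Lambda$ — which, by the identity of the first paragraph, means the underlying solutions share the same time-jet — this yields injectivity of $\Lambda\colon\mathcal J^x\to\mathcal J^t$. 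Together with the surjectivity established above, $\Lambda$ restricts to a bijection $\mathcal J^x\to\mathcal J^t$, which is the claim.
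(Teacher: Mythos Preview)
Your proof is correct and follows essentially the same route as the paper. The paper's one-line proof simply cites Lemma~\ref{lem1}, whose formula $\partial_t^{Nn+j}\partial_x^k y = P^n\partial_t^j\partial_x^k y + H_l^k(x,Y^{x,t}_{Mn+k-1,N-1})$ both defines $\Lambda$ (via the $J_l$'s, exactly as you do) and, read the other way, lets one solve for $\partial_x^{Mn+k}\partial_t^j y$ in terms of the time-jet and lower spatial derivatives; your bootstrap on $b$ is precisely this inversion written out from the PDE rather than from the packaged lemma, so the two arguments coincide in substance.
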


\begin{proof}
Proposition \ref{prop1} is a consequence of  Lemma \ref{lem1} (see below) which, roughly speaking, consists in taking sufficiently many derivatives in \eqref{A1}. 
\end{proof}
\begin{nota}
\label{defYxt}
 The space $(\R^{q+1})^{p+1}$ will be denoted $\E{p}{q}$. The current vector in $\E{p}{q}$ will be denoted $\Y{p}{q}\in \E{p}{q}$ when a confusion may occur, but very often merely $Y$ to make notations easier.
 
 For $y\in C^\infty ([-1,1]\times [t_1,t_2])$ and $p,q\in \N$, we denote the vector $\yxt{p}{q}(y):= (\yt_{q},\partial_{x}\yt_{q},\dots, \partial^{p}_x \yt_{q})\in \E{p}{q}$ with $Y^t_{q}(x,t)=(y(x,t),\partial_t y(x,t),...,\partial_t^{q}y(x,t))$ as it was defined in \eqref{Yt}. Most of the time, when only one function $y$ is concerned, we will write $\yxt{p}{q}$.
\end{nota}

\begin{lemma}
\label{lem1}
Let $f\in C^\infty (\R ^{M+1})$ and $l,k\in \N $ with $l=Nn+j$ for some $0\le j <N$ and $n\in \N$.  Then there exists a  smooth function  
 $H_{l}^{k}:\R\times \E{Mn+k-1}{N-1}\to \R$ 
 such that any solution $y\in C^\infty ([0,1]\times [t_1,t_2])$ of \eqref{A1} satisfies 
\ba
\partial_{t}^{l}\partial^{k}_{x}y=P^n \partial_{t}^{j}\partial_{x}^{k}y + H_{l}^{k}(x,\yxt{Mn+k-1}{N-1})  	
\label{B2} 
\ea
where we have used the Notations \ref{defYxt}.
\end{lemma}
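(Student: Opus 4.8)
The plan is to prove Lemma \ref{lem1} by a double induction: an outer induction on $n$ (the number of ``blocks'' of $N$ time derivatives) and, for each fixed $n$, an auxiliary verification over $j\in\{0,\dots,N-1\}$ and over $k\in\N$. The base case $n=0$ is immediate: for $l=j<N$ we simply take $H_l^k\equiv 0$, since then $\partial_t^l\partial_x^k y = P^0\partial_t^j\partial_x^k y$ with $P^0=\mathrm{Id}$, and the claimed domain $\E{Mn+k-1}{N-1}=\E{k-1}{N-1}$ carries no actual dependence. The heart of the matter is the inductive step from $n$ to $n+1$, and the mechanism is exactly the one sketched in the statement of Proposition \ref{prop1}: differentiate the PDE \eqref{A1}.

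First I would record the key algebraic identity. Starting from \eqref{A1}, i.e. $\partial_t^N y = Py + f(x,\yx)$, apply $\partial_t^{Nn+j}\partial_x^k$ to both sides to get
\begin{equation*}
\partial_t^{N(n+1)+j}\partial_x^k y = \partial_t^{Nn+j}\partial_x^k(Py) + \partial_t^{Nn+j}\partial_x^k f(x,\yx).
\end{equation*}
Since $P=\sum_{i=0}^M\zeta_i\partial_x^i$ has constant coefficients, it commutes with $\partial_t^{Nn+j}\partial_x^k$, so the first term on the right equals $P\big(\partial_t^{Nn+j}\partial_x^k y\big)$, to which the induction hypothesis at level $n$ (with the same $j$, and with $k$ replaced by $k+i$ for $0\le i\le M$) applies: $\partial_t^{Nn+j}\partial_x^{k+i} y = P^n\partial_t^j\partial_x^{k+i} y + H_{Nn+j}^{k+i}(x,\yxt{Mn+k+i-1}{N-1})$. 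Summing against $\zeta_i$ and using $\sum_i\zeta_i\partial_x^i P^n\partial_t^j\partial_x^k y = P^{n+1}\partial_t^j\partial_x^k y$, the PDE-linear part contributes $P^{n+1}\partial_t^j\partial_x^k y$ plus a sum of (derivatives in $x$ of) the $H_{Nn+j}^{k+i}$ terms, all of which are smooth functions of $\yxt{M(n+1)+k-1}{N-1}$ (the extra $x$-derivatives and the shift $k\mapsto k+i$ with $i\le M$ push the required jet length up by $Mn+M = M(n+1)$, matching the target). The nonlinear term $\partial_t^{Nn+j}\partial_x^k f(x,\yx)$ is handled by the chain rule and the Faà di Bruno / Leibniz formula: each $t$- and $x$-derivative falling on $f(x,y,\partial_xy,\dots,\partial_x^{M-1}y)$ produces, via the induction hypothesis applied to the arguments $\partial_x^{k'}y$ appearing, smooth expressions in finitely many $\partial_x^{p}\partial_t^{q}y$ with $q\le N-1$ and $p$ bounded by $M(n+1)+k-1$; here one uses once more that $\partial_t^{Nn+j}$ can be converted into $P^n\partial_t^j$ plus lower-order corrections by the hypothesis, so that only $t$-derivatives of order $<N$ survive inside the final function, and $f\in C^\infty(\R^{M+1})$ guarantees the resulting composition is smooth. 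Collecting all the correction terms into a single function and calling it $H_{N(n+1)+j}^k$ completes the step; smoothness is automatic as a finite composition of smooth maps.

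The step I expect to be the main obstacle is the bookkeeping of the jet lengths, i.e. verifying that all the auxiliary functions that appear really do depend only on $\yxt{M(n+1)+k-1}{N-1}$ and on nothing more. Two competing effects must be tracked: every application of $\partial_x$ raises the required $x$-jet length by one, while every use of the induction hypothesis to eliminate a block $\partial_t^N$ in favour of $P$ raises it by $M$ but also re-expresses higher-than-$(N-1)$ time derivatives in terms of $x$-derivatives (so the $t$-jet length stays pinned at $N-1$). One has to check that the worst case — differentiating $f$ once in $x$ and once ``in a $t$-block'', acting on an argument $\partial_x^{M-1}y$ — still lands within $M(n+1)+k-1$; the $-1$ is comfortably absorbed because $f$'s top argument is $\partial_x^{M-1}y$, not $\partial_x^{M}y$. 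I would organize this as a clean count: at level $n$, applying $P^n$ costs $\le Mn$ spatial derivatives, the residual $\partial_t^j\partial_x^k$ with $j\le N-1$ contributes the explicit leading term, and every correction stays strictly below the next threshold. Once the base case and this inductive count are in place, the Lemma follows, and Proposition \ref{prop1} is then obtained by specializing to $x=0$, $t=\tau$ and reading off that the map sending the $x$-jet of $\yt$ to the $t$-jet of $\yx$ is given by these $H_l^k$ together with the explicit $P^n$-terms, with the inverse constructed symmetrically (or by the formal-series argument that the leading term $P^n\partial_t^j\partial_x^k y$ already determines the top of the jet).
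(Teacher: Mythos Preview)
Your proposal is correct and takes essentially the same approach as the paper's proof: induction on $n$, driven by differentiating \eqref{A1} and tracking the jet order via the chain rule. The only cosmetic difference is the order of operations in the inductive step---you apply $\partial_t^{Nn+j}\partial_x^k$ to the PDE and then invoke the (strong) induction hypothesis on the Fa\`a di Bruno terms to reduce every $\partial_t^{m}\partial_x^{p'}y$ with $m\le Nn+j$, whereas the paper applies $\partial_t^N$ to the level-$n$ identity \eqref{B2} and then uses the PDE together with the separately treated case $n=1$ to eliminate the extra $N$ time derivatives in $\partial_t^N H_l^k$; both routes produce the same bound $M(n+1)+k-1$ on the $x$-jet.
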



We introduce first some definitions, notations, and lemmas that will be needed for the proof of Lemma \ref{lem1}. To apply Leibniz formula for $x$ in a formal way, we have to see how the derivations $\partial _x$ and $\partial _t$ operate in 
$E_{p,q}$.  This leads us to define the following operators.
\begin{nota}
\label{notaDtDx}
There is a linear operator $\dtf$ from $\E{p}{q+1}$ to $\E{p}{q}$ such that we can write $\partial_{t}\yxt{p}{q}=\dtf(\yxt{p}{q+1})$ for any smooth function.\\

Similarly, we define the operator $\dxf$ from $\E{p+1}{q}$ to $\E{p}{q}$ by the shift $\dxf (Y_{0},Y_{1},\ldots, Y_{p+1})= (Y_{1},\ldots, Y_{p+1})$ so that for any $y\in C^{\infty}([-1,1]\times [t_1,t_2])$, $\yxt{p}{q}$ being as in Notation \ref{defYxt}, we have 
\bnan
\label{propDtildex}
\dxf \yxt{k+1}{N-1}=\partial_{x}\yxt{k}{N-1}.
\enan
\end{nota}
Note that $\dtf$ can also be seen as a shift, but after a proper identification between $\E{p}{q}$ and $\E{q}{p}$.
The operator $\dxf$ depends of course on $p$ and $q$ but, since the definition is similar for each $p$, $q$, it should not lead to any confusion.

\begin{nota}
For $Y=(Y^{0}, \ldots  , Y^{i} , \ldots, Y^{k})\in  \E{k}{N-1}$, we denote
\[ I(Y) :=(Y^{0}, \ldots  ,(-1)^{i}Y^{i} , \ldots, (-1)^{k}Y^{k}) .\]
Strictly speaking, the operator $I$ depends on $k$, but since it takes the same form on each space, we will keep the same notation. The interest of this operator is that for $y\in C^{\infty}(  [-1,1] 
\times [t_1,t_2] )$ and $Y=\yxt{k}{N-1}(y)$ as in Notation \ref{defYxt},  we have 
\bnan
\label{propI}
I(Y(y))=Y(y_{-})(-x),
\enan
 where $y_{-}$ is the reflected function $y_{-}(t,x):=y(t,-x)$. 
\end{nota}

We notice that 
\begin{eqnarray}
\dtf I(Y)&=& I (\dtf Y),  \label{AS1}\\
\dxf I(Y)&=&-I(\dxf Y).\label{DxfvsI}
\end{eqnarray}
\begin{lemma}
\label{lmdxdtformel}
Let $p, q\in \N$ and let  $M: \R\times \E{p}{q}\to \R$ be a smooth function. Then there exist two smooth functions $M_{t}: [-1,1]\times \E{p}{q+1}\to \R$ and $M_{x}: [-1,1]\times \E{p+1}{q}\to \R$ such that for any $y\in C^{\infty}([ -1,1 ]\times [t_1,t_2])$ (not necessarily solution of \eqref{A1}), $\yxt{p}{q}$ being as in Notation \ref{defYxt}, we have
\bnan
\label{Mcompat}
\partial_{t}M(x,\yxt{p}{q})=M_{t}(x,\yxt{p}{q+1}),   \label{Mcompat1}\\
\partial_{x}M(x,\yxt{p}{q})=M_{x}(x,\yxt{p+1}{q}). \label{Mcompat2}
\enan
Moreover, if we assume that for some $\varpi, \sigma \in \{-1,1\}$, $M(-x,\varpi I(Y))=  \sigma M(x, Y)$, then we have 
\begin{eqnarray}
M_{t}(-x,\varpi I(Y))=\sigma M_{t}(x,Y),   \label{VVV1}\\
M_{x}(-x,\varpi I(Y))=-\sigma M_{x}(x,Y).\label{VVV2}
\end{eqnarray}
\end{lemma}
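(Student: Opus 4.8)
The plan is to obtain the existence of $M_t,M_x$ from the chain rule, and the symmetry relations by differentiating the hypothesis $M(-x,\varpi I(Y))=\sigma M(x,Y)$ and combining it with the commutation rules \eqref{AS1}--\eqref{DxfvsI}. For the first part, write $D_1M(x,Y)$ for the partial derivative of $M$ in its scalar argument and $D_2M(x,Y)$ for its differential in the $\E{p}{q}$-argument (a linear form on $\E{p}{q}$). Recall from Notation \ref{notaDtDx} and \eqref{propDtildex} that $\partial_t\yxt{p}{q}=\dtf\yxt{p}{q+1}$ and $\partial_x\yxt{p}{q}=\dxf\yxt{p+1}{q}$, and let $\pi:\E{p}{q+1}\to\E{p}{q}$ (resp. $\rho:\E{p+1}{q}\to\E{p}{q}$) be the projection forgetting, in each $x$-block, the top $\partial_t^{q+1}$-component (resp. forgetting the top $\partial_x^{p+1}$-block), so that $\pi\circ\yxt{p}{q+1}=\yxt{p}{q}=\rho\circ\yxt{p+1}{q}$ for every $y$. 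I then set, for $x\in[-1,1]$,
\begin{gather*}
M_t(x,Y):=D_2M(x,\pi Y)[\dtf Y],\qquad Y\in\E{p}{q+1},\\
M_x(x,Y):=D_1M(x,\rho Y)+D_2M(x,\rho Y)[\dxf Y],\qquad Y\in\E{p+1}{q}.
\end{gather*}
These maps are smooth, and applying $\partial_t$ (resp. $\partial_x$) to $M(x,\yxt{p}{q}(y))$ by the chain rule, together with the two differentiation formulas above, yields exactly \eqref{Mcompat1} and \eqref{Mcompat2}.

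For the symmetry, assume $M(-x,\varpi I(Y))=\sigma M(x,Y)$ on $\R\times\E{p}{q}$. Differentiating this in $x$ gives $D_1M(-x,\varpi I(Y))=-\sigma D_1M(x,Y)$, and differentiating it in $Y$ (using linearity of $I$ and of $D_2M$ in its direction) gives $\varpi\,D_2M(-x,\varpi I(Y))[I(H)]=\sigma\,D_2M(x,Y)[H]$ for every $H\in\E{p}{q}$. Now I substitute $\varpi I(Y)$ into the definitions of $M_t$ and $M_x$. Since $I$ rescales $x$-blocks while $\pi$ (resp. $\rho$) acts on $t$-components (resp. drops the top $x$-block), we have $\pi\circ I=I\circ\pi$ and $\rho\circ I=I\circ\rho$; moreover $\dtf\circ I=I\circ\dtf$ and $\dxf\circ I=-I\circ\dxf$ by \eqref{AS1}--\eqref{DxfvsI}. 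Feeding these identities and the two displayed consequences of the hypothesis into $M_t(-x,\varpi I(Y))$ produces $\sigma M_t(x,Y)$, which is \eqref{VVV1}; in the computation of $M_x(-x,\varpi I(Y))$ the extra sign in $\dxf\circ I=-I\circ\dxf$ combines with the sign in $D_1M(-x,\varpi I(Y))=-\sigma D_1M(x,Y)$ to give the overall $-\sigma$, i.e. \eqref{VVV2}. Here one uses that $[-1,1]$ is symmetric about $0$, so $M_t,M_x$ are indeed evaluated at admissible points $-x$.

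There is no real obstacle in this lemma; the only thing to watch is the bookkeeping of how the reflection operator $I$ interacts with the shifts $\dtf,\dxf$ and the projections $\pi,\rho$, which is precisely the content of \eqref{AS1}--\eqref{DxfvsI}. Alternatively, the symmetry relations can be obtained without any abstract manipulation: evaluate \eqref{Mcompat1}--\eqref{Mcompat2} along the reflected function $\varpi\,y(-\cdot,\cdot)$, use \eqref{propI} and the chain rule, and then observe that every point of $\E{p}{q+1}$ (resp. $\E{p+1}{q}$) is the jet at some $(x,t)$ of a suitable polynomial $y$, so that the resulting pointwise identities hold on the whole space.
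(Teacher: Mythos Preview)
Your proof is correct and follows essentially the same route as the paper: you define $M_t,M_x$ via the chain rule using the shift operators $\dtf,\dxf$, then obtain the symmetry relations by differentiating the hypothesis $M(-x,\varpi I(Y))=\sigma M(x,Y)$ and invoking the commutation rules \eqref{AS1}--\eqref{DxfvsI}. Your explicit introduction of the projections $\pi,\rho$ and the alternative jet-surjectivity argument at the end are minor notational refinements, but the substance is identical.
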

\bnp  By the chain rule, we have
\begin{gather} 
\partial_{t}M(x,\yxt{p}{q})= \nabla M(x,\yxt{p}{q})\cdot \left(\begin{array}{cc}  0 \\ \partial_{t} Y_{p,q}^{x,t}  \end{array} \right) .
\end{gather}

Using the operator $\dtf$ introduced in Notation \ref{notaDtDx}, we can 
define $M_{t}$ as 
\begin{gather} 
\label{formMt}
M_{t}(x,Y_{p,q+1}) := \nabla M(x,Y_{p,q})\cdot \left(\begin{array}{cc}  0 \\ \dtf(Y_{p,q+1}) \end{array} \right), \quad \forall x\in [-1,1], \ \forall  \,Y_{p,q+1}\in E_{p,q+1}.
\end{gather}
For $Y_{p,q+1} = (Y_0, Y_1, ..., Y_p)\in E_{p,q+1}$, we have denoted $Y_{p,q}$ the vector in $E_{p,q}$ obtained by selecting the $q+1$ first components of each vector $Y_i$ for $0\le i\le p$. 
 With this definition, \eqref{Mcompat1} is true for any smooth function $y$. \\

Similarly, we define the function $M_{x}$ by
\begin{gather} 
\label{formMx}
M_{x}(x,Y_{p+1,q}) := \nabla M(x,Y_{p,q})\cdot \left(\begin{array}{cc}  1 \\ \dxf(Y_{p+1,q}) \end{array} \right), \quad \forall x\in [-1,1], \ \forall  \,Y_{p+1,q}\in E_{p+1,q},
\end{gather}
and it can be seen that \eqref{Mcompat2} is true for any smooth function $y$.

To prove \eqref{VVV1}, we take the derivative w.r.t. $Y$ in the relation  $M(-x,\varpi I(Y))=  \sigma M(x, Y)$ to obtain for any $Z\in \E{p}{q}$,
\begin{eqnarray*}
\nabla M(-x, \varpi  I(Y))\cdot \left(\begin{array}{cc}  0 \\ \varpi I(Z) \end{array}\right) &=&\sigma   \nabla M(x,Y)\cdot \left(\begin{array}{cc}  0 \\ Z  \end{array}\right).
\end{eqnarray*}
Let  $Y_{p,q+1} \in \E{p}{q+1}$. Taking $Y=Y_{p,q}$ and $Z=\dtf(Y_{p,q+1})$ and noticing that $I(\dtf(Y_{p,q+1}))=\dtf(I(Y_{p,q+1}))$ by \eqref{AS1}, we obtain
\begin{eqnarray*}
\nabla M(-x, \varpi  I(Y_{p,q}))\cdot \left(\begin{array}{cc}  0 \\ \varpi \dtf(I(Y_{p,q+1})) \end{array}\right) &=& 
\sigma  \nabla M(x, Y_{p,q})\cdot \left(\begin{array}{cc}  0 \\ \dtf(Y_{p,q+1}) \end{array}\right),
\end{eqnarray*}
which is exactly \eqref{VVV1}. The proof of \eqref{VVV2} is similar and is omitted.
\enp

\begin{proof}[Proof of Lemma \ref{lem1}]We will actually prove the slightly stronger result that for $k\in \N$ and 
$l=Nn+j$ for some $0\le j < N$ and $n\in \N$, each $H_{l}^{k}$ is actually a function of $x$ and $Y\in \E{Mn+k-1}{j}$ so that \eqref{B2} is satisfied with $H_{l}^{k}(x,\yxt{Mn+k-1}{N-1})$ replaced by $H_{l}^{k}(x,\yxt{Mn+k-1}{j})$.\\

The case $n=0$, $0\le j <N$ is trivial since we can take $H_{l}^{k}=0$. \\

For some technical reasons, we will also need to deal with the case $n=1$.
Letting $f_{0}(x,\yx_{M-1}):= f(x,\yx_{M-1})$,  we apply the operator  $\partial _t^j$  in  \eqref{A1} for $ 0\le j<N$ to get
\begin{gather}\label{eqderj} \partial_{t}^{N+j}y= P\partial_{t}^{j}y + \partial_{t}^{j} f(x,\yxt{M-1}{0}) \end{gather}
We want to define  functions $f_{j}$ so that for any $y\in C^\infty ([-1,1]\times [t_1,t_2])$, we have
\begin{gather} 
\label{formfjsol}
f_{j}(x,\yxt{M-1}{j})= \partial^{j}_{t}f_{0}(x,\yxt{M-1}{0})\textnormal{ for }0<j<N.
\end{gather}
Using Notation \ref{notaDtDx}  we can define $f_{j}$ iteratively by 
\begin{gather} 
\label{formfjbis}
f_{j}(x,Y_{M-1,j}) := \nabla f_{j-1}(x,Y_{M-1,j-1})\cdot \left(\begin{array}{cc}  0 \\ \dtf(Y_{M-1,j}) \end{array} \right),
\end{gather}
so that  by Lemma \ref{lmdxdtformel}, \eqref{formfjsol}  is true for any $y\in C^\infty ([-1,1]\times [t_1,t_2])$. 
Now that the $f_{j}$ are defined, we see that any solution $y$ of \eqref{A1} satisfies \eqref{eqderj} and also
\begin{gather} \label{eqnfj}\partial_{t}^{N+j}y= P\partial_{t}^{j}y + f_{j}(x,\yxt{M-1}{j}) . \end{gather}
In particular, defining $H_{N+j}^{0} :=f_{j}$, we see that the case $k=0$, $l=N+j$ with $0\leq j<N$ is treated.\\


Applying $\partial_{x}^{k}$ in \eqref{eqnfj}   and using Lemma \ref{lmdxdtformel}, we can find some smooth functions $H_{N+j}^{k}$ such that
\begin{gather} \label{caseN1}\partial_{t}^{N+j}\partial_{x}^{k}y= P\partial_{t}^{j}\partial_{x}^{k}y + H_{N+j}^{k}(x,\yxt{M-1+k}{j}). \end{gather}
The $H_{N+j}^{k}$ are defined by the iteration formula
\begin{gather} 
\label{formfHk1n1}
H_{N+j}^{k}(x,Y_{M-1+k,j})   := \nabla H_{N+j}^{k-1}(x,Y_{M-1+k-1,j})\cdot \left(\begin{array}{cc}  1 \\ \dxf(Y_{M-1+k,j}) \end{array} \right),
\end{gather}

this is the case $n=1$ of the Lemma.

\bigskip
Now, we construct the functions $H_{Nn+j}^{k}$ by induction on $n$. Assume that the \eqref{B2} is satisfied for some $n\in \N ^*$, for
all $l=Nn+j$ with  $0\le j <N$ and all $k\in \N$. Applying  the operator $\partial _t^N$ in \eqref{B2} yields
\ba
\partial_{t}^{l+N}\partial^{k}_{x}y=P^n \partial_{t}^{j}\partial_{x}^{k}\partial_{t}^{N}y + \partial_{t}^{N} H_{l}^{k}(x,\yxt{Mn+k-1}{j})  .
\ea
Using equation \eqref{A1}, we obtain
\ba
\label{expren1}
\partial_{t}^{l+N}\partial^{k}_{x}y&=&P^{n+1} \partial_{t}^{j}\partial_{x}^{k} y+P^{n} \partial_{x}^{k}\partial_{t}^{j}f(x,Y_{M-1}^{x}) +  \partial_{t}^{N} H_{l}^{k}(x,\yxt{Mn+k-1}{j}) .
\ea
So, we are led to prove that the last two terms $P^{n} \partial_{x}^{k}\partial_{t}^{j}f(x,Y_{M-1}^{x}) +  \partial_{t}^{N} H_{l}^{k}(x,\yxt{Mn+k-1}{j})$ can be written as $H_{l+N}^{k}(x,\yxt{M(n+1)+k-1}{j})$. Concerning the first one, due to \eqref{formfjsol} we can write
\bnan
\label{iterterm1}
P^{n} \partial_{x}^{k}\partial_{t}^{j}f(x,Y_{M-1}^{x}) =P^{n} \partial_{x}^{k}f_{j}(x,\yxt{M-1}{j}).
\enan
Since $P^{n} \partial_{x}^{k}$ is a differential operator of order $Mn+k$ in $x$, we see by successive applications of Lemma \ref{lmdxdtformel} that the previous term can be written as a smooth function of $x$ and $\yxt{M(n+1)+k-1}{j}$.\\

By iterative applications of Lemma   \ref{lmdxdtformel},  the second term $\partial_{t}^{N} H_{l}^{k}(x,\yxt{Mn+k-1}{j})$ can be written as $F(x,\yxt{Mn+k-1}{j+N})$ for some smooth function $F$. But thanks to the case $n=1$, namely \eqref{caseN1}, for each $0\leq p\leq Mn+k-1$, $\partial_{t}^{N+j}\partial^{p}_{x}y$ can be written as $J_{N+j}^{p}(x,\yxt{M+p}{j})$ for some smooth function $J_{N+j}^{p}$. In particular, $\yxt{Mn+k-1}{N+j}$ can be written as a smooth function of $x$ and  $\yxt{M+Mn+k-1}{j}$. It follows that $\partial_{t}^{N} H_{l}^{k}(x,\yxt{Mn+k-1}{j})=F(x,\yxt{Mn+k-1}{j+N})$  can be written as a smooth function of $x$ and $\yxt{M(n+1)+k-1}{j}$. Going back to \eqref{expren1} and summing up the expression of the last two terms as functions of  $x$ and $\yxt{M(n+1)+k-1}{j}$, we can write
\ba
\partial_{t}^{N(n+1)+j}\partial^{k}_{x}y&=&P^{n+1} \partial_{t}^{j}\partial_{x}^{k} y+H^{k}_{N(n+1)+j}( x, \yxt{M(n+1)+k-1}{j})
\ea
for some smooth function $H^{k}_{N(n+1) +j}$. This is the expected result at step $n+1$.
\end{proof}

We present a few consequences of  Lemma \ref{lem1}.

\begin{nota}
\label{defGl}
Let $k\in \N$ and $l=Nn+j$ for some $0\le j <N$ and $n\in \N$. Noticing that  $P^n \partial _t^j \partial _x^k y$ can be expressed as a linear combination of variables in 
$Y_{Mn+k, N-1}^{x,t}$,  
we can define a smooth function $J_{l}^{k}: [-1,1]\times \E{Mn+k}{N-1}\to \R$  such that
\begin{equation}
J_{l}^{k}(x, \yxt{Mn+k}{N-1})=\partial_{t}^{l}\partial^{k}_{x}y=P^n \partial_{t}^{j}\partial_{x}^{k}y + H_{l}^{k}(x,\yxt{Mn+k-1}{N-1}) \label{DDD1}
\end{equation}
for any solution $y$ of \eqref{A1}.	
We define also the vector-valued functions $$J_{l}: [-1,1]\times \E{Mn+M-1}{N-1}\to \R^{M},$$ with $J_{l}=(J_{l}^{0},J_{l}^{1},\ldots,J_{l}^{M-1})$.
\end{nota}
These definitions will mainly be used at $x=0$ and $t=0$. Since the knowledge of the initial datum $Y_{0}$ and all its $x$-derivatives are sufficient to know $\yxt{Mn+k}{N-1}$ for $t=0$, $J_{l}$ has to be thought as the function that, from a sufficient amount of $x$-derivatives of the initial datum, provides $(\partial_t^{l}\yx)(0,0)$, that is the $l$ time derivative of the boundary data. More precisely, if $y$ is a solution of \eqref{A1}, we have for any $t, x$
\bnan
\label{propJl}
\partial_{t}^{l}\yx=J_{l}(x,\yxt{Mn+M-1}{N-1}).
\enan
In particular, this definition of $J_{l}$ provides a proof for Lemma \ref{lmdefJlintro} with the appropriate choice of $m(l)=Mn+{ M-1}$ if $l=Nn+j$ for some $0\le j <N$ and 
$n\in \N$.\\

The two following Lemmas are almost tautological with the definitions, but they are important to justify the relevance of the set $\Comp$.
\begin{lemma}
\label{lmcompatibility}
Assume that $y$ is a smooth solution of \eqref{W1a}-\eqref{W1c}. Then $\yt(.,t) \in \Comp$ for all $t\in[0,T] $.  
\end{lemma}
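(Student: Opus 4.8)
The plan is to read off the conclusion from the boundary condition \eqref{W1b} together with the jet identity \eqref{propJl} attached to the functions $J_l$ of Notation \ref{defGl}. Since \eqref{W1a} is precisely \eqref{A1}, Lemma \ref{lem1} and the construction of the $J_l$ apply to the given smooth solution $y$. Writing $l = Nn + j$ with $0 \le j < N$ and $m(l) = Mn + M - 1$, identity \eqref{propJl} gives, for every $(x,t) \in [0,1]\times[0,T]$,
\[
\partial_t^l \yx(x,t) = J_l\bigl(x, \yt(x,t), \partial_x \yt(x,t), \dots, \partial_x^{m(l)} \yt(x,t)\bigr),
\]
because, by Notation \ref{defYxt}, the vector $\yxt{m(l)}{N-1}(x,t)$ is exactly the tuple of $x$-derivatives up to order $m(l)$ of $\yt_{N-1}(x,t) = \yt(x,t)$. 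In particular, the right-hand side depends on $\yt(\cdot,t)$ and its $x$-derivatives only, which is precisely what is needed to match the membership condition \eqref{comp} defining $\Comp$.

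Next I would differentiate \eqref{W1b} in time. By \eqref{W1b} we have $B\yx(0,t) = 0$ for all $t \in [0,T]$; since $y$ is smooth, $t \mapsto \yx(0,t)$ is smooth, hence $B\,\partial_t^l \yx(0,t) = 0$ for every $l \in \N$ and every $t \in [0,T]$. Evaluating the previous display at $x = 0$ and combining with this identity gives
\[
B J_l\bigl(x, \yt(\cdot,t), \partial_x \yt(\cdot,t), \dots, \partial_x^{m(l)} \yt(\cdot,t)\bigr)\big|_{x=0} = B\,\partial_t^l \yx(0,t) = 0,
\qquad \forall l \in \N,\ \forall t \in [0,T].
\]
Since $y \in C^\infty([0,1]\times[0,T])$, we have $\yt(\cdot,t) \in C^\infty([0,1])^N$ for each $t$, so the displayed identity is exactly the condition \eqref{comp} characterizing $\Comp$. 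Hence $\yt(\cdot,t) \in \Comp$ for all $t \in [0,T]$.

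As the surrounding text signals, this statement is almost tautological once the notational apparatus is in place, so I do not expect a genuine obstacle. The only points that require a moment's care are: (i) checking that the argument of $J_l$ furnished by \eqref{propJl} is built solely out of $\yt$ and its $x$-derivatives, so that it really coincides with $J_l$ applied to the jet at $x=0$ of the would-be datum $\yt(\cdot,t)$; and (ii) verifying that the index $m(l) = Mn + M - 1$ fixed in Notation \ref{defGl} is the same $m(l)$ entering the definition \eqref{comp} of $\Comp$ — both being immediate from the definitions. One could alternatively bypass the $J_l$ and simply differentiate \eqref{W1a} directly in $t$ order by order, but invoking \eqref{propJl} is exactly the packaging that turns the argument into a one-liner.
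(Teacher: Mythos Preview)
Your proof is correct and follows essentially the same approach as the paper: differentiate the boundary condition \eqref{W1b} in time, then invoke the identity \eqref{propJl} to rewrite $B\partial_t^l Y^x(0,t)=0$ as the defining condition of $\Comp$. The paper's version is more terse, but the logic is identical.
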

\bnp
From \eqref{W1b}, we have $B\yx(0,t)=0$ for all $t\in [0,T]$. Applying the operator $\partial _t^l$ in that equation yields $B\partial_{t}^{l}\yx(0,t) =0$ for all $t\in [0,T]$. Writing $l=Nn+j$, with $0\le j <N$, $n\in \N$, and using the fundamental property \eqref{propJl} of the function $J_{l}$, we obtain $BJ_{l}(x,\yt,\partial_{x}\yt,...,\partial_{x}^{Mn+ M-1 }\yt )_{x=0}$. It means that
$\yt(.,t)\in\Comp$ for all $t\in [0,T]$.
\enp

\begin{lemma}
\label{lmcompatibilityrecip}
Let $y$ be a smooth solution to $\partial _t^N y = P\, y + f(x , y , \partial_{x} y,...,  \partial _x^{M-1} y)$ such that $Y^{x}(0,t)\in \Comp$ for some $t\in [0,T]$. Then $\yx$  satisfies the boundary condition $B\yx(0,t)=0$. 
\end{lemma}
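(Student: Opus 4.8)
The plan is to read off the conclusion directly from the $l=0$ instance of the defining relation of the compatibility set $\Comp$, using the functions $J_l$ built in Lemma~\ref{lmdefJlintro} (equivalently, in Notation~\ref{defGl}). I would first recall that for $l=Nn+j$ with $0\le j<N$ and $n\in\N$, property \eqref{propJl} states that every smooth solution $y$ of \eqref{A1} satisfies $\partial_t^{l}\yx=J_{l}(x,\yxt{Mn+M-1}{N-1})$ on $[-1,1]\times[t_1,t_2]$; in particular, since $l=0$ corresponds to $n=j=0$ with $H_0^k=0$, this reduces to $\yx=J_0(x,\yt,\partial_x\yt,\dots,\partial_x^{M-1}\yt)$, i.e. $J_0$ is precisely the map reconstructing $\yx$ from the jet of $\yt$, with $m(0)=M-1$.

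Next I would fix the time $t$ for which the compatibility hypothesis holds, that is, for which the function $x\mapsto\yt(x,t)$ belongs to $\Comp$ (note that $\Comp$ is a set of $N$-tuples of smooth functions of $x$, so the hypothesis is to be read on $\yt(\cdot,t)$). Evaluating the identity above at $x=0$ gives
\[
\yx(0,t)=J_0\bigl(x,\yt(\cdot,t),\partial_x\yt(\cdot,t),\dots,\partial_x^{m(0)}\yt(\cdot,t)\bigr)\Big|_{x=0}.
\]
Applying $B$ on the left and invoking the definition \eqref{comp} of $\Comp$ with $l=0$, the right-hand side vanishes, whence $B\yx(0,t)=0$, which is the assertion. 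Running the same argument with an arbitrary $l\in\N$ yields moreover $B\partial_t^{l}\yx(0,t)=0$ for every $l$, in line with the time-invariance of the condition \eqref{comp} already mentioned.

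I do not expect any genuine obstacle here: the statement is essentially a bookkeeping consequence of how $\Comp$ and the $J_l$ were set up. The only two points that need a line of justification are (i) that the $J_0$ occurring in \eqref{comp} is the same map as the one produced by Lemma~\ref{lmdefJlintro}/Notation~\ref{defGl} — which is guaranteed by the uniqueness of the $J_l$ in the relevant analytic class (Lemma~\ref{lmuniqJl}) — and (ii) correctly matching the number of $x$-derivatives, namely $m(0)=M-1$, the specialization of $m(l)=Mn+M-1$ to $n=0$.
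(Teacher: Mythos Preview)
Your proof is correct and follows essentially the same approach as the paper's: use the $l=0$ case of the definition \eqref{comp} of $\Comp$ together with property \eqref{propJl} at $x=0$ to identify $J_0$ with $\yx$, and then apply $B$. Your additional remarks (the correct reading of the hypothesis as $\yt(\cdot,t)\in\Comp$, the value $m(0)=M-1$, and the extension to all $l$) are accurate but not needed for the bare statement.
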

\bnp
We have $Y^{x}(0,t)\in \Comp$, which implies, with the choice $l=0$ \[BJ_{0}(x,\yt,\partial_{x}\yt,...,\partial_{x}^{Mn+ M-1}\yt )_{x=0}=0.\]
Using property \eqref{propJl} at time $t$ and with $x=0$ and $l=n=0$, one obtains $B\yx (0,t)=0$.
\enp
The following Lemma is needed to prove Proposition \ref{propCompsym}.

 %
\begin{lemma}
\label{lmparHn}
Assume that $M$ is even and that 
\begin{equation}
P=\sum_{j=0}^{M/2}\zeta _{2j}\partial_{x}^{2j}. \label{DDD2}
\end{equation}
\begin{enumerate}
 \item If \eqref{WWimpair} holds, then for all $l,k\in \N$ we have 
\bnan
\label{Hsym}H_{l}^{k}(-x,-I(Y))= (-1)^{k+1} H_{l}^{k}(x, Y),\quad \forall x\in [-1,1], \ \forall Y  \in \E{Mn+k-1}{N-1},\\
\label{Jsym}J_{l}^{k}(-x,-I(Y))= (-1)^{k+1} J_{l}^{k}(x, Y),\quad \forall x\in [-1,1], \ \forall Y  \in \E{Mn+k}{N-1}. 
\enan
\item If \eqref{WWpair} holds, then for all $l,k\in \N$ we have 
\bnan
\label{HsymNeum}H_{l}^{k}(-x,I(Y))= (-1)^{k} H_{l}^{k}(x, Y),\quad \forall x\in [-1,1], \ \forall Y  \in \E{Mn+k-1}{N-1},\\
\label{JsymNeum}J_{l}^{k}(-x,I(Y))= (-1)^{k} J_{l}^{k}(x, Y),\quad \forall x\in [-1,1], \ \forall Y  \in \E{Mn+k}{N-1}. 
\enan
\end{enumerate}
\end{lemma}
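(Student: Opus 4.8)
The plan is to prove the two items at once by carrying a sign parameter $\varpi\in\{-1,1\}$ through the construction of $H_l^k$ and $J_l^k$ in the proof of Lemma~\ref{lem1}: set $\varpi=-1$ under \eqref{WWimpair} and $\varpi=+1$ under \eqref{WWpair}. On $\E{M-1}{0}=\R^M$ the operator $I$ acts by $(Y^0,\dots,Y^{M-1})\mapsto((-1)^iY^i)_i$, so \eqref{WWimpair} reads $f_0(-x,-I(Y))=-f_0(x,Y)$ and \eqref{WWpair} reads $f_0(-x,I(Y))=f_0(x,Y)$, i.e.\ in both cases
\[
f_0(-x,\varpi I(Y))=\varpi\,f_0(x,Y).
\]
I then claim the unified statement: for $l=Nn+j$ ($0\le j<N$, $n\in\N$) and all $k$,
\[
H_l^k(-x,\varpi I(Y))=(-1)^k\varpi\,H_l^k(x,Y),\qquad J_l^k(-x,\varpi I(Y))=(-1)^k\varpi\,J_l^k(x,Y),
\]
which for $\varpi=-1$ is \eqref{Hsym}--\eqref{Jsym} and for $\varpi=+1$ is \eqref{HsymNeum}--\eqref{JsymNeum}. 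Since by \eqref{DDD1} one can write $J_l^k(x,Y)=L_l^k(Y)+H_l^k(x,Y)$, where $L_l^k$ is the $x$-independent linear form expressing $P^n\partial_t^j\partial_x^k y$ in terms of $Y\in\E{Mn+k}{N-1}$, and since by \eqref{DDD2} the operator $P^n\partial_t^j\partial_x^k$ is a combination of the operators $\partial_x^{2m+k}\partial_t^j$, all of $x$-parity $k$, one gets at once $L_l^k(\varpi I(Y))=(-1)^k\varpi\,L_l^k(Y)$; hence it suffices to prove the claim for $H_l^k$, the claim for $J_l^k$ following by addition.

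The engine is Lemma~\ref{lmdxdtformel}: if $M(-x,\varpi I(Y))=\sigma M(x,Y)$, then $M_t$ obeys the same relation with the same $\sigma$ (use \eqref{VVV1} together with $\dtf I=I\dtf$) and $M_x$ obeys it with $\sigma$ replaced by $-\sigma$ (use \eqref{VVV2} together with \eqref{DxfvsI}); moreover $I$ commutes with the natural inclusions $\E{p'}{q}\hookrightarrow\E{p}{q}$. I would now revisit the induction on $n$ of the proof of Lemma~\ref{lem1}, tracking the sign at each elementary step. The case $n=0$ is trivial ($H_l^k=0$). For $n=1$: the $f_j$ are obtained from $f_0$ by iterating the $M_t$-construction \eqref{formfjbis}, so $f_j(-x,\varpi I(Y))=\varpi f_j(x,Y)$; thus $H_{N+j}^0=f_j$ has the correct sign, and $H_{N+j}^k$ is obtained from it by $k$ successive $M_x$-constructions \eqref{formfHk1n1}, each flipping the sign, yielding the factor $(-1)^k\varpi$. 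For the step $n\to n+1$, recall that $H_{l+N}^k$ is obtained, as a function of $x$ and $\yxt{M(n+1)+k-1}{j}$, by summing (i) the rewriting of $P^n\partial_x^k f_j$ and (ii) the rewriting of $\partial_t^NH_l^k$. For (i): $f_j$ has sign $\varpi$, and $P^n\partial_x^k$ is a combination of operators $\partial_x^{2m+k}$, each contributing (via iterated $M_x$-constructions) the sign $(-1)^{2m+k}\varpi=(-1)^k\varpi$, independent of $m$, so the sum has sign $(-1)^k\varpi$. For (ii): by induction $H_l^k$ has sign $(-1)^k\varpi$, the $N$ $M_t$-constructions computing $\partial_t^NH_l^k$ preserve it, and the subsequent substitution of $\yxt{Mn+k-1}{N+j}$ as a function $\Phi$ of $x$ and $\yxt{M(n+1)+k-1}{j}$ — assembled from the $n=1$ identities $\partial_t^{N+j'}\partial_x^p y=J_{N+j'}^p(x,\yxt{M+p}{j'})$ — intertwines the involutions: block by block in the order $p$ of the $x$-derivative, $\Phi$ is built either from components copied directly from $\yxt{M(n+1)+k-1}{j}$ or from the $J_{N+j'}^p$, and in both cases the block picks up under $W\mapsto\varpi I(W)$ exactly the factor $\varpi(-1)^p$ carried by block $p$ of the target (for $J_{N+j'}^p$ this is the $n=1$ relation $J_{N+j'}^p(-x,\varpi I(\cdot))=(-1)^p\varpi\,J_{N+j'}^p(x,\cdot)$ already proved). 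Hence $\Phi(-x,\varpi I(W))=\varpi I(\Phi(x,W))$, so $(\partial_t^NH_l^k)\circ\Phi$ again has sign $(-1)^k\varpi$. Adding (i) and (ii) gives $H_{l+N}^k$ the sign $(-1)^k\varpi$, which closes the induction; then $J_l^k=L_l^k+H_l^k$ gives \eqref{Jsym}/\eqref{JsymNeum}.

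I expect the only delicate point to be the step $n\to n+1$, and within it precisely the verification that the substitution map $\Phi$ commutes with the involutions $Y\mapsto\varpi I(Y)$. This is where the three structural ingredients must mesh: the parity of $P$ granted by \eqref{DDD2} (so $P^n\partial_x^k$ does not mix the two parities), the indexing of $\E{p}{q}$ by order of $x$-derivative (so that block $p$ consistently carries the sign $(-1)^p$), and the $n=1$ sign relations for the $J_{N+j'}^p$. Once $\Phi$ is known to intertwine the involutions, the rest is the mechanical propagation of the two rules of Lemma~\ref{lmdxdtformel}, namely that $M_t$ keeps $\sigma$ and $M_x$ flips $\sigma$.
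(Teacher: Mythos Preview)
Your proposal is correct and follows essentially the same approach as the paper's proof: the unified treatment via $\varpi\in\{-1,1\}$, the induction on $n$ through the recursive construction of $H_l^k$ in Lemma~\ref{lem1}, the use of the sign-propagation rules \eqref{VVV1}--\eqref{VVV2} from Lemma~\ref{lmdxdtformel}, and the identification of the substitution map $\Phi$ (called $K$ in the paper) as the one nontrivial point, handled via the $n=1$ parity of the $J_{N+j'}^p$. Your organization, separating out the linear part $L_l^k$ of $J_l^k$ upfront so that only $H_l^k$ needs the induction, is a minor rearrangement of the paper's argument rather than a different route.
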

\bnp
To treat both cases simultaneously, we define $\varpi$ as $\varpi=-1$  (resp  $\varpi=1$) if \eqref{WWimpair} holds (resp. \eqref{WWpair} holds). Therefore, we want to prove 
\begin{eqnarray}
H_{l}^{k}(-x,\varpi I(Y)) &=& \varpi (-1)^{k} H_{l}^{k}(x, Y) \quad \forall Y\in E_{Mn+k-1,N-1},  \label{OPA1}\\
J_{l}^{k}(-x,\varpi I(Y)) &=&  \varpi (-1)^{k} J_{l}^{k}(x, Y) \quad \forall Y\in E_{Mn+k,N-1}.\label{OPA2}
\end{eqnarray}

We still denote $l=Nn+j$, where $n\in \N$ and $0\le j<N$. We first prove \eqref{OPA1} by induction on $n$. 
If $n=0$, then \eqref{OPA1} is obvious since $H_l^k=0$. 

Assume that $n=1$ so that $l=N+j$. Assume first that $k=0$. We claim that 
\ba
\label{fjsym}
f_{j}(-x,\varpi I(Y_{M-1,j}))=\varpi f_{j}(x,Y_{M-1,j}), \quad \forall Y_{M-1,j}\in E_{M-1,j}\cdot
\ea
We proceed by induction. 
For $j=0$, $f_{0}=f$, so it follows from assumption \eqref{WWimpair} or \eqref{WWpair} thanks to the choice of $\varpi$. If \eqref{fjsym} is true for $j-1$, taking derivatives with respect to $Y$, we also have, for any $Z\in E_{M-1,j-1}$,
\ba
\label{derifjsym}
\varpi  \nabla f_{j-1}(-x,\varpi I(Y_{M-1,j -1 }))\cdot \left( \begin{array}{c} 0 \\ I(Z) \end{array} \right) =\varpi \nabla f_{j-1}(x,Y_{M-1,j -1 }) \cdot  \left( \begin{array}{c} 0 \\ Z\end{array} \right) . 
\ea
By \eqref{AS1}, \eqref{formfjbis} and \eqref{derifjsym}, we have 
\begin{eqnarray*}
f_j(-x, \varpi I(Y_{M-1,j}))
&=& \nabla f_{j-1} (-x, \varpi I(Y_{M-1,j-1}))\cdot \left( \begin{array}{c}0\\ \dtf (\varpi I (Y_{M-1,j} )  )  \end{array}  \right) \\
&=& \varpi  \nabla f_{j-1} (-x, \varpi I(Y_{M-1,j-1}))\cdot \left( \begin{array}{c}0\\    I( \dtf (  Y_{M-1,j} )  )  \end{array}  \right) \\
&=& \varpi\nabla f_{j-1} (x, Y_{M-1,j-1})\cdot \left( \begin{array}{c}0\\  \dtf (Y_{M-1,j})  \end{array}  \right) \\
&=& \varpi f_{j}(x,Y_{M-1,j} ).
\end{eqnarray*}
Assume now that \eqref{OPA1} is true for $0\le j< N$ and at step $k-1$, i.e. 
\[ 
H_{N+j}^{k-1}(-x,\varpi I(Y_{M-1+k-1,j} )) = \varpi (-1)^{k-1} H_{N+j}^{k-1}(x, Y_{M-1+k-1,j}).
 \]
Taking derivatives with respect to $x$ and $Y$, it gives  for any $Z\in E_{M-1+k-1,j}$,
\[ \nabla H_{N+j}^{k-1}(-x,\varpi I(Y_{M-1+k-1,j}))\cdot\left( \begin{array}{c} -1\\  \varpi I(Z) \end{array} \right) = \varpi (-1)^{k-1} \nabla H_{N+j}^{k-1}(x, Y_{M-1+k-1,j})
\cdot\left( \begin{array}{c} 1\\ Z \end{array} \right)  .\]
Combined with \eqref{formfHk1n1} and \eqref{DxfvsI}, this gives 
 \bna
  H_{N+j}^{k}( -x,\varpi I(Y_{M-1+k,j}))
 &=& 
  \nabla H_{N+j}^{k-1}(-x,\varpi I(Y_{M-1+k-1,j}))\cdot \left(\begin{array}{cc}  1 \\ \dxf( \varpi  I(Y_{M-1+k,j})) \end{array} \right)\\
 &=& 
  \nabla H_{N+j}^{k-1}(-x,\varpi I(Y_{M-1+k-1,j}))\cdot \left(\begin{array}{cc}  1 \\ -\varpi I(\dxf(Y_{M-1+k,j})) \end{array} \right)\\
&=& 
\varpi (-1)^{k} \nabla H_{N+j}^{k-1}(x,Y_{M-1+k-1,j})\cdot \left(\begin{array}{cc}  1 \\ \dxf(Y_{M-1+k,j} ) \end{array} \right)\\
&=&
\varpi (-1)^{k}   H_{N+j}^{k}( x,Y_{M-1+k,j}).
 \ena
Thus \eqref{OPA1} is proved for $n=1$. 
Assume that \eqref{OPA1} is true for $l=Nn+j$, with $0\le j<N$ and $n\in \N^*$, and for $k\in \N$. Let us prove that \eqref{OPA1} is also true for $l+N$ and $k$. From  
\eqref{expren1}-\eqref{iterterm1}, we have that 
\begin{equation}
H_{l+N}^k (x, Y_{M(n+1)+k-1,j}^{x,t} )= P^n \partial _x ^k f_j (x, Y_{M-1,j}^{x,t} ) + \partial _t^N H_l^{k} (x, Y_{M(n+1) +k-1,j}^{x,t} ). 
\label{OPA3}
\end{equation}

By Lemma \ref{lmdxdtformel}, \eqref{DDD2} and \eqref{fjsym}, we infer that the first term 
$P^{n} \partial_{x}^{k}f_{j}(x,\yxt{M-1}{j})$ can be written as $G(x,\yxt{M(n+1)+k-1}{j})$ where $G$ satisfies $G(-x,\varpi I(Y))=(-1)^{k}\varpi G(x,Y)$.

The second term $\partial_{t}^{N} H_{l}^{k}(x,\yxt{Mn+k-1}{j})$, by an application of  Lemma \ref{lmdxdtformel}, can be written as $F(\yxt{Mn+k-1}{N+j})$ for a smooth function $F$ that satisfies the same parity property as $H_{l}^{k}$, that is $F(-x,\varpi I(Y))= \varpi (-1)^{k} F(x, Y)$.\\

 But the case $n=1$ (see \eqref{caseN1}) gives that, for each $0\leq p\leq Mn+k-1$, $\partial_{t}^{N+j}\partial^{p}_{x}y$ can be written as $J_{N+j}^{p}(\yxt{M+p}{j})$ for some smooth function $J_{N+j}^{p}$ that satisfies $J_{N+j}^{p}(-x,\varpi I(Y))= \varpi (-1)^{p} J_{N+j}^{p}(x, Y)$. In particular, $\yxt{Mn+k-1}{N+j}$ can be written  as $K(x,\yxt{M+Mn+k-1}{N-1})$ (the components of $K$ are the $J_{N+j}^{p}(x,Y)$). Therefore, the symmetry properties of $J_{N+j}^{p}$ imply $K(-x,\varpi I(Y))=\varpi I(K(x,Y))$. In particular, we can write 
 \[\partial_{t}^{N} H_{l}^{k}(x,\yxt{Mn+k-1}{j})=F(x,\yxt{Mn+k-1}{j+N})=F(x,K(x,\yxt{M+Mn+k-1}{N-1})).\]
  Summarizing the symmetry properties of $K$ and $F$, we obtain
 \bna
 F(-x,K(-x,\varpi I(Y)))=F(-x,\varpi I(K(x,Y))=\varpi (-1)^{k} F(x, K(x,Y)).
 \ena
This is the expected result, and it completes the proof of \eqref{OPA1}.


To prove \eqref{OPA2}, we use \eqref{DDD1} and \eqref{OPA1}. Thus it remains to establish the symmetry property for the term $P^n\partial _ t^j\partial _x^k y$ for any 
smooth function $y$. This follows at once from \eqref{propI} and \eqref{DDD2}.
\enp

Next, we relate the behaviors as $n\to +\infty$ of the jets  $(\partial _x ^n \yt(0,\tau))_{n\ge 0}$ and
$(\partial _t ^n \yx(0,\tau ))_{n\ge 0}$.

 To do that, we assume that in \eqref{A1} the nonlinear term reads 
\be
f(x,y_0,y_1,...,y_{M-1})= \sum_{ (\vec{p},r)\in \N^{M+1} }a_{\vec{p},r} y_0^{p_0} y_1^{p_1}\dots y_{M-1}^{p_{M-1}} x^r  \quad \forall (x,y_0, ..., y_{M-1}) \in (-4,4)^{M +1 }, 
\ee

where the coefficients $a_{\vec{p},r}$, $(\vec{p},r)\in \N^{M+1}$, satisfy \eqref{AB3}-\eqref{AB4}. 

For $x\in (-1,+\infty)$, we denote $x!=\Gamma (x+1)$, where $\Gamma (x)=\int_0^\infty t^{x-1}e^{-t}dt$ is the Gamma function. Then $(x+1)!=(x+1) (x! )$ for $x>-1$. We also set $\left( \begin{array}{c} y \\ x \end{array} \right) 
=\frac{y!}{x! (y-x)!}$ for $y\ge x\ge 0$.

\begin{proposition}
\label{prop10}
Let $-\infty < t_1 \le \tau \le t_2 < +\infty$ and $f=f(x,y_0,y_1,...,y_{M-1})$ 
be as in  \eqref{AB1}-\eqref{AB2} with the 
coefficients  $a_{\vec{p},r}$, $(\vec{p},r)\in \N ^{M+1}$, satisfying \eqref{AB3}-\eqref{AB4}.
 Assume that $|\zeta _M |=1$. Let $\widetilde R>4$,  $R,R' \in \R$ with $4<R'<R<\min (\widetilde R,b_2)$, and  $\mu > M+1$. 
Then there exists some number $\widetilde C>0$ such that for any 
$C\in (0,\widetilde C ]$, one can find a number $C' =C'(C,R,R' , \mu )>0$ with $\lim _{C\to 0^+} C'(C,R,R' ,\mu) =0$ such that 
\begin{enumerate}
\item for any function 
 $y\in C^\infty([-1,1]\times [t_1,t_2] )$ satisfying
 \eqref{A1} on $[-1,1]\times [t_1,t_2]$ and 
\be
\yt(x,\tau )=Y_0(x)=\sum_{k=0}^\infty A_k \frac{x^k}{k!} , \quad \forall x\in [-1,1] \label{D1}\\
\ee
for some $ Y_0\in ({\mathcal R}_{\widetilde R,C})^N$,
we have
\be
\vert \partial _x^k \partial _t ^n y(0, \tau ) \vert \le C'  \frac{ (\lambda n+k)!}{R^kR'^{\lambda n} (\lambda n+k+1)^\mu} \ \forall k, n \in \N ;
\label{D2} 
\ee
\item there exists an application 
\[
\Lambda^{\infty} : (A_k)_{k\ge 0} \in( {\mathcal N}_{\tilde R,C})^N  \to   (d_n^k)_{ (n,k)\in \N ^2}  \in \R^{\N^{2}}
\]
such that if there exists a solution $y$ of  \eqref{A1} on $[-1,1]\times [t_1,t_2]$ with $Y^t(x,\tau )=\sum _{k\ge 0}  A_k \frac{x^{k}}{k!}$, then 
$\partial_{x}^{k} \partial_{t}^{n}y  (0, \tau ) =d_{n}^k$ for all $(n,k)\in \N ^2$ (without knowing {\em a priori} the existence of such solution). Moreover, we have \be
\vert d_{n}^k \vert \le C'  \frac{ (\lambda n+k)!}{R^kR'^{\lambda n} (\lambda n+k+1)^\mu} \qquad  \forall k, n \in \N .
\label{Gd} 
\ee
\item The application $\Lambda^{\infty}$ satisfies the following property: Assume $y$ is a smooth solution of \eqref{A1} such that there exists $(d_n^k)_{ (n,k)\in \N ^2} =\Lambda^{\infty}((A_k)_{k\ge 0})$ for some $(A_k)_{k\ge 0} \in( {\mathcal N}_{\tilde R,C})^N$ so that $\partial_{x}^{k} \partial_{t}^{n}y  (0, \tau ) =d_{n}^k$ for $k=0,\dots,M-1$ and $n\in \N$. Then $\partial_{x}^{k} \partial_{t}^{n}y  (0, \tau ) =d_{n}^k$ for all  $(n,k)\in \N ^2$.
\end{enumerate}
\end{proposition}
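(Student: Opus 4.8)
The plan is to establish assertions (1) and (2) together, and then to deduce (3) by a separate, shorter induction.

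\textit{Setting up $\Lambda^\infty$ and reduction of (1)--(2).} Given $(A_k)_{k\ge0}\in(\mathcal N_{\widetilde R,C})^N$, I would first define the array $(d_n^k)_{(n,k)\in\N^2}$ \emph{without} reference to any solution. Writing $n=Nn'+j$ with $0\le j<N$, Notation \ref{defGl} furnishes a smooth map $J_n^k$ with $J_n^k(x,Y^{x,t}_{Mn'+k,N-1})=\partial_t^n\partial_x^k y$ for every solution $y$ of \eqref{A1}; since at $(0,\tau)$ this argument is entirely determined by the $x$-jet of the data, i.e.\ by the $A_l$'s, I set $d_n^k:=J_n^k\big(0,(A_l)_l\big)$. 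This is consistent with $d_n^k=(A_k)_n$ for $n<N$, it defines $\Lambda^\infty:(A_k)_k\mapsto(d_n^k)_{n,k}$, and by Lemma \ref{lem1} any smooth solution $y$ with $Y^t(\cdot,\tau)=Y_0$ satisfies $\partial_t^n\partial_x^k y(0,\tau)=d_n^k$ for all $(n,k)$. Hence (1) follows from the bound \eqref{Gd} for $(d_n^k)$, and (2) reduces to that bound plus this identification; so it suffices to prove \eqref{Gd}.

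\textit{The weighted estimate --- linear part (main obstacle).} I would prove \eqref{Gd} by induction on $n$, and split $d_n^k=d_n^{k,\mathrm{lin}}+d_n^{k,\mathrm{nl}}$, where $d_n^{k,\mathrm{lin}}$ is the value at $(0,\tau)$ of the corresponding mixed derivative of the solution of the linear equation obtained by keeping in $f$ only the terms linear in $\vec y$, i.e.\ $\partial_t^N z=\widetilde P z$ with the $z$-jet in time at $\tau$ equal to $Y_0$, $\widetilde P$ being $P$ augmented by that linear part of $f$ (a linear $x$-operator whose coefficients are analytic of radius $b_2>R$, with $|a_{\vec p,r}|\le C_ab^{-1}b_2^{-r}$). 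The linear part is explicit: since $\partial_t^{Nn'+j}Z^x=\widetilde P^{\,n'}\partial_t^j Z^x$, it equals an $x$-differential operator of order $\le Mn'+k$ built from $\widetilde P^{\,n'}\partial_t^j\partial_x^k$ applied to the analytic data $(Y_0)_j$, whose Taylor coefficients obey $|(A_l)_j|\le C\,l!/\widetilde R^{\,l}$. Estimating this directly, the combinatorics of composing $n'$ copies of $\widetilde P$ produce Bessel-type sums $\sum_p x^p/(p!)^{\lambda}$ (here one uses $\lambda N=M$, so that $Mn'+k$ matches $\lambda n+k$ up to the bounded shift $\lambda j$, and $(\lambda p)!\gtrsim(p!)^{\lambda}$ for the multinomial factors), of subexponential growth $e^{O(\sqrt n)}$; these are absorbed by the genuine superexponential growth of $(\lambda n+k)!\,R'^{-\lambda n}$ against $\widetilde R^{-\lambda n}$, because $\widetilde R>R'$. \textbf{The point that makes this delicate is that one cannot estimate the linear part one $\partial_t^N$-step at a time}: a single step multiplies by the (possibly large) norm of $\widetilde P$ and would never close; it is essential to unroll the recursion completely down to $\tau$, so that the factorial weight of the data dominates. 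The outcome is $|d_n^{k,\mathrm{lin}}|\le\kappa\,C\,(\lambda n+k)!\,R^{-k}R'^{-\lambda n}(\lambda n+k+1)^{-\mu}$ with $\kappa=\kappa(P,f,\widetilde R,R,R',\mu,M)$.

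\textit{The weighted estimate --- nonlinear part and closing the induction.} For $d_n^{k,\mathrm{nl}}$, which is governed by the terms of $f$ of degree $\ge2$ in $\vec y$, the induction on $n$ is used: expanding $\partial_x^k\partial_t^{m}[f(x,Y^x)](0,\tau)$ with $m=n-N<n$ by Leibniz and the multivariate chain rule gives sums over monomials $a_{\vec p,r}\,(\text{multinomial})\prod_{\ell=1}^{|\vec p|}d_{b_\ell}^{a_\ell}$ with $|\vec p|\ge2$, $\sum_\ell b_\ell=m$, $a_\ell\le(M-1)+k$, so the induction hypothesis applies to each factor; only finitely many $r\le k$ survive after setting $x=0$, with $|r!\,a_{\vec p,r}|\le C_ab^{-|\vec p|}r!/b_2^{\,r}$. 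Using $b,b_2>4$, the fact that each of the $|\vec p|$ factors carries one power of $C'$, the weight $(\lambda n+k+1)^{-\mu}$ with $\mu>M+1$ to absorb the polynomial losses from distributing the $k$ derivatives and from $\#\{\vec p:|\vec p|=\rho\}\le C(\rho+1)^{M-1}$, and again $\lambda N=M$ so the $\partial_t^N$-shift preserves the weight, one sums the geometric series $\sum_{|\vec p|\ge2}(\mathrm{const}\cdot C')^{|\vec p|-1}$ to get $|d_n^{k,\mathrm{nl}}|\le\varepsilon(C')\,C'\,(\lambda n+k)!\,R^{-k}R'^{-\lambda n}(\lambda n+k+1)^{-\mu}$ with $\varepsilon(C')\to0$ as $C'\to0$. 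Adding, $|d_n^k|\le\big(\kappa C+\varepsilon(C')\,C'\big)(\lambda n+k)!\,R^{-k}R'^{-\lambda n}(\lambda n+k+1)^{-\mu}$; choosing $C':=2\kappa C$ (enlarged if needed to satisfy the base case) and then $\widetilde C>0$ so small that $\varepsilon(2\kappa C)\le1/2$ for $C\le\widetilde C$, the bracket is $\le C'$ and the induction closes, while $C'=2\kappa C\to0$. The base cases $0\le n<N$ are the data bound $|d_n^k|=|(A_k)_n|\le C\,k!/\widetilde R^{\,k}$, which is dominated by $C'\,(\lambda n+k)!\,R^{-k}R'^{-\lambda n}(\lambda n+k+1)^{-\mu}$ since $\widetilde R>R\ge R'$ makes $(R/\widetilde R)^k(\lambda n+k+1)^{\mu}$ bounded in $k$.

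\textit{Assertion (3).} By construction through the $J_l^k$, cf.\ \eqref{expren1} and \eqref{DDD1}, the array $(d_n^k)$ satisfies the same recursion as the genuine mixed derivatives: $d_{n+N}^k=\zeta_M d_n^{k+M}+\sum_{i<M}\zeta_i d_n^{k+i}+\mathcal N_n^k$, where $\mathcal N_n^k$ is $\partial_x^k\partial_t^n[f(x,Y^x)]$ at $(0,\tau)$ written as the same polynomial expression in the $d$'s. Now let $y$ be a smooth solution of \eqref{A1} with $\partial_x^k\partial_t^n y(0,\tau)=d_n^k$ for $0\le k<M$ and all $n\in\N$. I would prove $\partial_x^k\partial_t^n y(0,\tau)=d_n^k$ for all $k\ge M$ by induction on $k$: solving \eqref{A1} for the top $x$-derivative gives $\partial_x^k y=\zeta_M^{-1}\big(\partial_t^N\partial_x^{k-M}y-\sum_{i<M}\zeta_i\partial_x^{k-M+i}y-\partial_x^{k-M}f(x,Y^x)\big)$; applying $\partial_t^n$ and evaluating at $(0,\tau)$, every term on the right involves only $\partial_x^a\partial_t^b y(0,\tau)$ with $a<k$ --- the term $\partial_t^{n+N}\partial_x^{k-M}y(0,\tau)=d_{n+N}^{k-M}$ has $x$-order $k-M<k$, and $\partial_x^{k-M}\partial_t^n f(x,Y^x)$ involves $x$-orders $\le k-1$ --- so by the induction hypothesis (together with the assumption for $a<M$) these all equal the corresponding $d$'s, and the recursion identity yields $\partial_x^k\partial_t^n y(0,\tau)=d_n^k$.
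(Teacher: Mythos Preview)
Your setup of $\Lambda^\infty$ via the $J_n^k$ and your argument for part (3) are correct and essentially identical to the paper's. The problem lies in your proof of the estimate \eqref{Gd}.

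\medskip

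Your central claim --- that ``one cannot estimate the linear part one $\partial_t^N$-step at a time: a single step multiplies by the (possibly large) norm of $\widetilde P$ and would never close'' --- is precisely where you go wrong, and it is the opposite of what actually happens. The paper proves \eqref{Gd} by a straightforward induction on $n$ with a sequence of constants $C_n$, treating at each step the \emph{full} right-hand side of $\partial_x^k\partial_t^n y=\sum_{j=0}^{M}\zeta_j\partial_x^{k+j}\partial_t^{n-N}y+\partial_x^k\partial_t^{n-N}f(x,Y^x)$. The point you miss is that the weight $(\lambda n+k)!/(R^k R'^{\lambda n})$, with $R'<R$, is designed exactly so that one step of the top-order operator $\zeta_M\partial_x^M$ (which exchanges $N$ time derivatives for $M$ space derivatives, i.e.\ $(n,k)\to(n-N,k+M)$) produces the contraction factor $|\zeta_M|(R'/R)^M<1$. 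The lower-order linear terms $\zeta_j\partial_x^j$, $j<M$, and the entire nonlinear contribution from $f$ each carry an extra factor $1/(\lambda n+k)$ (or better), so they are $O(1/n)$ perturbations. Hence the recursion $C_n=\max(\lambda_n,1)\,C_{n-N}$ has $\lambda_n\le 1$ for all $n\ge n_0$, and the $C_n$ stabilize; smallness of $C$ is needed only to control the finitely many steps $n\le n_0$ through the condition $KC_{n-N}/b\le\delta$.

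\medskip

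Because you unroll the linear part instead, you create a second gap that you do not close. With your definition $d_n^{k,\mathrm{lin}}$ as the mixed derivative of the \emph{linear} solution, the remainder $d_n^{k,\mathrm{nl}}:=d_n^k-d_n^{k,\mathrm{lin}}$ satisfies
\[
d_n^{k,\mathrm{nl}}=\widetilde P\big[d_{n-N}^{\,\cdot,\mathrm{nl}}\big]^{(k)}+\big[\text{degree}\ge 2\text{ part of }f\big](d_{n-N}),
\]
so $\widetilde P$ reappears in the recursion for $d^{\mathrm{nl}}$. Your nonlinear paragraph only estimates the second term (``expanding $\partial_x^k\partial_t^m[f(x,Y^x)]$ with $m=n-N$''), not the first; to control the first you would need either the one-step contraction you just disavowed, or a Duhamel-type sum $\sum_{i\ge 1}\widetilde P^{\,i-1}[\text{nl}](d_{n-Ni})$ together with operator bounds for $\widetilde P^{\,i-1}$ acting on these sources --- none of which you provide. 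Finally, the ``Bessel-type sums of subexponential growth $e^{O(\sqrt n)}$'' claim for $\widetilde P^{\,n'}$ is unsubstantiated, and is in any case not needed once one uses the one-step contraction.
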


 We shall need several lemmas and give the proof of Proposition \ref{prop10} later. 
\begin{lemma}
\label{lem2}
(see \cite[Lemma A.1]{KiNi}) For all $k,q\in \N$ and $a\in \{ 0 , ... , k+q\}$, we have 
\[
\sum_{\tiny
\begin{array}{c} 
j+p=a\\
0\le j \le k\\
0\le p\le q
\end{array}}
\left( \begin{array}{c} k\\j \end{array} \right) \, 
\left( \begin{array}{c} q\\p \end{array} \right)  =
\left( \begin{array}{c} k + q \\a \end{array} \right) . 
\]
\end{lemma}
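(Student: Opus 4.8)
The plan is to recognize this as the classical Vandermonde convolution identity and prove it by a generating‑function argument. First I would work in the polynomial ring $\R[X]$ and recall the binomial theorem $(1+X)^m=\sum_{i\ge 0}\binom{m}{i}X^i$, adopting the usual convention $\binom{m}{i}=0$ whenever $i<0$ or $i>m$. With this convention the constraints $0\le j\le k$ and $0\le p\le q$ in the statement are automatically enforced, since any term violating them contributes a vanishing binomial coefficient; this lets me manipulate unrestricted sums and reinstate the ranges only at the end.

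Next I would expand the identity $(1+X)^k(1+X)^q=(1+X)^{k+q}$ in two ways and compare the coefficient of $X^a$ for $0\le a\le k+q$. On the left-hand side, multiplying $\sum_j\binom{k}{j}X^j$ by $\sum_p\binom{q}{p}X^p$ and collecting the power $X^a$ gives $\sum_{j+p=a}\binom{k}{j}\binom{q}{p}$, which, after reinstating the (automatic) vanishing of out‑of‑range terms, is exactly the sum in the statement. On the right-hand side the coefficient of $X^a$ in $(1+X)^{k+q}$ is $\binom{k+q}{a}$. Equating the two coefficients yields the claimed formula.

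As an alternative and as a sanity check I would note the purely combinatorial proof: $\binom{k+q}{a}$ counts the $a$-element subsets of a set $S=S_1\sqcup S_2$ with $|S_1|=k$ and $|S_2|=q$; partitioning those subsets according to the number $j$ of elements selected inside $S_1$ (so that $p=a-j$ are selected inside $S_2$) produces precisely $\sum_{j+p=a}\binom{k}{j}\binom{q}{p}$. Since the statement is already recorded as \cite[Lemma A.1]{KiNi}, there is no genuine obstacle here; the only point requiring a little care is the bookkeeping of the summation range, which, as noted, is handled once and for all by the convention that binomial coefficients vanish outside their natural range.
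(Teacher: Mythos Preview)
Your proof is correct: this is Vandermonde's identity, and both the generating-function argument via comparing the coefficient of $X^a$ in $(1+X)^k(1+X)^q=(1+X)^{k+q}$ and the double-counting argument are standard and complete. The paper does not give its own proof of this lemma but simply cites \cite[Lemma A.1]{KiNi}, so there is nothing to compare against; your write-up is entirely adequate.
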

\begin{lemma}
\label{lemma36}
For all $\lambda  \in [1,+\infty )$ and all $k,j,n,i\in \N$ with $k\ge j$ and $n\ge i$, we have 
\be
\left( \begin{array}{c} k \\ j  \end{array} \right) 
\left( \begin{array}{c} n \\ i  \end{array} \right) \le 
\lambda
\left( \begin{array}{c} k +  \lambda  n  \\  j +  \lambda  i   \end{array} \right) \cdot 
\ee
\end{lemma}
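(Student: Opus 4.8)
The plan is to peel off the two ``numerator'' indices with Vandermonde's identity and then reduce the inequality to a monotonicity statement for an extended binomial coefficient regarded as a function of a continuous parameter, which I would establish via the digamma function.

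First, for fixed integers $k\ge j$ and $n\ge i$ I would introduce, for real $\lambda\ge 1$,
$$\phi(\lambda):=\binom{k+\lambda n}{j+\lambda i}=\frac{\Gamma(k+\lambda n+1)}{\Gamma(j+\lambda i+1)\,\Gamma\big((k-j)+\lambda(n-i)+1\big)},$$
which is well defined, smooth and positive since all three arguments of $\Gamma$ are $\ge 1$. Because $\lambda\ge 1$ and $\phi(\lambda)>0$, it suffices to show $\lambda\phi(\lambda)\ge\phi(\lambda)\ge\phi(1)\ge\binom{k}{j}\binom{n}{i}$, and the last inequality $\binom{k+n}{j+i}\ge\binom{k}{j}\binom{n}{i}$ is immediate from Lemma \ref{lem2} (applied with the role of $q$ played by $n$ and of $a$ by $j+i$): the summand obtained by choosing the summation index equal to $j$ equals $\binom{k}{j}\binom{n}{i}$, and every summand is nonnegative.

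The core step is then to prove that $\phi$ is non-decreasing on $[1,+\infty)$. Writing $\psi=\Gamma'/\Gamma$ for the digamma function, which is increasing on $(0,+\infty)$, logarithmic differentiation gives
$$\frac{d}{d\lambda}\log\phi(\lambda)=n\,\psi(k+\lambda n+1)-i\,\psi(j+\lambda i+1)-(n-i)\,\psi\big((k-j)+\lambda(n-i)+1\big).$$
Since $j-k\le 0\le\lambda(n-i)$ one has $j+\lambda i\le k+\lambda n$, and since $-j\le 0\le\lambda i$ one has $(k-j)+\lambda(n-i)\le k+\lambda n$; monotonicity of $\psi$ together with the nonnegativity of $i$ and of $n-i$ then yield
$$\frac{d}{d\lambda}\log\phi(\lambda)\ \ge\ \big(n-i-(n-i)\big)\,\psi(k+\lambda n+1)=0.$$
Hence $\phi(\lambda)\ge\phi(1)$ for $\lambda\ge1$, which combined with the previous paragraph proves the lemma.

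The hard part will be the monotonicity of $\phi$ — specifically, the fact that $\lambda$ is an \emph{arbitrary real number} rather than an integer. For integer $\lambda$ one could get $\phi(\lambda)\ge\phi(1)$ by a chain of elementary moves (each of ``increase the numerator by $1$'' and ``increase numerator and denominator by $1$'' multiplies a binomial coefficient by a factor $\ge1$), but for general real $\lambda$ one genuinely has to pass to the $\Gamma$-function expression and use monotonicity of the digamma function; everything else is bookkeeping. As a byproduct the argument shows the factor $\lambda$ is not actually needed: in fact $\binom{k}{j}\binom{n}{i}\le\binom{k+\lambda n}{j+\lambda i}$.
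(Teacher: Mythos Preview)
Your proof is correct but takes a different route from the paper's.  Both arguments start from Vandermonde (Lemma~\ref{lem2}) to handle the case $\lambda=1$, but diverge when passing to general real $\lambda$.  The paper expresses $\binom{k+\lambda n}{j+\lambda i}$ via the Beta integral,
\[
(k+\lambda n+1)\binom{k+\lambda n}{j+\lambda i}=\left(\int_0^1 t^{\,j+\lambda i}(1-t)^{\,k-j+\lambda(n-i)}\,dt\right)^{-1},
\]
and observes that the integrand is pointwise non-increasing in $\lambda$ on $(0,1)$; this shows that $(k+\lambda n+1)\binom{k+\lambda n}{j+\lambda i}$ is non-decreasing in $\lambda$, and the factor $\lambda$ then enters through the crude bound $\frac{k+\lambda n+1}{k+n+1}\le\lambda$.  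Your digamma argument instead proves the stronger fact that $\phi(\lambda)=\binom{k+\lambda n}{j+\lambda i}$ is itself non-decreasing, so as you note the factor $\lambda$ is actually superfluous.  The paper's Beta-integral trick is arguably more elementary (it uses only that $t^a$ is decreasing in $a$ for $0<t<1$), while yours invokes monotonicity of the digamma function; in return yours gives the sharper inequality $\binom{k}{j}\binom{n}{i}\le\binom{k+\lambda n}{j+\lambda i}$.
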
 
\noindent 
\bnp[Proof of Lemma \ref{lemma36}]
Recall the relationship (see e.g. \cite{rudin}) between the Gamma function $\Gamma$  and  the Beta function $B$
defined by
$B(x,y)=\int_0^1t^{x-1}(1-t)^{y-1}dt$  for $\textrm{Re }x>0$ and $\textrm{Re } y>0$:  
\be
B(x,y)=\frac{\Gamma (x) \Gamma (y)}{\Gamma (x+y)} \cdot
\ee 
In particular, we have for $x,y\in [0,+\infty )$
\begin{multline*}
\left( \begin{array}{c} x+y \\ x  \end{array} \right)
=\frac{\Gamma (x+y+1)}{ \Gamma (x+1) \Gamma (y+1) }\\
=\frac{\Gamma (x+y+1)}{ \Gamma (x+y+2) }   B(x+1,y+1)^{-1} =    \left(  (x+y+1) \int_0^1 t^x(1-t)^y dt \right) ^{-1}.
\end{multline*}
Taking $x=j+ \lambda  i$, $y=k-j+\lambda (n-i)$, this yields 
\be
( k + \lambda n  +1 ) \left( \begin{array}{c} k+\lambda n \\ j+ \lambda i    \end{array}  \right)
=\left(  \int_0^1 t^{j+\lambda  i }  (1-t)^{    k-j + \lambda (n-i) }  dt \right)^{-1}.   
\label{LLL1}
\ee
As the right-hand side of \eqref{LLL1} is a non-decreasing function of $\lambda$, we infer that for $\lambda \ge 1$
\[
( k +  n  +1 ) \left( \begin{array}{c} k+ n \\ j+  i    \end{array}  \right)
\le  ( k + \lambda n  +1 ) \left( \begin{array}{c} k+\lambda n \\ j+ \lambda i    \end{array}  \right). 
\]
Therefore, using Lemma  \ref{lem2}, 
\[
\left( \begin{array}{c} k\\ j  \end{array} \right) 
 \left( \begin{array}{c} n\\ i \end{array} \right)
\le   \left( \begin{array}{c} k + n  \\  j +i  \end{array} \right)
\le \frac{k+ \lambda n + 1}{k+n+1 }  \left( \begin{array}{c}  k + \lambda n  \\  j + \lambda i   \end{array} \right)
\le \lambda  \left( \begin{array}{c} k + \lambda n  \\ j + \lambda i   \end{array} \right).
\] 
\enp
The following result gives  the algebra property for the mixed Gevrey spaces $G^{1,\lambda }([-1,1]\times [t_1,t_2])$.
\begin{lemma}
\label{lem3}
Let $-\infty < t_1\le t_2 <  \infty$, $(x_0,t_0)\in [-1,1]\times [t_1,t_2]$, $R,R' \in (0,+\infty ) $, $q\in \N$, $\lambda \in [1,+\infty)$,  $\mu  \in (q+2,+\infty )$,  
 $k_0,n_0\in \N$, $C_1,C_2\in (0,+\infty )$, and $y_1,y_2\in C^\infty ([-1,1]\times [t_1,t_2] )$ be such that 
\be
\vert \partial _x ^k \partial _t ^n y_i (x_0,t_0)\vert \le C_i \frac{ (\lambda n+k+q)!}{R^k R'^{\lambda n} (\lambda n+k+1)^\mu  } \quad \forall i=1,2, \quad  \forall k\in \{ 0, ..., k_0\} ,\ \forall n\in \{ 0, ..., n_0 \} .  
\label{A49}
\ee
Then we have 
\be
\vert \partial _x^k \partial _t ^n (y_1y_2) (x_0,t_0) \vert \le  K_{q,\mu}  C_1C_2 \frac{ (\lambda n+k+q )!}{R^k R'^{\lambda n} (\lambda n+k+1)^\mu } \quad  \forall k\in \{ 0, ..., k_0\} ,\ \forall n\in \{ 0, ..., n_0 \} ,
 \label{A50}
\ee
where
\[
K_{q,\mu} :=\lambda 2^{\mu -q +1} (1+q)^{2q} \sum_{j\ge 0} \sum_{i\ge 0} \frac{1}{ (\lambda  i +j+1)^{\mu -q} } <\infty  .
\]
\end{lemma}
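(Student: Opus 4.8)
The plan is to expand the product via the Leibniz rule in both variables, insert the hypothesis \eqref{A49} into each resulting term, and then reduce the double sum to a convergent numerical series by means of Lemma~\ref{lemma36}.

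\textbf{Step 1: Leibniz and insertion of the hypothesis.} For $k\in\{0,\dots,k_0\}$ and $n\in\{0,\dots,n_0\}$, the mixed Leibniz formula gives
\[
\partial_x^k\partial_t^n(y_1y_2)(x_0,t_0)=\sum_{j=0}^{k}\sum_{i=0}^{n}\binom{k}{j}\binom{n}{i}\,\big(\partial_x^j\partial_t^i y_1\big)(x_0,t_0)\,\big(\partial_x^{k-j}\partial_t^{n-i}y_2\big)(x_0,t_0).
\]
Since $j\le k\le k_0$, $i\le n\le n_0$ (and likewise for $k-j$, $n-i$), estimate \eqref{A49} applies to both factors; using $R^jR^{k-j}=R^k$ and $R'^{\lambda i}R'^{\lambda(n-i)}=R'^{\lambda n}$, and writing $\alpha:=\lambda i+j$, $\beta:=\lambda(n-i)+(k-j)$ so that $\alpha+\beta=\lambda n+k=:N_0$, one obtains
\[
\big|\partial_x^k\partial_t^n(y_1y_2)(x_0,t_0)\big|\le\frac{C_1C_2}{R^kR'^{\lambda n}}\sum_{j=0}^{k}\sum_{i=0}^{n}\binom{k}{j}\binom{n}{i}\,\frac{(\alpha+q)!\,(\beta+q)!}{(\alpha+1)^\mu(\beta+1)^\mu}.
\]

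\textbf{Step 2: combining binomials and factorials.} Because $k\ge j$ and $n\ge i$, Lemma~\ref{lemma36} gives $\binom{k}{j}\binom{n}{i}\le\lambda\binom{N_0}{\alpha}=\lambda\,\frac{N_0!}{\alpha!\,\beta!}$. Moreover $\frac{(\alpha+q)!}{\alpha!}=\prod_{m=1}^{q}(\alpha+m)\le\big((1+q)(\alpha+1)\big)^q$, the same with $\beta$, while $N_0!\le\frac{(N_0+q)!}{(N_0+1)^q}$ since $\frac{(N_0+q)!}{N_0!}=\prod_{m=1}^{q}(N_0+m)\ge(N_0+1)^q$. Hence each summand is bounded by
\[
\binom{k}{j}\binom{n}{i}\,\frac{(\alpha+q)!(\beta+q)!}{(\alpha+1)^\mu(\beta+1)^\mu}\le\lambda(1+q)^{2q}\,\frac{(N_0+q)!}{(N_0+1)^q}\cdot\frac{1}{(\alpha+1)^{\mu-q}(\beta+1)^{\mu-q}}.
\]

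\textbf{Step 3: summing the numerical series.} It remains to estimate $\sum_{j=0}^{k}\sum_{i=0}^{n}\big((\alpha+1)(\beta+1)\big)^{-(\mu-q)}$. For every $(j,i)$ one has $\max(\alpha,\beta)\ge N_0/2$, so one of $\alpha+1$, $\beta+1$ is $\ge(N_0+1)/2$, and thus (using $\mu-q>0$) the corresponding factor is $\ge\big((N_0+1)/2\big)^{\mu-q}$. Splitting the sum according to whether $\alpha\ge\beta$ or $\alpha<\beta$, bounding that dominant factor from below in each part, and then extending the remaining free index to all of $\N^2$ (reindexing $(j,i)\mapsto(k-j,n-i)$ in the first part, so that $\beta$ becomes $\lambda i'+j'$), we get
\[
\sum_{j=0}^{k}\sum_{i=0}^{n}\frac{1}{(\alpha+1)^{\mu-q}(\beta+1)^{\mu-q}}\le\frac{2^{\mu-q+1}}{(N_0+1)^{\mu-q}}\sum_{j\ge0}\sum_{i\ge0}\frac{1}{(\lambda i+j+1)^{\mu-q}},
\]
and the last series is finite: for fixed $i$ it converges in $j$ since $\mu-q>1$, and since $\lambda i+j+1\ge i+j+1$ the whole double sum is dominated by $\sum_{m\ge0}(m+1)^{-(\mu-q-1)}<\infty$ because $\mu-q-1>1$. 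Assembling Steps 1--3, recalling $N_0=\lambda n+k$ and using $\frac{(N_0+q)!}{(N_0+1)^q}\cdot\frac{1}{(N_0+1)^{\mu-q}}=\frac{(N_0+q)!}{(N_0+1)^\mu}$, we obtain exactly \eqref{A50} with $K_{q,\mu}=\lambda\,2^{\mu-q+1}(1+q)^{2q}\sum_{j\ge0}\sum_{i\ge0}(\lambda i+j+1)^{-(\mu-q)}$.

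\textbf{Expected main difficulty.} The computation is elementary; the only point requiring care is the exponent bookkeeping in Steps 2--3 — arranging that the $q$ extra units coming from $(\alpha+q)!/\alpha!$ and $(\beta+q)!/\beta!$ are absorbed into the denominator $(N_0+1)^q$, so that after the $\max(\alpha,\beta)\ge N_0/2$ reduction the residual series is summed with exponent $\mu-q>2$ while the final power of $(N_0+1)$ comes out to be precisely $\mu$. This is exactly where the hypothesis $\mu>q+2$ is used.
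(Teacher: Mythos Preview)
Your proof is correct and follows essentially the same strategy as the paper: expand by Leibniz, invoke Lemma~\ref{lemma36} to control $\binom{k}{j}\binom{n}{i}$ by $\lambda\binom{N_0}{\alpha}$, and reduce to the convergent double series $\sum_{i,j\ge 0}(\lambda i+j+1)^{-(\mu-q)}$.

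The only organizational differences are minor. The paper first absorbs the $+q$ in the factorials into the exponent (passing from $\mu$ to $\widetilde\mu=\mu-q$ and from $C_i$ to $(1+q)^qC_i$), works in the clean $q=0$ setting, and only at the end converts $(\lambda n+k)!(\lambda n+k+2)^q$ back to $2(\lambda n+k+q)!$; you instead keep $q$ throughout and strip it off via $(\alpha+q)!/\alpha!\le(1+q)^q(\alpha+1)^q$ together with $N_0!\le(N_0+q)!/(N_0+1)^q$. For the numerical sum, the paper uses the convexity of $x\mapsto x^{\widetilde\mu}$ applied to $\frac{1}{\alpha+1}+\frac{1}{\beta+1}=\frac{N_0+2}{(\alpha+1)(\beta+1)}$, whereas you use the equivalent and slightly more elementary observation $\max(\alpha,\beta)\ge N_0/2$ and split accordingly. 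Both routes produce the same constant $K_{q,\mu}$ and both use $\mu>q+2$ at exactly the same point, namely the summability of the residual series.
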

\noindent 
{\em Proof of Lemma \ref{lem3}:} 
Using $(\lambda n+k+q)^q\leq (1+q)^q\left(1+\lambda n+k\right)^q$, we obtain 
\[
(\lambda n+k+q)!=(\lambda n+k)!\prod_{j=1}^q(\lambda n+k+j)\leq (\lambda n+k)! (\lambda n+k+q )^q\leq (1+q)^q(\lambda n+k)!\left(1+\lambda n+k\right)^q.
\]
So, denoting $\mub :=\mu-q>2$ and $\widetilde{C}_i:=(1+q)^qC_i$, we have
\be
\vert \partial _x ^k \partial _t ^n y_i (x_0,t_0)\vert \le \widetilde{C}_i \frac{ (\lambda n+k)!}{R^k R'^{\lambda n} (\lambda n+k+1)^{\mub}  }, \quad \forall i=1,2, \quad  \forall k\in \{ 0, ..., k_0\} ,\ \forall n\in \{ 0, ..., n_0 \} .  
\ee
We infer from the Leibniz rule that 
\begin{eqnarray*}
&& \vert \partial _x ^k \partial _t ^n (y_1y_2) (x_0,t_0) \vert \\
&&\qquad = 
\left\vert \sum _{0\le j\le k}  \,  \sum_{0\le i\le n}  \left( \begin{array}{c} k\\j \end{array} \right) \, 
\left( \begin{array}{c} n\\i \end{array} \right)  
(\partial _x^j\partial_t^i y_1) (x_0,t_0)  (\partial _x^{k-j} \partial _t ^{n-i} y_2) (x_0,t_0)  \right\vert \\
&&\qquad \le 
\sum _{0\le j\le k}  \,  \sum_{0\le i\le n}  \left( \begin{array}{c} k\\j \end{array} \right) \, 
\left( \begin{array}{c} n\\i \end{array} \right)  
 \frac{\widetilde{C}_1(\lambda i +j)!}{R^jR'^{\lambda i} (\lambda i+j+1)^{\mub} } \frac{\widetilde{C}_2(\lambda (n-i) +k-j )!}{R^{k-j}R'^{\lambda (n-i)}
(\lambda (n-i)+k-j+1)^{\mub}} \\
&&\qquad = \frac{\widetilde{C}_1\widetilde{C}_2}{R^kR'^{\lambda n}} (\lambda n+k) !   
\underbrace{\sum _{0\le j\le k}  \,  \sum_{0\le i\le n} 
\frac{ \left( \begin{array}{c} k\\j \end{array} \right) \, \left( \begin{array}{c} n\\i \end{array} \right) \, 
 \left( \begin{array}{c} \lambda n+k\\ \lambda i+j   \end{array} \right) ^{-1} } 
 {(\lambda i+j+1)^{\mub}(\lambda (n-i)+k-j+1)^{\mub}}}_{I} \cdot
\end{eqnarray*}
We infer from Lemma \ref{lemma36} that 
\[
\left( \begin{array}{c} k\\j \end{array} \right) \, \left( \begin{array}{c} n\\i \end{array} \right) 
\left( \begin{array}{c} \lambda n+ k\\ \lambda i+ j \end{array} \right) ^{-1}\le \lambda. 
\]
Finally, by the convexity of $x\to x^{\mub}$  on $[0,+\infty)$, we have that 
 \begin{multline*}
\sum _{0\le j\le k}  \sum_{0\le i\le n}  \frac{ (\lambda n+k+ 2)^{\mub}}{(\lambda  i+j+1)^{\mub}(\lambda (n-i)+k-j+1)^{\mub}} 
= \\ 
\sum _{0\le j\le k}\sum_{0\le i\le n}  \left( \frac{1}{ \lambda i+j+1}  + \frac{1}{\lambda (n-i)+k-j+1}  \right)^{\mub} \le\\
 2^{\mub -1} \sum _{0\le j\le k}\sum_{0\le i\le n}  \left( \frac{1}{ (\lambda i+j+1)^{\mub} }  + \frac{1}{ (\lambda (n-i)+k-j+1)^{\mub} }  \right) \le\\ 
 2^{\mub} \sum_{j\ge 0}\sum_{i\ge 0} \frac{1}{ (\lambda i+j+1)^{\mub} } <\infty,
\end{multline*}
where we used the fact that ${\mub} =\mu  - q >2$. 

It follows that 
\begin{eqnarray*}
I &\le& 2^{\mub} \lambda 
\left( \sum_{j\ge 0} \sum_{i\ge 0} \frac{1}{ (\lambda i+j+1)^{\mub}} \right) \frac{1}{ (\lambda n+k+2)^{\mub}}\\
&=&2^{\mu -q} \lambda  \left( \sum_{j\ge 0}\sum_{i\ge 0} \frac{1}{ (\lambda i+j+1)^{\mu -q}} \right) 
\frac{  (\lambda n+k+2)^q }{ (\lambda n+k+2)^\mu },
\end{eqnarray*} 
and hence the proof of Lemma \ref{lem3} is complete once we have
noticed that $ (\lambda n+k) !(\lambda n+k+2)^{q}\leq  2 (\lambda n+k+q) !$. (We used the fact that $(x+2)^q\le 2\prod_{j=1}^q (x+j)$ for all $x\ge 0$, $q\in \N^*$.) \qed 
  
{\begin{remark}
\label{rk:Leibniz}
Lemma \ref{lem3} can also be written as the existence of an application $\pi:\R^{(k_{0}+1)\times (n_{0}+1)}\times \R^{(k_{0}+1)\times (n_{0}+1)} \mapsto \R^{(k_{0}+1)\times (n_{0}+1)} $ such that, if for some $d_{1}, d_{2}\in \R^{(k_{0}+1)\times (n_{0}+1)}$ and two smooth functions $y_{1}$, $y_{2}$ satisfying $\partial _x^k \partial _t ^n y_{i} (x_0,t_0)=d_{n,i}^{k}$ , $i=1,2$ for all $k\in \{ 0, ..., k_0\}$, for all $n\in \{ 0, ..., n_0 \}$, then $\partial _x^k \partial _t ^n (y_{1}y_{2}) (x_0,t_0)=\left(\pi(d_{1},d_{2})\right)_{n}^{k}$. The definition of $\pi(d_{1},d_{2})$ is given inside of the proof by the Leibniz formula. The Lemma gives then that the estimates
\be
\left|d_{n,i}^{k}\right| \le C_{i} \frac{ (\lambda n+k+q)!}{R^k R'^{\lambda n} (\lambda n+k+1)^\mu  } \quad \forall i=1,2, \quad  \forall k\in \{ 0, ..., k_0\} ,\ \forall n\in \{ 0, ..., n_0 \} .  
\ee
imply
\be
\left| \left(\pi(d,\widetilde{d})\right)_{n}^{k}\right| \le  K_{q,\mu}  C_1C_2 \frac{ (\lambda n+k+q )!}{R^k R'^{\lambda n} (\lambda n+k+1)^\mu } \quad  \forall k\in \{ 0, ..., k_0\} ,\ \forall n\in \{ 0, ..., n_0 \}.
\ee
This equivalent way of writing the same result is consistent with the second part of Proposition \ref{prop10}.
\end{remark}}
We are now ready to complete the proof of Proposition \ref{prop10}.
\begin{proof}[Proof of Proposition \ref{prop10}]
We will prove the first part of the proposition. The construction of the application $\Lambda$  in the second part of the proposition will appear along the proof.

 Pick any number  $\mu >M+1$. We shall prove by induction on 
$n\in \N$ that 
\be
\vert \partial _x^k\partial _t ^n y(0,\tau)\vert \le C_n \frac{(\lambda n+k)!}{R^k R'^{\lambda n} (\lambda n+k+1)^\mu }, \quad  \forall k\in \N ,\label{D20} 
\ee
where $0<C_n\leq C_{n+1} \le C'< +\infty$. The value of the constant $C'$ will appear along the proof. Assume first that $n=0$. Recall that 
$Y^t (x, \tau ) =Y_0(x)=\sum_{k=0}^\infty A_k \frac{x^k}{k!}$ with 
$ \Vert  A_k \Vert _\infty  \le C\frac{k!}{\widetilde R ^k}$. Denote  
$A_k=(A_k^0, ..., A_k^{N-1})$.
Using the fact that $R<\widetilde R$, we have that for $0\le n \le N-1$,
\[
\vert \partial _x^k\partial _t ^n  y(0,\tau)\vert  
= \vert  A_k^n \vert  
\le  C\frac{ k ! }{ \widetilde R ^k} 
\le  
CD\frac{(\lambda n + k ) ! }{ R^k {R'}^{\lambda n}(\lambda n+ k+1)^\mu } 
\]
where 
\[
D : = \left(\sup_{k\in \N, 0\le n\le N-1} (\frac{R}{\widetilde R})^k
R'^{\lambda n}  (\lambda n + k +1)^\mu \frac{k!}{(\lambda n+ k)!} \right) <\infty. 
\]

Il follows that 
\eqref{D20} holds for $0\le n\le N-1$ for some constants $C_0, ..., C_{N-1}\le CD$. 

Assume now that \eqref{D20} is true up to  the rank $n-1$ for some $n\ge N$. 
Let us show that \eqref{D20} is also true at the rank $n$ for some constant $C_n>0$.  
Then, by \eqref{W1a} and  \eqref{formfbis}, we have that 
{\begin{eqnarray}
\partial _x^k \partial _t ^n y(0,\tau )
&=& \partial _x ^k \partial _t ^{n-N} \sum_{j=0}^M 
\zeta _ j \partial _x^j y  
(0, \tau) 
+ \sum_{\vec p\ne 0}
\partial _x ^k \partial _t ^{n-N} \left(  
A_{\vec{p}} (x)  y^{p_0}(\partial _x y)^{p_1} \cdots (\partial _x ^{M-1} y)^{p_{M-1}}
 \right) (0,\tau)\nonumber \\
&=:& I_1+I_2. \label{E1}
\end{eqnarray}
}
Let us estimate $I_1$ first. For $0\le j\le M$, we have that 
\begin{eqnarray*}
|\zeta _j \partial _x^{k+j}\partial _t ^{n-N} y(0, \tau )| 
&\le & |\zeta _j | C_{n-N} \frac{(\lambda (n-N)+k+j)!}{R^{k+j}R'^{\lambda (n-N)} 
(\lambda (n-N)+k+j+1 )^\mu } \\
&\le & |\zeta _j | C_{n-N} \frac{(\lambda n + k + j - M )!}{R^{k+j}R'^{\lambda n-M} 
(\lambda n + k + j - M +1 )^\mu }, 
\end{eqnarray*}
where we have used $\lambda N=M$. 
It follows that 
\ba
&&|I_1| \le \left[  C_{n-N} \left( \frac{R'}{R} \right)^M
+\sum _{j=0}^{M-1} \frac{ |\zeta _j | C_{n-N}}{(\lambda n + k+ j -M+1) \cdots (\lambda n +k) } \frac{R'^M}{R^j} 
\left( \frac{\lambda n + k +1}{\lambda n + k + j - M + 1 }\right) ^\mu \right] \nonumber
 \\
&& \qquad \times \frac{(\lambda n+ k)!}{R^kR'^{\lambda n } (\lambda n + k + 1)^\mu }  \nonumber\\
&&\le \left[  C_{n-N} \left( \frac{R'}{R} \right)^M
+\sum _{j=0}^{M-1} \frac{ |\zeta _j | C_{n-N}}{(\lambda n + j -M+1) \cdots \lambda n } \frac{R'^M}{R^j} 
\left(M+1\right) ^\mu \right] \nonumber
 \\
&& \qquad \times \frac{(\lambda n+ k)!}{R^kR'^{\lambda n } (\lambda n + k + 1)^\mu }\cdot 
\label{E2}
\ea
where we have used $k, j\geq 0$ and $\lambda n+k\geq M$ so that $ \frac{\lambda n + k +1}{\lambda n + k + j - M + 1 }\leq M+1$.
Let us estimate $I_2$. 
Since $A_{\vec p}$ does not depend on $t$, we have that 
$\partial _x^k\partial _t^m A_{\vec p}=0$ for $m\ge 1$ and $k\ge 0$. Next, for $k\ge 0$, we have that 
\[
\vert \partial _x ^k A_{\vec p}(0)\vert =k!\, \vert a_{\vec p, k}\vert 
\le \frac{C_a\ k!}{b^{|\vec p |} b_2^k}
\le \frac{\overline{C}}{ b^{| \vec p | }}\frac{ k!}{(k+1)^\mu R^k},
\]
for some constant $\overline{C}>0$ depending on $R$, $b_2$, $\mu$, since $R<b_2$.  

Note that, still by the iteration assumption \eqref{D20} at step $n-1$, for $0\le j\le M-1$ the function $\partial _x^j y$ satisfies the estimate
\begin{eqnarray*}
\vert \partial _x^k\partial _t ^m (\partial _x^j  y)(0,\tau)\vert 
&\le& C_m \frac{(\lambda m+k +j)!}{R^{k+j} R'^{\lambda m} (\lambda m+k+j +1)^\mu } \\
&\le&  \frac{C_m}{R^j} \,  \frac{(\lambda m+k + M-1 )!}{R^{k} R'^{\lambda m} (\lambda m+k+1)^\mu }  \quad  \forall k\in \N, 
\ \forall m\in \{ 0 , ... , n-1\} . 
\end{eqnarray*}

 Let $q=M-1$. Since $\mu >M+1=q+2$,  it  follows from iterated applications of Lemma \ref{lem3} that 
\ba
&&\left\vert 
\partial_x ^k\partial_t^{n-N} \big( A_{\vec{p}}y^{p_0}
(\partial _x y)^{p_1} \cdots (\partial _x ^{M-1}y)^{p_{M-1}}
\big) (0, \tau ) 
\right\vert \nonumber \\ 
&&\qquad \le   \overline{C} \left( \frac{K C_{n-N}}{b}\right) ^{|\vec p|} 
\frac{ (\lambda (n-N)+ k + M-1 )!}{R^k R'^{\lambda (n-N)} (\lambda (n-N)+k+1)^\mu} 
\prod_{j=0}^{M-1} \frac{1}{(R^j)^{p_j}} 
\label{E3}
\ea
where $K=K_{q,\mu}>0$. If, for some number $\delta \in (0,1)$, we have   
\be
C_{n-N} \le \delta \min_{0\le j \le M-1} \frac{bR^j }{K}, 
\label{E150}
\ee
then 
\[
\sum_{\vec p \ne 0 } 
\left( \frac{K C_{n-N}}{b}\right) ^{|\vec p|} \prod_{j=0}^{M-1} \frac{1}{(R^j)^{p_j}}
\le \frac{KC_{n-N}}{b} 
\left( ( 1-\delta  ) ^{-1} + \cdots +  (1-\delta )^{-M} \right) \le \frac{MK}{b(1-\delta )^M} C_{n-N} .
\]
(We considered the subcases (1) $p_0\ge 1$ and $p_1=\ldots =p_{M-1}=0$; (2) $p_0\ge 0$,  $p_1\ge 1$ and $p_2=\ldots=p_{M-1}=0$; 
(3) $p_0\ge 0$, $p_1\ge 0$, $p_2\ge 1$ and $p_3=\ldots=p_{M-1}=0$  etc.).
Gathering the previous estimates and noticing that $\lambda N=M$, it follows that 
\ba
| I_2 | &\le&  
\overline{C} \frac{ (\lambda n + k -1 )!}{R^k R'^{\lambda n-M} (\lambda n-M+k+1)^\mu} 
\frac{M K}{b(1-\delta )^{M}}C_{n-N}  \nonumber \\
&\le& 
\left[ \frac{\overline{C}MK  {R'}^M }{b(1-\delta )^M(\lambda n + k)}
\left(  \frac{\lambda n + k  + 1}{\lambda n -M + k + 1 }\right) ^\mu     \right] 
\times \frac{(\lambda n+ k)!}{R^kR'^{\lambda n } (\lambda n + k + 1)^\mu }
C_{n-N}\\
&\le& 
\left[ \frac{\overline{C}MK  {R'}^M }{b(1-\delta )^M\lambda n}
\left(  M+1\right) ^\mu     \right] 
\times \frac{(\lambda n+ k)!}{R^kR'^{\lambda n } (\lambda n + k + 1)^\mu }
C_{n-N}\cdot
\ea
where we have used again $ \frac{\lambda n + k +1}{\lambda n + k - M + 1 }\leq M+1$.
We set $C_n:=\max(\lambda_n,1) C_{n-N}$, where 
\begin{multline*}
\lambda _n :=   |\zeta _M|  \left( \frac{R'}{R} \right)^M
+\sum _{j=0}^{M-1} \frac{ |\zeta _j | }{ (\lambda n +  j -M+1)  \ldots \lambda n    }  \frac{R'^M}{R^j} 
\left(M+1\right) ^\mu  + \frac{\overline{C}MK {R'}^M}{b(1-\delta )^M\lambda n }\left(M+1\right) ^\mu .    \end{multline*}
Then \eqref{D20} holds. 
Since $ \left( \frac{R'}{R} \right)^M<1$ and $\vert \zeta_M\vert =1$, 
it is clear that $|\lambda _n| \le 1$ for $n\gg 1$, say for $n\ge n_0\ge N$.   
This yields $C_{n}\le C_{n-N}$ for all $n\ge n_0\ge N$, provided that \eqref{E150} holds for $n\le n_0$. To ensure \eqref{E150} for $n \le n_0$, it is sufficient to choose $C$ small enough (or, equivalently, $\widetilde C$ small enough). The proof by induction of \eqref{D20} is achieved.
The proof of the first part of Proposition \ref{prop10} is complete. For the second part of Proposition \ref{prop10}, we follow the proof of the first part and define the coefficients $d_{n}^{k}$
by induction on $n$.\\

For $n=0, \dots, N-1$ and $k\in \N$, if we denote $A_k=(A_k^0, A_k^1, ... , A_k^{N-1})$, then we have $\partial_x^k \partial_t^n y(0,\tau ) =A_k^n$ for any solution satisfying \eqref{D1}. So we are led to define $d_n^k:=A_k^n$.
 
 For $n\ge N$, following the proof of the previous estimates, we obtain using the notations introduced in  \eqref{E1} and Leibniz' rule
\begin{eqnarray}
I_{1}&=&\sum_{j=0}^M \zeta _ j \partial _x ^{k+j} \partial _t ^{n-N} y 
(0, \tau)  \label{KL1}\\
I_2&=& \sum_{\vec{p} \ne 0}  \ \  \sum_{k_1+\cdots + k_{M+1} =k} \ \ \sum_{n_1+\cdots + n_{M+1} =n-N}
\frac{k!}{k_1! \cdots k_{M+1}!} \frac{(n-N)!}{n_1!\cdots n_{M+1}!}   \nonumber \\
&&\left( \partial _x^{k_1}\partial _t ^{n_1} ({ A_{\vec{p}} }) \, \partial _x^{k_2} \partial _t ^{n_2} (y^{p_0}) \,  
\partial _x^{k_3}\partial _t ^{n_3} (\partial _x y)^{p_1}\cdots \partial _x^{k_{M+1}}
\partial _t ^{n_{M+1}} (\partial _x^{M-1} y)^{p_{M-1}} \right) (0,\tau) \label{KL2}
\end{eqnarray}
with, for $0\le i\le M-1$, 
\begin{eqnarray}
&&\partial _x^{ k_{i+2} } \partial _t ^{n_{i+2}} (\partial _x^i y)^{p_i} (0, \tau ) 
= \sum_{l_1+\cdots + l_{p_i} =k_{i+2}} \, \sum_{m_1+\cdots + m_{p_i} =n_{i+2}} \nonumber \\
&&\qquad\qquad \frac{k_{i+2}!}{l_1! \cdots l_{p_i}!} \frac{n_{i+2}!}{m_1!\cdots m_{p_i}!}  
\partial _x^{l_1+i}\partial _t ^{m_1} y (0, \tau ) \cdots \partial _x^{l_{p_i} +i } \partial _t ^{m_{p_i}} y (0,\tau)  . \label{KL3}
\end{eqnarray}
We define some $\widetilde{I}_1$ and $\widetilde{I}_2$ by replacing in \eqref{KL1} $\partial _x ^{k+j} \partial _t ^{n-N} y  (0, \tau)$ by $d_{n-N}^{k+j}$, and in \eqref{KL3} 
 $\partial _x^{l_j+i}\partial _t ^{m_j} y(0,\tau)$ by $d_{m_j}^{l_j+i}$, where $m_j\le n_{i+2}\le n-N$. For instance, $\widetilde{I}_1$ writes 
 \begin{align*}
    \widetilde{I}_1=\sum_{j=0}^M \zeta _ j  d_{n-N}^{k+j}
 \end{align*}
 and $\widetilde{I}_2$ is defined similarly. We see that 
$$d_n^k :=\widetilde{I}_1+\widetilde{I}_2$$ is uniquely defined in terms of the $d_m^l$'s for $m\le n-N$, $l\in \N$. 
Thus the sequence $(d_n^k)_{(n,k)\in \N^{2}}$ can be defined by induction on $n$ and the same estimates as before allow us to obtain \eqref{Gd}, see also Remark \ref{rk:Leibniz}.

{For the third part of Proposition \ref{prop10}, we prove by iteration on $k$ that $\partial_{x}^{k} \partial_{t}^{n}y  (0, \tau ) =d_{n}^k$ for all $n\in \N$. By assumption, the result is true for all $k=0,\dots,M-1$. We assume that the result is true until the rank $k+M-1$ and we prove it at rank $k+M$. Let $n\geq N$.
We know that we have
 \begin{align}
 \label{e:yI2}
     \partial _x^k \partial _t ^n y(0,\tau )=\sum_{j=0}^M \zeta _ j \partial _x ^{k+j} \partial _t ^{n-N} y 
(0, \tau) +I_2,
 \end{align}
where $I_2$ is defined by \eqref{KL2} and \eqref{KL3}. 
The $d_n^k$ have been defined by iteration on $n$ by the formula 
\begin{align}
\label{e:dnkItilde}
d_n^k=\sum_{j=0}^M \zeta_j d_{n-N}^{k+j}+\widetilde{I}_{2}
\end{align}
where $\widetilde{I}_{2}$ has been obtained by replacing $\partial _x^{l_j+i}\partial _t ^{m_j} y(0,\tau)$ by $d_{m_j}^{l_j+i}$ in the formula of $I_2$. Since it only involves some terms with $0\le i\le M-1$ and $0\leq l_j\leq k$, we have $l_j+i\leq k+M-1$ for these terms and the iteration property gives $\partial _x^{l_j+i}\partial _t ^{m_j} y(0,\tau)=d_{m_j}^{l_j+i}$. In particular, $\widetilde{I}_{2}=I_2$. So, \eqref{e:dnkItilde} can be written as
\begin{align*}
\zeta_{M}d_{n-N}^{k+M}=d_n^k-\sum_{j=0}^{M-1} \zeta _ j d_{n-N}^{k+j}-I_{2}.
\end{align*}
Again, for $0\leq j\leq M-1$, the iteration assumption gives $d_{n-N}^{k+j}=\partial _x ^{k+j} \partial _t ^{n-N} y(0,\tau)$ and $d_n^k=\partial _x ^{k} \partial _t ^{n}y(0,\tau)$. So, we obtain
\begin{align*}
\zeta_{M}d_{n-N}^{k+M}=\partial _x ^{k} \partial _t ^{n}y(0,\tau)-\sum_{j=0}^{M-1} \zeta _ j \partial _x ^{k+j} \partial _t ^{n-N} y(0,\tau)-I_{2}.
\end{align*}
After comparison with \eqref{e:yI2} and since $\zeta_{M}\neq 0$, we obtain $d_{n-N}^{k+M}=\partial _x ^{k+M} \partial _t ^{n-N} y(0,\tau)$. Since $n\geq N$ is arbitrary, it gives the result at step $k+M$.
}
\end{proof}
\begin{remark}
We note that even if we do not know {\em a priori} whether $Y_{0}$ will give rise to a solution, the algorithm is still well-defined. Our proof will show {\em a posteriori} that any initial data $Y_{0}$ which is analytic (with an appropriate radius) and small enough will produce a solution making this detail not so relevant. But this fact is not obvious at this moment of the proof.
\end{remark}
Note that at that moment, both Proposition \ref{prop1} and Proposition \ref{prop10} seem to give two relations between the space derivatives of $Y_0$ and the time derivatives of an eventual solution. If there exists a solution $y$ starting from $Y_0$ at time $t=0$, that relation should be unique (but this claim is not proved yet).\\

 The following result will show the existence of a solution. It will allow us clarifying the relation between the $d_{n,k}$ and the functions $J_n^k$ in Corollary \ref{cordnkGnk}. 
 There is likely a direct way to prove this relation, but it might be quite computational. The difference between Lemma \ref{lem1} and Proposition \ref{prop10} is only the order in which we apply time and space derivatives to the equation.


\begin{proposition}[Existence of solution without boundary condition]
\label{prop100}
Let $-\infty < t_1\le \tau \le t_2<+\infty$ and $f=f(x,y_0,y_1, ... ,y_{M-1} )$ be as in \eqref{AB1}-\eqref{AB2} with the coefficients $a_{\vec{p},r}$, $(\vec{p},r)\in \N ^{M+1}$,
satisfying \eqref{AB3}-\eqref{AB4}.  Assume in addition that $b_2> \hat R:=4N\lambda e^{(\lambda e)^{-1}}$.
Let $\widetilde{R}>\hat R$. 
Then there exists some number $\widetilde C>0$ such that 
for any $C\in (0,\widetilde C ]$ and any numbers $R_{L}$ with $\hat R<R_{L}<\min (\widetilde{R}, b_2)$  there exists a number $C''=C''(C,\widetilde{R},R_L)>0$ with 
$\lim_{C\to 0^+} C''(C,\widetilde{R},R_L)=0$ such that for any $Y_0\in ({\mathcal R} _{\widetilde{R},C})^N$, we can pick a function $y\in G^{1,\lambda}( [-1,1]\times [t_1,t_2] )$ 
satisfying  \eqref{A1} for $(x,t)\in [-1,1]\times [t_1,t_2]$ and 
\be
\label{F1}
\yt(x,\tau ) =Y_0(x)=\sum_{k=0}^\infty A_k \frac{x^k}{k!}, \quad \forall x\in [-1,1], 
\ee
and such that for all $t\in [t_1,t_2]$ 
\ba
\label{F2}
\Vert \partial _t^n \yx(0, t) \Vert _\infty \le C''   (n!)^{\lambda}\left(\frac{D|\zeta_M|^{1/M}}{ R_L}\right)^{n\lambda},
\ea 
with $D:=\lambda e^{(\lambda e)^{-1}}$.
\end{proposition}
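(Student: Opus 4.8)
The plan is to reduce first to $|\zeta_M|=1$ by the scaling of Remark \ref{rkscaling} (the factor $|\zeta_M|^{1/M}$ in \eqref{F2} being restored at the end, since $(|\zeta_M|^{1/M})^{\lambda}=|\zeta_M|^{1/N}$), and then to proceed in four steps: (i) from the Taylor coefficients $(A_k)$ of $Y_0$ form the sequence $(d_n^k)=\Lambda^{\infty}((A_k))$ given by Proposition \ref{prop10}(2); (ii) manufacture boundary data $k_0,\dots,k_{M-1}\in G^{\lambda}([t_1,t_2])$ whose $t$-jets at $\tau$ are the sequences $(d_n^i)_{n\ge0}$; (iii) feed these data into Theorem \ref{thm2} to produce a solution $y\in G^{1,\lambda}$ of \eqref{A1} with $\partial_x^i y(0,t)=k_i(t)$; and (iv) invoke Proposition \ref{prop10}(3), together with the analyticity of $y$ in $x$, to conclude that $Y^t(x,\tau)=Y_0(x)$, reading \eqref{F2} off from the bounds on the $k_i$.

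\textbf{Choice of parameters and steps (i)--(ii).} Fix $\mu>M+1$ and choose radii $\hat R<R_L<\rho_3<\rho_2<\rho_1<\min(\widetilde R,b_2)$, which is possible because $R_L<\min(\widetilde R,b_2)$ and $\hat R=4ND>4$, with $D:=\lambda e^{(\lambda e)^{-1}}$. Apply Proposition \ref{prop10} with $R=\rho_1$, $R'=\rho_2$: it yields $\widetilde C>0$, and for $C\le\widetilde C$ a constant $C'=C'(C,\rho_1,\rho_2,\mu)\to0$ as $C\to0^{+}$, together with $\Lambda^{\infty}$, so that $(d_n^k)$ obeys \eqref{Gd}. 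For $0\le i\le M-1$ I would then convert \eqref{Gd} into a pure $t$-Gevrey bound: using $(\lambda n+i)!\le(\lambda n)!\,(\lambda n+M)^{M}$, the sharp inequality $(\lambda n)!\le D^{\lambda n}(n!)^{\lambda}$, and $\mu>M+1$ to absorb the residual polynomial factor $(\lambda n+M)^{M}(\lambda n+1)^{-\mu}$, one gets $|d_n^i|\le C_1(n!)^{\lambda}(D/\rho_2)^{\lambda n}$ with $C_1=c(\lambda,M,\mu)\,C'\to0$. Since $\lambda>1$, $G^{\lambda}$ contains nontrivial compactly supported functions, so a Borel--Ritt--type extension in Gevrey classes (a cutoff-truncated formal Taylor series) produces $k_0,\dots,k_{M-1}\in G^{\lambda}([t_1,t_2])$ with $k_i^{(n)}(\tau)=d_n^i$ for all $n$ and $|k_i^{(n)}(t)|\le C_2(n!)^{\lambda}(D/\rho_3)^{\lambda n}$ on $[t_1,t_2]$, where $C_2=c'\,C_1\to0$; the admissible loss here is any $\rho_3<\rho_2$, which is why the strict inequalities were imposed above.

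\textbf{Steps (iii)--(iv).} Set $\mathcal R:=(\rho_3/D)^{\lambda}$. Since $\rho_3>R_L>\hat R=4ND$ we have $\mathcal R>(4N)^{\lambda}$, so Theorem \ref{thm2} applies with this value of $R$; shrinking $\widetilde C$ so that $C_2$ is below the threshold constant furnished by Theorem \ref{thm2}, the hypothesis \eqref{AA} is met by $K_0=(k_0,\dots,k_{M-1})$, and we obtain $y\in G^{1,\lambda}([-1,1]\times[t_1,t_2])$ solving \eqref{A1} with $\partial_x^i y(0,t)=k_i(t)$ for $0\le i\le M-1$. Differentiating in $t$ gives $\partial_x^i\partial_t^n y(0,\tau)=k_i^{(n)}(\tau)=d_n^i$ for $0\le i\le M-1$ and all $n$, hence Proposition \ref{prop10}(3) yields $\partial_x^k\partial_t^n y(0,\tau)=d_n^k$ for all $(n,k)\in\N^2$. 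As $y\in G^{1,\lambda}$, for each $n$ the map $x\mapsto\partial_t^n y(x,\tau)$ is real-analytic on $[-1,1]$ and equals its Taylor series at $0$; recalling that $d_n^k=A_k^n$ for $0\le n\le N-1$ from the construction of $\Lambda^{\infty}$, this forces $\partial_t^n y(x,\tau)=\sum_{k\ge0}A_k^n x^k/k!$, i.e.\ $Y^t(x,\tau)=Y_0(x)$, which is \eqref{F1}. Finally $\partial_t^n\partial_x^i y(0,t)=k_i^{(n)}(t)$, so $\|\partial_t^n Y^x(0,t)\|_{\infty}=\max_{0\le i\le M-1}|k_i^{(n)}(t)|\le C_2(n!)^{\lambda}(D/\rho_3)^{\lambda n}\le C_2(n!)^{\lambda}(D/R_L)^{\lambda n}$ since $\rho_3>R_L$; this is \eqref{F2} with $C''=C_2$ when $|\zeta_M|=1$, the general case following by unscaling (which restores $|\zeta_M|^{1/M}$). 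Since $C''\to0$ with $C$, this completes the argument.

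\textbf{Main obstacle.} The genuinely delicate point is step (ii): passing from the mixed-derivative jet estimate \eqref{Gd}, phrased in terms of $(\lambda n+k)!$-growth, to a bona fide family of Gevrey-$\lambda$ functions of $t$ whose radius still exceeds the threshold $(4N)^{\lambda}$ required by Theorem \ref{thm2}. This is precisely where the elementary but sharp inequality $(\lambda n)!\le(\lambda e^{1/(\lambda e)})^{\lambda n}(n!)^{\lambda}$ is needed, and it is what pins down the explicit constant $\hat R=4N\lambda e^{(\lambda e)^{-1}}$ in the hypothesis.
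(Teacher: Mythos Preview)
Your overall strategy---compute the jet $(d_n^k)$ via Proposition \ref{prop10}(2), extend it to Gevrey boundary data $k_i$, feed these into Theorem \ref{thm2}, and then use Proposition \ref{prop10}(3) together with the analyticity in $x$ to recover \eqref{F1}---is exactly the paper's. The discrepancy is in your step (ii), and it is a genuine gap in the constant-tracking.

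You claim that the ``sharp inequality $(\lambda n)!\le D^{\lambda n}(n!)^\lambda$'' (with $D=\lambda e^{(\lambda e)^{-1}}$) is what pins down the factor $e^{(\lambda e)^{-1}}$ in $\hat R$, and that the subsequent Borel--Ritt extension costs only an arbitrarily small loss in the radius. Both statements are off: by Stirling the optimal exponential constant in $(\lambda n)!\le C_s\,c^{\lambda n}(n!)^\lambda$ is $c=\lambda$, not $c=D$; and a Gevrey Borel extension with only an $\epsilon$-loss in the type is neither available from the paper's toolkit nor cited. In the paper the factor $e^{(\lambda e)^{-1}}$ comes precisely from the Borel step: one first applies Lemma \ref{lem4} (that is, Proposition \ref{propMRR} with $M_q=(\lambda q)!$) to the bound $|d_n^k|\le C'P\,(\lambda n)!/(R'')^{\lambda n}$, which costs a factor $e^{e^{-1}}$ in the growth constant $H$ (equivalently $e^{(\lambda e)^{-1}}$ in the radius), and only \emph{afterwards} converts $(\lambda n)!\le C_s\lambda^{\lambda n}(n!)^\lambda$ to check \eqref{AA}. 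Your two inaccuracies happen to cancel numerically and you land on the right $\hat R$, but the reasoning as written does not stand: either swap the order of the two conversions as the paper does, or keep your order but use the sharp Stirling constant $\lambda$ in place of $D$ and then accept the genuine $e^{e^{-1}}$ loss of Proposition \ref{propMRR} (now applied with $M_q=(q!)^\lambda$, whose hypotheses $a_k=k^{-\lambda}$ are easily checked for $\lambda>1$).
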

\begin{proof}
We assume first that $|\zeta_M|=1$, dealing with the general case at the end of the proof. Note that the scaling in time affects only \eqref{F2}.\\

Let $\hat R:=4N\lambda e^{(\lambda e)^{-1}}$, we will need some intermediate radii $R$, $R'$, $R''$ with $\hat R<R_L<R''<R'<R <\min (\widetilde{R}, b_2)$. 
Pick $\widetilde C,C$ as in Proposition \ref{prop10}, and pick any $Y_{0}\in ({\mathcal R} _{\widetilde{R},C})^N$.
If a function $y$ as in Proposition \ref{prop100} does exists, then both sequences of numbers 
\begin{eqnarray*}
d_n^k &:=& \partial _t^n \partial _x^k y(0,\tau ), \quad n\in \N, \quad k\in \N 
\end{eqnarray*}
can be computed inductively in terms of the coefficients $A_k=\partial _x ^k Y_0 (0)$, $k\in \N$, according to Proposition \ref{prop10}, that is $(d_{n}^{k})_{(n,k)\in\N^{2}}=\Lambda^{\infty}(A_k)_{k\in \N}$. Note that the sequence $(d_n^k)_{(n,k)\in \N^{2}}$ can be defined in terms of the coefficients $A_k$'s, even if the existence of the 
solution $y$ is not yet established, according to  Proposition \ref{prop10} (2). 
Furthermore, it follows from Proposition \ref{prop10} that we have for some constant $C'=C'(C,R,R')>0$,
\begin{eqnarray*}
|d_n^k|&\le& C'\frac{(\lambda n+k)!}{ R^k R'^{\lambda n}}, \quad \forall n\in \N, \quad \forall k\in \{ 0,\ldots, M-1\}.
\end{eqnarray*}

Since $R''\in (\hat R, R')$, there exists some constant $P=P(R,R',R'') >0$ such that we have also
\[
|d_n^k|  \le  C'P\frac{(\lambda n)!}{  (R'')^{\lambda n}}, \quad \forall n\in \N, \quad  \forall k\in\{ 0,\ldots , M-1\} .
\]
The following lemma is a consequence of \cite[Proposition 3.6]{MRRreachable}. The proof that \cite[Proposition 3.6]{MRRreachable} implies Lemma \ref{lem4} will be done later.
\begin{lemma}
\label{lem4}Let $\lambda>1$. 
Let $(d_q)_{q\ge 0}$ be a sequence of real numbers such that  
\[
|d_q|\le C H^q (\lambda q)!\quad \forall q\ge 0  
\]
for some $H>0$ and $C>0$. Then for all $\tilde H>e^{e^{-1}}H$ there exists a function $f\in C^\infty(\R )$ such that 
\ba
f^{ (q) } (0) &=& d_q\quad \forall q\ge 0, \\
| f^{ (q) } (t) |&\le&  C \tilde H^q (\lambda q)! \quad \forall q\ge 0, \ \forall t\in \R .  
\ea 
\end{lemma}
Pick $H :=1/(R'')^{\lambda}$ and $H_L:=e^{e^{-1}}/(R_L)^{\lambda}$. Since $\hat R<R_L<R''$, we have $e^{e^{-1}}H<H_L<1/(4N\lambda)^{\lambda}$.
Then by Lemma \ref{lem4}, there exist $M$ functions $h_0,h_1,\ldots,h_{M-1} \in G^{\lambda}( [t_1,t_2] )$
such that for $k=0,\ldots ,M-1$,
\ba
h_k^{(n)} (\tau) &=& d_n^k, \quad n\ge 0\label{K1}\\
|h_k^{(n)} (t) | &\le&  C' P H_L^n (\lambda n)!, \quad n\ge 0,\ t\in [t_1,t_2]. \label{K3}
\ea 
It follows at once from Stirling's formula that $(\lambda n)!\le C_s \lambda^{\lambda n} (n!)^{\lambda}$ for some universal constant $C_s>0$, so that for  $k=0,\ldots,M-1$,
\ba
|h_k^{(n)} (t) | &\le&  C' P C_s(\lambda^{\lambda}H_L)^n (n!)^{\lambda}, \quad n\ge 0,\ t\in [t_1,t_2], \label{K33}
\ea 
Note that $\lambda^\lambda H_L <1/(4N)^{\lambda}$. So, if $C$ is sufficiently small, then $C'$ is as small as desired, and 
it follows then from Theorem \ref{thm2}  that we can pick a function $y\in G^{1,\lambda}([-1,1]\times [t_1,t_2])$ satisfying 
\eqref{C1} with $k_i:=h_i$ for $0\le i\le M-1$. In particular, for all $n\in \N$ and $k=0,\ldots ,M-1$, we have $\partial_t^n\partial _x^k y (0,\tau )=h_k^{(n)} (\tau) = d_n^k$.  Using the third Item of Proposition \ref{prop10}, we infer that $\partial_t^n\partial _x^k y (0,\tau )= d_n^k$ for $n\in \N$ and $k\in \N$. Moreover, we can check in the proof of Proposition \ref{prop10} (case $0\leq n\leq N-1$) that if $(d_{n}^{k})_{(n,k)\in\N^{2}}=\Lambda^{\infty}(A_k)_{k\in \N}$, then $d_n^k=A_k^n$ for $k\in \N$ and $0\le n\le N-1$. In particular, 
$\partial_t^n\partial _x^k y (0,\tau )= A_k^n= \partial _x^k y_0^n(0)$ for $k\in \N$ and $0\le n\le N-1$, where $Y_0=(y_0^0, y_0^1, .... , y_0^{N-1})$, and hence \eqref{F1} holds. 
Since $Y^x(0,t)=(h_0(t),\dots, h_{M-1} (t))$ by construction, the estimate \eqref{F2} follow from \eqref{K33}
with  $C'' := C'PC_s$ and $\lambda^{\lambda}H_L=\left(\lambda e^{(\lambda e)^{-1}}/R_L\right)^{\lambda}$. The proof of  Proposition \ref{prop100}  is complete for the case $|\zeta_M|=1$.\\

In the general case, assuming $\tau=0$ without loss of generality, we proceed as in Remark \ref{rkscaling} and define  $\widetilde{P}=|\zeta_{M}|^{-1}P$ 
and $\widetilde{f}=|\zeta_{M}|^{-1}f$ for which the result is proved for any interval in time. We have therefore a solution $\widetilde{y}$
 of $\partial_t^N \widetilde{y}=|\zeta_{M}|^{-1}P\widetilde{y}+|\zeta_{M}|^{-1}f(x,\widetilde{Y^x})$ on $[-1,1]\times [|\zeta_{M}|^{-1}t_1,|\zeta_{M}|^{-1}t_2]$
 with $\widetilde{Y}^t(x,0)=Y_0(x)$.
  Moreover, $\widetilde{y}$ satisfies \eqref{F2} with $|\zeta_M|=1$. 
 Now, we define $y(x,t):=\widetilde{y}(x,|\zeta_M|^{1/N}t)$ which is a solution of $\partial_t^N y=Py+f(x,Y^x)$ on $[-1,1]\times [t_1,t_2]$  with $Y^t(x,0)=Y_0(x)$. By scaling, $Y^x$ satisfies
\ba
\left\vert  \partial _t^n \yx(0, t) \right\vert \leq |\zeta_{M}|^{n/N}\left\vert  (\partial _t^n \widetilde{\yx})(0,|\zeta_{M}|^N t)
\right\vert\le |\zeta_M|^{n\lambda /M} C''   (n!)^{\lambda}\left(\frac{D}{ R_L}\right)^{n\lambda}.
\ea 
\end{proof}
\begin{proof}[Proof of Lemma \ref{lem4}]We want to apply \cite[Proposition 3.6]{MRRreachable} (stated below in Proposition \ref{propMRR})
 with the choice $a_0=1$ and  $k$ becoming $k-1$ so that $M_{q}=(\lambda q)!$ $a_k=\frac{(\lambda (k-1))!}{(\lambda k))!}=\Gamma(\lambda)^{-1}B(\lambda (k-1)+1, \lambda)=\Gamma(\lambda)^{-1}\int_0^1t^{\lambda (k-1)}(1-t)^{\lambda-1}dt$ for 
$k\ge 1$. All the terms being positive, we obtain for $p\ge 1$
\begin{eqnarray*}
\sum_{k>p}a_k&=& 
\Gamma(\lambda)^{-1}\int_0^1(1-t)^{\lambda-1}(\sum_{k>p} t^{\lambda (k-1)} )dt \\ 
&=&\Gamma(\lambda)^{-1}\int_0^1(1-t)^{\lambda-1}\frac{ t^{\lambda p}}{1-t^{\lambda}}dt \leq \Gamma(\lambda)^{-1}\int_0^1(1-t)^{\lambda-2}t^{\lambda p}dt \\
&=&\frac{\lambda p}{\Gamma(\lambda)(\lambda-1)}\int_0^1(1-t)^{\lambda-1}t^{\lambda p- 1}dt  \leq \frac{\lambda }{(\lambda-1)} p a_{p}<+\infty,
\end{eqnarray*}
where we have used twice $t^{\lambda}\leq t$ for $t\in [0,1]$ and $\lambda>1$, and performed  an integration by parts. In particular, the three conditions of Proposition \ref{propMRR} are fulfilled with $A:=\frac{\lambda }{\lambda-1}+1$ and $M_{q}=(\lambda q)!$. This completes the proof of Lemma \ref{lem4}. 
\end{proof}
For the convenience of the reader, we state the following proposition that we used before to construct the suitable Gevrey functions. 
\begin{proposition}[Proposition 3.6 of \cite{MRRreachable}]
    \label{propMRR}
   Pick any sequence $(a_q)_{q\in \N} $ satisfying
   \begin{itemize}
\item $1= a_{0}\geq a_{1}\geq a_{2}\geq \cdots >0$
\item $\sum_{k\geq 1}a_{k}<+\infty$
\item $pa_{p}+\sum_{k>p}a_{k}\leq Apa_{p}, \quad \forall p\geq 1$,
\end{itemize}
for some constant $A\in (0,+\infty )$. Let $M_q :=(a_0\cdots a_q)^{-1}$ for $q \geq 0$. Then for any sequence of real numbers $(d_q)_{q\geq 0}$ such that
$$|d_q|\le CH^q M_q,\quad  \forall q \ge 0 $$
for some $H >0$ and $C >0$, and for any $\widetilde{H}>e^{e^{-1}}H$, there exists a function $f \in  C^{\infty}(\R)$ such that
\begin{align*}
f^{(q)}(0)&= d_q \quad  \forall q \ge 0,\\
|f^{(q)}(x)|&\le  C \widetilde{H}^q M_q \quad \forall q \ge 0, \forall x \in \R.
\end{align*}
\end{proposition}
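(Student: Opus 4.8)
The statement is a Borel--Ritt type surjectivity result for the Carleman class attached to the weight sequence $M_q=(a_0\cdots a_q)^{-1}$, and it is quoted here from \cite[Proposition~3.6]{MRRreachable}; I only indicate how one would prove it. By homogeneity in $(d_q)$ one may take $C=1$, and one fixes $\widetilde H>e^{e^{-1}}H$. The plan is the classical ``truncated Taylor series'' construction
\[
f(x)=\sum_{q\ge 0}\frac{d_q}{q!}\,x^{q}\,\varphi_q(x),
\]
where each $\varphi_q\in C^\infty_c(\R)$ equals $1$ on a neighbourhood of $0$ and is supported in $[-2\delta_q,2\delta_q]$, with radii $\delta_q:=\eta\,a_q\searrow 0$ and a small parameter $\eta>0$ to be fixed last. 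Since $\varphi_q\equiv 1$ near $0$ while $x^{q}$ vanishes there to order $q$, the $q$-th summand contributes $d_q$ to $f^{(q)}(0)$ and nothing to the remaining derivatives at $0$; hence $f^{(q)}(0)=d_q$ for every $q$ as soon as the series converges in $C^\infty(\R)$, which is exactly what the estimate below provides. The support of $f$ lies in the fixed interval $[-2\eta,2\eta]$, so the bound ``for all $x\in\R$'' is then automatic.

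The value $\delta_q=\eta a_q$ is dictated by the weight: since $(a_q)$ is nonincreasing, $a_q\le a_k$ for $k\le q$, which yields the telescoping inequality
\[
M_q\,\delta_q^{\,q-n}=\eta^{\,q-n}\,M_n\prod_{k=n+1}^{q}\frac{a_q}{a_k}\ \le\ \eta^{\,q-n}\,M_n,\qquad q\ge n,
\]
the arithmetic bridge between $M_q$ and $M_n$. One then estimates $f^{(n)}$ by Leibniz' rule applied to each $(x^{q}\varphi_q)^{(n)}$, splitting the sum over $q$ at $q=n$. In the tail $q\ge n$, the bound $|x|^{q-m}\le(2\delta_q)^{q-m}$ on $\operatorname{supp}\varphi_q$ produces the factor $\delta_q^{\,q-n}$; combined with $|d_q|\le H^{q}M_q$, the telescoping inequality, and $\sum_{q>n}\eta^{\,q-n}/(q-n)!\le e^{\eta}$, it gives a bound $\lesssim\widetilde H^{\,n}M_n$ once $\eta$ is chosen small compared with $\widetilde H/H$. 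In the finite range $q<n$ one uses $M_q\le M_n$, the hypothesis $\sum_q a_q<\infty$ --- the Denjoy--Carleman non-quasianalyticity condition, which is what makes compactly supported bumps with the derivative bounds needed below exist at all --- and the ``moderate growth'' hypothesis $\sum_{k>p}a_k\le(A-1)\,p\,a_p$, which is precisely what forces the combinatorial/weight sums $\sum_{m}\binom{n}{m}(\cdots)$ arising from Leibniz' rule to converge uniformly in $n$.

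The one genuinely delicate point is the choice of the bumps $\varphi_q$ together with the \emph{value} of the loss constant. A single rescaled cutoff $\varphi_q(x)=\theta(x/\delta_q)$ already yields the conclusion, but only with a non-explicit loss $K(A,\theta)$ in place of $e^{e^{-1}}$, because compactly supported functions are very far from Gevrey-$1$. To reach the sharp constant one instead takes $\varphi_q$ to be an $\ell_q$-fold product of rescaled copies of a fixed cutoff, $\varphi_q(x)=\theta(x/\delta_q)^{\ell_q}$ with $\ell_q\uparrow\infty$: these still equal $1$ near $0$, keep their support in $[-2\delta_q,2\delta_q]$, and satisfy $\|\varphi_q^{(i)}\|_\infty\le A_0^{\ell_q}(B_0\ell_q/\delta_q)^{i}$ for $0\le i\le\ell_q$, i.e.\ they have essentially bounded derivatives up to order $\ell_q$. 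Running the previous estimate with these bumps and optimising over $\ell_q$, the loss between $H$ and $\widetilde H$ turns out to be governed by $\sup_{t>0}t^{1/t}=e^{1/e}$, which is the source of the threshold $e^{e^{-1}}$; this optimisation for the sharp constant --- rather than the construction of $f$ itself --- is the real obstacle, and is why the result is borrowed from \cite{MRRreachable}. Finally, one checks that all constants depend only on $A$, $H$, $\widetilde H$ and $\theta$, and reinstating the factor $C$ by linearity gives the statement as written.
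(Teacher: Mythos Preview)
The paper does not actually prove this proposition: it is quoted from \cite{MRRreachable} (with roots in Petzsche~\cite{petzsche}) purely for the reader's convenience, and no argument is supplied. Your sketch follows the classical truncated-Taylor-series construction used in those references, the telescoping inequality $M_q\delta_q^{\,q-n}\le\eta^{q-n}M_n$ is correct, and your identification of $\sup_{t>0}t^{1/t}=e^{1/e}$ as the source of the loss constant is right. One point to correct: the cutoffs achieving the sharp constant are built as $\ell_q$-fold \emph{convolutions} of rescaled bumps (or of characteristic functions of small intervals), not as pointwise powers $\theta(x/\delta_q)^{\ell_q}$. The bound $\|\varphi_q^{(i)}\|_\infty\le (B_0\ell_q/\delta_q)^{i}$ for $i\le\ell_q$ follows immediately from the convolution construction by differentiating one factor at a time, whereas for a pointwise power it would effectively require $|\theta^{(j)}|\le B_0K^{j}$ uniformly in $j$, which no compactly supported $\theta$ can satisfy. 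With that modification your outline matches the argument in the cited source.
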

\begin{corollary}
\label{cordnkGnk}
Let $Y_0$ satisfying the assumptions of Proposition \ref{prop10} and let $(d_n^k)_{(n,k)\in \N^{2}}$ be the sequence introduced in  Proposition \ref{prop10} (2). Then we have the relationship
\bnan
\label{equaldG}
d_{n}^k=J_n^k(0,A_0,A_1 , ... ,A_{M [\frac{n}{N}] +k})
\enan
where the $J_n^k$ are the functions defined in \eqref{DDD1}.

Moreover, if $Y_{0}\in \Comp$ and if we set $D_{n}:=(d_{n}^0,d_{n}^1,\ldots,d_{n}^{M-1})$, then $BD_{n}=0$ for all $n\in \N$. 
\end{corollary}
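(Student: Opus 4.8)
The plan is to derive \eqref{equaldG} from the existence statement of Proposition \ref{prop100} combined with the defining property \eqref{DDD1} of the operators $J_n^k$. First I would apply Proposition \ref{prop100} to the given analytic datum $Y_0\in({\mathcal R}_{\widetilde R,C})^N$ (with $C$ small): this produces a solution $y\in G^{1,\lambda}([-1,1]\times[t_1,t_2])$ of \eqref{A1} satisfying \eqref{F1}, i.e. $\yt(x,\tau)=Y_0(x)=\sum_{k\ge0}A_k x^k/k!$. For this particular solution, Proposition \ref{prop10}(2) forces $\partial_x^k\partial_t^n y(0,\tau)=d_n^k$ for every $(n,k)\in\N^2$, so the whole problem reduces to computing $\partial_x^k\partial_t^n y(0,\tau)$ from the Taylor coefficients $A_j=\partial_x^j Y_0(0)$. (Strictly, invoking Proposition \ref{prop100} uses the strengthened hypotheses $b_2,\widetilde R>\hat R$, which hold in every situation where the Corollary is used; alternatively one could verify \eqref{equaldG} by a direct, more computational induction matching the recursion defining the $d_n^k$ with the recursion defining the $J_n^k$.)

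Next I would read off that computation from \eqref{DDD1}. Writing $n=Nn'+j$ with $0\le j<N$, \eqref{DDD1} (equivalently \eqref{propJl}) gives $\partial_t^n\partial_x^k y=J_n^k\big(x,\yxt{Mn'+k}{N-1}\big)$ on $[-1,1]\times[t_1,t_2]$. Evaluating at $(x,t)=(0,\tau)$ and unwinding Notation \ref{defYxt}, the argument $\yxt{Mn'+k}{N-1}(0,\tau)$ is the tuple $\big(\partial_x^i Y^t_{N-1}(0,\tau)\big)_{0\le i\le Mn'+k}$; since $Y^t_{N-1}(x,\tau)=\yt(x,\tau)=Y_0(x)$, we have $\partial_x^i Y^t_{N-1}(0,\tau)=A_i$, so this argument is exactly $(A_0,A_1,\dots,A_{Mn'+k})$. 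Hence $d_n^k=\partial_t^n\partial_x^k y(0,\tau)=J_n^k(0,A_0,\dots,A_{M[n/N]+k})$, which is \eqref{equaldG}.

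For the final assertion, suppose in addition that $Y_0\in\Comp$. By the definition of $\Comp$, together with the identifications $m(l)=Mn'+M-1$ for $l=Nn'+j$ and $J_l=(J_l^0,\dots,J_l^{M-1})$ from Notation \ref{defGl}, we get $BJ_l(0,A_0,\dots,A_{Mn'+M-1})=0$ for all $l\in\N$. Each component $J_l^k$ with $0\le k\le M-1$ depends only on $(x,A_0,\dots,A_{Mn'+k})$, the remaining entries being dummy variables, so its value at $(0,A_0,\dots,A_{Mn'+M-1})$ equals $J_l^k(0,A_0,\dots,A_{Mn'+k})=d_l^k$ by \eqref{equaldG}. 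Therefore $BD_l=B(d_l^0,\dots,d_l^{M-1})=0$ for all $l\in\N$.

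The step I expect to be the only real subtlety is the logical one in the first paragraph: the identity \eqref{equaldG} compares two a priori unrelated constructions of the jet at $(0,\tau)$ --- the Leibniz/$\Lambda^\infty$ recursion of Proposition \ref{prop10} on one side, and the operators $J_n^k$ produced by Lemma \ref{lem1} on the other --- and they are reconciled only because a genuine solution exists and both must coincide with its derivatives. Everything else (unwinding $\yxt{Mn'+k}{N-1}(0,\tau)$ into the Taylor tuple, and tracking that $J_l^k$ discards surplus arguments) is routine bookkeeping with the notations.
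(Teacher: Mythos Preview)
Your proof is correct and follows essentially the same approach as the paper: both obtain \eqref{equaldG} by invoking the existence of a genuine solution $y$ (via Proposition \ref{prop100}) and observing that each side of \eqref{equaldG} equals $\partial_t^n\partial_x^k y(0,\tau)$, by Proposition \ref{prop10}(2) for the left-hand side and by \eqref{DDD1} for the right-hand side. Your write-up is simply more explicit about unwinding the notation and about the hypothesis $b_2,\widetilde R>\hat R$ needed for Proposition \ref{prop100}, which the paper leaves implicit.
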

\bnp
Let $y$ be a solution given by Proposition \ref{prop10}. Then \eqref{equaldG} holds since both sides of the equality agree with  $\partial_t^n\partial_x^k y (0,\tau ) $, according to \eqref{DDD1} for the right-hand side and to Proposition \ref{prop10} (2)  for the left-hand side. Moreover if $Y_0\in \Comp$, then   $BD_{n}=BJ_n=0$ by \eqref{comp}.
\enp
\begin{proposition}[Existence of solution with boundary condition]
\label{propexistcompat}
Consider the same assumptions and constants as in Proposition \ref{prop100}. Then, for any $Y^0\in \left({\mathcal R} _{\tilde R,C}\right)^N\cap \Comp$, we can find a solution of \eqref{W1a}-\eqref{W1b} for $(x,t)\in [-1,1]\times [t_1,t_2]$ satisfying \eqref{F1} and \eqref{F2}. 
\end{proposition}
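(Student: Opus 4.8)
The plan is to rerun the construction of Proposition \ref{prop100}, inserting one extra step that forces the boundary relation $B\yx(0,t)=0$ to hold identically in $t$, not merely at the level of Taylor jets at $\tau$. Exactly as at the end of the proof of Proposition \ref{prop100}, the case $|\zeta_M|\ne 1$ reduces by the time rescaling of Remark \ref{rkscaling} to the case $|\zeta_M|=1$ (the rescaling acts only on the time variable, hence preserves conditions of the form $B\yx(0,\cdot)\equiv 0$), so I assume $|\zeta_M|=1$ from now on.

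Start from $Y^0\in(\mathcal R_{\widetilde R,C})^N$ with $Y^0(x)=\sum_{k\ge 0}A_k\,x^k/k!$ and apply Proposition \ref{prop10}(2): it produces the sequence $(d_n^k)_{(n,k)\in\N^2}=\Lambda^\infty((A_k)_{k\ge0})$ together with the estimates used in the proof of Proposition \ref{prop100}, namely $|d_n^k|\le C'(\lambda n+k)!/(R^kR'^{\lambda n})$ and, for $0\le k\le M-1$, $|d_n^k|\le C'P(\lambda n)!/(R'')^{\lambda n}$. The only new input is Corollary \ref{cordnkGnk}: since $Y^0\in\Comp$, the vectors $D_n:=(d_n^0,\dots,d_n^{M-1})$ satisfy $BD_n=0$ for every $n\in\N$, i.e.\ all $D_n$ lie in $\ker B\subset\R^M$. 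Now fix once and for all a linear projection $\Pi\colon\R^M\to\R^M$ onto $\ker B$ (so $B\Pi=0$ and $\Pi v=v$ for $v\in\ker B$). As in the proof of Proposition \ref{prop100}, with $H:=1/(R'')^\lambda$ and $H_L:=e^{e^{-1}}/R_L^\lambda$ (recall $e^{e^{-1}}H<H_L<1/(4N\lambda)^\lambda$), Lemma \ref{lem4} applied componentwise gives $h_0,\dots,h_{M-1}\in G^\lambda([t_1,t_2])$ with $h_k^{(n)}(\tau)=d_n^k$ and $|h_k^{(n)}(t)|\le C'PC_s(\lambda^\lambda H_L)^n(n!)^\lambda$. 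Set $\widetilde h(t):=\Pi\bigl(h_0(t),\dots,h_{M-1}(t)\bigr)=:(\widetilde h_0(t),\dots,\widetilde h_{M-1}(t))$. Then $B\widetilde h(t)=0$ for all $t$, while differentiating $n$ times at $\tau$ and using $\Pi D_n=D_n$ yields $\widetilde h^{(n)}(\tau)=D_n$, that is $\widetilde h_i^{(n)}(\tau)=d_n^i$ for $0\le i\le M-1$ and $n\in\N$. Since $\Pi$ is a fixed matrix, the $\widetilde h_i$ remain Gevrey of order $\lambda$ with the same bounds up to the harmless factor $\|\Pi\|$; as $\lambda^\lambda H_L<1/(4N)^\lambda$, shrinking $C$ (hence $C'$) if necessary makes these bounds small enough for hypothesis \eqref{AA} of Theorem \ref{thm2} to hold for some $R>(4N)^\lambda$.

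Now conclude exactly as in Proposition \ref{prop100}. Theorem \ref{thm2} provides $y\in G^{1,\lambda}([-1,1]\times[t_1,t_2])$ solving \eqref{A1} with $\partial_x^i y(0,t)=\widetilde h_i(t)$; in particular $B\yx(0,t)=B\widetilde h(t)=0$, so $y$ solves \eqref{W1a}--\eqref{W1b}. Moreover $\partial_t^n\partial_x^i y(0,\tau)=\widetilde h_i^{(n)}(\tau)=d_n^i$ for $0\le i\le M-1$ and all $n$, so Proposition \ref{prop10}(3) upgrades this to $\partial_t^n\partial_x^k y(0,\tau)=d_n^k$ for all $(n,k)\in\N^2$; taking $0\le n\le N-1$ (where $d_n^k=A_k^n$), the functions $\partial_t^n y(\cdot,\tau)$ and the $n$-th component of $Y^0$ are both analytic in $x$ with the same Taylor coefficients at $0$, hence equal, which gives \eqref{F1}. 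Finally \eqref{F2} follows from $\partial_t^n\yx(0,t)=\widetilde h^{(n)}(t)$ and the bound above, with $C'':=\|\Pi\|\,C'PC_s\to 0$ as $C\to0^+$. The one genuine subtlety is precisely the one addressed by the projection $\Pi$: the compatibility hypothesis only controls the jet $(D_n)_n$ of the boundary trace at $\tau$, and since Gevrey-$\lambda$ classes with $\lambda>1$ are not quasi-analytic one cannot deduce $B\yx(0,\cdot)\equiv0$ from $BD_n=0$ alone; composing the Borel-type extension of Lemma \ref{lem4} with a fixed linear projection onto $\ker B$ turns the jet-level identity into a pointwise one at essentially no cost.
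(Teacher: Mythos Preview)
Your proof is correct and follows essentially the same strategy as the paper. The only cosmetic difference is how you force the boundary trace into $\ker B$: the paper introduces Lemma~\ref{lem4bis}, which decomposes each $D_q$ in a basis $(e_i)$ of $\ker B$, applies Lemma~\ref{lem4} to the scalar coefficient sequences, and reassembles $F=\sum_i f_i e_i$; you instead apply Lemma~\ref{lem4} componentwise in the ambient space and then post-compose with a fixed projection $\Pi$ onto $\ker B$. Both constructions exploit exactly the same facts (Corollary~\ref{cordnkGnk} for $D_n\in\ker B$, linearity to preserve the Gevrey bounds up to a fixed matrix norm, and $\Pi D_n=D_n$ to keep the jet at $\tau$), and both incur only a harmless multiplicative constant in the estimates. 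Your closing remark on non-quasi-analyticity nicely isolates \emph{why} this extra step is needed, a point the paper leaves implicit.
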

\bnp
The proof is similar to those of Proposition \ref{prop100}. The modifications to ensure the boundary conditions are the following.

The sequence $(D_{n})_{n\ge 0}$ defined in Corollary \ref{cordnkGnk} satisfies $BD_{n}=0$ for all $n\in \N$. We can then proceed as in the proof of Proposition \ref{prop100}, replacing Lemma \ref{lem4} by Lemma \ref{lem4bis} (see below). The advantage of using Lemma \ref{lem4bis} is that the condition $BH_0(t)=0$ is satisfied by the 
function $H_0=(h_0, ..., h_{M-1})\in G^\lambda ([t_1,t_2])^M$ it provides. Then, using Theorem \ref{thm2} again with that boundary condition $H_0$, the equation \eqref{C1} gives $\yx(0,t)=H_0(t)$, so that the boundary condition $B\yx(0,t)=0$ is satisfied, as expected. This gives a solution of the system \eqref{W1a}-\eqref{W1b}. The conditions \eqref{F1} and \eqref{F2} are fulfilled for the same reasons as in Proposition \ref{prop100}.
\enp
\begin{lemma}
\label{lem4bis}
Let $(D_q)_{q\ge 0}$ be a sequence in $\C^M$ such that  
\bna
\Vert D_q\Vert _\infty &\le& C H^q (\lambda q)!\quad \forall q\ge 0,  \\
BD_q &=& 0\quad \forall q\ge 0
\ena
for some $H>0$ and $C>0$. Then for all $\tilde H>e^{e^{-1}}H$, there exists a function $F\in C^\infty(\R )^M$ such that 
\ba
F^{ (q) } (0) &=& D_q\quad \forall q\ge 0, \\
\Vert  F^{ (q) } (t) \Vert _\infty&\le&  C \tilde H^q (\lambda q)! \quad \forall q\ge 0, \ \forall t\in \R ,\\
BF^{ (q) } (t) &=&0\quad \forall q\ge 0, \ \forall t\in \R.  
\ea 
\end{lemma}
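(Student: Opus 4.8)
The plan is to deduce Lemma~\ref{lem4bis} from the scalar construction of Lemma~\ref{lem4} by working in linear coordinates adapted to the subspace $W:=\ker B\subseteq\C^M$. The one point to get right is that a naive coordinatewise application of Lemma~\ref{lem4} to the $M$ sequences $(D_q^{(i)})_{q\ge0}$, $1\le i\le M$, would give a vector function $F$ with $BF^{(q)}(0)=BD_q=0$ but with no control on $BF(t)$ for $t\ne0$; the remedy is to force $F$ to take values in $W$, which is exactly where all the $D_q$ lie by hypothesis, so that $BF(t)=0$ then holds identically in $t$.

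First I would fix, once and for all, a real matrix $G\in\R^{M\times d}$ with $d=\dim W=M-\mathrm{rank}\,B$ whose columns form a basis of $W$; thus $BG=0$ and $G$ maps $\C^{d}$ isomorphically onto $W$. Since $D_q\in W$ for every $q$, there is a unique $c_q\in\C^{d}$ with $D_q=G\,c_q$, namely $c_q=G^{+}D_q$ with $G^{+}:=(G^{T}G)^{-1}G^{T}$; hence $\|c_q\|_\infty\le\kappa_1\|D_q\|_\infty\le\kappa_1 C\,H^{q}(\lambda q)!$ for all $q$, where $\kappa_1:=\|G^{+}\|_{\ell^\infty\to\ell^\infty}$ depends only on $B$. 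For each $j\in\{1,\dots,d\}$ I would split the complex sequence $(c_q^{(j)})_{q\ge0}$ into its real and imaginary parts and apply Lemma~\ref{lem4} to each of those real sequences, keeping the given $H$ and the prescribed $\tilde H>e^{e^{-1}}H$; this produces functions $g_j\in C^\infty(\R;\C)$ with $g_j^{(q)}(0)=c_q^{(j)}$ and $|g_j^{(q)}(t)|\le 2\kappa_1 C\,\tilde H^{q}(\lambda q)!$ for all $q\ge0$ and $t\in\R$.

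Then I would set $g:=(g_1,\dots,g_d)$ and $F:=G\,g\in C^\infty(\R)^M$. By construction $F^{(q)}(0)=G\,c_q=D_q$ for all $q$; moreover $BF(t)=BG\,g(t)=0$ for every $t\in\R$, hence $BF^{(q)}(t)=0$ for all $q$ and $t$; and $\|F^{(q)}(t)\|_\infty=\|G\,g^{(q)}(t)\|_\infty\le\|G\|_{\ell^\infty\to\ell^\infty}\,\|g^{(q)}(t)\|_\infty\le\kappa\,C\,\tilde H^{q}(\lambda q)!$, with $\kappa:=2\kappa_1\|G\|_{\ell^\infty\to\ell^\infty}$ depending only on $B$ and $M$. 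This is the assertion of Lemma~\ref{lem4bis} up to replacing the Gevrey constant $C$ by $\kappa C$; since $\kappa$ is a fixed constant depending only on $B$, this is harmless — one simply carries $\kappa$ along, as it is absorbed into the constants $C'$, $C''$ in the proofs of Propositions~\ref{prop100} and~\ref{propexistcompat}, where only the property $C'\to0$ as $C\to0$ is used. There is no genuine obstacle here: the whole argument is routine linear algebra plus one citation of Lemma~\ref{lem4}, the only conceptual ingredient being the parametrization $G\colon\C^{d}\to W=\ker B$ that makes the boundary relation survive for all times.
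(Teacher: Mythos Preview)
Your proof is correct and takes essentially the same approach as the paper: parametrize $\ker B$ by a basis, apply Lemma~\ref{lem4} componentwise to the coordinates of the $D_q$ in that basis, and set $F=Gg$. You are in fact more careful than the paper about splitting complex coordinates into real and imaginary parts before invoking Lemma~\ref{lem4}, and about tracking the harmless multiplicative constant $\kappa$ coming from the change of coordinates; the paper's proof glosses over both points.
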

\bnp
Let $e_i\in\C^M$, $i=1,\cdots, dim(\ker(B))$, be the vectors of a basis of $\ker(B)$. In particular, we can write $D_q=\sum_i D_{q,i} e_i$. By assumption, the real sequence $(D_{q,i})_{q\in\N}$ satisfies the assumptions of Lemma \ref{lem4}, so that there are  some functions $f_i\in C^\infty(\R )$ satisfying $f_i^{ (q) } (0) = D_{q,i}$ and 
$| f_i ^{ (q) } (t) |\le  C \tilde H^q (\lambda q)!$ for all $q\ge 0$, $t\in \R$.  The function $F=\sum_i f_ie_i$ satisfies the requested properties. 
\enp
We also infer from the existence of solutions given by Proposition \ref{prop100} the following uniqueness result for the functions $J_{l}^{k}$.
\begin{lemma}
 \label{lmuniqJl}Let $l\in \N$. Then there exists some number $\e>0$ such that if two applications $J_{l},\widetilde{J}_{l}:[-1,1]\times (\R^{N})^{m(l)+1}\to \R^{M}$ satisfy \eqref{propJlintro} for any  smooth solution $y$ of \eqref{A1}, then they coincide on $[-1,1]\times B(0,\e)$. 
 In particular, if both functions are analytic, then they are equal.
 \end{lemma}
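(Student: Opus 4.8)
The plan is to use Proposition~\ref{prop100} to show that an arbitrary small jet of $x$-derivatives of the initial datum, taken at an arbitrary point of $[-1,1]$, is realized by some smooth solution of \eqref{A1}; the equality of $J_l$ and $\widetilde J_l$ near the origin then follows by reading off \eqref{propJlintro} along such solutions.

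Fix $l\in\N$ and set $m:=m(l)$. For any solution $y$ of \eqref{A1}, any $x_0\in[-1,1]$ and any $0\le j\le m$, one has $\partial_x^j Y^t(x_0,\tau)=\partial_x^j Y_0(x_0)$, where $Y_0:=Y^t(\cdot,\tau)$ is the $\R^N$-valued initial datum at the time $\tau$. Thus, given a jet $\xi=(\xi_0,\dots,\xi_m)\in(\R^N)^{m+1}$, I would choose as initial datum the polynomial
\[
Y_0(x):=\sum_{j=0}^m \xi_j\,\frac{(x-x_0)^j}{j!},
\]
which satisfies $\partial_x^j Y_0(x_0)=\xi_j$ for $0\le j\le m$. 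Expanding around $0$, using $|x_0|\le 1$ and the fact that $\deg Y_0\le m$ with $m$ fixed, the Taylor coefficients $\alpha_i$ of $Y_0$ at $0$ obey $|\alpha_i|\le C_m\,|\xi|$ for all $i$ (with $\alpha_i=0$ for $i>m$), for some constant $C_m$ depending only on $m$. Since $c_m:=\min_{0\le i\le m} i!/\widetilde R^{\,i}>0$, after fixing $\widetilde R>\hat R$ and the associated smallness constant $C$ of Proposition~\ref{prop100}, one gets $Y_0\in({\mathcal R}_{\widetilde R,C})^N$ as soon as $|\xi|<\e:=C\,c_m/C_m$; the key point is that this $\e$ depends only on $l$, $\widetilde R$, $C$, and \emph{not} on the base point $x_0$.

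For such $Y_0$, Proposition~\ref{prop100} produces a function $y\in G^{1,\lambda}([-1,1]\times[t_1,t_2])$ solving \eqref{A1} with $Y^t(\cdot,\tau)=Y_0$; evaluating \eqref{propJlintro} for $y$ at $(x_0,\tau)$ yields $J_l(x_0,\xi)=\partial_t^l Y^x(x_0,\tau)=\widetilde J_l(x_0,\xi)$. Since $x_0\in[-1,1]$ and $\xi\in B(0,\e)$ are arbitrary, $J_l=\widetilde J_l$ on $[-1,1]\times B(0,\e)$. If moreover $J_l$ and $\widetilde J_l$ are real-analytic on $[-1,1]\times(\R^N)^{m+1}$, they agree on the nonempty open set $(-1,1)\times B(0,\e)$, hence, by the identity theorem, on the whole connected open set $(-1,1)\times(\R^N)^{m+1}$, and then on $[-1,1]\times(\R^N)^{m+1}$ by continuity. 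The only slightly delicate point is the uniformity of $\e$ with respect to $x_0$, which is immediate from the compactness of $[-1,1]$ and the fact that $m=m(l)$ is a fixed finite integer; the substantive ingredient — that analytic initial data actually generate smooth solutions — is supplied entirely by Proposition~\ref{prop100}, so there is no real obstacle beyond this bookkeeping.
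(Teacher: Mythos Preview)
Your proof is correct and follows essentially the same strategy as the paper's own proof: reduce to showing that every small jet at every base point $x_0\in[-1,1]$ is realized by some smooth solution, and invoke Proposition~\ref{prop100} to produce that solution from an analytic initial datum with the prescribed derivatives. The paper dismisses the interpolation step in one line (``simple analytic interpolation if $\e$ is chosen small enough''), whereas you carry it out explicitly via the polynomial $Y_0(x)=\sum_{j=0}^m\xi_j(x-x_0)^j/j!$ and track the uniformity of $\e$ in $x_0$; this is the same idea made concrete.
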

 \bnp
 Since \eqref{propJlintro} is assumed to be satisfied, it is sufficient to prove that for any \linebreak $(x_{0}, Y_{0}, Y_{1}, \cdots, Y_{m(l)})\in [-1,1]\times B(0,\e)$, there exists one solution of $y\in C^\infty ([-1,1]\times [t_1,t_2])$ solution of \eqref{A1} with $(Y_{0}, Y_{1}, \cdots, Y_{m(l)})=(\yt,\partial_{x}\yt(x_{0},\tau ),...,\partial_{x}^{m(l)}\yt(x_{0},\tau )))$. Thanks to Proposition \ref{prop100}, it suffices to find $Y^0\in ({\mathcal R} _{\tilde R,C})^N$ so that $(Y^{0}(x_{0}), \cdots, \partial_{x}^{m(l)}Y^{0}(x_{0}))=(Y_{0}, Y_{1}, \cdots, Y_{m(l)})$. This is simple analytic interpolation if $\e$ is chosen small enough with respect to $\tilde R,C$. 
 \enp
\section{Proofs of Theorem \ref{thm1} and Proposition \ref{propCompsym}}
\label{section4}
\begin{proof}[Proof of Theorem \ref{thm1}.]
Let $R>\hat R:=4N\lambda e^{(\lambda e)^{-1}}$ and let $\widetilde C$ be the constant given by Proposition \ref{prop100}. Let  $Y^0,Y^1\in ({\mathcal R}_{R,\widetilde C})^{N}\cap \Comp$. 
We infer from Proposition \ref{propexistcompat} applied with $[t_1,t_2] =[0,T]$ and $\tau =0$ (resp. $\tau =T$) the existence of 
two functions $\hat y,\tilde y \in G^{1,\lambda}([-1,1]\times [0,T])$ satisfying \eqref{W1a}-\eqref{W1b} 
and such that 
\[ \hat \yt(x, { 0}) =Y^0(x) \ \textrm{ and } \  \tilde \yt(x, { T} ) =Y^1(x), \quad \forall x\in [-1,1].\]
Let $\rho\in C^\infty (\R )$ be such that
\[
\rho (t) =\left\{
\begin{array}{ll}
1 & \textrm{ if } t \le \frac{T}{4} , \\[3mm]
0 & \textrm{ if } t \ge \frac{3T}{4} ,  
\end{array}
\right.
\]
and $\rho _{\vert [0,T]}  \in G^{\frac{\lambda+1}{2}}([0,T])$. (Note that $(\lambda + 1) / 2 >1$.) 
Let 
\begin{eqnarray*}
K_0(t)&=& \rho (t) \hat Y^{  x}  (0,t) + (1-\rho (t)) \tilde Y^{ x}  (0,t), \quad t\in [0,T].
\end{eqnarray*}
Then  $K_0\in G^\lambda([0,T])^{M}$ by \cite[Lemma 3.7]{MRRreachable}, and 
assuming $Y^0,Y^1\in ({\mathcal R}_{R,\widehat C})^{N}\cap \Comp$ with $0 <\widehat{C} < \widetilde{C}$, $\widehat{C}$ small enough, 
 we can assume 
that \eqref{AA} is satisfied.  It 
 follows then from Theorem \ref{thm2} that there exists a solution $y\in G^{1,\lambda}([-1,1]\times [0,T])$
of \eqref{C1}. Then $y$ satisfies \eqref{W1a}-\eqref{W1b} together with $\yt(x, T ) =Y^1(x)$ for $x\in [-1,1]$.  

Indeed, since $\rho (t)=0$ for $t>3T/4$, we have 
\begin{eqnarray*}
\partial _t ^n Y^{ x}(0, T)=K_0^{ (n) } (T ) &=&\partial _t^n \tilde Y^{ x}(0,T),\quad \forall n\in \N, 
\end{eqnarray*}
It follows then from Proposition \ref{prop1} that  $\partial _x ^nY^{t}(0,T)=\partial _x ^n \tilde Y^{t} (0,T)=\partial _x^n Y^1(0)$ 
for all $n\in \N$, and hence $Y^{t}(.,T)=Y^1$.  We can prove in the same way that $Y^t (.,0) = Y^0$. 
The proof of Theorem \ref{thm1} is achieved.\end{proof}

Let us now proceed to the proof of Proposition \ref{propCompsym} describing the compatibility set in cases where parity arguments can be used.
\begin{proof}[Proof of Proposition \ref{propCompsym}] 
We first consider the Dirichlet case. We will give the modifications of the proof for the Neumann case after.

Consider first the Dirichlet case when $B\yx(0,t)=0$ reduces to $\partial^{2j}_{x}y(0,t)=0$ for $2j\leq  M-1$. It means that, following the definition \eqref{comp} and denoting $J_{l}^{i}$ the 
$i$th component of the vector $J_{l}\in \R^{M}$, we have 
\begin{equation*}
\Comp=\left\{Y_{0}\in C^{\infty}([0,1])^{N};\quad J_{l}^{2j}(0,Y_{0},\partial_{x}Y_{0},...,\partial_{x}^{m(l)}Y_{0})_{x=0}=0,\quad  \forall 0\leq 2j\leq  M-1, \forall l\in \N\right\}
\end{equation*}
So, we need to show $\Comp=\widetilde{\Comp}$, where
\bna
\widetilde{\Comp}&:=&\left\{Y_{0}=(y_{0},y_{1},...,y_{N-1})\in C^{\infty}([0,1])^{N};\quad \partial^{2j}_{x}y_{l}(0)=0, \quad \forall j\in \N, \ \forall   l =0 , ... ,  N-1\right\}\\
&=&\left\{Y_{0}\in C^{\infty}([0,1])^{N};\quad \partial^{2j}_{x}Y_{0}(0)=0, \forall j\in \N\right\} .
\ena
We first prove that $\widetilde{\Comp}\subset \Comp$. 

The set $\widetilde{\Comp}$ is the set of smooth functions that admit a smooth odd extension to $[-1,1]$. We still denote $Y_{0}\in C^{\infty}([-1,1])^{N}$ this extension. We use the notation $(Y_{0})^{x}_{k}$ for the vector $(Y_{0})^{x}_{k}=(Y_{0},\partial_{x}Y_{0},...,\partial_{x}^{k}Y_{0})\in \E{k}{N-1}$. A vectorial variant of property \eqref{propI} is then
\bnan
\label{propIbis}
I((Y_{0})^{x}_{k})(x)=(Y_{0,-})^{x}_{k} (-x)
\enan
 where $Y_{0,-}$ is the reflected application $Y_{0,-}(x)=Y_{0}(-x)$. 
 
The derivatives at zero are not modified, so we need to prove that $J_{l}^{2j}(0,(Y_{0})^{x}_{m(l)})_{x=0}=0$ for this extension.
Using Lemma \ref{lmparHn} and property \eqref{propIbis}
\be
\label{JksymDir}
J_{l}^{k}(-x,-(Y_{0,-})^{x}_{m(l)}(-x))=J_{l}^{k}(-x,-I((Y_{0})^{x}_{m(l)}) (x))= (-1)^{k+1} J_{l}^{k}(x, (Y_{0})^{x}_{m(l)}(x)) .
\ee
But since $Y_{0}$ is odd, $Y_{0,-}=-Y_{0}$ and $(Y_{0,-})^{x}_{m(l)}=-(Y_{0})^{x}_{m(l)}$, which gives  
\[ J_{l}^{k}(-x,-(Y_{0,-})^{x}_{m(l)})(-x)))= J_{l}^{k}(-x, (Y_{0})^{x}_{m(l)}(-x)).\]
In particular, thanks to \eqref{JksymDir}, the function $x\mapsto J_{l}^{2j}(x, (Y_{0})^{x}_{m(l)}(x)) $ is odd and $J_{l}^{2j}(0, (Y_{0})^{x}_{m(l)}(0))=0$.\\

Next we prove that $\Comp\subset \widetilde{\Comp}$. Let $Y_0\in \Comp$. We prove by induction on $k$ the following equivalent fact: $I((Y_{0})^{x}_{k})=-(Y_{0})^{x}_{k}$ at $x=0$.\\

For $k\leq M-1$, we notice from the proof of Lemma \ref{lem1} that for $0\le l <N$, we have $H_{l}^{k}=0$ so that for $Y_{0}=(y_{0},y_{1},...,y_{N-1})$, we have 
$J_{l}^{k}(x,Y_{0},\ldots  ,\partial_{x}^{k}Y_{0})=\partial_{x }^{k}y_{l}$. So the assumption $J_{l}^{k}(x,Y_{0},\ldots , \partial_{x}^{k}Y_{0})_{x=0}=0$ for $k$ even,
 $k\leq M-1$ implies $\partial_{x }^{k}y_{l}=0$ for $k$ even, $k\leq M-1$. \\

Now, assume that $I((Y_{0})^{x}_{2k-1})=-(Y_{0})^{x}_{2k-1}$ at $x=0$ for some $k\in \N$ with $2k-1\ge M-1$. Write $2k=Mn+i$ with $0\leq i< M$ (necessarily even) and
 pick any  $l=Nn+j$, where $j$ is arbitrary with $0\leq j< N$. \\

By \eqref{Hsym}, since $i$ is even, we have $H_{l}^{i}(0,-I(Y))=- H_{l}^{i}(0, Y)$ for all $Y$. We have by the inductive hypothesis $I((Y_{0})^{x}_{Mn+i-1})=-(Y_{0})^{x}_{Mn+i-1}$ at $x=0$, so that $H_{l}^{i}(0,(Y_{0})^{x}_{Mn+i-1}(0))= -H_{l}^{i}(0, (Y_{0})^{x}_{Mn+i-1}(0))$, and hence $H_{l}^{i}(0, (Y_{0})^{x}_{Mn+i-1}(0))=0$. Now, using the definition \eqref{DDD1} of $J_{l}^{i}$ and the assumption $Y_0\in \Comp$ which gives $J_{l}^{i}(x, (Y_{0})^{x}_{Mn+i})_{x=0}=0$ (since $i$ is even), we obtain 
$P^n\partial_{x}^{i}y_{j}=0$ if we denote $Y_{0}=(y_{0},\ldots, y_{N-1})$. By the structure of $P$, this gives the result at step $2k=Mn+i$ since $0\leq j<N$ is arbitrary. This implies that the result is also true at step $2k+1$.

\medskip

For the Neumann case, we modify the proof as follows.

This time, we are in the case when $B\yx(0,t)=0$ reduces to $\partial^{2j+1}_{x}y(0,t)=0$ for $2j+1\leq  M-1$, and using \eqref{comp}, we have
\begin{equation*}
\Comp=\left\{Y_{0}\in C^{\infty}([0,1])^{N};\quad J_{l}^{2j+1}(0,Y_{0},\partial_{x}Y_{0},...,\partial_{x}^{m(l)}Y_{0})_{x=0}=0,\quad  \forall 0\leq 2j+1\leq  M-1, \forall l\in \N\right\} .
\end{equation*}
So, we have to show that $\Comp=\widetilde{\Comp}$ with
\bna
\widetilde{\Comp}
&:=&\left\{Y_{0}=(y_{0},y_{1},...,y_{N-1})\in C^{\infty}([0,1])^{N};\quad \partial^{2j+1}_{x}y_{l}(0)=0, \quad \forall j\in \N, \ \forall l=0 , ... , N-1\right\}\\
&=&\left\{Y_{0}\in C^{\infty}([0,1])^{N};\quad \partial^{2j+1}_{x}Y_{0}(0)=0, \forall j\in \N\right\} .
\ena
We first prove that $\widetilde{\Comp}\subset \Comp$. In this case, the set $\widetilde{\Comp}$ is the set of smooth functions that admit a smooth even extension to $[-1,1]$. So we need to prove that $J_{l}^{2j+1}(0,Y_{0},\partial_{x}Y_{0},...,\partial_{x}^{m(l)}Y_{0})_{x=0}=0$ for this extension.
Using the second part of Lemma \ref{lmparHn} and property \eqref{propIbis}
\be
\label{JksymNeu}
J_{l}^{k}(-x,(Y_{0,-})^{x}_{m(l)}(-x))=J_{l}^{k}(-x,I((Y_{0})^{x}_{m(l)})  (x))= (-1)^{k} J_{l}^{k}(x, (Y_{0})^{x}_{m(l)}(x)).
\ee
But since $Y_{0}$ is even, $Y_{0,-}=Y_{0}$ and $(Y_{0,-})^{x}_{m(l)}=(Y_{0})^{x}_{m(l)}$, which gives this time
 \[ J_{l}^{k}(-x,(Y_{0,-})^{x}_{m(l)}(-x))= J_{l}^{k}(-x, (Y_{0})^{x}_{m(l)}(-x)). \]
 In particular, thanks to \eqref{JksymNeu}, the function $x\to J_{l}^{2j+1}(x, (Y_{0})^{x}_{m(l)}(x))$ is odd and  \[J_{l}^{2j+1}(0,  (Y_{0})^{x}_{m(l)}(0))=0.\]

In order to prove that $\Comp\subset \widetilde{\Comp}$, we prove by induction on $k$ that for all $k\in \N$,  $I( (Y_0)_k^x ) = (Y_0)_k^x$ at $x=0$.

For $k\leq M-1$, we still have $H_{l}^{k}=0$ and the same arguments as in the Dirichlet case gives $\partial_{x }^{k}y_{l}=0$ for $k$ odd in the range we consider.\\ 

Assume that $I((Y_{0})^{x}_{2k})=(Y_{0})^{x}_{2k}$ at $x=0$ for some $k\in \N$ with $2k\ge
 M-1$. Write $2k+1=Mn+i$ with $0\leq i< M$ (necessarily odd), and pick $l=Nn+j$ where $j$ is arbitrary with $0\leq j< N$.\\ 

By \eqref{HsymNeum},
since $i$ is odd, we have $H_{l}^{i}(0,I(Y))= -H_{l}^{i}(0, Y)$ for all $Y$. But we have from the inductive hypothesis $I((Y_{0})^{x}_{Mn+i-1})=(Y_{0})^{x}_{Mn+i-1}$ at $x=0$, so that $H_{l}^{i}(0,(Y_{0})^{x}_{Mn+i-1})= -H_{l}^{i}(0, (Y_{0})^{x}_{Mn+i-1})$, and hence $H_{l}^{i}(0,(Y_{0})^{x}_{Mn+i-1})= 0$. Now, using the definition \eqref{DDD1} of $J_{l}^{i}$ and the assumption 
$J_{l}^{i}(x,(Y_{0})^{x}_{Mn+i})_{x=0}=0$ (since $i$ is odd), we obtain 
$P^n\partial_{x}^{i}y_{j}=0$ if $Y_{0}=(y_{0},\ldots, y_{N-1})$. Since $0\le j<N$ is arbitrary, this gives the result at step $2k+1$ and also at step $2k+2$.
\end{proof}

\section*{Appendix }
\section{A Lemma of complex analysis}
\begin{lemma}\label{lemmasets}
    Consider 
\[
{\mathcal B}_{R,C} :=\left\{  z:[-1,1]\to \C ;  \ \exists f\in H^\infty_R,\ \Vert f\Vert_{L^\infty(B(0,R))} \le C, \ f\Big|_{  [-1,1]}=z\right\}.
\]
Then, for any $1<r<R$ and  $C>0$

$${\mathcal{B}}_{R,C} \subset {\mathcal R}_{R,C}\subset {\mathcal B}_{r,C  ( 1-\frac{r}{R} )^{-1} }.$$
\end{lemma}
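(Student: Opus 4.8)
The plan is to establish the two inclusions separately; each is a one-line consequence of a standard fact about Taylor series on disks, once one notes that the set $[-1,1]$ is contained in every disk $B(0,s)$ with $s>1$, which is precisely what the hypothesis $1<r<R$ provides.

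For the first inclusion ${\mathcal B}_{R,C}\subset{\mathcal R}_{R,C}$, I would start from $z\in{\mathcal B}_{R,C}$, take $f\in H^\infty_R$ with $\|f\|_{L^\infty(B(0,R))}\le C$ and $f|_{[-1,1]}=z$, and use that $f$ is holomorphic on $B(0,R)$ to write its Taylor expansion $f(w)=\sum_{n\ge 0}\alpha_n w^n/n!$ with $\alpha_n:=f^{(n)}(0)$, the series converging on $B(0,R)$. Cauchy's estimates on the circle $|w|=\rho$ for $\rho<R$ give $|\alpha_n|\le C\,n!/\rho^n$, and letting $\rho\uparrow R$ yields $|\alpha_n|\le C\,n!/R^n$, i.e. $(\alpha_n)_{n\ge 0}\in{\mathcal N}_{R,C}$. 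Since $R>1$, the interval $[-1,1]$ lies in $B(0,R)$, so $z(x)=f(x)=\sum_{n\ge 0}\alpha_n x^n/n!$ for $x\in[-1,1]$, which shows $z\in{\mathcal R}_{R,C}$.

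For the second inclusion ${\mathcal R}_{R,C}\subset{\mathcal B}_{r,C(1-r/R)^{-1}}$, I would take $z\in{\mathcal R}_{R,C}$, fix $(\alpha_n)_{n\ge 0}\in{\mathcal N}_{R,C}$ with $z(x)=\sum_{n\ge 0}\alpha_n x^n/n!$ on $[-1,1]$, and define $f(w):=\sum_{n\ge 0}\alpha_n w^n/n!$. The bound $|\alpha_n w^n/n!|\le C(|w|/R)^n$ shows this power series converges uniformly on compact subsets of $B(0,R)$, hence defines a function $f\in H(B(0,R))$; and for $|w|\le r<R$, summing the geometric series gives $|f(w)|\le C\sum_{n\ge 0}(r/R)^n=C(1-r/R)^{-1}$. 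Thus $f\in H^\infty_r$ with $\|f\|_{L^\infty(B(0,r))}\le C(1-r/R)^{-1}$, and since $r>1$ we have $f|_{[-1,1]}=z$. Therefore $z\in{\mathcal B}_{r,C(1-r/R)^{-1}}$, which completes the proof.

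There is no genuine obstacle in this argument; the only two points that deserve a word of care are the limit $\rho\uparrow R$ in Cauchy's inequalities (needed to reach the sharp constant $C$ rather than $C/\rho^{n}$, so that the target set is exactly ${\mathcal N}_{R,C}$ and not a slightly larger one), and the elementary observation that $[-1,1]\subset B(0,s)$ precisely when $s>1$, which is where the strict inequalities $1<r<R$ are used.
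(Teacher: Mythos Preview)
Your proof is correct and follows essentially the same approach as the paper's: Cauchy's estimates on circles $|w|=\rho<R$ followed by $\rho\uparrow R$ for the first inclusion, and the geometric-series bound $\sum (r/R)^n=(1-r/R)^{-1}$ for the second. Your write-up is in fact slightly more careful in spelling out why $[-1,1]$ lies inside the relevant disks.
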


\begin{proof}
    For given $z\in {\mathcal B}_{R,C}$, if $f$ denotes its analytic extension to $B(0,R)$, writing 
 $f(\xi)=\sum_{n=0}^\infty \alpha _n \frac{\xi^n}{n!}$ for $|\xi|<R$, we have by Cauchy's formula that for any $n\in \N$ and any $r<R$:
\[
| \alpha _n | = |f^{(n)} (0)| =  \left\vert \frac{n!}{2\pi i} \int_{|\xi|=r} \frac{f (\xi)}{\xi^{n+1}}d\xi\right\vert \le \frac{n!}{r^n} \Vert f\Vert _{L^\infty ( B(0,R ))},  
\] 
and hence $| \alpha  _n |   \le \Vert f\Vert _{L^\infty (B(0,R))} \frac{n!}{R^n}$ by letting $r\to R^-$. On the other hand, if $z\in {\mathcal R}_{R,C}$ is given
by $z(\xi)=f(\xi):=\sum_{n=0}^\infty \alpha _n \frac{\xi^n}{n!}$ for $\xi\in [-1,1]$ and $1<r<R$, then for $|\xi|<r$ we have that 
$ | f (\xi) | \le C\sum_{n=0}^\infty ( \frac{r}{R} )^n =C(1-\frac{r}{R})^{-1}<\infty$.\\   

\end{proof}

ƒ

\subsection{Gevrey regularity of the solution of \eqref{C1} provided in Theorem \ref{thm2}}\label{s:A1} 

Assume that $f$ satisfies \eqref{AB1}-\eqref{AB4}, still under the assumption $|\zeta _M|=1$.
Let us show that $y\in G^{1,\lambda }([-1,1]\times [t_1,t_2])$. Let $L_0=L(s_0)=e^{\frac{1-s_0}{N}}L_1<\frac{e^{\frac{1}{N}}}{(4N)^{\lambda}}  $ where $s_0\in[0,1]$ and $L_1$ are given in the proof of Theorem \ref{thm2}. Then, we can pick some numbers $R_1,R_2$ such that $\frac{4M}{e^{\frac{1}{M}}}<R_1<R_2<\frac{\lambda }{L_0^{\frac{1}{\lambda}}}$.
Let us prove that there exists some constant $ Q>0$ such that \eqref{AAAAA} holds. 
To this end, picking any $\mu >M+1$, we prove by induction on $k\in \N$ that 
\ba
\vert \partial _x^k \partial _t ^n y(x,t) \vert &\le& C_k \frac{ (\lambda n+k)!}{R_1^kR_2^{\lambda n} (\lambda n+k+1)^\mu} 
\quad \forall (x,t)\in [-1,1]\times [t_1,t_2], \ \forall n \in \N , 
\label{TT1}
\ea
 with $\sup_{k\in \N} C_k <\infty$, the sequence $C_k$ being  nondecreasing.  
Let us start with $k\in \{ 0, ..., M-1 \}$. 

We already know that $y\in C^\infty ([-1,1]\times [t_1,t_2] ) $ and that $U=(y,\partial _x y , ..., \partial_x^{M-1}) \in C ( [-1,1] ,X_{s_0} )$  for some $s_0\in (0,1)$, 
the space $X_{s_0}$ being  defined in \eqref{PP0};
that is,   $U\in C( [-1,1],{\mathcal X}_{L_0} )$ with 
$L_0=L(s_0)=e^{r(1-s_0)}L_1 = e^{\frac{1-s_0}{N}} L_1 \le e^\frac{1}{N} L_1$. 
Thus, we have for some constant $C>0$ and for all $n\in \N$ and all $(x,t)\in [-1,1]\times [t_1,t_2]$
\begin{eqnarray*}
\vert \partial _x^k \partial _t ^{n+1} y(x,t) \vert &\le & C\, L_0^{| n -\frac{M-1-k}{\lambda }|}
 \Gamma (n+1- \frac{M-1-k}{\lambda })^{\lambda} (1+n)^{-2},
 \\
\vert \partial _x^k \partial _t ^{n+1} y(x,t) \vert &\le & C, 
 \\
\vert \partial _x^{M-1} \partial _t ^{n+1} y(x,t) \vert  &\le & C\,   L_0^{n} (n!)^\lambda (1+n)^{-2},
\end{eqnarray*}  
for  $0\le k<M-1, n\leq \left|\frac{M-k-1}{\lambda}\right|+1$.\\

We readily infer from Stirling's formula $\Gamma (x+1)\sim (\frac{x}{e})^x \sqrt{2\pi x}$ that 
$\Gamma (x+a) \sim \Gamma (x) x^a$ as $x\to \infty$, for any $a\in \R$, and that 
$(n!)^\lambda \sim (2\pi n)^\frac{\lambda -1}{2}  \lambda ^{-\frac{1}{2}} (\lambda n)! /
 \lambda^{\lambda n}$. It follows that for some constant $C>0$  
 \[
 | \partial _x^k\partial _t ^{n+1} y(x,t) | \le C L_0^n L_0^{-{\frac{M-1-k}{\lambda}}}  [n! (n+1) ^{-\frac{M-1-k}{\lambda}} ]^\lambda (n+1)^{-2}
 \le C L_0^n \frac{(\lambda n)!}{\lambda ^{\lambda n}}  (n+1)^{-({ M-1-k})} n ^\frac{\lambda -1}{2}\]
 for $0\le k\le M-1$.  
 Thus there are some positive constants $C_k$, $0\le k\le M-1$, such that 
 \eqref{TT1} holds, provided that 
$R_2< \lambda / L_0^\frac{1}{\lambda}$.\\

Assume now that \eqref{TT1} is true for $k\in \{ 0,..., l+M -1 \} $ for some $l\in \N $. Let us show that \eqref{TT1} is true for $k=l+M$; that is, for all $n\ge 0$ and all $(x,t)\in [-1,1]\times
[t_1,t_2]$
\[
\left\vert 
\partial _x ^{l+M} \partial _t ^n y (x,t)
\right\vert 
\le C_{l+M} \frac{(\lambda n + l+M)! }{R_1^{l+M}R_2^{\lambda n}  (\lambda n + l  + M + 1)^\mu},
\] 
for some constant $C_{l+M}>0$. Since $|\zeta _M|=1$, using \eqref{C1}, we have that 
\begin{eqnarray*}
\vert \partial _x ^{l+M} \partial _t ^n y\vert 
&=& \vert \partial _x^l \partial _t^n (\zeta _M\partial _x^M y)\vert \\
&=& \vert \partial _x^l \partial _t^n (\partial _t^N y - \sum_{j=0}^{M-1} \zeta _j\partial _x ^jy - 
f(x,y , \partial _x y , ..., \partial _x^{M-1}y )\vert \\
&\le& \vert \partial _x^l \partial _t ^{n+N} y \vert + \vert \partial _x^l\partial _t^n 
(\sum_{j=0}^{M-1} \zeta _j \partial _x^j y )\vert + 
\vert \partial _x^l \partial _t^n f(x,y, \partial _x y , ..., \partial _x ^{M-1} y )\vert \\
&=:& I_1 + I_2 + I_3. 
\end{eqnarray*}  
Then using directly the iteration assumption and $\lambda N=M$, we have
\[
I_1 \le C_l \frac{(\lambda n+\lambda N +l)!}{R_1^{l}R_2^{\lambda n+\lambda  N } (\lambda n+\lambda N+ l + 1)^\mu } 
=C_{l} \left( \frac{R_1}{R_2}\right) ^M
 \frac{(\lambda n + l+M)! }{R_1^{l+M}R_2^{\lambda n}  (\lambda n + l  + M + 1)^\mu} \cdot
 \]
On the other hand, we have that 

\begin{eqnarray*}
I_2 &\le & \sum_{j=0}^{M-1} 
\vert \zeta _j\vert \, \vert \partial _x ^{l+j} \partial _t^n y\vert \\
&\le& \sum_{j=0}^{M-1} |\zeta _j| C_{l+j} \, \frac{(\lambda  n + l + j ) !}{R_1^{l+j} R_2^{\lambda n} 
(\lambda n + l + j+ 1)^\mu} \\
&\le& \left( 
\sum_{j=0}^{M-1} |\zeta _j | C_{l+j} \frac{R_1^{M-j}}{(\lambda n + l+ j+ 1) \cdots (\lambda n +  l + M )}  
\left( \frac{\lambda n + l + M + 1}{ \lambda n  + l + j + 1} \right) ^\mu    
\right) 
\, \\
&&  \times \frac{(\lambda n + l+M)! }{R_1^{l+M}R_2^{\lambda n}  (\lambda n + l  + M + 1)^\mu} \cdot
\end{eqnarray*}
Finally, as in the proof of Proposition \ref{prop10} (see estimate \eqref{E3} iterating Lemma \ref{lem3}), we have that for some positive constant
 $\overline{C}$
\begin{eqnarray*}
I_3
&\le&
\left\vert  
\partial _x^l\partial _t^n \sum_{\vec{p}\ne 0} A_{\vec{p}} y^{p_0}(\partial _x y)^{p_1} 
\cdots (\partial _x^{M-1}y)^{p_{M-1}} 
\right\vert\\
&\le& \sum_{\vec{p}\ne 0} K^{|\vec{p}|} \frac{\overline{C}}{b^{ | \vec{p} |}}
\frac{(\lambda n+l+M-1)!}{ R_1^{{ l}} R_2^{\lambda n} (\lambda n+l+{1} )^\mu}
C_{l+M-1}^{|\vec{p}|}\prod_{j=0}^{M-1}\frac{1}{(R_1^j)^{p_j}}
\end{eqnarray*}
Note that $R_1>1$. If, for some constant $\delta \in (0,1)$, we have  
\be
\label{YY1bis}
\frac{C_{l+M-1}K}{b} \le \delta , 
\ee
this yields 
\begin{eqnarray*}
I_3 &\le& { \delta} \overline{C}  \frac{(\lambda n+l+M-1)!}{R_1^lR_2^{\lambda n} (\lambda n+l+{ 1} )^\mu}
\left( \frac{1}{1-\delta} \right) ^{M} \\
&\le&  \frac{ { \delta} \overline{C} R_1^M}{ (\lambda n + l +  M)(1-\delta )^M}
\left( \frac{\lambda n + l+ M +1}{\lambda n + l + { 1} }\right) ^\mu
 \frac{(\lambda n + l+M)! }{R_1^{l+M}R_2^{\lambda n}  (\lambda n + l  + M + 1)^\mu} 
\end{eqnarray*}
It follows that 
\be
\vert \partial _x ^{l+M}\partial _t ^n y (x,t)\vert \le C_{l+M} \frac{ (\lambda n+l+M) ! }{R_1^{l+M} R_2^{\lambda n} (\lambda n + l + M +1 )^\mu},  
\label{TT8}
\ee
with 
\begin{multline*}
C_{l+M} :=\max \left(C_{l+M-1},C_{l} \left( \frac{R_1}{R_2}\right) ^M \right.\\
 + \left( 
\sum_{j=0}^{M-1} |\zeta_j | C_{l+j} \frac{R_1^{M-j}}{(\lambda n + l+ j+ 1) \cdots (\lambda n +  l + M )} 
\left( \frac{\lambda n + l + M + 1}{ \lambda n  + l + j + 1} \right) ^\mu    \right.
 \\
\left.+ 
 \frac{ { \delta} \overline{C} R_1^M}{ (\lambda n + l +  M)(1-\delta )^M}
\left( \frac{\lambda n + l+ M +1}{\lambda n + l + { 1}}\right) ^\mu \right)
\cdot \label{TT10}
\end{multline*}

Then, using the fact that $R_1<R_2$, { if $\frac{C_j K}{b} \le \delta$ for $j=0,1, ... ,l+M-1$}, then 
 $\frac{C_{l+M}K}{b} \le \delta$ provided that $l$ is large enough, say $l\ge l_0$. 
 It is then sufficient to impose that 
 \[
 \max(C_0, ..., C_{l_0+M-1})\le \frac{\delta b}{K},
 \]
 and this is the case provided that the constant $C$ in \eqref{AA} is small enough. 
 \section{On the complex Ginzburg-Landau equation}
 \label{s:GLproof}
\begin{theorem}
\label{t:exGL}
    Theorem \ref{thm2} holds true for the complex Ginzburg-Landau equation.
\end{theorem}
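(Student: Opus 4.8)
The plan is to revisit the proof of Theorem~\ref{thm2} and to check that every step survives three harmless changes specific to the Ginzburg--Landau equation $\partial_t y = e^{i\theta}\partial_x^2 y + e^{i\varphi}|y|^2y$: the unknown $y$ and the data $k_0,\dots,k_{M-1}$ are now $\C$-valued; the leading coefficient $\zeta_M=e^{i\theta}$ is complex but of modulus one; and the nonlinearity $f(x,\vec y)=e^{i\varphi}\,y_0^2\,\overline{y_0}$ is a polynomial in $y_0$ \emph{and} $\overline{y_0}$ rather than a power series in $y_0$ alone. Accordingly, I would first recast the abstract framework \eqref{AB1}--\eqref{AB4} so that $f$ is allowed to be a convergent series $f(x,\vec y,\overline{\vec y})=\sum_{(\vec p,\vec q,r)\in\N^{2M+1}}a_{\vec p,\vec q,r}\,\vec y^{\vec p}\,\overline{\vec y}^{\vec q}\,x^r$ with $|a_{\vec p,\vec q,r}|\le C_a\,b^{-|\vec p|-|\vec q|}\,b_2^{-r}$. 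For Ginzburg--Landau $f$ is a single monomial (independent of $x$ and of $y_1=\partial_x y$), so every choice $b,b_2>4$ works and the constant $\hat R$ is unchanged.

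Second, I would record that all the functional-analytic ingredients of Section~\ref{section2} extend verbatim to $\C$-valued Gevrey functions: the spaces $G_L^\lambda$, $G_{L,a}^\lambda$ and $\mathcal X_L$, the algebra property (Lemma~\ref{algebre}), the cost-of-derivatives estimate (Lemma~\ref{coutderivee}) and the embeddings \eqref{Gacroissance+}--\eqref{GacroissanceL} all involve only the scalars $|u^{(k)}(t)|$ and are therefore insensitive to the scalar field. The one new elementary fact I would state and use repeatedly is that conjugation is an isometry of these spaces, $\Vert\overline u\Vert_{L}=\Vert u\Vert_{L}$ and $\Vert\overline u\Vert_{L,a}=\Vert u\Vert_{L,a}$, since $|\partial_t^k\overline u|=|\partial_t^k u|$.

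Third, I would re-run the existence argument. Writing \eqref{DDD10} as the first-order system $\partial_x U=AU+F(x,U,\overline U)$, nothing changes in the linear part: $A$ has the same shape with the complex but unit-modulus factor $\zeta_M^{-1}$, and the proofs of Lemmas~\ref{lmAder} and \ref{lmArest} used only $|\zeta_M|=1$, never the reality of $\zeta_M$, so they go through word for word. In Lemma~\ref{lemmeF} the difference $f(x,U,\overline U)-f(x,V,\overline V)$ is now expanded over multi-indices $(\vec p,\vec q)$; factoring each $u_j^{p_j}\overline{u_j}^{q_j}-v_j^{p_j}\overline{v_j}^{q_j}$ telescopically, applying Lemma~\ref{algebre}, and using the conjugation isometry to replace $\Vert\overline{u_j}^{q_j}-\overline{v_j}^{q_j}\Vert_L$ by $\Vert u_j^{q_j}-v_j^{q_j}\Vert_L$, one obtains the same bound as before with $p_j+q_j$ in the role of $p_j$; the ensuing series $\sum_{(\vec p,\vec q)\ne 0}|a_{\vec p,\vec q,r}|\cdots$ still converges because $\#\{(\vec p,\vec q):|\vec p|+|\vec q|=R\}\le C(R+1)^{2M-1}$, so \eqref{R1}--\eqref{R2} and the continuity of $x\mapsto F(x,U)$ hold. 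Since Theorem~\ref{thmexistenceloc} is stated for an arbitrary Banach scale, it applies and yields, exactly as in Theorems~\ref{thm4} and \ref{thm2}, a solution $U=(y,\partial_x y)\in C([-1,1],\mathcal X_{L_0})$ of the Ginzburg--Landau equation with $\partial_x^i y(0,t)=k_i(t)$. The $C^\infty$ bootstrap \eqref{PPP1} is unchanged because $f$ is $C^\infty$ in its arguments, and the $G^{1,\lambda}$ estimate of Appendix~\ref{s:A1} carries over: in the induction on $k$ the nonlinear term now also contains $\overline y$ and its derivatives, but $|\partial_x^k\partial_t^n\overline y|=|\partial_x^k\partial_t^n y|$, so Lemma~\ref{lem3} applies to products of factors $y$ and $\overline y$ just as to products of $y$'s, giving the same recursion for the constants $C_k$ and hence \eqref{AAAAA}.

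I do not expect a genuine obstacle: the whole content of the argument is that conjugation is a Gevrey isometry, so doubling the list of variables costs nothing beyond bookkeeping — namely checking that, after replacing $\N^{M+1}$ by $\N^{2M+1}$, the coefficient bound $|a_{\vec p,\vec q,r}|\le C_a b^{-|\vec p|-|\vec q|}b_2^{-r}$ still dominates every geometric series that appears, and that the constants in Lemmas~\ref{lmAder}--\ref{lemmeF} depend only on quantities ($M$, $N$, $L_1$, $b$, $C_a$) that are unaffected. The one point worth flagging is conceptual rather than technical: since $z\mapsto|z|^2z$ is not holomorphic, the solution produced is guaranteed to satisfy the Ginzburg--Landau equation only on the real slab $[-1,1]\times[t_1,t_2]$ — which is exactly what Theorem~\ref{thm2} asserts — even though it extends holomorphically in $x$ for each fixed $t$; one must therefore never invoke the equation off the real axis in the subsequent application to controllability.
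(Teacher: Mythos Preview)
Your proposal is correct and follows essentially the same approach as the paper: both observe that the linear estimates (Lemmas~\ref{lmAder}, \ref{lmArest}) use only $|\zeta_M|=1$, that conjugation is an isometry of the Gevrey norms, and that this isometry lets Lemma~\ref{lemmeF} and the Appendix~\ref{s:A1} induction go through with $\overline y$ treated on the same footing as $y$. The only stylistic difference is that the paper handles the specific cubic $|u_0|^2u_0$ by the explicit factorization $(u_0-v_0)(u_0+v_0)\overline{u_0}+v_0^2(\overline{u_0}-\overline{v_0})$ rather than setting up the full $\N^{2M+1}$ multi-index framework you describe; since the Ginzburg--Landau nonlinearity is a single monomial, this direct computation is shorter, but your more systematic route would be needed for any genuinely infinite series in $(\vec y,\overline{\vec y})$.
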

 \bnp
 The fact that the equation is complex-valued does not change the proof. The only slight difference is for Lemma \ref{lemmeF} where the nonlinearity contains some conjugate. The proof is even simpler since the sum is finite. We give a simpler proof for the convenience of the reader. In that case, $M=2$, $N=1$ and $\lambda=2$. If $U=(u_0,u_1)\in L^\infty (K)^2$, and $F(x,U)=\left( 
\begin{array}{c}
0\\
-e^{i\varphi }|u_0|^2u_0
\end{array}
\right) $ then 
 \begin{eqnarray*}
\Vert F(x,U)-F(x,V) \Vert _{X_{s'}}
&=&
\left\| 
\left( 
\begin{array}{c}
0\\
|u_0|^2u_0-|v_0|^2v_0
\end{array}
\right) 
\right\| _{X_{s'}}\\
&=& e^{-\tau (1-s') } \Vert |u_0|^2u_0-|v_0|^2v_0\Vert _{L(s')}\\
&=&e^{-\tau (1-s') } \Vert (u_0-v_0)(u_0+v_0)\overline{u_0}+v_0^2(\overline{u_0}-\overline{v_0})\Vert _{L(s')}\\
&\leq &\frac{3}{2} e^{-\tau (1-s') } \Vert u_0-v_0\Vert _{L(s')}  \left(\Vert u_0\Vert _{L(s')}^2 +\Vert v_0\Vert _{L(s')}^2\right)  .\end{eqnarray*} 
We used the algebra property of Lemma \ref{algebre} and the fact that the norm is invariant by conjugation. Using \eqref{Gacroissance+} and 
\eqref{gainXs}, we get, for a constant $C$ depending on $L_1$, $M$ and $N$,  
$$\Vert u_0\Vert _{L(s')}\leq C\Vert u_0\Vert _{L(s'),\frac{M-1}{\lambda}}\leq C\nor{U}{{\mathcal X}_{L(s')}}=Ce^{\tau (1-s')} \nor{U}{X_{s'}} 
\leq Ce^\tau e^{-\tau (s-s')} \Vert U\Vert _{X_s}.$$
The same estimate is true for $u_0-v_0$, and therefore we obtain
 \begin{multline*}
\Vert F(x,U)-F(x,V) \Vert _{X_{s'}}
\leq\\
C^3 e^{-\tau (s-s') }e^{3\tau } \nor{U-V}{X_s}\left(\nor{U}{X_s}^2+ \nor{V}{X_s}^2\right)\leq \frac{C^3e^{-1}e^{3\tau }}{\tau (s-s')}D^2\nor{U-V}{X_s},
\end{multline*} 
where we have used \eqref{estimexp}. For fixed $\tau$, it can be made arbitrarily small when $D$ is chosen small enough. The proof finishes the same way for the existence of the solution. Concerning the estimates given in Section \ref{s:A1}, the only difference concerns the term $I_3$ that becomes $I_3=|e^{i\varphi}\partial _x^l\partial _t^n (y^2\overline{y}) |$.  In this part of the proof the  induction argument  \eqref{TT1} is valid for $k\in \{ 0,..., l+M -1 \} $ for some $l\in \N $. The derivatives of $\overline{y}$ have the same bounds as those of $y$ in \eqref{TT1}, namely 
\ba
\vert \partial _x^k \partial _t ^n \overline{y}(x,t) \vert &\le& C_k \frac{ (2 n+k)!}{R_1^kR_2^{2 n} (2 n+k+1)^\mu} 
\quad \forall (x,t)\in [-1,1]\times [t_1,t_2], \ \forall n \in \N . 
\ea

We can apply similarly Lemma \ref{lem3} twice to get
\begin{eqnarray*}
I_3
&=&
\left\vert  
\partial _x^l\partial _t^n (y^2\overline{y})
\right\vert\le K^2 C_l^3\frac{(2 n+l)!}{ R_1^{l} R_2^{2 n} (2 n+l+ 1 )^\mu}\le\widetilde{\beta}_{l+2}C_l\frac{(2 n+l+2)!}{ R_1^{l+2} R_2^{2 n} (2 n+l+2+ 1 )^\mu}
\end{eqnarray*}
with $\widetilde{\beta}_{l+2}= \sup_{n\in\N}\frac{ K^2 C_l^2R_1^2}{(2 n+l+1)(2 n+l+2)}\frac{(2 n+l+2 +1 )^\mu}{(2 n+l+1 )^\mu}\leq \frac{ K^2 C_l^2R_1^2}{(l+1)(l+2)}3^\mu$. The rest of the estimate being the same, we can make the $\widetilde{\beta}_{l+2}$ arbitrarily small in a similar way. This completes the inductive step. 
 \enp
 \begin{proposition}
       Proposition \ref{prop10} holds true for the complex Ginzburg-Landau equation.
 \end{proposition}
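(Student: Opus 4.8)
The plan is to repeat the proof of Proposition \ref{prop10} almost verbatim, the only structural change being the treatment of the nonlinear term in the induction. For the Ginzburg--Landau equation we have $M=2$, $N=1$, $\lambda=2$ and $\zeta_M=e^{i\theta}$, so $|\zeta_M|=1$ and the normalization hypothesis of Proposition \ref{prop10} holds automatically. As explained in the remark following Theorem \ref{thmGLN}, the solution we build is real analytic on $[-1,1]$ for each $t$, so all the Taylor-coefficient and Leibniz manipulations at $x=0$ appearing in the proof of Proposition \ref{prop10} remain meaningful, even though $f(z)=|z|^2z$ is not holomorphic; the equation is only ever used on $[-1,1]\times[t_1,t_2]$.

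First I would set up the same induction on $n$ for the estimate \eqref{D20}, now for complex-valued $y$. The initialization $0\le n\le N-1$ (here just $n=0$) is unchanged, since it only involves the analytic datum $Y_0\in(\mathcal N_{\widetilde R,C})^N$. For the induction step we use the PDE to write
\[
\partial_x^k\partial_t^n y(0,\tau)=e^{i\theta}\,\partial_x^{k+2}\partial_t^{n-1}y(0,\tau)+e^{i\varphi}\,\partial_x^k\partial_t^{n-1}\bigl(y^2\overline y\bigr)(0,\tau)=:I_1+I_2 .
\]
The term $I_1$ is estimated exactly as in Proposition \ref{prop10} (it is the linear part, with $|e^{i\theta}|=1$). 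For $I_2$ the essential observation is that conjugation is an isometry for the modulus of every derivative, $|\partial_x^k\partial_t^m\overline y|=|\partial_x^k\partial_t^m y|$, so $\overline y$ obeys the same bound \eqref{D20} up to rank $n-1$ as $y$. Hence one applies Lemma \ref{lem3} twice --- once to bound $\partial_x^k\partial_t^{n-1}(y^2)$, then once more multiplying by $\overline y$ --- with $q=M-1=1$ and $\mu>M+1=q+2$, to obtain
\[
\bigl|\partial_x^k\partial_t^{n-1}(y^2\overline y)(0,\tau)\bigr|\le K_{q,\mu}^2\,C_{n-1}^3\,\frac{(\lambda(n-1)+k+q)!}{R^kR'^{\lambda(n-1)}(\lambda(n-1)+k+1)^\mu}.
\]
This is in fact simpler than in the general case: there is no series over $\vec p$ to sum, so the smallness condition on $C$ needed to close the induction (the analogue of \eqref{E150}, giving $C_n\le C_{n-1}$ for $n$ large) is immediate, and one still gets $\sup_n C_n=:C'$ with $C'\to0$ as $C\to0^+$, hence \eqref{D2}.

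For the second and third parts I would follow Proposition \ref{prop10}: the numbers $d_n^k$ are defined by the same Leibniz-type recursion, except that the monomial $y^2\overline y$ produces, via Leibniz, terms $\partial_x^{l_1}\partial_t^{m_1}y\cdot\partial_x^{l_2}\partial_t^{m_2}y\cdot\partial_x^{l_3}\partial_t^{m_3}\overline y$ with $m_i\le n-1$; one replaces $\partial_x^{l}\partial_t^{m}y(0,\tau)$ by $d_m^{l}$ and $\partial_x^{l}\partial_t^{m}\overline y(0,\tau)$ by $\overline{d_m^{l}}$, which keeps $d_n^k$ well defined inductively in terms of the $d_m^l$ with $m\le n-1$. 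The estimate \eqref{Gd} then follows from the same computation as for \eqref{D20} (see also Remark \ref{rk:Leibniz}, which applies verbatim as it concerns only Leibniz' formula). For part (3), the point that let one recover $\partial_x^{k+M}\partial_t^{n-N}y$ from the lower-order jet --- namely $\zeta_M\ne0$ --- is unaffected, since $e^{i\theta}\ne0$, so the induction on $k$ goes through word for word, the bookkeeping term $\widetilde I_2$ now involving both $d$ and $\overline d$ and still coinciding with $I_2$ by the induction hypothesis. The only mildly delicate point is that the jet recursion must stay consistent in the presence of conjugates, but carrying $\overline{d_m^l}$ alongside $d_m^l$ makes this automatic, so there is no real obstacle; the proof is a routine adaptation.
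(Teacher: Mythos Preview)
Your proposal is correct and follows essentially the same route as the paper: the paper's proof is a two-line remark that the argument of Proposition \ref{prop10} carries over to $\C$-valued solutions, with the nonlinear term handled as in Theorem \ref{t:exGL} by observing that $\overline y$ satisfies the same derivative bounds as $y$ and applying Lemma \ref{lem3} twice to the product $y^2\overline y$. Your write-up simply spells out these steps (including the bookkeeping with $\overline{d_m^l}$ in parts (2)--(3)), so there is no difference in approach.
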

 \bnp
The reconstruction is exactly the same working in $\C$ instead of $\R$. The modifications of the estimates of the nonlinear term are done in the same way as in Theorem \ref{t:exGL}, noticing that $\overline{y}$ satisfies the same estimates as $y$.
 \enp
 \begin{proof}[Proof of Theorem \ref{thmGLDir} and \ref{thmGLN}]
This is the same as before with $\lambda=2/1=2$. It only remains to check the condition about the non-linearity. We have $f(x,y_0,y_1)=e^{i\varphi} |y_0|^2y_0$. It satisfies $f(-x,-y_{0}, y_{1})=-e^{i\varphi} |y_0|^2y_0= -f(x , y_{0} , y_{1})$ which is  condition \eqref{WWimpair} for system  \eqref{GLDir}, 
 and $f(-x,y_{0},- y_{1})=e^{i\varphi} |y_0|^2y_0= f(x , y_{0} , y_{1})$ which is condition \eqref{WWpair} for system \eqref{GLN}.
\end{proof}
\section*{Acknowledgements}
Part of this work was done while the first author was visiting the Universidad del Valle at Cali. He would like to thank that institution for 
its hospitality. The second author 
would like to express her gratitude to Universidad del Valle, Department of Mathematics for all the facilities used during the realization of this work. The first author was supported by ANR project ISDEEC (ANR-16-CE40-0013). The third author was supported by the ANR project CHAT (ANR-24-CE40-5470). The first and third authors were supported by the COFECUB project COCECUB-20232505780P.   


\end{document}